\definecolor{trp}{rgb}{1,1,1}
\definecolor{red}{rgb}{1,0,.2}
\theoremstyle{plain}
\newtheorem{theorem}{Theorem}[section]
\newtheorem{claim}[theorem]{Claim}
\newtheorem{fact}[theorem]{Fact}
\newtheorem{corollary}[theorem]{Corollary}
\newtheorem{definition}[theorem]{Definition}
\newtheorem{example}[theorem]{Example}
\newtheorem{lemma}[theorem]{Lemma}
\newtheorem{prop}[theorem]{Proposition}
\newtheorem{proposition}[theorem]{Proposition}
\newtheorem{remark}[theorem]{Remark}
\numberwithin{equation}{section}
\def \N {\mathbb N}
\def \Z {\mathbb Z}
\def \Q {\mathbb Q}
\def \R {\mathbb R}
\def \ind{1\!\!1}
\newcommand*{\arabicdec}[1]{\the\numexpr\value{#1}\relax}
\definecolor{blue}{rgb}{0,0,1}
\definecolor{red}{rgb}{1,0,.7}
\title[Projections of the random Menger Sponge]{Projections of the random Menger Sponge}
\author{K\'aroly Simon$^{1,2,3}$}
\address{$^1$Department of Stochastics, Institute of Mathematics, Budapest University of Technology and Economics, M\H{u}egyetem rkp. 3., H-1111 Budapest, Hungary\newline \indent $^2$ Alfréd Rényi Institute of Mathematics – Eötvös Loránd Research Network, Re\'altanoda u. 13-
15., 1053 Budapest, Hungary}
\email{simonk@math.bme.hu}
\author{Vilma Orgov\'anyi$^{1,3}$}
\address{$^3$ MTA-BME Stochastics Research Group, Műegyetem rkp.
3., H-1111 Budapest, Hungary} 
\email{orgovanyi.vilma@gmail.com}
\date{\today}
\begin{document}
\maketitle
\begin{abstract}
	Using a similar random process to the one which yields the fractal percolation sets, starting from the deterministic Menger sponge we get the random Menger sponge. We examine its orthogonal projections from the point of Hausdorff dimension, Lebesgue measure and existence of interior points. 
	
	We obtain these results as special cases of our theorems stated for 
random self-similar IFSs.
These 
 are obatained by a random process similar to the fractal percolation, applied for the cylinder sets of  a deterministic self-similar IFS, as in \cite{falconer2014exact}.
In this paper the associated deterministic IFS on the line is of the special form 
$\mathcal{S}=\left\{\frac{1}{L}x+t_i  \right\} _{i=1}^{m}$,  where $L\in\mathbb{N}$, $L\geq 2$ and $t_i\in\mathbb{Q}$.
\end{abstract}
\tableofcontents
\thispagestyle{empty}

\section{Introduction}\label{y19}

\subsection{Brief summary}\label{u68}
The Mandelbrot percolation Cantor set is a two-parameter family of random sets on $\mathbb{R}^d$.
Namely, fix the parameters $K\geq 2$ and $p\in(0,1)$. In the first step of the construction
we partition the unit cube $[0,1]^d$ into 
axes-parallel cubes of side length $1/K$.  Each of these cubes are retained   with probability $p$ and discarded with  
probability $1-p$ independently. This step is repeated independently in each of the retained cubes 
ad infinitum or until no retained cubes left. The random set we end up with is the Mandelbrot percolation set. 

Falconer and Jin introduced a generalization of the  Mandelbrot percolation Cantor sets \cite{falconer2014exact}. 
In this paper we consider a special case of 
Falconer and Jin's construction. 

Namely, we apply the random process, which results  in the 
Mandelbrot percolation set, on the subsequent level cylinders in the construction of a self-similar set. 

More precisely, to understand the nature of the 
 above-mentioned random process, we consider a (deterministic) $M$-array tree $\mathcal{T}$.    That is every node of $\mathcal{T}$ has exactly $M$
children. (In the construction of the Mandelbrot percolation set $M=K^d$.)
We assign a random label (from $\{0,1\}$) to each of these nodes. The label of the root $\emptyset $ is equal to $1$ and  the random label of all other nodes are independent $\text{Bernoulli}(p)$
random variables. A level $n$ node is retained if all of its ancestors are labelled with $1$. 
In the mandelbrot percolation example, 
every retained level-$n$ node naturally corresponds to a retained level $n$ cube.  
An infinite path starting from the root is retained if all the nodes of the path are labelled with $1$. It may happen that no infinite paths are retained. This event is called extinction. The set of retained level $n$ nodes is denoted by $\mathcal{E}_n$ for an $n\in \mathbb{N}\cup \{\infty  \}$.
In the case of the Mandelbrot percolation, every element of $\mathcal{E}_{\infty  }$
naturally correspond to a point of the Mandelbrot percolation Cantor set. 

A generalization of the Mandelbrot percolation sets 
can be obtained if we consider a self-similar IFS (Iterated Function System)
$\mathcal{F}:=
\left\{ f_i \right\} _{i=1}^{M}$ on $\mathbb{R}^d$
and, we retain the elements of the attractor of $\mathcal{F}$ having a symbolic representation from 
$\mathcal{E}_{\infty  }$. We call these random sets 
\texttt{coin tossing self-similar sets} since we decide if a cylinder set is retained or not as a result of subsequent coin-tossing. For more detailed notations see Definition \ref{y36}.

If the self-similar IFS  is the one whose attractor is the so-called Menger sponge then this random construction yields the random Menger sponge.  More precisely, first we define 
the (deterministic) Menger sponge, which is the natural three-dimensional generalization of the well-known Sierpi\'nski-carpet. 
\begin{figure}[ht]
    \centering
    \includegraphics[width=\textwidth]{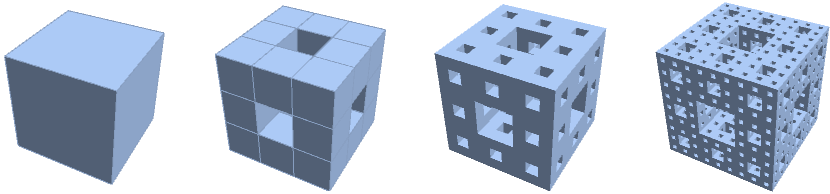}
    \caption{The zeroth, first, second and third level approximation of the Menger sponge.}
    \label{y31}
\end{figure}
We partition the unit cube $[0,1]^3$ into 27 cubes of side length $1/3$. For each of the $6$ 
faces of $[0,1]^3$ there is a central cube.  We remove these $6$ cubes as well as  the cube which contains the center of $[0,1]^3$. We retain the remaining 20 cubes. Then we repeat this process in each of the retained cubes ad infinitum. The set which remains after infinitely many steps is the Menger sponge $\mathcal{M}$.  See Figure \ref{y31} for the first four approximations of $\mathcal{M}$. 

To construct a random Menger sponge we need a parameter $p\in(0,1)$. Instead of retaining all of the $20$ cubes we retain each with probability $p$ 
and discard it with probability $1-p$ independently. Then we repeat this random construction in each retained cube ad infinitum or until there is no retained cubes left since it is possible that in finitely many steps we have no cubes retained. 
It follows from  the theory of Branching Processes that 
whenever $p>\frac{1}{20}$ we end up with a non-empty Cantor set 
with positive probability. This random set is called random Menger sponge corresponding to probability $p$ and it is denoted by $\mathcal{M}_p$. For the precise definition see Example \ref{y29}.
We study the structure of projections of $\mathcal{M}_p$ to lines trough the origin of the form
$\text{proj}_{(a,b,c) }(x,y,z):=ax+by+cz$. We say that $\text{proj}_{(a,b,c) }$ is rational if
$a,b,c\in \mathbb{Q}$. It follows from Mattila's Theorem \cite{matt_75}
that for Lebesgue almost all $(a,b,c)$ the projection of $\mathcal{M}_p$
\begin{enumerate}
    \item has Hausdorff dimension $\min\{1, \text{dim}_H(\mathcal{M}_p)\}$,
    \item if $\text{dim}_H(\mathcal{M}_p)>1$ then $\mathcal{L}eb_1(\text{proj}_{(a,b,c)}\mathcal{M}_p)>0$.
\end{enumerate}
Here we prove results of similar nature but ones which are much more precise in this special case.

\subsection{Pictural explanation of our results about $\mathcal{M}_p$}\label{u60}

Our results regarding the random Menger sponge  
(Theorems \ref{y52}-\ref{y54}) are summarized on Figure \ref{y10}.
Many of these are special cases of 
theorems that we prove in this paper for 
more general coin tossing self-similar IFSs.
A number of results presented on Figure \ref{y10},
are about the $(1,1,1)$-projection of $\mathcal{M}_p$.
We remark that corresponding results can be proved for rational projections satisfying some additional conditions. These conditions can be easily checked for the $(1,1,1)$-projection and in this case, we can determine the parameter intervals where these additional conditions hold.  

\subsubsection{Explanation of Figure \ref{y10}}
Under the baseline there is information about the $\text{proj}_{(1,1,1)}$-projected set. Information about the random Menger sponge itself are visualized  by colored rectangles arrond the $p$-axis, and lastly the information about the $\text{proj}_{(1,0,0)}$-projection (i.e. the projection to the coordinate axis) are placed above the baseline. 

\begin{figure}[h!]
    \centering
    \includegraphics[width=\textwidth]{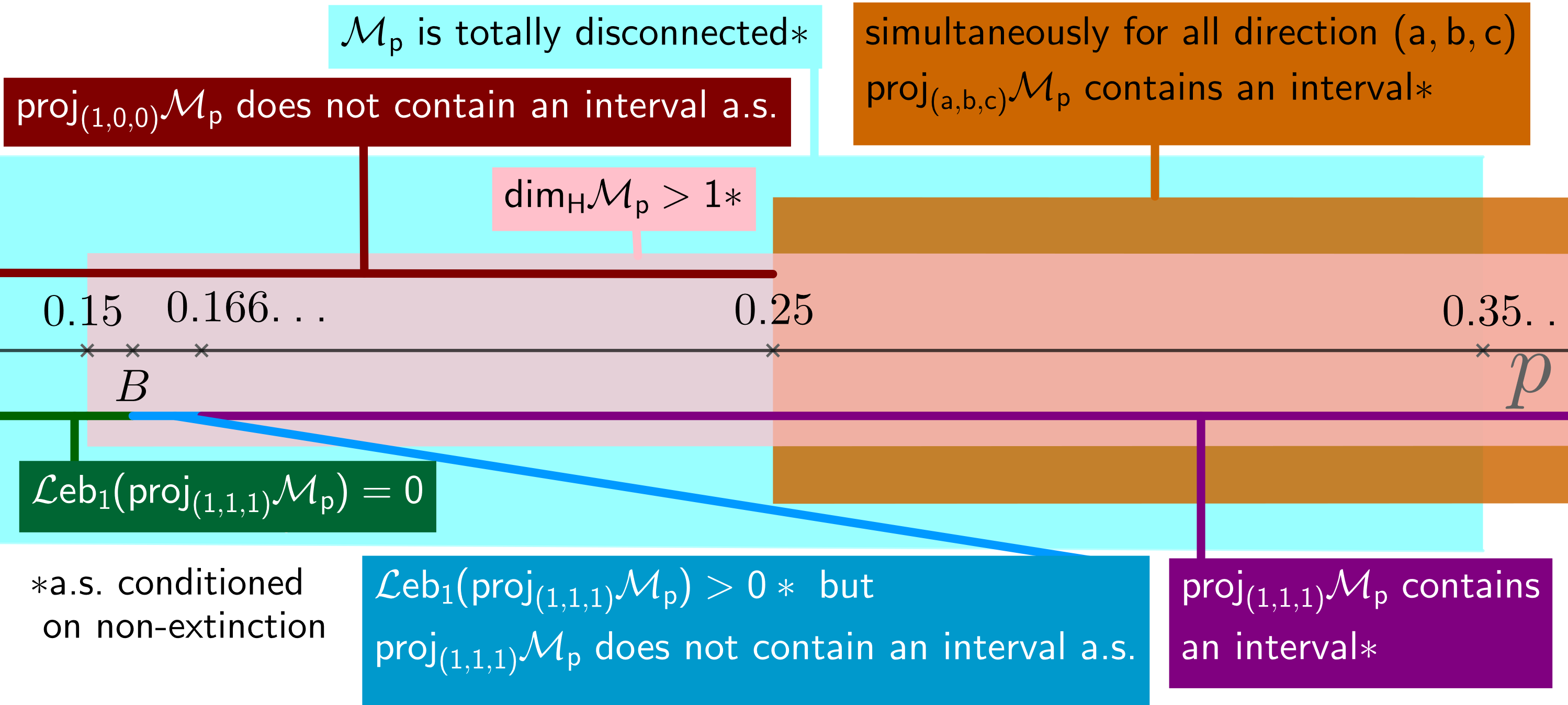}
    \caption{The parameter intervals for $p$ in the case of the random Menger sponge.}
    \label{y10}
\end{figure}
In what follows we present some further explanation to this figure. First consider the $\text{proj}_{(1,1,1)}$-projection of $\mathcal{M}_p$.
It follows from Theorem \ref{y54} that there exists a  $B \in (0.15,0.1514\dots)$ such that:

\begin{enumerate}[label=\emph{\alph*})]

     \item  For $p \in$ \color{OliveGreen}$\left(0.15, B\right)$
     \color{black} the  Lebesgue-measure of
 $\text{proj}_{(1,1,1)}(\mathcal{M}_{p})$ is
     zero almost surely, by Theorem \ref{y56}, despite the fact that $\mathcal{M}_p$ has Hausdorff 
     dimension greater than 1 almost surely conditioned on non-extinction by Fact \ref{y58}.

     \item For $p \in$\color{Aquamarine}$\left(B, 0.1666\dots  \right)$\color{black}, \  
      conditioned on non-extinction,
     $\text{proj}_{(1,1,1)}(\mathcal{M}_p)$
     is a set of positive Lebesgue measure
     (Theorem \ref{y54})
     which contains no interior points (Theorem \ref{y55}) almost surely.
     \item For $p\in $  \color{Fuchsia}$ \left(0.166\dots  ,1 \right]$\color{black},\ 
     conditioned on non-extinction,
     $\text{proj}_{(1,1,1)}(\mathcal{M}_{p})$ contains an interval almost surely by Theorem \ref{y55}.
 \end{enumerate}

 About the other projections of the random Menger sponge $\mathcal{M}_{p}$  we can say the following:
 \begin{enumerate}[label=\emph{\alph*}),resume]
     \item Let  $p\in $ \color{Orange}$\left(0.25,1\right]$\color{black}. 
     Then simultaneously for all directions $(a,b,c)$ the projection contains an interval almost surely conditioned on non-extinction by Theorem \ref{y53}. 
 \end{enumerate}
This is interesting when $p<0.35\dots$ since in this case the Menger sponge itself is totally disconnected
by Theorem \ref{y52}.
 Also  the bound $0.25$ is sharp because for $p<0.25$ the $\text{proj}_{(1,0,0)}$-projection does not contain an interval almost surely, by Theorem \ref{y22}.
As we already mentioned the results above are special cases of our more general theorems, which are presented in Section \ref{u70}. To state them we need to introduce our basic notations.
\subsection{Notations}\label{a99}
 Before we give the precise definition of the coin tossing self-similar sets, first we define the deterministic self-similar sets in $\R^d$.
Given a self-similar  IFS $\mathcal{F}$ on $\mathbb{R}^d$ 
\begin{equation}
\label{y34}
\mathcal{F}:=\left\{ 
    f_i(x):=r_iQ_ix+t_i
 \right\}_{i=0}^{M-1}
 , \, f_i:\mathbb{R}^d\to\mathbb{R}^d,\ 
 r_i\in(0,1),\, Q_i \in O(d),
\, t_i\in\mathbb{R}^d.
\end{equation}
We use the 
short hand notations
\begin{equation}
\label{y33}
f_{i_1,\dots,i_n}:=f_{i_1}\circ \cdots\circ f_{i_n},\; r_{i_1,\dots,i_n}:=r_{i_1} \dots r_{i_n},\;
[M]:=\left\{ 0,\dots  ,M-1 \right\}.
\end{equation}
It is easy to see that we can choose 
\begin{equation}
\label{y35}
B\subset \R^d\text{, compact such that} \quad f_i(B) \subset B \quad \text{for all } i \in [M].
\end{equation}

Then the union of all $n$-cylinders $\bigcup \limits_{i_1,\dots  ,i_n}f_{i_1,\dots,i_n}(B)$ form a nested sequence of compact sets. Their intersection is the attractor
$$
\Lambda :=\bigcap\limits_{n=1}^{\infty   }
\bigcup \limits_{i_1,\dots  ,i_n}f_{i_1,\dots,i_n}(B).
$$

The definition of $\Lambda$ does not depend on the choice of $B$ as long as $B$ satisfies \eqref{y35}.

\begin{definition}[Coin tossing self-similar sets]\label{y36}
Let $\mathcal{F}:=\left\{f_i\right\}_{i=0}^{M-1}$ be a (deterministic) self-similar IFS on $\mathbb{R}^d$ as it was defined in \eqref{y34} and let $p\in (0,1)$. The corresponding coin tossing self-similar set $\Lambda_{\mathcal{F}}(p)$
is defined as follows: In the first step for every $k\in[M]$ we toss (independently) a bias coin which lands on head with probability $p$. The random subset $X_1 \subset [M]$ consists of those $k \in [M]$ for which the  coin tossing resulted in head. Assume that we have already constructed $X_n \subset [M]^n$. Then for every node $\mathbf{i}\in X_n$ we define (independently of everything) the random set $X_1^{\mathbf{i}} \subset [M]$ which has the same distribution as $X_1$.
 The set of the
 offsprings of $\mathbf{i}$ is defined by
 $O(\mathbf{i})=\left\{\mathbf{i}k\in[M]^{n+1}:k\in X_1^{\mathbf{i}} \right\}$, where  $\mathbf{i}k=i_1, \dots, i_n,k$ if $\mathbf{i}=i_1, \dots ,i_n$.
Finally, we form $X_{n+1} = \bigcup_{\mathbf{i}\in X_n}O(\mathbf{i}) \subset [M]^{n+1}$. Then the coin tossing self-similar set is defined by
 \begin{equation}\label{y32}
 \Lambda_{\mathcal{F}}(p):=\bigcap\limits_{n=1}^{\infty }\bigcup\limits_{\mathbf{i}\in X_n}f_{\mathbf{i}}(B),
 \end{equation}   
where $B$ is chosen as in \eqref{y35}. 
\end{definition}
We do not assume that the Open Set Condition (OSC) (see \cite{falconer2004fractal}  )
holds for $\mathcal{F}$. However, we mention the following theorem.
\begin{theorem}[Falconer \cite{falconer1986random}, Mauldin-Williams \cite{mauldin1986random}]\label{u62}
Let $\mathcal{F}$ be a deterministic self-similar IFS, as in Definition \ref{y36}, which satisfies the OSC.
Then alost surely, conditioned on non-extinction,
\begin{equation}
\label{u65}
\dim_{\rm H}  \Lambda_{\mathcal{F}}(p)
=\dim_{\rm B}  \Lambda_{\mathcal{F}}(p)
=s, \text{ where }
\sum_{i=0}^{M-1 }pr _{i}^{s }=1,
\end{equation}
where $r_i$ is the contraction ratio of the similarity mapping $f_i$.
\end{theorem}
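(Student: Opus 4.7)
The plan is to split the statement into an upper bound $\dim_{\rm B} \Lambda_{\mathcal{F}}(p) \leq s$ coming from the natural covers in the construction and a lower bound $\dim_{\rm H} \Lambda_{\mathcal{F}}(p) \geq s$ coming from a random Frostman-type measure on $\Lambda_{\mathcal{F}}(p)$. Combined with the trivial $\dim_{\rm H} \leq \dim_{\rm B}$, this yields equality with the value $s$ defined by $\sum p r_i^s = 1$, which is strictly positive exactly when the underlying branching process is supercritical, i.e.\ when non-extinction has positive probability.

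For the upper bound I would use the natural level-$n$ cover $\{f_{\mathbf{i}}(B) : \mathbf{i} \in X_n\}$ of $\Lambda_{\mathcal{F}}(p)$, whose elements have diameters $r_{\mathbf{i}}\operatorname{diam}(B)$. Introduce the critical additive martingale
\[
Z_n := \sum_{\mathbf{i} \in X_n} r_{\mathbf{i}}^s.
\]
Conditioning on $X_n$ and using the independence of the retention coins at level $n{+}1$ gives $\mathbb{E}[Z_{n+1}\mid X_n] = Z_n \sum_{i=0}^{M-1} p r_i^s = Z_n$, so $(Z_n)$ is a nonnegative martingale converging a.s.\ to a finite $Z_\infty$. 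Consequently $\mathcal{H}^s(\Lambda_{\mathcal{F}}(p)) \leq \operatorname{diam}(B)^s Z_\infty < \infty$ a.s., whence $\dim_{\rm H} \Lambda_{\mathcal{F}}(p) \leq s$. A refinement using stopping lines of the form $\{\mathbf{i} : r_{\mathbf{i}} \leq \delta < r_{\mathbf{i}|\cdot-1}\}$ together with standard box-counting promotes this to $\dim_{\rm B}\Lambda_{\mathcal{F}}(p) \leq s$.

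For the lower bound I would transport the martingale to a random measure on the attractor. For each surviving $\mathbf{i}$, let $Z_\infty^{\mathbf{i}}$ denote the martingale limit of the subtree rooted at $\mathbf{i}$; these are i.i.d.\ copies of $Z_\infty$. Define a random Borel measure $\mu$ on $\Lambda_{\mathcal{F}}(p)$ consistently by
\[
\mu\bigl(f_{\mathbf{i}}(B) \cap \Lambda_{\mathcal{F}}(p)\bigr) := r_{\mathbf{i}}^s Z_\infty^{\mathbf{i}}.
\]
The non-triviality $\mathbb{P}(Z_\infty > 0) > 0$, in fact equal to the survival probability, follows from the Kahane--Peyri\`ere / Biggins $L\log L$ criterion applied to the branching random walk with displacements $-\log r_i^s$. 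For any $t \in (0,s)$ I would bound the expected $t$-energy $\mathbb{E}\iint |x-y|^{-t}\,d\mu(x)\,d\mu(y)$ by decomposing according to the deepest common ancestor of the symbolic addresses of $x$ and $y$. The OSC supplies a uniform lower bound on the distance between distinct $n$-cylinders in terms of their diameters, and using the defining relation $\sum p r_i^s = 1$ this decomposition collapses to a geometric series that converges because $s-t > 0$. Frostman's lemma then gives $\dim_{\rm H}\Lambda_{\mathcal{F}}(p) \geq t$ a.s.\ on $\{Z_\infty > 0\}$, and $t \uparrow s$ concludes.

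The main obstacle is the lower bound, and within it two entangled difficulties: proving that $Z_\infty$ is non-degenerate on non-extinction (a branching-random-walk fact which in the boundary regime requires the full $L \log L$ condition rather than the easy $L^2$ bound) and controlling the divergence of the Riesz kernel at short scales, which is only possible because the OSC forces distinct cylinders at each level to sit at a definite distance relative to their size. Both steps are the essential content of the Falconer and Mauldin--Williams arguments and cannot be shortcut by the deterministic self-similar theory alone.
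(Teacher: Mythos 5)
Your outline follows the standard Falconer / Mauldin--Williams strategy (the paper itself only cites those references and does not reproduce a proof): the additive martingale $Z_n=\sum_{\mathbf{i}\in X_n}r_{\mathbf{i}}^{s}$ for the box-dimension upper bound, and a random Frostman measure built from the subtree martingale limits together with a $t$-energy estimate for the Hausdorff lower bound. The upper-bound half and the non-degeneracy of $Z_\infty$ are sound; in fact, since the offspring random variable $W=\sum_i \mathbf{1}_{\{i\text{ retained}\}}r_i^s$ is bounded and $\sum_i p\,r_i^{2s}<\sum_i p\,r_i^{s}=1$, the elementary $L^2$ estimate already gives $\mathbb{E}[Z_\infty]=1$, so your appeal to the $L\log L$ criterion is valid but unnecessary, and the remark about a ``boundary regime'' does not apply here.

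The genuine gap is in the lower bound. You assert, and rely on twice, that ``the OSC supplies a uniform lower bound on the distance between distinct $n$-cylinders in terms of their diameters.'' That is false: the Open Set Condition allows cylinders of the same generation to touch and share boundary points --- the Sierpi\'nski carpet and the Menger sponge, the very examples of this paper, have adjacent first-level pieces at distance zero. Only the Strong Separation Condition gives the separation you describe, and OSC also does not guarantee unique symbolic addresses, so the ``deepest common ancestor'' decomposition is not even well defined in general. What the OSC actually furnishes, and what the original proofs use, is a bounded-overlap lemma: there is a constant $C$ such that for every $x$ and $r>0$, at most $C$ of the cylinders $f_{\mathbf{i}}(\overline{V})$ with $r_{\min}r<r_{\mathbf{i}}\le r$ can meet $B(x,r)$. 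Combined with the martingale structure this yields $\mathbb{E}\bigl[\mu(B(x,r))\bigr]\lesssim r^{s}$ uniformly in $x$ and $r$, and the finiteness of $\mathbb{E}\iint|x-y|^{-t}\,d\mu(x)\,d\mu(y)$ for $t<s$ then follows by a dyadic decomposition of the distance $|x-y|$ rather than by common ancestor. Your argument needs to be rerouted through this lemma; as stated, the short-scale control of the Riesz kernel fails precisely for pairs $(x,y)$ in abutting cylinders.
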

Motivated by this formula we introduce the similarity dimension of a coin tossing self-similar set $\Lambda_{\mathcal{F}}(p)$:
\begin{equation}
\label{u37}
\dim_{\rm Sim}  \Lambda_{\mathcal{F}}(p):=s, \text{ where }
\sum_{i=0}^{M-1 }pr _{i}^{s }=1.
\end{equation}

In this paper we only consider homogeneous coin tossing self-similar IFSs, which means that all contraction ratios $r_i$ are equal to the same $r\in (0,1)$. In this homogeneous case, formula \eqref{u65} simplifies to
\begin{equation}
\label{u61}
\left( \text{ OSC } \& \ 
r_i\equiv r,\  \forall i
\right) \Longrightarrow
s=\dim_{\rm H}  \Lambda_{\mathcal{F}}(p)
=\dim_{\rm B}  \Lambda_{\mathcal{F}}(p)=
\frac{\log (Mp)}{-\log r},
\end{equation}
almost surely, conditioned on non-extinction.

A more detailed definition of a coin tossing self-similar set, which describes the ambient probability space can be found in Section \ref{u69}.
\begin{example}[(Homogeneous) Mandelbrot percolation]\label{y30}
The homogeneous Mandelbrot percolation on $\R^d$ with parameters $(K,p)$ can be obtained as a special case of the construction defined above by choosing $M=K^d$ and
$$\mathcal{F}:=\left\{f_i(x)=
\frac{1}{K}x+t_i\right\}_{i=0}^{M-1},$$ where $\left\{t_i\right\}_{i=0}^{M-1}$ is an enumeration of the left bottom corners of the $K$-mesh cubes contained in $[0,1]^d$.  
\end{example}
\begin{example}[Random Menger sponge]\label{y29}
The (deterministic) Menger sponge is the attractor (see Figure \ref{y31}) of the following self-similar IFS in $\R^3$:
\begin{equation}\label{u89}
\mathcal{F}:
=\left\{f_{i}(\underline{x})=\frac{1}{3}(\underline{x})+t_i\right\}_{i=0}^{19},
\end{equation}
where $\left\{t_i\right\}_{i=0}^{19}$ is an enumeration of the following set
\begin{multline}\label{y59}
\left\{0,\frac{1}{3},\frac{2}{3}\right\}^3 \setminus \left\{\left(\frac{1}{3},\frac{1}{3},0\right), \left(\frac{1}{3},0,\frac{1}{3}\right), 
\left(0,\frac{1}{3},\frac{1}{3}\right), \left(\frac{1}{3},\frac{1}{3},\frac{2}{3}\right),\right .\\
\left .  \left(\frac{1}{3},\frac{2}{3},\frac{1}{3}\right),\left(\frac{2}{3},\frac{1}{3},\frac{1}{3}\right), \left(\frac{1}{3},\frac{1}{3},\frac{1}{3}\right)\right\}.
\end{multline}
We obtain the random Menger sponge by applying the random construction introduced in Definition \ref{y36} for the deterministic IFS above. As we have already mentioned we denote the random Menger sponge with parameter $p$ with $\mathcal{M}_p$.

\end{example}
If we project the random Menger sponge to straight lines, then the resulting random set is a coin tossing self-similar set on the line.
\begin{example}[Coin tossing integer self-similar sets on the line]\label{y28} 
We obtain the coin tossing integer self-similar sets on the line by applying the random construction introduced in Definition \ref{y36} for the following deterministic IFS: 
\begin{equation}
\mathcal{F}:=\left\{ 
    f_i(x):=\frac{1}{L}x+t_i
 \right\}_{i=0}^{M-1}
 , \, f_i:\mathbb{R}\to\mathbb{R},\ 
 L\in\N \setminus \left\{0,1\right\},
\, t_i\in \mathcal{N},
\end{equation}
where $\mathcal{N}\subset \R$ is a lattice.

\end{example}
\begin{remark}\label{u88}
Without loss of generality
(see \cite[Section 1.3.3]{SBS_book})
we may assume, that 
\begin{equation}\label{y27}
    \mathcal{N}:=\N,\quad 
    0=t_0\leq t_1\leq t_2\leq\dots \leq t_{M-1},\quad
    \text{ and } \quad t_{M-1}|L-1.
\end{equation}
 
\end{remark}
\begin{remark}\label{y15}
In the deterministic case usually we require that the elements of the IFS $\mathcal{F}$ are different. However, in the random case we allow repetition among the functions of $\mathcal{F}$. This is reasonable since even if $f_i=f_j$ we randomize them differently. For example, if $\mathcal{F}:= \left\{ \frac{1}{3}x,\frac{1}{3}x,\,\frac{1}{3}x+\frac{2}{3}\right\}$ and $\mathcal{S}:=\left\{ \frac{1}{3}x,\,\frac{1}{3}x+\frac{2}{3}\right\}$, then the coin tossing self-similar sets $\Lambda_{\mathcal{F}}(p)$, $\Lambda_{{\mathcal{S}}}(p)$ are different.  
\end{remark}

\begin{definition}\label{y14}
Given the deterministic IFS $\mathcal{F}:=\left\{f_i\right\}_{i=0}^{M-1}$ we select all distinct elements of $\mathcal{F}$ and form a new IFS from these functions, we denote it by $\mathcal{S}:=\left\{S_{i}\right\}_{i=0}^{m-1} \subset \mathcal{F}$. That is for every $i \in [M]$ there exist a unique $j \in [m]$ such that $f_i=S_j$. For every $j \in [m]$ let 
\begin{equation}\label{u80}
    n_j:= \#\left\{f_{i}\in \mathcal{F}: f_{i}=S_{j}\right\}
\end{equation}
 and we define the probability vector
 \begin{equation}
      \underline{q}:=(q_1, \dots, q_m)\text{, where } q_j:=\frac{n_{j}}{M}.
 \end{equation}
\end{definition}

For a $k\geq 2$ we write 
$\Sigma ^{(k)}:=[k]^{\mathbb{N}}$, where we remind the reader that we defined 
$[k]:=\left\{ 0,1,\dots  ,k-1 \right\}$.

We introduce the natural (probability) measure 
$\mu :=\underline{q}^{\N}$
on $\Sigma ^{(m)}$.
 We define the natural projection $\Pi^{(m)}:\Sigma^{(m)}\to \R$
\begin{equation}\label{y13}
\Pi^{(m)}({\tt i}):= \lim_{n\to \infty}S_{i_1, \dots, i_n}(0),
\end{equation}
for ${\tt i}=i_1,\dots,i_n\dots \in \Sigma^{(m)}$. The push forward of the natural measure $\mu$ is denoted by $\nu$,
\begin{equation}\label{y12}
    \nu:=\Pi^{(m)}_{*}\mu.
\end{equation}

Following Ruiz \cite[Section 3.1.1]{ruiz2009dimension} we define the L-adic intervals:
\begin{equation}\label{u87}
    \mathcal{D}_{k}:=\left\{\left[(i-1)\cdot L^{-k}, i\cdot L^{-k}\right]: i \in \Z\right\} , \quad \text{for }k\in \left\{-1, 0,\dots\right\}.
\end{equation}
Since $\nu$ is compactly supported there exist finitely many intervals, called basic types, 
\begin{multline}\label{u67}
    J^{0}, \dots, J^{N-1} \in \mathcal{D}_{-1}\text{ such that } \text{spt}(\nu)\subset \cup_{i \in [ N]  }J^{i}\text{ and }\\
    \nu(J^{i})>0\text{ for every }i \in [N],
\end{multline}
here we assume that the intervals $J^{0}, \dots, J^{N-1}$ are arranged in an increasing order.
It is clear from definition that the interval spanned by the attractor is 
$I:=\left[ 0,L\frac{ t_{m-1}}{L-1} \right]$.
It follows from our assumption  $L-1|t_{m-1}$ that the right endpoint of $J^{N-1}$ coincides with  the right endpoint of $I$ .
 For every $k\in [N] $ the interval $J^k$
   subdivides into $L^n$ intervals from $\mathcal{D}_{n-1}$ (of length $L^{-(n-1)}$) which are denoted by
   $J _{\underline{ a}}^{k }=J^{k}_{a_1,\dots,  a_{n}}$ for  $\underline{a}=(a_1,\dots  ,a_{n})\in [L]^{n}$.
We define the $N\times N$ matrices $\left\{A_a\right\}_{a=0}^{L-1}$:
\begin{equation}
\label{y89}
A_a(\ell, k)
:=
\left\{
\begin{array}{ll}
n_i
,&
\hbox{if $\exists i \quad S_i(J^k)=J^{\ell}_a$;}
\\
0
,&
\hbox{otherwise.}
\end{array}
\right.
\end{equation}
%
Note that we index the rows and columns of the matrices $A_{a}$ from $0$ to $N-1$.
For $\underline{a}=\left(a_1, \dots, a_n\right) \in [ L] ^n$,
\begin{equation}\label{u86}
 A_{\underline{a}}:=A_{a_1}\cdots A_{a_n}.
\end{equation}
The $j$-th column sum ($CS$) of the $a$-th matrix is denoted by  
\begin{equation}\label{y23}
  CS_{a,j}:=\sum_{i=0}^{N-1} A_{a}(i,j).
\end{equation}
For the rest of this section let $\Lambda_{\mathcal{F}}(p)$ denote the coin tossing integer self-similar set on the line, defined in Example \ref{y28} with parameters $\mathcal{F}$ and $p$.
\subsection{Results}\label{u70}
In the rest of this Section we deal with coin tossing integer self-similar IFSs $\mathcal{F}$ on the line introduced in Example \ref{y28} including Remark \ref{u88}. That is for the rest of this Section we always assume that
\begin{multline}
\label{u66}
\mathcal{F}:=\left\{ 
    f_i(x):=\frac{1}{L}x+t_i
 \right\}_{i=0}^{M-1}
 ,
\\
L\in\N \setminus \left\{0,1\right\},
\, t_i\in \mathbb{N}, 0=t_0\leq \cdots\leq t_{M-1},\ 
L-1|t_{M-1}.
\end{multline}
Moreover, throughout this Section we use the notation introduced in Section \ref{a99}. The new results of this paper are as follows:

\begin{theorem}\label{y26}
  The coin tossing integer self-similar set $\Lambda_{\mathcal{F}}(p)$ contains an interval, almost surely conditioned on non-extinction, if the following two conditions hold
\begin{enumerate}
    \item $p\cdot CS_{a,U}>1$ for all $a \in [L]$ and $U \in [N]$. That is $p$ is larger than the reciprocal of every column sum of every matrix. 
    \item There exists a $\underline{b}\in  [L]^*$ and $U \in [N]$ such that $A_{\underline{b}}(U,V)>0$ for all $V \in [N]$. That is there exist a product ($A_{\underline{b}}$, $\underline{b} \in [L]^*$) of the matrices with a strictly positive row.
\end{enumerate}

\end{theorem}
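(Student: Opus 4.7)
The plan is to first establish that, with positive probability, some basic type $J^U$ is contained in $\Lambda_{\mathcal{F}}(p)$, and then use the self-similar tree structure of the random construction to upgrade this into almost-sure containment of an interval conditional on non-extinction.

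For the positive-probability step, I would model the problem by a multi-type Galton--Watson branching process on the type space $[N]$. A type-$U$ individual is a blueprint for a copy of the basic type $J^U$ embedded in the random construction, and its offspring at a position $a \in [L]$ are the retained maps $f_j \in \mathcal{F}$ that send $J^U$ to position $a$ of some basic type. The number of type-$\ell$ children at position $a$ is distributed as $\mathrm{Binomial}(A_a(\ell,U), p)$, independent across $\ell$ (since distinct nonzero entries of $A_a$ come from distinct elements of $\mathcal{F}$); summing, the expected total number of children of a type-$U$ parent at position $a$ equals $p \cdot CS_{a,U}$, which is exactly the quantity appearing in hypothesis~(1). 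Moreover, since distinct positions of a single parent arise from distinct $f_j$'s, offspring counts at distinct positions are also independent. Hypothesis~(1) thus says the process is supercritical \emph{at every position}, while hypothesis~(2)---that some product $A_{\underline b}$ has an all-positive row indexed by $U$---is the primitivity input ensuring that, starting from this type $U$, every type is reachable within $|\underline b|$ generations.

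Using the random self-similar decomposition $\Lambda_{\mathcal{F}}(p) = \bigcup_{j \in X_1} f_j(\Lambda_{\mathcal{F}}(p)^{(j)})$ with the $\Lambda_{\mathcal{F}}(p)^{(j)}$ being i.i.d.\ copies of $\Lambda_{\mathcal{F}}(p)$ independent of $X_1$, I would translate the survival question for this branching process into a nonlinear fixed-point problem for the vector $\mathbf p = (p_U)_{U \in [N]}$ with $p_U := \Pr(J^U \subset \Lambda_{\mathcal{F}}(p))$, and lower-bound $\mathbf p$ by the greatest fixed point of an appropriate monotone operator $G:[0,1]^N\to[0,1]^N$ obtained by iterating downward from $\mathbf 1$. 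The crux is to prove that this greatest fixed point $\mathbf q^*$ is strictly positive. This is the \emph{main technical obstacle}: the natural operator $G$ has a degenerate Jacobian at $\mathbf 0$ (each of the $L$ position-factors is of order $|\mathbf q|$, so the operator is of total degree $L$ at the origin), which rules out a direct linear spectral-radius criterion. Instead the positivity of $\mathbf q^*$ must be read off position-by-position using~(1), combined with~(2) to propagate the positivity across all basic types---concretely, by extracting a suitable $|\underline b|$-step mean matrix from an iterate of the branching process, invoking Perron--Frobenius to obtain a strictly positive left eigenvector, and using this eigenvector as a test direction to show that the iterates of $G$ stay uniformly bounded below and converge to a strictly positive fixed point.

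Once positivity $\Pr(J^U \subset \Lambda_{\mathcal{F}}(p)) > 0$ is in hand for some $U$, the upgrading step is standard. Hypothesis~(1) implies $Mp > 1$, so the retained Galton--Watson tree of cylinders is supercritical; conditional on non-extinction it contains infinitely many retained nodes at every sufficiently high generation, each rooted at an independent copy of $\Lambda_{\mathcal{F}}(p)$. Each such sub-copy contains $J^U$ with probability at least $q^*_U > 0$, and these events are mutually independent across disjoint sub-copies. Borel--Cantelli therefore ensures that almost surely some sub-copy contains $J^U$; mapping this $J^U$ up by the corresponding retained cylinder $f_{\mathbf i}$ yields the interval $f_{\mathbf i}(J^U) \subset \Lambda_{\mathcal{F}}(p)$, completing the proof.
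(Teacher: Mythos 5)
Your plan has the right shape at the boundary (positive probability first, then a standard zero--one upgrade using statistical self-similarity), and the upgrade step is essentially the paper's Lemma~\ref{u95}.  The gap is in the positive-probability step, and it is more serious than the ``main technical obstacle'' you flag.

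The object you propose to analyse, $q_U:=\Pr(J^U\subset\Lambda_{\mathcal{F}}(p))$ for the \emph{basic} intervals $J^U$, is the wrong object: its greatest fixed point can be $\mathbf 0$ even when the theorem's hypotheses hold.  A concrete example occurs already for the $(1,1,1)$-projection of the Menger sponge (Example~\ref{y87}).  There $J^0=[0,3]$ and $J^0_0=[0,1]$ can only be covered by the single map with $t_j=0$, which sends $J^0$ onto $J^0_0$, because the first row of $A_0$ is $(1,0,0)$.  Hence $q_0\le\Pr(J^0_0\subset\Lambda)\le p\,q_0$, forcing $q_0=0$ for every $p<1$; by symmetry $q_2=0$, and then the reduced equation for $q_1$ has only the zero solution over a range of $p$ that is well inside the regime where Theorem~\ref{y55} guarantees an interval.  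So there is nothing for your monotone iteration to converge to: the ``natural operator $G$'' really does have only the trivial fixed point, and no choice of Perron--Frobenius test direction can rescue that.  You identify the degree-$L$ degeneracy at $\mathbf 0$ as the obstacle, but a left eigenvector of a mean matrix is a linear functional, and a linear functional cannot force a map of degree $L\ge 2$ at the origin to have a nonzero fixed point; the proposal to ``use this eigenvector as a test direction'' is therefore not a proof step but a restatement of the difficulty.

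The paper avoids this entirely by not working at level $0$.  Condition~(2) is used to produce, at some finite level $k$, an interval $J^{\widetilde U}_{\widetilde{\underline b}_k}$ that is \emph{of every type simultaneously} with positive probability (Lemma~\ref{y4}); this is the event $H^0$.  Then, instead of a scalar survival probability, the proof tracks the \emph{number} of $N$-tuples of retained indices that realise all $N$ types in each level-$n$ sub-position (the process $\mathfrak T^n$), and shows via Azuma--Hoeffding (Fact~\ref{y0} and Lemma~\ref{y1}) that, on the event where all counts exceed $\rho^n$ with $1<\rho<\gamma$, the conditional probability of falling below $\rho^{n+1}$ at any of the $L^{n+1}$ positions at the next level is at most $L^{n+1}N\delta^{\rho^n}$, which is summable.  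The super-exponential concentration is precisely what beats the union bound over positions, and this is the ingredient your fixed-point/eigenvector scheme has no analogue of.  If you want to keep a branching-process flavour, the quantity you must propagate is a multiplicity count (or a tail bound on it), not a scalar survival probability of a level-$0$ basic interval; conditioning on $H^0$ and the Dekking--Simon counting argument is the mechanism that makes this work.
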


\begin{theorem}\label{y21}
    The coin tossing integer self-similar set $\Lambda_{\mathcal{F}}(p)$ does not contain any intervals,
almost surely, if 
 there exists an $a\in [L]$ such that the spectral radius of the matrix $p \cdot A_a$ is smaller than 1. 
\end{theorem}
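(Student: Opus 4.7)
The overall strategy is to show that, for every fixed $L$-adic interval $J = J^{U}_{\mathbf{c}}$ with $U \in [N]$ and $\mathbf{c} \in [L]^{n_{0}}$, we have $\mathbb{P}(J \subset \Lambda_{\mathcal{F}}(p)) = 0$. A countable union over all such intervals then shows that, almost surely, $\Lambda_{\mathcal{F}}(p)$ contains no $L$-adic interval; since any non-degenerate interval contains an $L$-adic sub-interval at a sufficiently fine scale, the theorem follows.

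The central algebraic input is a combinatorial interpretation of the matrix products $A_{\mathbf{a}}$. For every $\mathbf{i} = (i_{1},\dots,i_{n}) \in [M]^{n}$ and every basic type $J^{k}$, the image $f_{\mathbf{i}}(J^{k})$ is a single $L$-adic interval $J^{\ell}_{\mathbf{a}}$, with $\ell \in [N]$ and $\mathbf{a} \in [L]^{n}$ uniquely determined by $\mathbf{i}$ and $k$. Unfolding the composition $f_{\mathbf{i}} = f_{i_{1}} \circ \cdots \circ f_{i_{n}}$ and inducting on $n$ via \eqref{y89}, one obtains the identity $A_{\mathbf{a}}(\ell,k) = \#\{\mathbf{i} \in [M]^{n} : f_{\mathbf{i}}(J^{k}) = J^{\ell}_{\mathbf{a}}\}$. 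Since the $n$ coin tosses attached to the entries of $\mathbf{i}$ are independent, each producing heads with probability $p$, the first-moment identity $\mathbb{E}[\#\{\mathbf{i}\in X_{n}: f_{\mathbf{i}}(J^{k}) = J^{\ell}_{\mathbf{a}}\}] = p^{n} A_{\mathbf{a}}(\ell,k)$ follows immediately.

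Fix now $k \geq 0$, set $n := n_{0}+k$, and apply the construction \eqref{y32} with the compact set $B = \bigcup_{k' \in [N]} J^{k'}$, which is $\mathcal{F}$-invariant because every $f_{i}$ sends a basic type to a sub-interval of another basic type by the definition of $[N]$. This choice is important because it makes $\Lambda_{n} := \bigcup_{\mathbf{i}\in X_{n}} f_{\mathbf{i}}(B) = \bigcup_{\mathbf{i}\in X_{n},\ k'\in [N]} f_{\mathbf{i}}(J^{k'})$ a union of $L$-adic intervals, all of length $L^{1-n}$. Consider the particular sub-interval $J^{U}_{\mathbf{c},a^{k}} \subset J$ obtained by appending $k$ copies of the distinguished index $a$; it has length $L^{1-n}$. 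If $J \subset \Lambda_{\mathcal{F}}(p)$, then in particular $J^{U}_{\mathbf{c},a^{k}} \subset \Lambda_{n}$, and because distinct $L$-adic intervals of the same scale are either equal or share at most an endpoint, at least one of the pieces $f_{\mathbf{i}}(J^{k'})$ in the above decomposition must coincide exactly with $J^{U}_{\mathbf{c},a^{k}}$. Letting $Y_{k}$ denote the random number of such pairs $(\mathbf{i},k')$, we therefore have $\mathbb{P}(J \subset \Lambda_{\mathcal{F}}(p)) \leq \mathbb{P}(Y_{k} \geq 1) \leq \mathbb{E}[Y_{k}]$ by Markov's inequality.

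Finally, combining the first-moment identity with $A_{\mathbf{c},a^{k}} = A_{\mathbf{c}} A_{a}^{k}$, one obtains $\mathbb{E}[Y_{k}] = p^{n} \sum_{k' \in [N]} A_{\mathbf{c},a^{k}}(U,k') = p^{n_{0}} \bigl(A_{\mathbf{c}} (pA_{a})^{k} \mathbf{1}\bigr)_{U}$, where $\mathbf{1}$ is the all-ones vector in $\mathbb{R}^{N}$. Since $\rho(pA_{a}) < 1$ by hypothesis, Gelfand's formula yields $\|(pA_{a})^{k}\| \to 0$, hence $\mathbb{E}[Y_{k}] \to 0$ as $k \to \infty$; letting $k \to \infty$ in the inequality chain above yields $\mathbb{P}(J \subset \Lambda_{\mathcal{F}}(p)) = 0$, and a countable union over $(U,\mathbf{c})$ finishes the argument. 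I expect the main obstacle to be the scale-matching step: one has to choose $B$ as the union of basic types (rather than the larger spanning interval $I$) so that the level-$n$ approximation decomposes neatly into $L$-adic pieces of length $L^{1-n}$, thereby reducing the covering event $J^{U}_{\mathbf{c},a^{k}} \subset \Lambda_n$ to the exact-coincidence event $f_{\mathbf{i}}(J^{k'}) = J^{U}_{\mathbf{c},a^{k}}$, which is precisely what enables the Markov bound.
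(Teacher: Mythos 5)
Your proof is correct and follows essentially the same strategy as the paper's: interpret $A_{\underline{a}}(\ell,k)$ as a count of level-$n$ cylinders mapping $J^k$ onto $J^\ell_{\underline{a}}$, deduce the first-moment identity $\mathbb{E}[\cdot]=p^nA_{\underline{a}}(\ell,k)$, apply Markov's inequality, and let the spectral-radius hypothesis force the bound $\|(pA_a)^k\|\to 0$. The only (cosmetic) difference is the final framing: the paper shows that the countable dense family of points $\bigcap_n J^{j}_{\underline{c}_k\overline{a}^n}$ is almost surely excluded from $\Lambda_{\mathcal{F}}(p)$, while you show directly that each $L$-adic interval $J^U_{\mathbf{c}}$ almost surely fails to be contained in $\Lambda_{\mathcal{F}}(p)$; both observations immediately yield that no interval can be contained.
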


\begin{theorem}\label{y25}
The coin tossing integer self-similar set $\Lambda_{\mathcal{F}}(p)$ has positive Lebesgue measure, almost surely conditioned on non-extinction, if the following two conditions hold:
\begin{enumerate}
    \item $p^L\cdot(\prod_{a=0}^{L-1} CS_{a,U})>1$ for all $U \in [N]$.  That is for every $U \in [N]$ column  index we consider the geometric mean of the $U$-th column sums of the matrices $\left\{A_{a}\right\}_{a=0}^{L-1}$ and we denote it by $g_{U}$. Our assumption is that $p>\max_{U \in [N]}\frac{1}{g_{U}}$.
    \item For every $b \in  [L]$ there exists an $U \in [N]$ such that $A_{b}(U,V)>0$ for all $V \in [N]$. That is, every matrix $A_{b}$ ($b \in [L]$) has a positive row.
\end{enumerate}
\end{theorem}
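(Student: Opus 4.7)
The plan is to establish positive Lebesgue measure via a Paley--Zygmund (second-moment) argument, applied separately to each basic type $J^{U}$, and then to promote the resulting positive-probability statement to ``almost sure conditioned on non-extinction'' via a zero--one law for the coin-tossing branching construction. Fix $U\in [N]$ and let $Y_n^{U}:=\mathrm{Leb}(\Lambda^{(n)}\cap J^{U})$, where $\Lambda^{(n)}:=\bigcup_{\mathbf i\in X_n}f_{\mathbf i}(B)$ is the $n$-th level approximation. Since $Y_n^{U}$ is non-negative, monotone decreasing and bounded by $|J^U|=L$, we have $Y_n^{U}\to\mathrm{Leb}(\Lambda_{\mathcal F}(p)\cap J^{U})$ almost surely. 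I will show $\liminf_n \mathbb{E}[Y_n^U] > 0$ and $\sup_n \mathbb{E}[(Y_n^U)^2] < \infty$.

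For the \emph{first moment}, partition $J^{U}$ into its $L^{nL}$ subintervals $J^{U}_{\underline a}$ with $\underline a \in [L]^{nL}$. A direct computation using the definition of $A_a$ shows that the number of level-$nL$ cylinders equal to $J^{U}_{\underline a}$ (with multiplicity) is $\sum_{\ell\in [N]}A_{\underline a}(U,\ell)$, and each such cylinder survives with probability $p^{nL}$. Writing $\mathbb{E}[Y_{nL}^{U}]=L^{1-nL}\sum_{\underline a\in [L]^{nL}}\mathbb P\bigl(J^U_{\underline a}\text{ is covered}\bigr)$ and using a Paley--Zygmund bound at the level of a single subinterval to lower-bound the covering probability by a constant times $\min\{1,\,p^{nL}\sum_\ell A_{\underline a}(U,\ell)\}$, I will group the $nL$ position indices into $n$ blocks of length $L$. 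Using condition~(2) to route, within each block, through a type $V$ admitting a positive row of some $A_b$, the blockwise contribution becomes proportional to $p^{L}\prod_{a\in [L]}CS_{a,V}$. Condition~(1) guarantees that each such factor exceeds $1$, yielding a lower bound $\mathbb{E}[Y_{nL}^U]\geq c_U>0$ uniformly in $n$.

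For the \emph{second moment}, I will employ the two-point method:
\[
\mathbb{E}\bigl[(Y_n^{U})^{2}\bigr]=\int\!\!\int_{J^{U}\times J^{U}}\mathbb{P}\bigl(x,y\in\Lambda^{(n)}\bigr)\,\mathrm d x\,\mathrm d y,
\]
and decompose the integrand by the largest level $k\leq n$ at which $x$ and $y$ lie in a common retained cylinder. By independence of disjoint subtrees in the coin-tossing construction, the joint probability factors as (prob.\ shared ancestor survives to level $k$) times two conditionally independent ``descendant'' probabilities. This produces a series $\sum_{k=0}^{n}\alpha_k\beta_k$; the matrix estimates from Step~1, combined with condition~(2) to control the geometric decay of the sibling factors, show that this series converges uniformly in $n$, giving $\sup_n\mathbb{E}[(Y_n^U)^2]\leq C_U<\infty$.

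Combining these bounds via Paley--Zygmund yields $\mathbb{P}(\mathrm{Leb}(\Lambda_{\mathcal F}(p)\cap J^{U})>0)>0$ for every $U\in [N]$. A standard zero--one argument --- using that the subtrees rooted at distinct level-$n$ nodes are i.i.d.\ copies of the construction --- then upgrades this to ``almost sure conditioned on non-extinction'', in the spirit of \cite{mauldin1986random}. The \textbf{main obstacle} is the second-moment bound in the absence of the Open Set Condition: overlapping cylinders create positive correlations in the covering events that inflate the second moment, and handling this requires the full strength of condition~(2). A further subtle point is that the geometric-mean hypothesis~(1) is strictly weaker than the pointwise condition $p\cdot CS_{a,U}>1$ used for interval-existence in Theorem~\ref{y26}; exploiting this weaker hypothesis requires the blockwise grouping of length $L$ in Step~1, so that the excess at ``good'' positions within a block compensates for the deficit at ``bad'' positions.
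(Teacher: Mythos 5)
Your route is genuinely different from the paper's, and unfortunately it has a real gap precisely at the step you flag as ``the main obstacle.''

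The paper proves Theorem~\ref{y25} via Proposition~\ref{u94}: for a set $K$ of positive $\mathcal{L}eb_1$-measure, $\mathbb{P}(x\in\Lambda_{\mathcal{F}}(p))>0$ for a.e.\ $x\in K$. This is proved by setting up a \emph{branching process in random environment}: a uniformly random $x\in K$ determines an $L$-adic digit sequence $\mathbf{a}$, the environment $\overline{\theta}$ is formed by grouping these digits into blocks of some length $\Delta$, and the process $\mathcal{Z}_n(\overline{\theta})$ counts $N$-tuples $({\tt i}^{n,0},\dots,{\tt i}^{n,N-1})$ of retained cylinders with $f_{{\tt i}^{n,V}}(J^V)=J^{\widetilde U}_{\widetilde b_k\mathbf{a}|_{n\Delta}}$. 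Survival of this process with positive probability (which yields $x\in\Lambda$) is verified via conditions \textbf{C1} and \textbf{C2} of \cite{bp_renv}. Crucially, condition \textbf{C2} is a logarithmic average over the environment, i.e.\ a \emph{geometric mean} --- which is exactly where hypothesis~(1) with $p^L\prod_a CS_{a,U}>1$ enters; and condition \textbf{C1} is a uniform positivity statement supplied by hypothesis~(2). Once Proposition~\ref{u94} is available, $\mathbb{E}[\mathcal{L}eb_1(\Lambda_{\mathcal{F}}(p))]>0$ follows by Fubini (Proposition~\ref{u92}), hence $\mathbb{P}(\mathcal{L}eb_1>0)>0$, and the zero--one Lemma~\ref{u95} upgrades this to ``a.s.\ conditioned on non-extinction.'' \emph{No second moment is ever computed.}

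Two observations about your proposal. First, your second-moment step is logically superfluous: since $Y_n^U$ decreases to $\mathcal{L}eb_1(\Lambda_{\mathcal{F}}(p)\cap J^U)$ and is bounded by $|J^U|$, dominated convergence gives $\mathbb{E}[\mathcal{L}eb_1(\Lambda_{\mathcal{F}}(p)\cap J^U)]=\lim_n\mathbb{E}[Y_n^U]$, so $\liminf_n\mathbb{E}[Y_n^U]>0$ already yields positive probability of positive measure without any Paley--Zygmund at the global level. Second --- and this is the real gap --- the first-moment lower bound $\liminf_n\mathbb{E}[Y_n^U]>0$ is \emph{precisely equivalent} to Proposition~\ref{u94}, i.e.\ it is the entire difficulty, and your treatment of it does not close. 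You write that a ``Paley--Zygmund bound at the level of a single subinterval'' lower-bounds $\mathbb{P}(J^U_{\underline a}\ \text{is covered})$ by $c\min\{1,p^{nL}\sum_\ell A_{\underline a}(U,\ell)\}$. But the retained level-$nL$ cylinders covering $J^U_{\underline a}$ are not independent: any two that share a symbolic prefix of length $j$ have retention indicators with covariance of order $p^{2nL-j}-p^{2nL}$, and these tree correlations make the second moment of the covering count grow without bound relative to the square of its mean as $n\to\infty$. The Paley--Zygmund ratio therefore does not stay bounded below uniformly in $n$, and the claimed pointwise lower bound on the covering probability fails without a genuinely new argument. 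Proving that it \emph{does} stay bounded below (for a positive-measure set of $\underline{a}$) amounts to re-deriving the BPRE survival criterion. Finally, note that the covering count involves \emph{row sums} $\sum_\ell A_{\underline a}(U,\ell)$, whereas hypothesis~(1) constrains \emph{column sums} $CS_{a,U}=\sum_i A_a(i,U)$; the paper's $N$-tuple bookkeeping (one retained cylinder for each source type $V$) is exactly what converts the column-sum hypothesis into a usable survival estimate, and your ``blockwise routing through a type $V$'' sketch does not bridge this mismatch.
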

\begin{theorem}\label{y24}
There exists a $B>\frac{L}{M}$ such that for $p\in \left(0,B \right)$ the Lebesgue measure of the
coin tossing integer self-similar set
$\Lambda_{\mathcal{F}}(p)$ is almost surely 0. 
\end{theorem}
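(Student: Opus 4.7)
The naive first-moment bound $\mathbb{E}[\mathcal{L}eb(E_n)]\le (pM/L)^n|B|$, where $E_n:=\bigcup_{\mathbf{i}\in X_n} f_{\mathbf{i}}(B)$, yields $\mathcal{L}eb(\Lambda_{\mathcal{F}}(p))=0$ almost surely only for $p<L/M$. To push the threshold strictly beyond $L/M$ the argument must exploit that the IFS $\mathcal{F}$, or one of its iterates $\mathcal{F}^k$, has genuine coincidences among its composition maps, i.e., some multiplicity $n_j\ge 2$ in the sense of Definition \ref{y14}. For $M>L$ with integer translations satisfying $L-1\mid t_{M-1}$, the existence of such coincidences at some finite level $k$ is guaranteed by a pigeonhole argument applied to the translations of $\mathcal{F}^k$.

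The plan is to derive a sharp self-similar functional equation for the pointwise coverage probability $u(x):=\mathbb{P}(x\in \Lambda_{\mathcal{F}}(p))$. Grouping the $M$ level-$1$ indices $i\in [M]$ according to which distinct map $S_j$ they represent, the number of retained copies of $S_j$ is $Y_j\sim\mathrm{Binom}(n_j,p)$; averaging over $Y_j$ gives the exact coverage probability $1-(1-p\,u(S_j^{-1}(x)))^{n_j}$ at a point $x\in S_j(B)$. Combining over distinct maps $j$ (whose factors are independent since the corresponding coin tosses are at distinct $[M]$-indices) yields
\[
u(x)\;=\;1-\prod_{j:\, x\in S_j(B)}\big(1-p\,u(S_j^{-1}(x))\big)^{n_j}.
\]
The strict Bernoulli inequality $1-(1-py)^{n_j}<n_j\,p\,y$ for $n_j\ge 2$, $y\in(0,1)$ makes this functional equation strictly dominated by its linearization, whose only nonnegative integrable solution is $u\equiv 0$ for $p<L/M$; the nonlinear correction then pushes the zero-solution regime strictly past $L/M$. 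By integrating and using the matrix formalism of Section \ref{a99}, this translates into a strict inequality on the effective spectral radius of the relevant product of the matrices $A_a$.

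The main obstacle is quantitative: the Bernoulli gap $n_j\,p\,y-(1-(1-py)^{n_j})\sim\binom{n_j}{2}\,p^2 y^2$ is only quadratic in $y$, so the pointwise improvement must be amplified by integrating over a set where $u$ has a uniform lower bound of positive Lebesgue measure. I plan to handle this by a bootstrap argument: show that any nonzero nonnegative solution $u$ of the functional equation must admit such a lower bound on a set of positive measure (using the self-similar structure together with the natural projection measure $\nu$ from \eqref{y12}), and that this lower bound feeds back through the Bernoulli inequality to strictly reduce the effective spectral radius for some $p$ slightly larger than $L/M$. Combining these ingredients produces a $B>L/M$ for which $\mathcal{L}eb(\Lambda_{\mathcal{F}}(p))=0$ almost surely for all $p\in(0,B)$.
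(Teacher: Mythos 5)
Your route is genuinely different from the paper's, and it is worth spelling out the contrast. The paper never studies the coverage probability $u(x)=\mathbb{P}\bigl(x\in\Lambda_{\mathcal{F}}(p)\bigr)$ as a fixed point of a nonlinear operator. It argues deterministically first: Lemma \ref{y9} bounds the number of level-$n$ cylinders $I_{\mathbf{j}}$ containing a fixed $x$ (with $L$-adic code $\mathbf{a}$) by $\|A_{\mathbf{a}|n}\|$, and Theorem \ref{y50} (subadditive ergodic theorem, the pressure function $P$, the Gibbs measure $\mu_1$ and the dimension drop $\dim_H\mu_1<1$ of Lemma \ref{y41}) shows that this quantity grows at an exponential rate $w$ strictly below $\log(M/L)$ for Lebesgue-typical $x$. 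The random part is then a bare first-moment bound at a fixed typical point, $\mathbb{E}(\#\mathcal{E}_n(x))\le p^n e^{(w+o(1))n}\to 0$ whenever $p<e^{-w}$, and $e^{-w}>L/M$. In other words, the entire overlap gain is isolated in the deterministic inequality $w<\log(M/L)$, which is exactly what the thermodynamic machinery delivers. Your approach instead tries to read the gain off the nonlinearity of the one-step recursion for $u$.

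The recursion $1-u(x)=\prod_{j:\,x\in S_j(B)}\bigl(1-p\,u(S_j^{-1}x)\bigr)^{n_j}$ is correct, and its linearization recovers the elementary threshold $p<L/M$. The gap is exactly the step you describe as the ``main obstacle'': you state that the nonlinear correction ``pushes the zero-solution regime strictly past $L/M$'', but the bootstrap is not carried out, and in the naive form it does not close. Integrating the recursion and lower-bounding the quadratic correction by Cauchy--Schwarz only produces
\begin{equation*}
\Bigl(\tfrac{pM}{L}-1\Bigr)\int u \;\ge\; \frac{c\,p^2}{|I|}\Bigl(\int u\Bigr)^{2},
\end{equation*}
which for $p$ slightly above $L/M$ is an a priori \emph{upper bound} on a hypothetical positive fixed point, not a proof that it vanishes. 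Concretely, take $L=2$, $M=3$, $\mathcal{S}=\{x/2,\ x/2+1\}$, $(n_0,n_1)=(1,2)$; the recursion integrates exactly to
\begin{equation*}
\Bigl(\tfrac{3p}{2}-1\Bigr)\int_{0}^{2}u \;=\; \frac{p^{2}}{2}\int_{0}^{2}u^{2},
\end{equation*}
which for any $p>2/3$ is perfectly consistent with $u\not\equiv 0$, yet the paper's argument gives $B=1/\sqrt{2}>2/3$ for this example, so a positive solution really cannot exist for $2/3<p<1/\sqrt{2}$. The integrated identity simply does not see this; the improvement is hidden in the way the quadratic gain compounds across scales, which your outline does not extract. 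Two further issues to settle if you pursue this: (i) when $n_j\equiv 1$ at level one, your coincidences live in an iterate $\mathcal{F}^k$, but $\Lambda_{\mathcal{F}}(p)$ is \emph{not} $\Lambda_{\mathcal{F}^k}(p^k)$ --- the nested coin tossing correlates sibling level-$k$ cylinders --- so you must iterate the one-step equation $k$ times, producing nested products whose quadratic content is much harder to isolate; (ii) any ``uniform lower bound for $u$ on a positive-measure set'' necessarily degenerates as $p\downarrow L/M$, since every positive fixed point shrinks to zero in that limit, so the quadratic gain must be shown to survive quantitatively, which is precisely the point where the paper's Lyapunov-exponent estimate does the real work.
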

The relevance of the bound $\frac{L}{M}$ is that 
the similarity dimension $\dim_{\rm Sim}\Lambda_{\mathcal{F}}(p)>1  $ if and only if $p>\frac{L}{M}$.
Thus, for $p\in\left( \frac{L}{M},B \right)$
the similarity dimension of $\Lambda_{\mathcal{F}}(p)$
is greater than $1$ but its Lebesgue measure is zero almost surely.

In the following we state theorems about the random Menger sponge (defined in Example \ref{y29}) and its rational projections (see \eqref{y56}). As we mentioned earlier, part of them are consequences of the three theorems above, but some are special to the random Menger sponge $\mathcal{M}_p$. It is clear that the IFS which defines the Menger sponge (see Example \ref{y29}) 
satisfies the OSC. So, we can apply formula 
  \eqref{u61}, with the substitution 
  $r=1/3$ and $M=20$. This yields that
\begin{fact}\label{y58}
For almost all realizations conditioned on non-extinction we have
$\text{dim}_{H}\mathcal{M}_p=\frac{\log 20\cdot p}{\log 3}$, which is greater than $1$ if and only if 
$p>\frac{3}{20}=0.15$.
\end{fact}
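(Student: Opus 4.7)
The plan is to apply formula \eqref{u61} (the homogeneous case of Theorem \ref{u62}) directly to the IFS $\mathcal{F}$ of \eqref{u89} that defines the Menger sponge, with the parameters $r=1/3$ and $M=20$. The only nontrivial task is to verify the Open Set Condition; everything else is a substitution.

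First I would verify the OSC. Take the open unit cube $U:=(0,1)^3$. For each $i\in[20]$ the map $f_i(x)=\tfrac{1}{3}x+t_i$ sends $U$ onto the open sub-cube $t_i+(0,1/3)^3$, where $t_i$ ranges over the 20 points listed in \eqref{y59}. These 20 points are distinct elements of the $3$-mesh $\{0,1/3,2/3\}^3$, so the 20 images are pairwise disjoint open sub-cubes of $U$. Hence $\mathcal{F}$ satisfies the OSC with open set $U$.

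Second, since the IFS is homogeneous with common contraction ratio $r=1/3$ and cardinality $M=20$, the hypotheses of Theorem \ref{u62} are fulfilled, so the simplified formula \eqref{u61} applies to $\Lambda_{\mathcal{F}}(p)=\mathcal{M}_p$. Substituting gives
\begin{equation*}
\dim_{\rm H}\mathcal{M}_p=\frac{\log (Mp)}{-\log r}=\frac{\log (20p)}{\log 3},
\end{equation*}
almost surely, conditioned on non-extinction.

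Finally, the inequality $\dim_{\rm H}\mathcal{M}_p>1$ is equivalent to $\log(20p)>\log 3$, i.e.\ $20p>3$, which is precisely $p>3/20=0.15$. There is no real obstacle here: the statement is a direct specialization of Theorem \ref{u62}, and the only geometric input is the (evident) verification of the OSC for the Menger sponge.
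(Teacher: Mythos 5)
Your proof is exactly the paper's argument: the paper notes that $\mathcal{F}$ satisfies the OSC and then applies formula \eqref{u61} with $r=1/3$ and $M=20$; you do the same, merely supplying the (straightforward) verification of the OSC that the paper declares ``clear.'' The substitution and the final inequality are identical.
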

Now we state the theorems regarding the random Menger sponge. The following theorems are visualized on Figure \ref{y10}. The contents of these theorems have already been mentioned in Section \ref{u60}. Here we just give a more precise formulation.

Recall, that  $\text{proj}_{(a,b,c)}(x,y,z): \R^3\to \R$ denote the scalar multiple of the orthogonal projection to the vector $(a,b,c)$,
\begin{equation}\label{y57}
\text{proj}_{(a,b,c)}(x,y,z):= ax+by+cz,
\end{equation} 
and the projection is rational if $a,b,c \in \Q$.

\begin{theorem}\label{y52}
If $p<\frac{1}{\sqrt{8}}=0.35\dots$, then $\mathcal{M}_{p}$ is totally disconnected almost surely conditioned on non-extinction.
\end{theorem}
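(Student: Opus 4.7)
The argument is a Galton--Watson branching process argument that rules out two-sided accumulation points of $\mathcal{M}_p$ on the shared boundaries of adjacent retained cubes. To set up the main process, fix a face-adjacent pair $(Q,Q')$ of retained level-$n$ cubes sharing face $F$, and subdivide $F$ into a $3\times 3$ grid of sub-faces at level $n+1$. The Menger sponge pattern omits the face-center sub-cube in each of $Q,Q'$, so exactly $8$ of the $9$ sub-face positions admit a sub-cube on \emph{each} side of $F$. Call a sub-face \emph{good} if both flanking sub-cubes are retained; the number of good sub-faces at level $n+1$ is distributed as $\mathrm{Bin}(8,p^2)$, and by self-similarity of the Menger sponge the descendant tree of good sub-faces is a Galton--Watson process with offspring mean $8p^2$. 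Under the assumption $p<1/\sqrt{8}$ we have $8p^2<1$, so this tree is almost surely finite.

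To reduce total disconnectedness to such branching processes, suppose for contradiction that $\mathcal{M}_p$ has a connected component $C$ with $|C|\ge 2$. For $n$ large, $C$ meets two distinct retained level-$n$ cubes, so by connectedness the cluster of retained level-$n$ cubes meeting $C$ is connected in the face/edge/vertex adjacency graph and contains an adjacent pair $(Q,Q')$ both meeting $C$. Then $C$ must contain a point $z\in Q\cap Q'$, and since $C$ is a non-trivial continuum a standard topological argument shows that $z$ must be a two-sided accumulation point of $\mathcal{M}_p$: otherwise $C$ would split at $z$ into two relatively clopen pieces on the two sides of $Q\cap Q'$, contradicting connectedness of $C$. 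A two-sided accumulation point in the interior of a shared face $F$ corresponds exactly to an infinite nested sequence of good sub-faces of $F$, ruled out by the a.s.\ extinction above. Shared edges and shared vertices are treated analogously with easier parameters: the edge of an edge-adjacent retained pair subdivides into $3$ sub-edges each having both sides in the Menger pattern, giving a branching process of mean $3p^2$, while a shared vertex reduces to a single line of descent in which both corner sub-cubes must be retained at each step, of ratio $p^2$. Both are subcritical since $3p^2, p^2 \le 8p^2<1$.

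A countable union over all $n$ and all adjacent pairs of retained level-$n$ cubes then shows that almost surely no shared boundary of any pair carries a two-sided accumulation point of $\mathcal{M}_p$. Combined with the reduction above, this yields that $\mathcal{M}_p$ is almost surely totally disconnected, so a fortiori the conclusion holds a.s.\ on the event of non-extinction.

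The main obstacle is identifying the correct branching process: it must track pairs of sub-cubes with \emph{both} sides of $F$ retained (mean $8p^2$) rather than a naive one-sided process (mean $8p$, which would give only the much weaker bound $p<1/8$). Two-sidedness is essential because a single retained sub-cube on one side of $F$ can support $\mathcal{M}_p$-points on that side but cannot support a topological connection across $F$. A secondary delicacy is the degenerate possibility that $C$ lives entirely inside some shared face $F$, resolved by a dimension descent using the edge branching process inside $F$, which is automatically subcritical under the same hypothesis since $3p^2<8p^2<1$.
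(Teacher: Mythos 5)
Your proof is correct and rests on the same key observation as the paper's (Lemma~\ref{v95} there): the number of level-$(n+1)$ sub-face positions whose two flanking sub-cubes are both retained is $\mathrm{Bin}(8,p^2)$, and the descendant tree of such positions is a Galton--Watson process that is subcritical exactly when $p<1/\sqrt{8}$. The paper's write-up formulates this for two independent translated copies of $\mathcal{M}_p$ and then, in the proof of Theorem~\ref{y52}, simply asserts that a nontrivial connected component would produce a face-adjacent pair of retained level-$n$ cubes joined across their common face, without addressing edge- or vertex-adjacency. You go further: you reduce a nontrivial continuum to a two-sided accumulation point on some shared boundary, you run the same subcriticality argument for shared edges (mean $3p^2$) and shared vertices (mean $p^2$), and you note that these extra cases are dominated by the face case and so do not move the threshold. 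That is a genuine tightening of an argument the paper leaves slightly informal. The one place where you are at roughly the paper's level of rigour is the degenerate possibility that the component lies entirely inside a shared face; you flag it and gesture at a dimension descent, which is the right idea, but a complete write-up would need to carry out the analogous two-dimensional crossing argument inside the face --- the paper leaves this case implicit as well.
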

\begin{theorem}\label{y53}
For any projection $(\text{proj}_{(a,b,c)})$ if $p>0.25$ then $\text{proj}_{(a,b,c)}(\mathcal{M}_{p})$ contains an interval almost surely conditioned on non-extinction.
\end{theorem}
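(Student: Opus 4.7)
The plan is to deduce Theorem \ref{y53} from Theorem \ref{y26} by passing to the projected IFS. Since each map $f_i(\mathbf{x})=\mathbf{x}/3+t_i$ of the Menger sponge IFS is a homothety of the same contraction ratio, the orthogonal projection commutes with the dynamics: $\text{proj}_{(a,b,c)} \circ f_i = g_i \circ \text{proj}_{(a,b,c)}$, where $g_i(u)=u/3+s_i$ with $s_i=\text{proj}_{(a,b,c)}(t_i)$. Combined with Remark \ref{y15} (repetitions among IFS maps are allowed), this identifies $\text{proj}_{(a,b,c)}(\mathcal{M}_p)$ with the coin tossing self-similar set on the line generated by the $20$-map projected IFS $\{g_i\}_{i=0}^{19}$ with parameter $p$.

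For a rational direction $(a,b,c)\in\mathbb{Q}^3\setminus\{0\}$, clearing denominators and rescaling by $3$ puts the projected IFS into the integer form of Example \ref{y28} with $L=3$, so Theorem \ref{y26} applies. The combinatorial heart is the verification of hypothesis~(1), which reduces to showing that $\min_{a',U} CS_{a',U}\geq 4$ for every rational direction. Writing $CS_{a',k}=\sum_{s_i'\equiv a'-k\pmod 3} n_{s_i'}$, I would case-analyse on the residues of $(a,b,c) \pmod 3$: if exactly one coordinate is non-zero mod $3$, the congruence class of $s_i'$ is determined by a single coordinate of $t_i$ and the multiplicities are the familiar $(8,4,8)$ of the coordinate-axis projection, giving minimum $4$; if two or three coordinates are non-zero mod $3$, the map $(x,y,z)\mapsto ax+by+cz\pmod 3$ is balanced with $9$ tuples per residue class in $\{0,1,2\}^3$, and a short enumeration of the sub-cases shows the $7$ removed cubes from \eqref{y59} contribute at most $3$ to any single class, leaving minimum $9-3=6$. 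Hypothesis~(2), the existence of a positive row in some product $A_{\underline{b}}$, is trivial when $N=1$ and otherwise already holds for a single $A_a$: the $12$ edge cubes of the Menger sponge project onto a broad enough range of $s_i'$ that in every case at least one row of one of $A_0, A_1, A_2$ is strictly positive.

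Applying Theorem \ref{y26} gives, for each rational direction, that $\text{proj}_{(a,b,c)}(\mathcal{M}_p)$ contains an interval almost surely, conditioned on non-extinction. Countability of $\mathbb{Q}^3\setminus\{0\}$ then yields a single full-probability event on which the projection contains an interval for every rational direction simultaneously. The main obstacle is the extension to irrational directions: one may approximate $(a,b,c)$ by rationals and invoke Hausdorff-continuity of the projection (as $\mathcal{M}_p$ is compact), but the property ``contains an interval'' is not preserved under Hausdorff limits of closed sets. To bridge this gap I expect one needs to extract from the proof of Theorem \ref{y26} a quantitative lower bound on the length of the contained interval, depending only on the gap $p-1/4$ and on uniform data of the matrices $A_a$, so that the intervals produced in rational directions have length bounded away from zero and a non-degenerate interval survives in the limiting irrational projection.
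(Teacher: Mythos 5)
Your approach is genuinely different from the paper's, and it has a gap that is unlikely to be fixable along the lines you suggest. The paper does not go through Theorem \ref{y26} at all. Instead it reinterprets $\mathcal{M}_p$ as an \emph{inhomogeneous Mandelbrot percolation} set in $\mathbb{R}^3$ (subdivision into $27$ cubes, probabilities $0$ on the $7$ removed positions and $p$ on the remaining $20$) and invokes the projection theorems of Simon--V\'ag\'o \cite{SV_2014}, which are stated for all directions $(a,b,c)$ simultaneously. The combinatorial/probabilistic content is entirely packaged into the purely geometric Proposition \ref{w99}: for \emph{every} plane $S(a,b,c)$, the removed first-level cubes cover at most $5/9$ of the area of $S(a,b,c)\cap Q$. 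This single uniform estimate (verified by case analysis together with a Lipschitz bound plus a grid search) feeds into \cite[Proposition 2.3 and Theorem 1.2]{SV_2014} and yields the theorem for all $p>1/4$, \emph{uniformly in the direction}.

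The core difficulty in your plan is the passage from rational to irrational directions, and the fix you sketch does not work as stated. For a rational $(a,b,c)\in\mathbb{Z}^3$ (with $\gcd=1$) the projected translations $s_i = a x + b y + c z$, $(x,y,z)\in\{0,1,2\}^3$, spread over $[0,\,2(|a|+|b|+|c|)]$, so as you approximate an irrational direction by rationals with growing numerators the number of basic types $N$ grows without bound, the matrices $A_a$ get larger, and the word $\underline{b}$ needed for the positive-row hypothesis~(2) gets longer. The interval produced in the proof of Theorem \ref{y26} lives inside $J^{\widetilde U}_{\underline{\widetilde b}_k}$, whose length is of order $L^{-(k+1)}$, so the interval length you can guarantee shrinks to zero along any rational sequence tending to an irrational direction; the Hausdorff-limit argument then gives nothing. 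In short, there is no uniform quantitative lower bound of the kind you hope for that follows from Theorem \ref{y26}, and this is precisely why the paper switches to the Mandelbrot-percolation viewpoint, where the uniformity in the direction is built in.

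Two smaller remarks on the rational case. The identification of $CS_{a,U}$ with a residue-class count $\sum_{t_i\equiv r \pmod 3} n_i$ is correct, but it depends on using the \emph{minimal} integer rescaling; with a non-minimal rescaling (such as $t_i\in\{0,3,6\}$ in Example \ref{u50}) some column sums are literally $0$ and condition~(1) of Theorem \ref{y26} fails outright. With the minimal rescaling your residue case analysis does give minimum retained count $4$ in the axis-parallel case and $\geq 6$ otherwise, which is consistent with the threshold $p>1/4$. Hypothesis~(2), however, should not be waved off: the primitivity of $A=\sum_a A_a$ (Lemma \ref{y81}) does not immediately produce a single product $A_{\underline{b}}$ with a strictly positive row, and this needs a genuine (direction-dependent) argument.
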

\begin{theorem}\label{y56}
    For every rational projection $\text{proj}_{(a,b,c)}$
    there exists a $p'=p'(a,b,c)>\frac{3}{20}$ such that for every $p<p'$: $\mathcal{L}eb_1(\text{proj}_{(a,b,c)}(\mathcal{M}_p))=0$ almost surely.
    \end{theorem}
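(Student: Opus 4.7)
The plan is to realise $\text{proj}_{(a,b,c)}(\mathcal{M}_p)$ as a coin tossing integer self-similar set on the line satisfying the hypotheses of Theorem \ref{y24}. Since $\text{proj}_{(a,b,c)}$ is linear and commutes with the similarities defining the IFS in \eqref{u89}, the projected IFS is
\[
\mathcal{F}' = \left\{ f'_i(y) = \tfrac{1}{3} y + \tau_i \right\}_{i=0}^{19}, \qquad \tau_i := a t_i^{(1)} + b t_i^{(2)} + c t_i^{(3)},
\]
where $t_i=(t_i^{(1)},t_i^{(2)},t_i^{(3)})$ are the translations listed in \eqref{y59}. Because projection is continuous and turns each retained cube at level $n$ into a retained interval at level $n$ governed by exactly the same coin toss, one has $\text{proj}_{(a,b,c)}(\mathcal{M}_p) = \Lambda_{\mathcal{F}'}(p)$.

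Next I would conjugate $\mathcal{F}'$ by an affine bijection so that it sits inside the normalised framework \eqref{u66}, \emph{without changing} the parameters $L$ or $M$. Because $a,b,c \in \mathbb{Q}$ and $t_i^{(j)} \in \{0, 1/3, 2/3\}$, the numbers $\tau_i$ are rational; choose a positive integer $q$ large enough that each $2q(\tau_i - \min_j \tau_j)$ is a non-negative integer and, moreover, such that $L-1=2$ divides the maximum of these integers. Conjugation by the affine map $\phi(x) = 2q\bigl(x - \tfrac{3}{2}\min_j \tau_j\bigr)$ transforms $\mathcal{F}'$ into
\[
\mathcal{F}'' = \left\{ f''_i(y) = \tfrac{1}{3} y + s_i \right\}_{i=0}^{19}, \qquad s_i := 2q(\tau_i - \min_j \tau_j),
\]
which is of the form \eqref{u66} with $L=3$ and $M=20$. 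Since the coin tossing construction depends only on the combinatorial labelled tree and not on the conjugating coordinates, $\phi(\Lambda_{\mathcal{F}'}(p)) = \Lambda_{\mathcal{F}''}(p)$, and having Lebesgue measure zero is preserved by the affine bijection $\phi$.

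Finally, Theorem \ref{y24} applied to $\mathcal{F}''$ produces a threshold $B > L/M = 3/20$ such that $\mathcal{L}eb_1(\Lambda_{\mathcal{F}''}(p)) = 0$ almost surely for every $p < B$; setting $p' := B$ yields the claim. The only delicate point I see is making sure the normalisation keeps $L = 3$ and $M = 20$ intact rather than replacing $\mathcal{F}'$ by some iterate: iterating $k$ times would drive $L/M$ down to $(3/20)^k$ and render the strict inequality $p' > 3/20$ vacuous, whereas conjugating by a single dilation and translation (as above) preserves the critical ratio and delivers exactly the threshold required.
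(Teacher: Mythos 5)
Your proposal is correct and follows the same route as the paper: the authors also reduce $\text{proj}_{(a,b,c)}(\mathcal{M}_p)$ to a coin tossing integer self-similar set on the line (after the same kind of affine rescaling you carry out, as flagged by Remark \ref{u88} and the remark at the start of Section \ref{y18}) and then apply Theorem \ref{y24} with $L=3$, $M=20$. You merely spell out the normalisation that the paper dispatches with ``the proof is straightforward using Theorem \ref{y24},'' and your closing caution about not iterating the IFS (which would shrink $L/M$) is a valid, if implicit, point.
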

\begin{theorem}\label{y22}
If $p<0.25$ then the projection of the random Menger sponge onto the $x$-axis ($\text{proj}_{(1,0,0)}(\mathcal{M}_{p})$) does not contains an interval almost surely.
\end{theorem}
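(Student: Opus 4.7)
The plan is to realise $\text{proj}_{(1,0,0)}(\mathcal{M}_{p})$ as a coin tossing integer self-similar set on the line in the sense of Example \ref{y28} and then invoke Theorem \ref{y21}.

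First I would observe that the orthogonal projection onto the $x$-axis conjugates each $3$D similarity $f_{i}(\underline{x})=\underline{x}/3+t_{i}$ of the Menger sponge IFS $\mathcal{F}$ with the $1$D similarity $g_{i}(x)=x/3+t_{i}^{x}$, where $t_{i}^{x}$ is the first coordinate of $t_{i}$. Because the projection commutes with unions and with nested intersections of compact sets, the very same random coin tosses that define $\mathcal{M}_{p}$ in Definition \ref{y36} also produce the corresponding coin tossing set for the $1$D IFS $\mathcal{G}:=\{g_{i}\}_{i=0}^{19}$, yielding $\text{proj}_{(1,0,0)}(\mathcal{M}_{p})=\Lambda_{\mathcal{G}}(p)$ almost surely. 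After the affine conjugation $x\mapsto 3x$, the IFS $\mathcal{G}$ fits the framework of \eqref{u66} with $L=3$, $M=20$ and integer translations $\tilde{t}_{i}\in\{0,1,2\}$, so that Theorem \ref{y21} is applicable.

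Next I would read off from \eqref{y59} the multiplicities $n_{0},n_{1},n_{2}$ of the three distinct projected maps. The layer $t_{i}^{x}=0$ of the Menger sponge loses only the single face centre $(0,1/3,1/3)$ out of its $9$ positions, so $n_{0}=8$; the middle layer $t_{i}^{x}=1/3$ loses four face centres plus the body centre, so $n_{1}=4$; the layer $t_{i}^{x}=2/3$ loses only $(2/3,1/3,1/3)$, so $n_{2}=8$, and consistently $8+4+8=20$.

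Then I would compute the matrices $A_{a}$ of Section \ref{a99}. The deterministic IFS of distinct maps $\mathcal{S}=\{S_{j}(x)=x/3+j\}_{j=0}^{2}$ has attractor $[0,3]$, which itself belongs to $\mathcal{D}_{-1}$; hence there is a \emph{single} basic type $J^{0}=[0,3]$ and $N=1$. Subdividing gives $J^{0}_{a}=[a,a+1]$, and since $S_{j}(J^{0})=[j,j+1]$, the unique distinct map sending $J^{0}$ onto $J^{0}_{a}$ is $S_{a}$. Therefore each $A_{a}$ is the $1\times 1$ matrix $(n_{a})$, with spectral radii $n_{0}=8$, $n_{1}=4$, $n_{2}=8$. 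Applying Theorem \ref{y21} with $a=1$ completes the argument: the spectral radius of $p\cdot A_{1}$ equals $4p$, which is strictly less than $1$ whenever $p<1/4=0.25$, so $\Lambda_{\mathcal{G}}(p)=\text{proj}_{(1,0,0)}(\mathcal{M}_{p})$ contains no interval almost surely.

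There is no serious obstacle in this plan; the only mildly delicate point is the very first step, namely that the coin tossing construction genuinely commutes with the projection. This is immediate once one notices that the same random subtree $X_{n}\subset[20]^{n}$ indexes both the surviving $3$D cubes $f_{\mathbf{i}}([0,1]^{3})$ and their $1$D images $g_{\mathbf{i}}([0,1])$, after which the identity $\text{proj}(\bigcap_{n}K_{n})=\bigcap_{n}\text{proj}(K_{n})$ for decreasing compacts $K_{n}$ finishes step 1.
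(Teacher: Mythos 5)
Your proposal is correct and, at bottom, is the same proof as the paper's: realise $\text{proj}_{(1,0,0)}(\mathcal{M}_p)$ as a coin tossing integer self-similar set, invoke Theorem~\ref{y21}, and check that the relevant spectral radius equals $4p$. The only place you diverge from the paper is in the choice of affine normalisation of the projected IFS, and it is worth being explicit about the difference. The paper (Example~\ref{u50}) conjugates so that the distinct translations are $t_i\in\{0,3,6\}$, which makes the attractor $[0,9]$, gives $N=3$ basic types $J^0=[0,3]$, $J^1=[3,6]$, $J^2=[6,9]$, and yields $3\times 3$ matrices $B_0,B_1,B_2$; one then computes that the nonzero eigenvalue of $B_1$ is $4$. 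You instead conjugate by $x\mapsto 3x$, obtaining translations $\{0,1,2\}$, attractor $[0,3]$, hence a single basic type ($N=1$) and scalar matrices $A_a=(n_a)$, so the spectral radius is read off immediately as $n_1=4$. Both normalisations satisfy the standing hypotheses~\eqref{u66} (in particular $L-1\mid t_{m-1}$), and the resulting spectral condition $4p<1$ is of course identical, so your version is a legitimate simplification rather than a different method. Your bookkeeping of the multiplicities $(n_0,n_1,n_2)=(8,4,8)$ from the removed translations in~\eqref{y59} is also correct, and your opening observation that the coin-tossing construction commutes with the projection, because the same random subtree $X_n\subset[20]^n$ indexes the surviving $3$D cylinders and their $1$D images, is exactly the justification the paper implicitly relies on when it passes from $\mathcal{M}_p$ to the projected coin tossing set.
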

\begin{theorem}\label{y55}
If $p>\frac{1}{6}=0.1666\dots$, then $\text{proj}_{(1,1,1)}(\mathcal{M}_p)$ contains an interval almost surely conditioned on non-extinction, and if $p<\frac{1}{6}$, then $\text{proj}_{(a,b,c)}(\mathcal{M}_p)$ does not contain an interval almost surely.
\end{theorem}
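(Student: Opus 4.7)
My plan is to reduce Theorem~\ref{y55} to the one-dimensional results Theorems~\ref{y26} and~\ref{y21}, by exploiting the fact that $\text{proj}_{(1,1,1)}(\mathcal{M}_p)$ is itself a coin tossing integer self-similar set on the line. Because $\text{proj}_{(1,1,1)}$ conjugates each map $f_i$ of the Menger--sponge IFS \eqref{u89} to $g_i(x)=\frac{1}{3}x+(t_{i,1}+t_{i,2}+t_{i,3})$, the projected random set coincides with the coin tossing self-similar set generated by $\mathcal{G}:=\{g_i\}_{i=0}^{19}$ on $\mathbb{R}$. After rescaling by the factor $3$, this becomes an IFS of the form \eqref{u66} with $L=3$, $M=20$, contraction ratio $1/3$, and translations $t'_i\in\{0,1,2,3,4,5,6\}$ carrying multiplicities $(1,3,3,6,3,3,1)$ (so $m=7$ distinct maps).

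The next step is to identify the basic types and compute the three transfer matrices $A_0,A_1,A_2$ from \eqref{y89}. Since the attractor of the rescaled IFS is contained in $[0,9]$, there are $N=3$ basic types $J^0=[0,3]$, $J^1=[3,6]$, $J^2=[6,9]$. Using the identity $S_i(J^k)=J^\ell_a$ iff $t'_i=3\ell+a-k$, one obtains three $3\times 3$ matrices. A routine computation shows that their minimum column sum equals $6$ and that each $A_a$ already possesses at least one strictly positive row (for instance the middle row $(6,3,3)$ of $A_0$). Therefore, whenever $p>1/6$, both hypotheses of Theorem~\ref{y26} hold, and $\text{proj}_{(1,1,1)}(\mathcal{M}_p)$ contains an interval almost surely conditioned on non-extinction.

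For the opposite direction $p<1/6$, the main task is to pin down the spectral radius of one of the $A_a$. The matrix $A_0$ is block lower-triangular: its first row is $(1,0,0)$, and the remaining $2\times 2$ block has all entries equal to $3$. Consequently the spectrum of $A_0$ is $\{0,1,6\}$ and $\rho(A_0)=6$, so $\rho(p\cdot A_0)=6p<1$ exactly when $p<1/6$; Theorem~\ref{y21} then yields that $\text{proj}_{(1,1,1)}(\mathcal{M}_p)$ contains no interval almost surely. The essential point---and the only place where something nontrivial has to be checked---is the coincidence of the two thresholds: the min-column-sum driving Theorem~\ref{y26} happens to equal the spectral radius of a specific $A_a$. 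This matching is not automatic in general, but here it is forced by the degenerate rank-one structure of the non-trivial block of $A_0$, whose unique nonzero eigenvalue equals the common value of its row and column sums.
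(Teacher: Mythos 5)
Your argument is exactly the paper's own: reduce $\text{proj}_{(1,1,1)}(\mathcal{M}_p)$ to a coin-tossing integer self-similar set on the line with translations $\{0,\dots,6\}$ and multiplicities $(1,3,3,6,3,3,1)$, then apply Theorem~\ref{y26} using the minimum column sum $6$ and Theorem~\ref{y21} using $\rho(A_0)=6$; this is precisely what the paper does via Example~\ref{y87}. Your write-up is simply more explicit about deriving the matrices and about why the two thresholds coincide, and like the paper you read the second clause of the theorem as referring to the $(1,1,1)$ projection (the ``$(a,b,c)$'' there appears to be a typo).
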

\begin{theorem}\label{y54}
If $p>(8\cdot 6\cdot 6)^{-\frac{1}{3}}=0.1514\dots$, then $\mathcal{L}eb_1(\text{proj}_{(1,1,1)}(\mathcal{M}_p))>0$ almost surely conditioned on non-extinction.
\end{theorem}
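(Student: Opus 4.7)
The strategy is to identify $\mathrm{proj}_{(1,1,1)}(\mathcal{M}_p)$, up to an affine rescaling, with a coin tossing integer self-similar set on the line and then to apply Theorem \ref{y25}. Writing $\pi := \mathrm{proj}_{(1,1,1)}$, linearity of $\pi$ together with nestedness of the cylinders gives $\pi(\mathcal{M}_p) = \bigcap_n \bigcup_{\mathbf{i} \in X_n} \pi(f_{\mathbf{i}}(B))$. Applying $\pi$ to the Menger IFS \eqref{u89} and rescaling by $3$ produces an IFS $\mathcal{F} = \{y \mapsto y/3 + t_i\}_{i=0}^{19}$ satisfying \eqref{u66} with $L = 3$, $M = 20$, and distinct translations $(0,1,2,3,4,5,6)$ of multiplicities $n_j = (1,3,3,6,3,3,1)$. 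These multiplicities come from enumerating triples $(a,b,c) \in \{0,1,2\}^3$ with a prescribed sum (yielding counts $(1,3,6,7,6,3,1)$) and subtracting the contributions $(0,0,3,1,3,0,0)$ of the $7$ excluded points listed in \eqref{y59}. Since $L-1 = 2$ divides $t_{m-1} = 6$, \eqref{u66} holds; Remark \ref{y15} together with Definition \ref{y36} ensures the coin tossing on the $20$ Menger subcubes descends to the coin tossing of $\Lambda_{\mathcal{F}}(p)$.

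Next, I set up the matrices $A_a$ of \eqref{y89}. The attractor spans $I = [0,\, L\, t_{m-1}/(L-1)] = [0, 9]$, giving $N = 3$ basic types $J^0 = [0,3]$, $J^1 = [3,6]$, $J^2 = [6,9]$. Since $S_i(J^k) = [k + t_i,\, k + 1 + t_i]$, the equation $S_i(J^k) = J^{\ell}_a$ forces $t_i = 3\ell + a - k$; reading off $n_i$ for each valid triple yields
\begin{equation*}
A_0 = \begin{pmatrix} 1 & 0 & 0 \\ 6 & 3 & 3 \\ 1 & 3 & 3 \end{pmatrix},\quad
A_1 = \begin{pmatrix} 3 & 1 & 0 \\ 3 & 6 & 3 \\ 0 & 1 & 3 \end{pmatrix},\quad
A_2 = \begin{pmatrix} 3 & 3 & 1 \\ 3 & 3 & 6 \\ 0 & 0 & 1 \end{pmatrix}.
\end{equation*}
The column-sum vectors $(CS_{0,U}, CS_{1,U}, CS_{2,U})$ are $(8,6,6)$, $(6,8,6)$ and $(6,6,8)$ for $U = 0, 1, 2$, so $\prod_{a=0}^{2} CS_{a,U} = 8 \cdot 6 \cdot 6 = 288$ for every $U$.

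Finally, I verify the two hypotheses of Theorem \ref{y25}. Condition (1) reads $p^L \prod_a CS_{a,U} > 1$ for every $U$, which here is exactly $p > 288^{-1/3} = (8\cdot 6 \cdot 6)^{-1/3}$, matching the hypothesis. Condition (2) is immediate since in each of $A_0, A_1, A_2$ the middle row ($U = 1$) is $(6,3,3)$, $(3,6,3)$, $(3,3,6)$, all strictly positive. Theorem \ref{y25} then gives $\mathrm{Leb}_1(\Lambda_{\mathcal{F}}(p)) > 0$ almost surely conditioned on non-extinction, and the rescaling between $\Lambda_{\mathcal{F}}(p)$ and $\pi(\mathcal{M}_p)$ preserves positive Lebesgue measure. \emph{Main obstacle:} conceptually nothing is hard once Theorem \ref{y25} is in hand, but everything hinges on the combinatorial bookkeeping --- threading the multiplicities $n_j$ correctly through the exclusions of \eqref{y59} and then populating the nine entries of each $A_a$ accurately --- since any slip would shift the threshold away from the announced $(8 \cdot 6 \cdot 6)^{-1/3}$.
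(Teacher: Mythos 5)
Your proposal is correct and follows the same route as the paper: reduce $\mathrm{proj}_{(1,1,1)}(\mathcal{M}_p)$ to the coin tossing integer self-similar set of Example \ref{y87} (you re-derive its multiplicities $(n_j)=(1,3,3,6,3,3,1)$ and matrices $A_0,A_1,A_2$ accurately, and the column sums $8\cdot 6\cdot 6 = 288$ for each $U$ match), then invoke Theorem \ref{y25}. The paper compresses this to a one-line reference to Example \ref{y87} and Theorem \ref{y25}; your version spells out the combinatorial bookkeeping, but the substance is identical.
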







\subsubsection{The organization of the rest of the paper}
Firstly in Section \ref{y18} we prove the theorems regarding the random Menger sponge (Theorem \ref{y52}-\ref{y54}).
Theorem \ref{y56}-\ref{y54} are special cases of Theorem \ref{y26}-\ref{y24}, hence we prove them assuming those more general theorems. For Theorem \ref{y52} we use a standard branching process type argument. To prove Theorem \ref{y53} we use the result of Simon and Vágó \cite{SV_2014}. 

In section \ref{y20} we present the proof of the general theorems, Theorem \ref{y26}-\ref{y24}.
We start with proving Theorem \ref{y26} using a similar argument as in \cite{SD2005}, secondly we prove Theorem \ref{y21} by a standard argument, then we prove Theorem \ref{y25} in a similar way as in \cite{SMS2009}. Finally, we prove Theorem \ref{y24} by firstly considering the deterministic attractor in a similar way as in  \cite{barany2014dimension} in a similar setup as in \cite{ruiz2009dimension}. Using this deterministic result we can prove Theorem \ref{y25}.

In the Appendix we present another example, namely two of the projections of random Sierpi\'nski carpet. We briefly summarize the related results, and add some new by applying our theorems. We then compare the results regarding the random Menger sponge and the random Sierpi\'nski carpet.

\section{Menger sponge}\label{y18}
 We consider the projections
$\text{proj}_{(1,0,0)}$ and  $\text{proj}_{(1,1,1)}$ of $\mathcal{M}_p$. Using that
all rational projections of  $\mathcal{M}_p$
(after proper re-scaling)
can be considered as coin tossing integer self-similar sets,
 we can use the general theorems (Theorem \ref{y26}-\ref{y24}) for the proofs of Theorems \ref{y56}-\ref{y54}.

\begin{example}[The $\text{proj}_{(1,0,0)}$ projection of the random Menger sponge]\label{u50}
We project the random Menger sponge to the x-axis, hence the corresponding IFS is the following (see Definition \ref{y14}):
$S_i(x)= \frac{1}{3}x+t_i$, $t_i \in \left\{0,3,6\right\}$, with $\left(n_0,n_1,n_2\right)=\left(8,4,8\right)$, and basic types (see \eqref{u67}) $J^0=\left[0,3\right]$, $J^1=\left[3,6\right]$, $J^2=\left[6,9\right]$. Hence, the corresponding matrices (see \eqref{y89}) are,
\begin{equation}
B_0=
\begin{bmatrix}
8 & 0 & 0\\
4 & 0 & 0 \\
8 & 0 & 0
\end{bmatrix}, \quad
B_1=
\begin{bmatrix}
0 & 8 & 0\\
0 & 4 & 0 \\
0 & 8 & 0
\end{bmatrix}, \quad
B_2=
\begin{bmatrix}
0 & 0 & 8\\
0 & 0 & 4 \\
0 & 0 & 8
\end{bmatrix}.
\end{equation}
\end{example}

\begin{figure}[h!]
    \centering
  \includegraphics[width=0.8\linewidth]{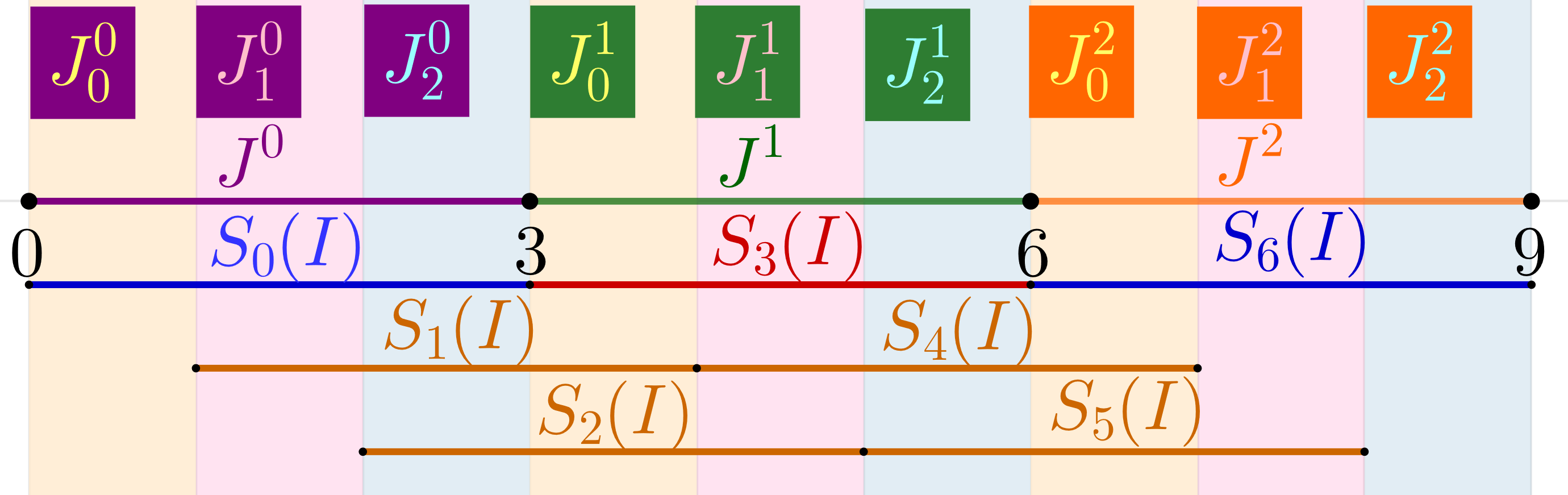}
  \caption{Illustration of Example \ref{y87}}\label{y46}
\end{figure}

\begin{example}[The proj$_{(1,1,1)}$ projection of $\mathcal{M}_p$]\label{y87}
$S_i(x)= \frac{1}{3}x+i$, $i \in [7]$. $\left(n_0,\dots, n_6\right)=\left(1,3,3,6,3,3,1\right)$. $J^0=[0,3]$, $J^1=[3,6]$, $J^2=[6,9]$. For an illustration see Figure \ref{y90}.
\begin{equation}      
A_0=
\begin{bmatrix} 
1 & 0 & 0\\
6 & 3 & 3 \\
1 & 3 & 3
\end{bmatrix}, \quad
A_1=
\begin{bmatrix}
3 & 1 & 0\\
3 & 6 & 3 \\
0 & 1 & 3
\end{bmatrix}, \quad
A_2=
\begin{bmatrix}
3 & 3 & 1\\
3 & 3 & 6 \\
0 & 0 & 1
\end{bmatrix}
\end{equation}
\end{example}
\begin{proof}[Proof of Theorem \ref{y22}]
The statement follows from \ref{y21}, as the Theorem is about the $\text{proj}_{(1,0,0)}$-projection (Example \ref{u50}). An easy calculation yields that the spectral radius of $p\cdot B_1$ is $4\cdot p$ which is less than 1 for $p<0.25$.
\end{proof}

\begin{proof}[Proof of Theorem \ref{y55}]
    Here we use what we wrote in Example \ref{y87}.
 The first part follows from Theorem \ref{y26}.
 The second part is an immediate consequence of  Theorem \ref{y21}. This is so, because  the spectral radius of $p \cdot A_0$ in Example \ref{y87} is $6p$, which is less than $1$ whenever $p<\frac{1}{6}$.
\end{proof}

\vspace{-0.5cm}
\begin{proof}[Proof of Theorem \ref{y54}]
The theorem is an easy consequence of Theorem \ref{y25} using again Example \ref{y87}.
\end{proof}

\vspace{-0.5cm}
\begin{proof}[Proof of Theorem \ref{y56}]
The proof is straightforward using Theorem \ref{y24}.
\end{proof}

\vspace{-1cm}
\subsection{Proof of Theorem \ref{y52}}
 \begin{lemma}\label{v95}
 We consider two independent copies of $\mathcal{M}_p$, and we translate 
one of them by $(1,0,0)$. We call them 
$\mathcal{M}_p^1$ and  $\mathcal{M}_p^2$. 
We denote their intersection by $\mathcal{C}$. That is
 $\mathcal{C}=\mathcal{M}_p^1 \cap \mathcal{M}_p^2$
as in Figure \ref{u84}. 
\begin{figure}[h!]
    \centering
    \includegraphics[width=0.5\textwidth]{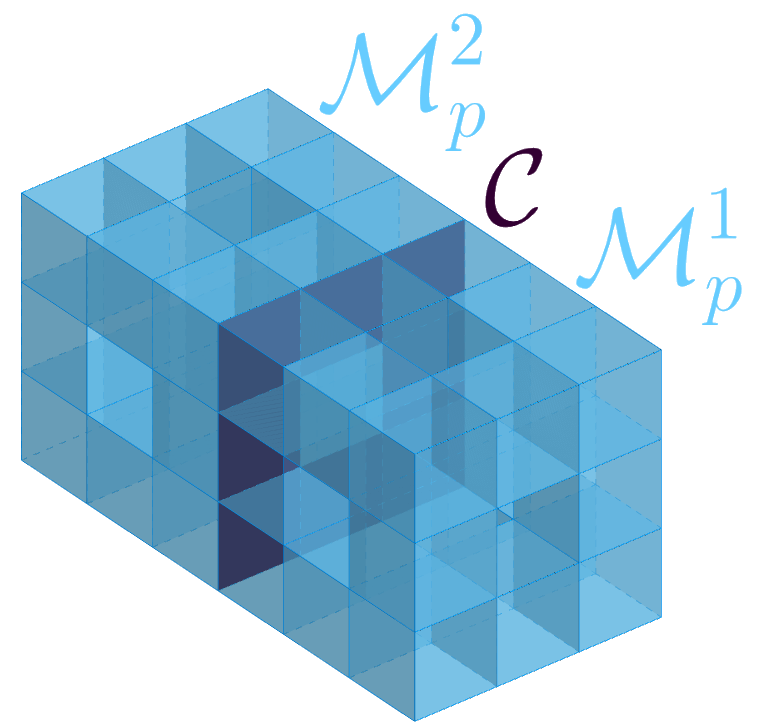}
    \caption{Explanation of notations of Lemma \ref{v95}.}
    \label{u84}
\end{figure}
If $p>\frac{1}{\sqrt{8}}=0.35\dots  $ then there  cannot exist a path from $\mathcal{M}_p^1$
to $\mathcal{M}_p^2$ through $\mathcal{C}$, almost surely.
\end{lemma}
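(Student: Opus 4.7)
The plan is to view $\mathcal{C}$ as a 2D random fractal on the shared face $\{1\}\times[0,1]^2$ admitting a natural Galton-Watson description, and to apply extinction of the associated branching process. (The extinction regime produces $\mathcal{C}=\emptyset$ almost surely when $p<1/\sqrt{8}$, which is the direction consistent with the lemma's use in proving Theorem~\ref{y52}; I therefore read the stated inequality in that sense.)

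\emph{Structure of $\mathcal{C}$.} Tile $\{1\}\times[0,1]^2$ by its $9$ level-$1$ sub-squares of side $1/3$. Each such sub-square is the common face of one level-$1$ sub-cube of $[0,1]^3$ (with $x\in[2/3,1]$) and one level-$1$ sub-cube of $[1,2]^3$ (with $x\in[1,4/3]$). By inspection of~\eqref{y59}, the right face-centered cube $[2/3,1]\times[1/3,2/3]^2$ is deterministically absent from the Menger IFS, and by the obvious symmetry so is its left counterpart on the $\mathcal{M}_p^2$ side. Thus the central sub-square of the face is never populated. Each of the remaining $8$ sub-squares belongs to $\mathcal{C}$ if and only if both independent copies retain their respective level-$1$ sub-cubes, an event of probability $p\cdot p=p^2$. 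The construction iterates self-similarly at every subsequent scale by Definition~\ref{y36}.

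\emph{Branching-process extinction.} Hence the survival of $\mathcal{C}$ is governed by a Galton-Watson process with $\mathrm{Binomial}(8,p^2)$ offspring distribution and mean $8p^2$. When $p<1/\sqrt{8}$ the mean is strictly less than $1$, so the process dies out almost surely and $\mathcal{C}=\emptyset$ almost surely.

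\emph{No crossing path.} Any continuous path joining $\mathcal{M}_p^1\subset[0,1]^3$ to $\mathcal{M}_p^2\subset[1,2]^3$ must cross the plane $\{x=1\}$ at some point, which by construction lies in $\mathcal{M}_p^1\cap\mathcal{M}_p^2=\mathcal{C}$. Since $\mathcal{C}$ is almost surely empty, no such path can exist almost surely.

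The only delicate step is verifying that, conditioned on a level-$n$ sub-square of $\mathcal{C}$ being populated, its $8$ children are a fresh $\mathrm{Binomial}(8,p^2)$ sample and that these samples are jointly independent across populated sub-squares at every scale; this is immediate from the fact that the coin tossings defining $\mathcal{M}_p^1$ and $\mathcal{M}_p^2$ are mutually independent across copies and across levels in Definition~\ref{y36}. Everything else is a standard application of Galton-Watson extinction and the intermediate-value observation that a continuous $[0,1]\to\mathbb{R}^3$ path traveling from $[0,1]^3$ to $[1,2]^3$ must meet $\{x=1\}$.
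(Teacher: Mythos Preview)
Your proof is correct and follows essentially the same approach as the paper's: both set up the Galton--Watson process with $\mathrm{Binomial}(8,p^2)$ offspring (the paper counts face-adjacent retained cube pairs $\varepsilon_n$, which is exactly your count of populated face sub-squares) and invoke subcritical extinction for $p<1/\sqrt{8}$. You correctly identified and resolved the sign typo in the lemma statement, and your explicit verification of the offspring law from~\eqref{y59} and of the independence across levels is, if anything, slightly more detailed than the paper's version.
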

\begin{proof}
Assume that we have a path from $\mathcal{M}_{p}^{1}$ to $\mathcal{M}_{p}^{2}$ through $\mathcal{C}$ with positive probability. Then for every  $n$, with positive probability, there exist a pair of retained level $n$ cubes which are on opposite  sides of $\mathcal{C}$ and share a common face (contained in $\mathcal{C}$). Observe that the number of such level $n$ pairs -- denote it by $\varepsilon_n$ -- forms a Galton-Watson branching process with offspring-distribution $Bin(8, p^2)$. It follows from the Theory of Branching Processes (see \cite[page 7]{athreya-ney}), that the process dies out almost surely in finite steps whenever 
\[
8\cdot p^2=\mathbb{E}(\varepsilon_n)<1,
\]
i.e. when $p<\frac{1}{\sqrt{8}}$. Now if the process dies out in finite steps almost surely, then there cannot exists a connection between $\mathcal{M}_{p}^{1}$ and $\mathcal{M}_{p}^{2}$ through $\mathcal{C}$.
\end{proof}

\begin{proof}[Proof of theorem \ref{y52}]
Fix $p_0<\frac{1}{\sqrt{8}}$. Assume that the random Menger sponge $\mathcal{M}_{p_0}$ contains a connected component with positive probability. Then with positive probability there exists an  $n$ such that with positive probability we can find two level $n$ retained adjacent cube that are connected through their common face say $\mathcal{D}_1$ and $\mathcal{D}_2$. By statistical self-similarity, conditioned on $\mathcal{D}_1$ and $\mathcal{D}_2$ are two retained level $n$ cubes, they are independent, rescaled copies of the random Menger sponge $\mathcal{M}_p$. Hence, by Lemma \ref{v95} with probability one, $\mathcal{D}_1$ and $\mathcal{D}_2$ can not be connected through their common wall, which is a contradiction.
\end{proof}

\subsection{The dual nature of $\mathcal{M}_p$}
All until now we have always looked at $\mathcal{M}_p$
as an example of a coin tossing self-similar set. 
However, to prove Theorem \ref{y53}, we need another interpretation of $\mathcal{M}_p$ as a 
 three-dimensional inhomogeneous Mandelbrot percolation set. The random set 
what we call inhomogeneous Mandelbrot percolation
in this paper simply named  Mandelbrot percolation 
in \cite{SR2015}, \cite{SV_2014}. 
 The motivation for this different interpretation of $\mathcal{M}_p$ is that we would like to use the projection theorems of the 
 paper \cite{SV_2014}, which are stated for the 
 inhomogeneous Mandelbrot percolation sets.
To construct $\mathcal{M}_p$ as an inhomogeneous Mandelbrot percolation set, we divide the unit cube 
$Q:=[0,1]^3$ into $27$ axes parallel cubes of side length $1/3$. 
$$
\left\{\mathfrak{K}_{ \left(i,j,k\right)}     \right\} _{i,j,k=0}^{2 }, \text{ where }
\mathfrak{K}_{ \left(i,j,k\right)}:=
\left[\frac{i}{3},\frac{i}{3}+\frac{1}{3}  \right]
\times
\left[\frac{j}{3},\frac{j}{3}+\frac{1}{3}  \right]
\times
\left[\frac{k}{3},\frac{k}{3}+\frac{1}{3}  \right].
$$
Similarly, we define the level $n$ cubes 
$$
\mathcal{\mathfrak{K}}_n:=
\left\{ 
    \mathfrak{K}_{(\mathbf{i},\mathbf{j},\mathbf{k})}:=
(\overline{\mathbf{i}},\overline{\mathbf{j}},\overline{\mathbf{k}})+\left[ 0,3^{-n} \right]^3
 \right\}_{\mathbf{i},\mathbf{j},\mathbf{k}\in\left\{ 0,1,2 \right\}^n},
$$
where $\mathbf{i}:=(i_1,\dots  ,i_n)$ and
$\overline{\mathbf{i}}:=\sum_{\ell =1}^{n}i_{\ell }3^{-\ell }$

Let 
\begin{multline}\label{u58}
    \widetilde{\mathcal{A}}:=\{(1,1,0), (1,0,1), (0,1,1), (1,1,2), (1,2,1), (2,1,1), (1,1,1)\}.
\end{multline}
Note that these indices corresponds to the cubes which 
are deleted when we construct the first approximation of the deterministic Menger sponge. 
For each $(i,j,k)\in \left\{ 0,1,2 \right\}^3$ we set  a probability $p_{(i,j,k)}\in [0,1]$ as follows:
\begin{equation}
\label{u51}
p_{(i,j,k)}:=
\left\{
\begin{array}{ll}
0
,&
\hbox{if $(i,j,k)\in \widetilde{\mathcal{A}} $;}
\\
p
,&
\hbox{otherwise.}
\end{array}
\right.
\end{equation}
We retain the cube $\mathfrak{K}_{ \left(i,j,k\right)}$ with probability $p_{(i,j,k)}$ independently. Assume that we have already constructed the level $n$ retained cubes. That is we are given the random set $\mathcal{E}_n\subset \left\{ 0,1,2 \right\}^{n}\times \left\{ 0,1,2 \right\}^{n}\times \left\{ 0,1,2 \right\}^{n}$ such that after $n$ steps we have retained the cubes 
$\left\{ \mathfrak{K}_{(\mathbf{i},\mathbf{j},\mathbf{k})} \right\} _{(\mathbf{i},\mathbf{j},\mathbf{k})\in \mathcal{E}_n}$. For every $(\mathbf{i},\mathbf{j},\mathbf{k})\in \mathcal{E}_n$ we retain the cube
$\mathfrak{K}_{(\mathbf{i}i_{n+1},\mathbf{j}j_{n+1},\mathbf{k}k_{n+1})}$ with probability $p_{(i_{n+1},j_{n+1},k_{n+1})}$ independently of everything. Let $E_n:=\bigcup\limits_{(\mathbf{i},\mathbf{j},\mathbf{k})\in \mathcal{E}_n}
\mathfrak{K}_{(\mathbf{i},\mathbf{j},\mathbf{k})} 
$. Then $\mathcal{M}_p=\bigcap\limits _{n=1}^{\infty  }
E_n$.

\begin{proposition}\label{w99}
    For any plane $S(a,b,c)$:
    \begin{equation}\label{w97}
        \frac{\mathcal{L}eb_2(Q \setminus M_1\cap S(a,b,c))}{\mathcal{L}eb_2(Q\cap S(a,b,c))}\leq \frac{5}{9}.
    \end{equation}
    where $\mathcal{L}eb_2$ denotes the two dimensional Lebesgue measure, $M_1$ denotes the first approximation of the deterministic Menger sponge. Finally, $S(a,b,c)$ denote the plane 
\begin{equation}\label{v98}
    S(a,b,c):=\left\{(x,y,z)\in \R^3: ax+by+c=z\right\}.
\end{equation}
    \end{proposition}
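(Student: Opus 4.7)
The plan is to reduce the statement to a planar Lebesgue comparison by projecting $S(a,b,c)$ to the $xy$-coordinate plane. Since $S(a,b,c)$ is the graph of the affine function $g(x,y) := ax+by+c$, the surface area element equals $\sqrt{1+a^2+b^2}\,dx\,dy$, and this constant Jacobian cancels in \eqref{w97}. Thus it suffices to prove
\[
\mathcal{L}eb_2(D) \leq \tfrac{5}{9}\, \mathcal{L}eb_2(R),
\]
where $R := \{(x,y) \in [0,1]^2 : g(x,y) \in [0,1]\}$ is the $xy$-projection of $Q \cap S(a,b,c)$ and $D \subseteq R$ is the $xy$-projection of the portion of $S(a,b,c)$ lying in a removed cube. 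Writing $\Box_{i,j} := [i/3,(i+1)/3] \times [j/3,(j+1)/3]$, and partitioning indices into the center cell, the four edge cells $\mathrm{E} := \{(0,1),(1,0),(1,2),(2,1)\}$, and the four corner cells $\mathrm{C} := \{(0,0),(0,2),(2,0),(2,2)\}$, the description of $\widetilde{\mathcal{A}}$ from \eqref{u58} immediately gives
\[
D = \bigl(R \cap \Box_{1,1}\bigr) \;\cup\; \bigcup_{(i,j)\in \mathrm{E}} \bigl(R \cap \Box_{i,j} \cap \{g \in [1/3,2/3]\}\bigr),
\]
because over $\Box_{1,1}$ all three $z$-levels are removed, over each edge cell only the middle $z$-level is removed, and over each corner cell nothing is removed.

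Next, I would use the symmetries of $\widetilde{\mathcal{A}}$, namely the swap $x \leftrightarrow y$ and the three reflections $x\mapsto 1-x$, $y \mapsto 1-y$, $z \mapsto 1-z$; each of these sends a plane $S(a,b,c)$ to a plane of the same form and preserves the ratio in \eqref{w97}. This reduces the analysis to $a \geq b \geq 0$ with $c + (a+b)/2 \leq 1/2$. Then I would split by the position of the range $[c, c+a+b]$ of $g$ on $[0,1]^2$ relative to the partition $[0, 1/3] \cup [1/3, 2/3] \cup [2/3, 1]$, obtaining three cases: (i) range contained in one sub-interval, (ii) range spanning two adjacent sub-intervals, (iii) range spanning all three. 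Case (i) with range in $[1/3,2/3]$ gives $R = [0,1]^2$ and $D$ equal to the ``plus-shape'' $\Box_{1,1} \cup \bigcup_{\mathrm{E}} \Box_{i,j}$ of measure $5/9$ — realising equality. Case (i) with range in an outer sub-interval gives $D = R \cap \Box_{1,1}$ of measure at most $1/9$, ratio at most $1/9 < 5/9$.

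For cases (ii) and (iii) the key technical tool is the push-forward density $h_{i,j}(t)$ of Lebesgue on $\Box_{i,j}$ by $g$: by affinity each $h_{i,j}$ is a symmetric tent (trapezoidal, or triangular when $a=b$) function of fixed support width $(a+b)/3$ and total mass $1/9$, and $h_{i,j}$ is a translate of $h_{0,0}$ by the shift $(ai+bj)/3$. Rewriting the target inequality as
\[
\tfrac{4}{9}\, \mathcal{L}eb_2(R) \;\leq\; \sum_{(i,j)\in \mathrm{C}} \int_0^1 h_{i,j}(t)\,dt \;+\; \sum_{(i,j) \in \mathrm{E}} \int_{[0,1/3] \cup [2/3,1]} h_{i,j}(t)\,dt,
\]
the problem becomes a one-dimensional comparison of overlaps between the nine shifted tents and the three sub-intervals. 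I expect the \textbf{main obstacle} to be the bookkeeping in cases (ii) and (iii): these admit a continuum of extremizers (any plane for which $\Box_{1,1} \cup \bigcup_{\mathrm{E}} \Box_{i,j}$ lies entirely in the middle slab $\{g \in [1/3,2/3]\}$ already saturates $5/9$), so the argument cannot rely on strict slack outside case (i) and must instead quantify how mass ``leaks'' from the deleted cross into the non-deleted corner cells whenever the range extends beyond $[1/3,2/3]$. Each sub-case reduces to an elementary integral of a tent function, but enumerating the orientations and slab patterns is tedious.
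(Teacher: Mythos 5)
Your reduction to the $xy$-projection, the decomposition of the removed region by cell type, and the reformulation as a one-dimensional comparison via the push-forward tent densities $h_{i,j}$ are correct. The tent-density reformulation is a genuinely different intermediate step from the paper, which works directly with the projected two-dimensional picture, the level lines $\ell_i$ of \eqref{w58}, and the function $\widetilde{H}$ of \eqref{w49}. Note, however, that the paper's symmetry normalization (Fact \ref{w66} and \eqref{w67}) uses all coordinate permutations, including swaps with $z$, and so in addition obtains $a,b\leq 1$; the $x\leftrightarrow y$ swap and the three reflections alone leave the slopes unbounded, which enlarges the parameter space your cases (ii) and (iii) must cover.

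The more substantial issue is that your proposal stops exactly where the paper's authors found the problem hardest. The paper handles the region $\mathfrak{D}_{\text{CA}}$ (roughly your case (i) together with the flatter planes) by elementary geometric arguments in Section \ref{w69}, but the remaining region $\mathfrak{D}_{\text{WM}}$ of \eqref{w52} required a Lipschitz estimate for $\widetilde{H}$ (Section \ref{w60}) and a computer-assisted grid verification in Wolfram Mathematica (Section \ref{w50}). You correctly observe that the extremizers form a continuum: any plane whose deleted cross sits in the middle slab $\{g\in[1/3,2/3]\}$ \emph{and} for which $R=[0,1]^2$ saturates $5/9$, so there is no uniform slack to lean on. Your claim that ``each sub-case reduces to an elementary integral of a tent function'' is plausible in principle, but this enumeration of tent-overlap patterns across nine shifted tents and three sub-intervals is exactly the combinatorics the paper did not manage by hand; before the proof can stand you must either carry that enumeration out explicitly (now over an unbounded slope range) or, as the paper does, replace the residual region by a finite verified computation.
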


\subsection{Proof of Theorem \ref{y53} assuming Proposition \ref{w99}}
We use some of the results of  Simon and Vágó \cite{SV_2014} on projections of inhomogeneous Mandelbrot percolations. In \cite{SV_2014} Simon and Vágó introduces conditions under which the projections of the inhomogeneous Mandelbrot percolation contains an interval almost surely conditioned on non-extinction. In what follows we show that for $p>0.25$ these conditions are satisfied. 
 Let
$\Pi_{(a,b)}$ denote the projection along the planes $S(a,b,.)$ to the $z$-coordinate axis, namely
\begin{equation}\label{v97}
    \Pi_{(a,b)}(x,y,z):= z-ax-by.
\end{equation}
In particular,
 $\Pi_{(a,b)}(S(a,b,t))=t$. Observe that whenever $\widetilde{c}\ne 0$ we have
 \begin{equation}
 \label{w56}
 \frac{1}{\widetilde{c}}\text{proj}_{(\widetilde{a},\widetilde{b},\widetilde{c})}\equiv
 \Pi _{\left( -\frac{\widetilde{a}}{\widetilde{c}} ,-\frac{\widetilde{b}}{\widetilde{c}}\right)}.
 \end{equation}
 We remark that because of the symmetries of the Menger sponge without loss of generality we can restrict our attention to those projections
 $\text{proj}_{\widetilde{a},\widetilde{b},\widetilde{c}}$ for which $\widetilde{c}\ne 0$.
 These projections can be identified by 
 projections along planes 
in the form of \eqref{v98}.

In particular, the following implication holds:
\begin{equation}
\label{w55}
\text{int}(\Pi _{(a,b)}(\mathcal{M}_p))\ne \emptyset, \quad \forall (a,b)\quad
\Longrightarrow\quad
\text{int}(\text{proj}_{\widetilde{a},\widetilde{b},\widetilde{c}}(\mathcal{M}_p)),
\quad
\forall 
(\widetilde{a},\widetilde{b},\widetilde{c}),
\end{equation}
where $\text{int}$ stands for the interior.
Clearly,  the inequality in \eqref{w97}  is equivalent to
\begin{equation}\label{w98}
    \frac{\mathcal{L}eb_2(M_1\cap S(a,b,c))}{\mathcal{L}eb_2(Q\cap S(a,b,c))}\geq \frac{4}{9},
    \end{equation}
\begin{proposition}\label{w59}
If the inequality in \eqref{w98} holds for all $(a,b,c)$ then 
almost surely, for all $(a,b)$, the projection
$\Pi _{(a,b)}(\mathcal{M}_p)$ contains an interval for all $p>0.25$.
\end{proposition}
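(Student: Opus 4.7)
The plan is to apply, as a black box, the projection theorem for inhomogeneous Mandelbrot percolations from Simon--V\'ag\'o \cite{SV_2014}. Recall that in the preceding subsection the random Menger sponge $\mathcal{M}_p$ has been given a second description as a $3$-dimensional inhomogeneous Mandelbrot percolation with subdivision factor $K=3$ and with coin probabilities $p_{(i,j,k)}$ as in \eqref{u51}; this is precisely the setup of \cite{SV_2014}. Their projection theorem gives a sufficient condition for the projection $\Pi_{(a,b)}$ of such a random set to contain an interval almost surely, conditioned on non-extinction. That condition takes the form of a pointwise inequality asking that the expected slice area of the level-$1$ approximation be sufficiently large compared to the full slice area of $Q$. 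For the Menger-sponge choice of $p_{(i,j,k)}$, this inequality reads
\[
p\cdot \mathcal{L}eb_2\bigl(M_1\cap S(a,b,c)\bigr)>\tfrac{1}{9}\,\mathcal{L}eb_2\bigl(Q\cap S(a,b,c)\bigr),\qquad\forall\,(a,b,c),
\]
the threshold $1/9$ being the scaling exponent of $2$-dimensional slice areas under the self-similar zoom by the factor $3$ that underlies the one-step recursion.

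Second, I would verify that this inequality is satisfied whenever $p>0.25$, using the hypothesis \eqref{w98}. Indeed, \eqref{w98} says that the ratio $\phi(a,b,c):=\mathcal{L}eb_2(M_1\cap S(a,b,c))/\mathcal{L}eb_2(Q\cap S(a,b,c))$ is at least $4/9$ for every $(a,b,c)$; hence
\[
p\cdot\phi(a,b,c)\;>\;\tfrac14\cdot\tfrac{4}{9}\;=\;\tfrac{1}{9}
\]
uniformly in $(a,b,c)$, with strict inequality since $p>1/4$. Plugging this into the Simon--V\'ag\'o criterion, the projection $\Pi_{(a,b)}(\mathcal{M}_p)$ contains an interval almost surely conditioned on non-extinction, for every fixed direction $(a,b)$.

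To obtain the ``almost surely, for all $(a,b)$'' form of the proposition, I would exploit that the above inequality is strict and \emph{uniform} in $(a,b,c)$. Combined with the continuity of $(a,b,c)\mapsto \mathcal{L}eb_2(M_1\cap S(a,b,c))/\mathcal{L}eb_2(Q\cap S(a,b,c))$ on the relevant compact parameter set, this allows one either to apply the Simon--V\'ag\'o result along a countable dense set of directions and then interpolate (using that $\Pi_{(a,b)}(\mathcal{M}_p)$ varies lower-semicontinuously in $(a,b)$), or to invoke directly the ``simultaneous in the direction'' version of their theorem.

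The main obstacle will be to pin down the precise statement borrowed from \cite{SV_2014}: in particular, confirming that the critical threshold in their sufficient condition is indeed $1/9$ (and not, say, $1/3$), and checking that their result can be applied simultaneously to the whole family of directions $(a,b)\in\mathbb{R}^2$ rather than only to a fixed one. Once these bookkeeping issues are resolved, the proposition is just a substitution of \eqref{w98} together with $p>0.25$ into their sufficient condition.
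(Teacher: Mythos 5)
Your proposal takes essentially the same route as the paper: invoke \cite[Theorem 1.2 and Proposition 2.3]{SV_2014} to reduce the claim to Condition B$(a,b)$, and then verify Condition B from $p>\frac14$ and \eqref{w98}; the threshold you reconstruct, $K^{-(d-1)}=\frac19$ for $K=3$, $d=3$, is indeed the right one (it reproduces, e.g., the known bound $p>\frac12$ for the horizontal projection of the Sierpi\'{n}ski carpet via $3p\cdot\frac23>1$), and your arithmetic $\frac14\cdot\frac49=\frac19$ is exactly the paper's computation. The dense-set-plus-interpolation fallback you sketch for the ``simultaneously for all $(a,b)$'' part is unnecessary, and the lower-semicontinuity step it rests on is not obviously available: \cite[Theorem 1.2]{SV_2014} already delivers the ``a.s.\ for all $(a,b)$'' conclusion directly once Condition A$(a,b)$ holds for every direction, which is what the paper uses.
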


\begin{proof}
    Suggested by the formula on the top of page 177
of the paper \cite{SV_2014} we consider
 the function, 
\begin{equation}\label{v99}
    f(t):=\mathcal{L}eb_2(S(a,b,t)\cap Q).
\end{equation}
A combination of \cite[Theorem 1.2 and Proposition 2.3]{SV_2014} applied to the projection  $\Pi_{(a,b)}$ and function $f$
 yields the assertion of the Proposition.
 More precisely, if we replace 
 $proj_\alpha $ in
 \cite[Theorem 1.2]{SV_2014} by $\Pi _{(a,b)}$
 then we obtain that
 the assertion of 
 Proposition \ref{w59}  holds if a certain condition which is named as Condition $A(a,b)$ 
 (see \cite[Definition 2.1]{SV_2014})
 holds for all $(a,b)$. 
 On the other hand, 
 \cite[Proposition 2.3]{SV_2014} asserts
 in our case
 that another condition which is called 
 Condition $B(a,b)$  (see \cite[Definition 2.2]{SV_2014})  implies that Condition $A(a,b)$ holds. It is immediate from the definitions that 
 \eqref{w98} implies that Condition $B(a,b)$ holds
 with $f$ defined in \eqref{v99} and $\Pi _{(a,b)}$.
\end{proof}

\begin{proof}[Proof of Theorem \ref{y53} assuming Proposition \ref{w99}]
   The assertion of Theorem  \ref{y53}
   immediately follows from Proposition \ref{w59}
   and the implication in \eqref{w55}.
\end{proof}

\subsection{Proof of Proposition \ref{w99}}\label{w48}
We often use the following simple observation.
\begin{fact}\label{w66}
Observe that $Q\setminus M_1$ remains unchanged if we permutate the coordinate axes and if we reflect to any of the planes $x=\frac{1}{2}$, $y=\frac{1}{2}$ or $z=\frac{1}{2}$.
\end{fact}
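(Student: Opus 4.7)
The plan is to reduce the geometric invariance of $Q\setminus M_1$ to a one-line combinatorial check on the index set $\widetilde{\mathcal{A}}$ defined in \eqref{u58}. The key observation is that every symmetry named in the statement is an isometry of $\mathbb{R}^3$ which fixes $Q=[0,1]^3$ setwise and permutes the $27$ closed level-$1$ subcubes $\{\mathfrak{K}_{(i,j,k)}\}_{(i,j,k)\in\{0,1,2\}^3}$ among themselves. Since $M_1$ is, by construction, the union of the $20$ subcubes whose indices lie in $\{0,1,2\}^3\setminus\widetilde{\mathcal{A}}$, and $Q\setminus M_1$ is the set-theoretic difference of two symmetry-invariant sets once $M_1$ is known to be invariant, it is enough to show that the induced action on indices stabilises $\widetilde{\mathcal{A}}$.

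First I would write down the induced action on indices explicitly. A coordinate permutation $\sigma\in S_3$ sends $\mathfrak{K}_{(i,j,k)}$ to $\mathfrak{K}_{\sigma\cdot(i,j,k)}$, while reflection across the plane $\{x_{\ell}=1/2\}$ replaces the $\ell$-th index $i_{\ell}$ by $2-i_{\ell}$ and leaves the other two indices unchanged. So I must check that $\widetilde{\mathcal{A}}\subset\{0,1,2\}^3$ is closed under coordinate permutations and under $i_{\ell}\mapsto 2-i_{\ell}$ applied in any single coordinate position $\ell\in\{1,2,3\}$.

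Next I would observe, by inspecting \eqref{u58} directly, that $\widetilde{\mathcal{A}}$ coincides with the set of triples $(i,j,k)\in\{0,1,2\}^3$ with at least two coordinates equal to $1$. This characterisation is manifestly symmetric in the three coordinates, so invariance under $S_3$ is immediate. For the reflections, the involution $t\mapsto 2-t$ fixes the value $1$ (since $2-1=1$) and merely interchanges $0$ with $2$, hence preserves the number of $1$'s in any triple. Both required invariances follow at once, and the statement is proved.

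I do not anticipate any serious obstacle here: the proof is a short bookkeeping reduction followed by a one-line combinatorial observation. The only mild subtlety worth flagging is that $M_1$ is a union of closed subcubes, so adjacent retained and removed subcubes share boundary faces; but since each listed symmetry is a bijection of $\mathbb{R}^3$ carrying $Q\to Q$ and $M_1\to M_1$ setwise, it automatically carries the set-theoretic difference $Q\setminus M_1$ to itself, with no further argument needed.
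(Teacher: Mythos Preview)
Your argument is correct and complete. The paper itself offers no proof of this fact---it is stated as a bare observation---so your reduction to the combinatorial invariance of $\widetilde{\mathcal{A}}$, together with the clean characterisation of $\widetilde{\mathcal{A}}$ as the set of triples in $\{0,1,2\}^3$ having at least two coordinates equal to $1$, makes the symmetry explicit in a way the paper does not bother to do.
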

Hence, it is easy to see that without loss of generality we may assume, that 
\begin{equation}\label{w67}
   0 \leq a \leq b \leq 1. 
\end{equation}
Under this assumption, the plane $S(a,b,c)$ intersects the unit cube $[0,1]^3$
if and only if 
\begin{equation}
\label{w54}
-(a+b)\leq c\leq 1.
\end{equation}
\begin{figure}[h!]
    \centering
    \includegraphics[width=0.5\textwidth]{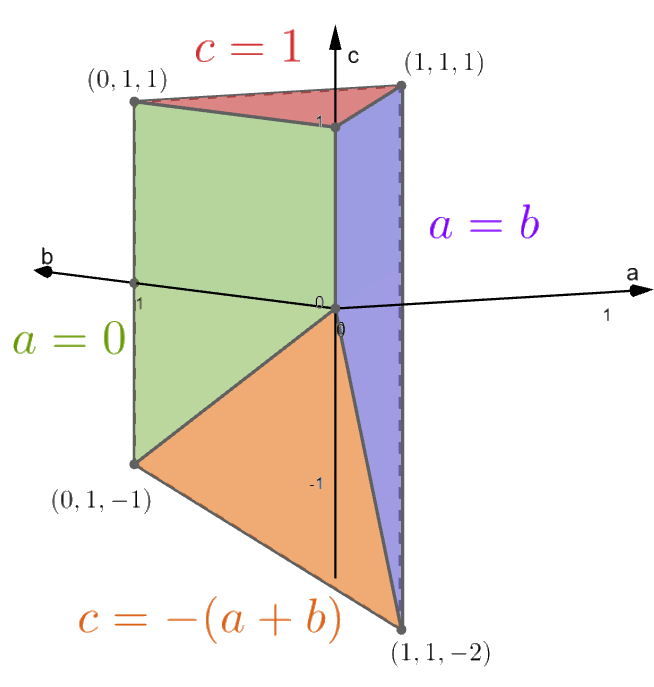}
    \caption{Illustration of the set $\mathfrak{D}$ \eqref{w53}.}
    \label{u36}
\end{figure}
So, we can confine ourselves to the region (see Figure \ref{u36})
\begin{equation}
\label{w53}
\mathfrak{D}:=
\left\{ (a,b,c):\quad
0\leq a\leq b\leq 1,\quad 
-(a+b)\leq c\leq 1
\right\}.
\end{equation}
We partition $\mathfrak{D}=
\mathfrak{D}_{\text{CA}}
\cup
\mathfrak{D}_{\text{WM}}$, where
\begin{multline}
\label{w52}
\mathfrak{D}_{\text{WM}}:=
\left\{ (a,b,c)\in \mathfrak{D}:
a\in \left[\frac{1}{3},1\right], b \in [a,1],\right.
\\
\left.
c\in\left[\frac{2}{3}-(a+b), 0]\cup [\max\{0, 1-(a+b)\}, \frac{1}{3}\right]\right\},
\end{multline}
and
\begin{equation}
\label{w51}
\mathfrak{D}_{\text{CA}}:=\mathfrak{D}
\setminus 
\mathfrak{D}_{\text{WM}}.
\end{equation}
According to this partition, 
the proof is divided into two major parts. We verify \eqref{w97}
\begin{itemize}
\item for $(a,b,c)\in \mathfrak{D}_{\text{CA}}$
with case analysis in Section \ref{w69}.
\item For $(a,b,c)\in \mathfrak{D}_{\text{WM}}$
we do the following:
We introduce a function $\widetilde{H}(a,b,c)$
on $\mathfrak{D}$ in 
\eqref{w49}, and we reformulate the inequality in \eqref{w97} as  $\widetilde{H}|_{\mathfrak{D}_{\text{WM}}}\geq 0$. To verify this, in Section 
\ref{w60} we estimate the Lipschitz constant of 
$\widetilde{H}|_{\mathfrak{D}_{\text{WM}}}$.
In Section   \ref{w50} we consider a sufficiently dense grid  $G\subset  \mathfrak{D}_{\text{WM}}$.
Then  using Wolfram Mathematica we calculate
$C:=\min\widetilde{H}|_{G}$. We verify that   $C>0$ is so large that 
because of the Lipschitz property of  $\widetilde{H}$,
we get that 
$\widetilde{H}|_{\mathfrak{D}_{\text{WM}}}>0$.
\end{itemize}

First we introduce the notation used in the rest of Section \ref{w48}.

\subsubsection{Notations}
We write $\text{proj}_{x,y}$ for the  orthogonal projection to the $(x,y) $-coordinate plane, that is
\begin{equation}\label{w96}
    \text{proj}_{x,y}:\R^3\to\R^2,\quad \text{proj}_{x,y}((u,v,w)):= (u,v).
\end{equation}
The area of the unit cube intersected with the plane $S(a,b,c)$ and the area of the $\text{proj}_{x,y}$-projection respectively denoted by:
\begin{equation}\label{w95}
    F(a,b,c):=\mathcal{L}eb_2(Q\cap S(a,b,c)), \quad \widetilde{F}(a,b,c):=\mathcal{L}eb_2(\text{proj}_{x,y}(Q\cap S(a,b,c))).
\end{equation}
It is easy to see that 
\begin{equation}\label{w90}
\widetilde{F}(a,b,c)\sqrt{1+a^2+b^2}=F(a,b,c).
\end{equation}
Let 
\begin{multline}\label{w94}
    \mathcal{A}:=\left\{\left(0,\frac{1}{3}, \frac{1}{3}\right),\left(\frac{1}{3},0,\frac{1}{3}\right),\left(\frac{1}{3},\frac{1}{3},0\right),\left(\frac{1}{3},\frac{1}{3},\frac{1}{3}\right),\left(\frac{2}{3},\frac{1}{3},\frac{1}{3}\right), \right. \\ \left. \left(\frac{1}{3},\frac{2}{3},\frac{1}{3}\right), 
    \left(\frac{1}{3},\frac{1}{3},\frac{2}{3}\right)\right\}
\end{multline}
denote the lower left corners
(the closest point of the cube to the origin)
of the level-1 cubes missing from the first approximation of the Menger sponge.
Also when we want to calculate the area of a level 1 square with lower left corner $(u,v,w)$ intersected with the plane $S(a,b,c)$ we renormalize the plane with respect to the cube and calculate the area of the renormalized intersection and divide it by 9 (see \eqref{w92}). Renormalization means that we transform a level 1 cube into the unit cube. In case of a level one cube $(u,v,w)+[0,\frac{1}{3}]^3$ this transformation is:
\begin{equation}
    (x,y,z)\to 3(x-u, y-v, z-w).
\end{equation}

The renormalization of the plane $S(a,b,c)$ with respect to the level $1$ cube with lower left corner $(u,v,w)$
is $S(a,b,c')$, where
$c':=3(au+bv+c-w)$. This is why we introduce
\begin{equation}\label{w93}
    g(a,b,c,u,v,w):=(a,b,3(au+bv+c-w)).
\end{equation}
Observe that for a level 1 cube $\widehat{Q}:=(u,v,w)+\left[0,\frac{1}{3}\right]^3$:
\begin{equation}\label{w92}
    \mathcal{L}eb_2(\widehat{Q}\cap S(a,b,c))= \frac{1}{9}F(g(a,b,c,u,v,w)).
\end{equation}
Hence, to verify Proposition \ref{w99}  we need to show that for any $(a,b,c)$:
\begin{multline}\label{w91}
H(a,b,c):=\frac{5}{9}\mathcal{L}eb_2(Q\cap S(a,b,c))-\mathcal{L}eb_2(Q \setminus M_1\cap S(a,b,c))\\=
\frac{5}{9}F(a,b,c)-\frac{1}{9}\sum_{(u,v,w) \in\mathcal{A}}F(g(a,b,c,u,v,w))\geq 0.
\end{multline}
Let 
\begin{equation}\label{w49}
   \widetilde{H}(a,b,c):=\frac{5}{9}\widetilde{F}(a,b,c)-\frac{1}{9}\sum_{(u,v,w)\in\mathcal{A}}\widetilde{F}(g(a,b,c,u,v,w)).
\end{equation}
By \eqref{w90} $H(a,b,c)=\frac{1}{\sqrt{1+a^2+b^2}}\widetilde{H}(a,b,c)$.
In this way we have proved that
\begin{fact}\label{w47}
    The inequality in \eqref{w97} holds if 
\begin{equation}\label{w68}
    \widetilde{H}(a,b,c)\geq 0, \, \text{ for any }0\leq a\leq b \leq 1\text{ and any }c.
\end{equation}
\end{fact}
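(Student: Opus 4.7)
The plan is to verify that the implication \eqref{w68} $\Rightarrow$ \eqref{w97} reduces to three elementary observations that are already essentially isolated in the text, so no genuine new computation should be required. First I would check that \eqref{w97} is equivalent to the pointwise inequality $H(a,b,c)\geq 0$. Indeed, whenever the plane $S(a,b,c)$ intersects the interior of $Q$ one has $F(a,b,c)=\mathcal{L}eb_2(Q\cap S(a,b,c))>0$, so dividing $H\geq 0$ by this positive quantity gives \eqref{w97}; when the plane misses the interior of $Q$ both sides of \eqref{w97} vanish and the inequality is trivial.

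Next, I would justify the second equality of \eqref{w91}. The complement $Q\setminus M_1$ is, by construction of the first Menger approximation, the disjoint union of the seven open level-$1$ cubes whose lower-left corners are exactly the elements of $\mathcal{A}$ in \eqref{w94}. Intersecting with the plane $S(a,b,c)$ and summing, then rescaling each level-$1$ cube to $Q$ via the affine change of variables in \eqref{w93}, formula \eqref{w92} gives
\[
\mathcal{L}eb_2\big((Q\setminus M_1)\cap S(a,b,c)\big)=\tfrac{1}{9}\sum_{(u,v,w)\in\mathcal{A}}F(g(a,b,c,u,v,w)),
\]
which is precisely the expression subtracted from $\tfrac{5}{9}F(a,b,c)$ in \eqref{w91}.

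Third, I would use \eqref{w90} to pass from $H$ to $\widetilde H$. Since the map $g(a,b,c,u,v,w)$ fixes the first two coordinates (see \eqref{w93}), the factor $\sqrt{1+a^{2}+b^{2}}$ is common to $F(a,b,c)$ and to every $F(g(a,b,c,u,v,w))$ appearing in \eqref{w91}. Pulling it out of each term yields
\[
H(a,b,c)=\sqrt{1+a^{2}+b^{2}}\cdot\widetilde H(a,b,c),
\]
so $H$ and $\widetilde H$ have the same sign everywhere. (This is what the paper asserts after \eqref{w49}, up to which side the positive factor is written on; the sign relation is all that is used.)

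Finally I would invoke Fact \ref{w66}: the set $Q\setminus M_{1}$ is invariant under permutations of the coordinate axes and under reflections in $x=\tfrac{1}{2}$, $y=\tfrac{1}{2}$, $z=\tfrac{1}{2}$, and these symmetries act on the normal vectors $(a,b,1)$ of the planes $S(a,b,c)$ by the same permutations and sign changes. Consequently, to verify \eqref{w97} for \emph{every} $(a,b,c)$ it suffices to verify it on the reduced region $0\le a\le b\le 1$, any real $c$, which is \eqref{w67}. Combining the three steps, assumption \eqref{w68} gives $\widetilde H\ge 0$ on this region, hence $H\ge 0$ there, hence \eqref{w97} there, hence \eqref{w97} in full generality. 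There is no serious obstacle: the whole argument is a bookkeeping chain, and the only point one has to be a little careful about is keeping the positive scalar $\sqrt{1+a^{2}+b^{2}}$ consistent between $H$ and $\widetilde H$.
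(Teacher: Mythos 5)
Your proof is correct and follows essentially the same bookkeeping chain the paper uses (reduction to $H\geq 0$, the decomposition of $Q\setminus M_1$ into the seven level-$1$ cubes, the rescaling formula \eqref{w92}, passing to $\widetilde H$ via \eqref{w90}, and the symmetry reduction from Fact \ref{w66}). You also correctly observe that the display after \eqref{w49} has the scalar $\sqrt{1+a^2+b^2}$ on the wrong side --- it should read $H(a,b,c)=\sqrt{1+a^2+b^2}\,\widetilde H(a,b,c)$ --- but, as you say, only the sign agreement is used, so the argument is unaffected.
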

Let also 
\begin{equation}\label{w58}
    \ell_i(x):=-\frac{a}{b}x+\frac{\frac{i}{3}-c}{b}, \quad i \in \left\{0,1,2,3\right\}
\end{equation}
denote the line which is the $\text{proj}_{x,y}$-projection of $$\left\{\left(x,y,\frac{i}{3}\right):x\in \R, y\in \R\right\}\cap S(a,b,c).$$
That is $y=\ell_i(x)$ is the $\frac{i}{3}$-level set of $S(a,b,c)$. Note that \eqref{w67} implies that the slope of $\ell_i(x)$ is between $-1$ and $0$.

For any $H\subseteq [0,1]^2$ the \texttt{ bad part of $H$}
is $H\cap \text{proj}_{x,y}(S(a,b,c)\cap Q\setminus M_1)$
 and the \texttt{good part of $H$} is   $H\cap \text{proj}_{x,y}(S(a,b,c)\cap M_1)$. The bad part of $[0,1]^2$ is going to be denoted by dark color on the figures below.
Let $Q_{i,j}:=\left(\frac{i}{3}, \frac{j}{3}\right)+\left[0,\frac{1}{3}\right]^2$, $i,j \in \left\{0,1,2\right\}$ as shown in the Figure \textsc{\ref{w86}}.
We call these squares \texttt{level-$1$ squares}.
By the construction of the Menger sponge, the squares in the corners can not have bad parts. The bad parts of $Q_{1,1}$ is between the lines $\ell_{0}(x)$ and $\ell_{3}(x)$, and the bad parts of $Q_{1,0} , Q_{2,1},Q_{0,1}, Q_{1,2}$ are between the lines $\ell_{1}(x)$ and $\ell_2(x)$.

First we show by a detailed case analysis (Section \ref{w69}) that for some values of $a,b$ and $c$ the assertion \eqref{w68} holds. Then for the uncovered values of $a,b,c$ we create a grid to calculate the value of $\widetilde{H}$ on the grid and estimate the value of the function out of the grid points, using the upper bound (Section \ref{w60}) on the Lipschitz constant for the function $\widetilde{H}$ .

\subsubsection{Case analysis}\label{w69}
Recall that we always assume that $0 \leq a \leq b \leq 1$. Note that if $a=b=0$ and $c\in (\frac{1}{3}, \frac{2}{3})$ then $\widetilde{H}(a,b,c)=0$. 

\begin{fact}\label{w89}
If $c>0$ and $a+b+c\leq 1$, then $\widetilde{H}(a,b,c)\geq0$.
\end{fact}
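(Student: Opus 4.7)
The plan is to reinterpret $\widetilde{H}$ geometrically. By the definition of $\widetilde{H}$ in \eqref{w49} together with \eqref{w92}, the quantity $\frac{1}{9}\sum_{(u,v,w)\in\mathcal{A}}\widetilde{F}(g(a,b,c,u,v,w))$ is exactly the area of the $\text{proj}_{x,y}$-projection of $(Q\setminus M_1)\cap S(a,b,c)$. So what has to be verified is the single inequality
\[
\mathcal{L}eb_2\bigl(\text{proj}_{x,y}\bigl((Q\setminus M_1)\cap S(a,b,c)\bigr)\bigr) \leq \tfrac{5}{9}\,\widetilde{F}(a,b,c).
\]

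First I would use both halves of the hypothesis to pin the plane down inside the unit cube. Since $0\leq a\leq b$, for every $(x,y)\in[0,1]^2$ the value $z:=ax+by+c$ lies in $[c,a+b+c]$; the condition $c>0$ keeps $z$ strictly above $0$ and $a+b+c\leq 1$ keeps $z$ at most $1$. Hence $S(a,b,c)\cap Q$ projects onto the entire unit square, so $\widetilde{F}(a,b,c)=1$ and the right-hand side of the target inequality equals exactly $5/9$.

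Next I would partition $[0,1]^2$ into the nine level-$1$ squares $Q_{i,j}=\left[\tfrac{i}{3},\tfrac{i+1}{3}\right]\times\left[\tfrac{j}{3},\tfrac{j+1}{3}\right]$ and read off which ones can sit under a bad cube. Inspecting the list $\mathcal{A}$ in \eqref{w94}, the seven bad cubes have $xy$-footprint $Q_{1,1}$ (stacked at all three vertical $z$-layers) or $Q_{1,0},Q_{0,1},Q_{2,1},Q_{1,2}$ (middle $z$-layer only); no bad cube sits above any of the four corner squares $Q_{0,0},Q_{2,0},Q_{0,2},Q_{2,2}$. Therefore
\[
\text{proj}_{x,y}\bigl((Q\setminus M_1)\cap S(a,b,c)\bigr)\subseteq Q_{1,1}\cup Q_{1,0}\cup Q_{0,1}\cup Q_{2,1}\cup Q_{1,2},
\]
a union of five essentially disjoint squares of total area $5/9$, which gives the desired inequality.

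There is no serious obstacle here: the argument is a direct combinatorial identification of which vertical columns of level-$1$ cubes are removed from $M_1$, and it avoids any case-split on the lines $\ell_i$ or the Lipschitz-and-grid machinery needed in the harder region $\mathfrak{D}_{\text{WM}}$. The only subtlety is to make sure both halves of the hypothesis are genuinely used — $c>0$ to prevent the plane from dipping below $z=0$ and $a+b+c\leq 1$ to prevent it from poking above $z=1$ — so that each $(x,y)\in[0,1]^2$ corresponds to a unique point on $S(a,b,c)\cap Q$ whose containing level-$1$ cube is determined purely by the $xy$-column above it.
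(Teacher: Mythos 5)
Your proof is correct and follows essentially the same route as the paper's: both first deduce $\widetilde{F}(a,b,c)=1$ from the hypotheses, and then bound the bad area by the total area $5/9$ of the five non-corner level-$1$ squares (with $Q_{1,1}$ entirely bad and $Q_{1,0},Q_{0,1},Q_{2,1},Q_{1,2}$ bad at most where $z\in[\tfrac{1}{3},\tfrac{2}{3}]$). You make explicit a point the paper leaves implicit — that the seven terms of the sum add up without double-counting because each $(x,y)$ meets the plane at a single $z$ — but the underlying argument is the same.
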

\begin{proof}
Whenever $c>0$ and $a+b+c\leq 1$ by elementary geometry we have $\widetilde{F}(a,b,c)=1$. Hence, $\widetilde{H}(a,b,c)$ is the smallest when all of $Q_{1,0} , Q_{2,1},Q_{0,1}, Q_{1,2}$ is between the lines $\ell_{1}(x)$ and $\ell_2(x)$. 
In that case $\widetilde{H}(a,b,c)=0$. For visual explanation see Figures \textsc{\ref{w85}} and \textsc{\ref{w84}}.
\end{proof}
\begin{figure}
\minipage{0.25\textwidth}
  \includegraphics[width=\linewidth]{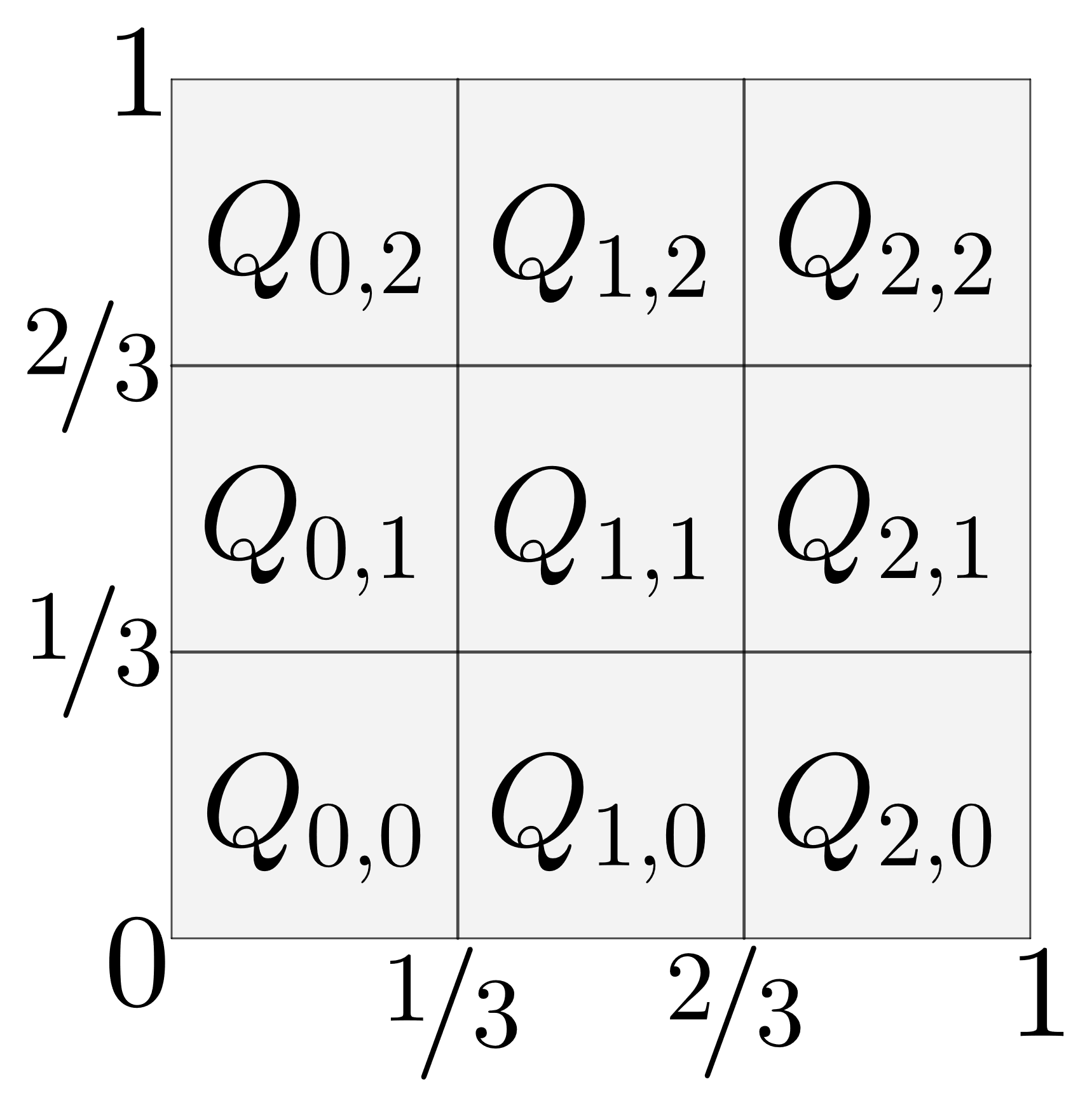}
  \subcaption{}\label{w86}
\endminipage\hfill
\minipage{0.25\textwidth}
  \includegraphics[width=\linewidth]{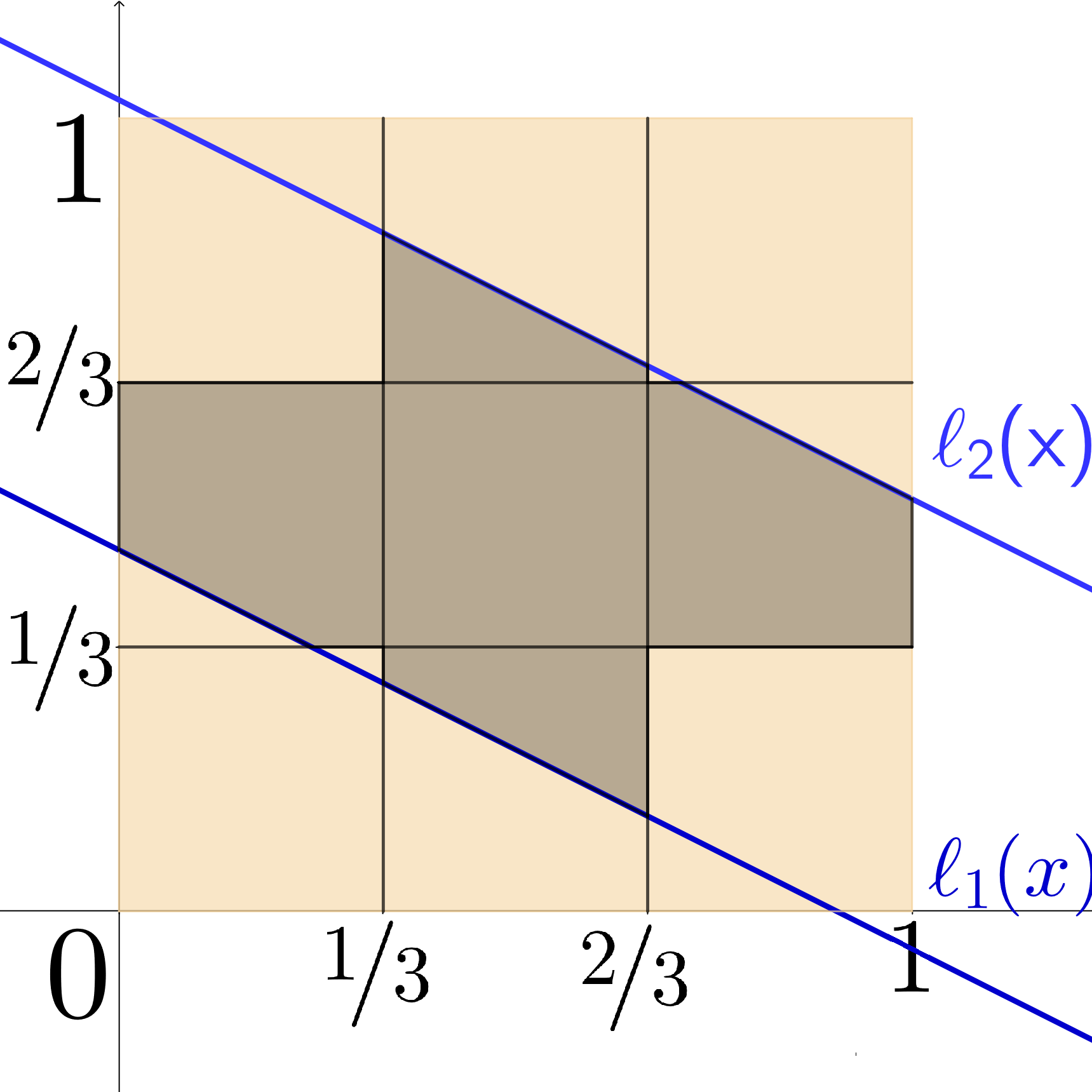}
  \subcaption{}\label{w85}
\endminipage\hfill
\minipage{0.25\textwidth}
  \includegraphics[width=\linewidth]{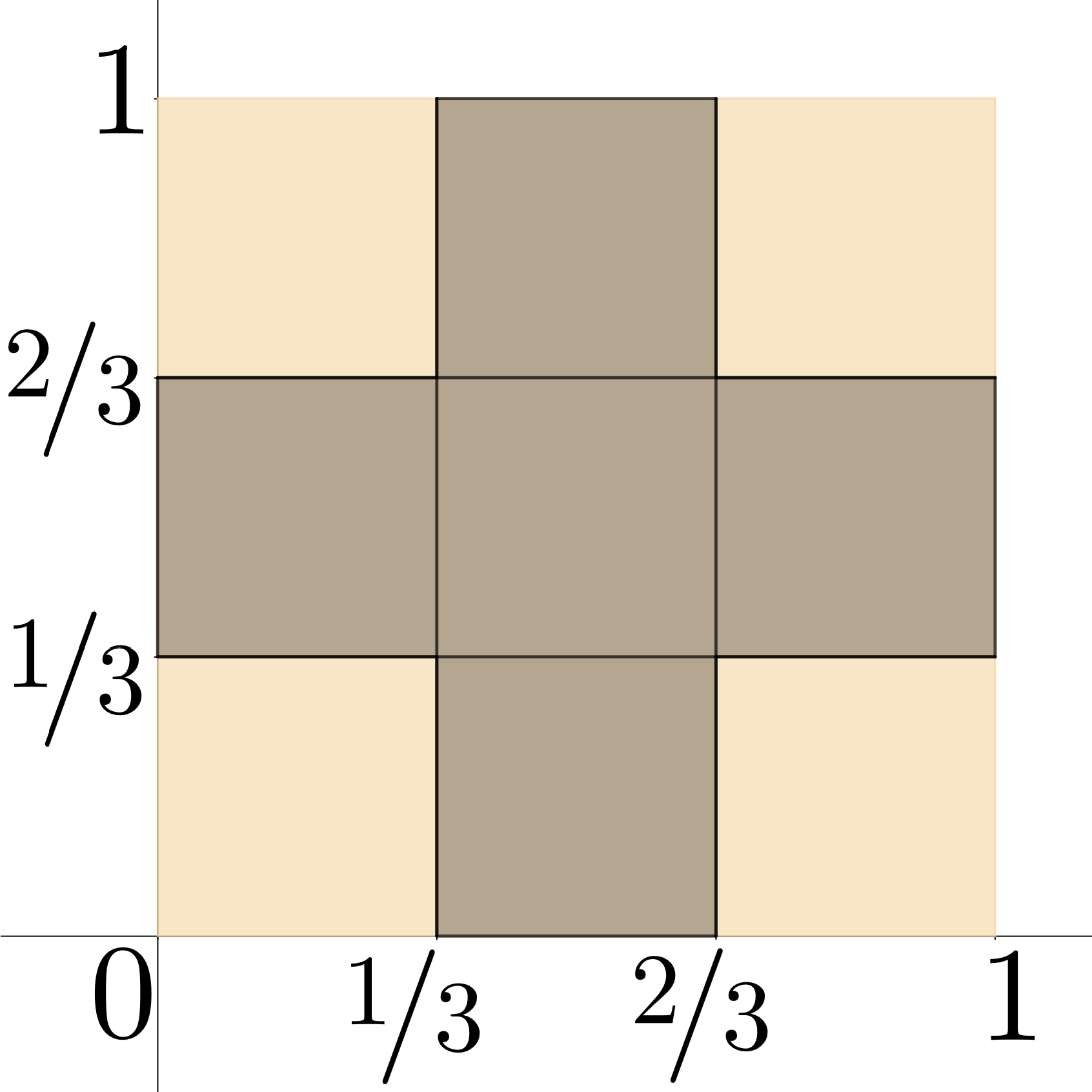}
  \subcaption{}\label{w84}
\endminipage\hfill
\minipage{0.25\textwidth}
  \includegraphics[width=\linewidth]{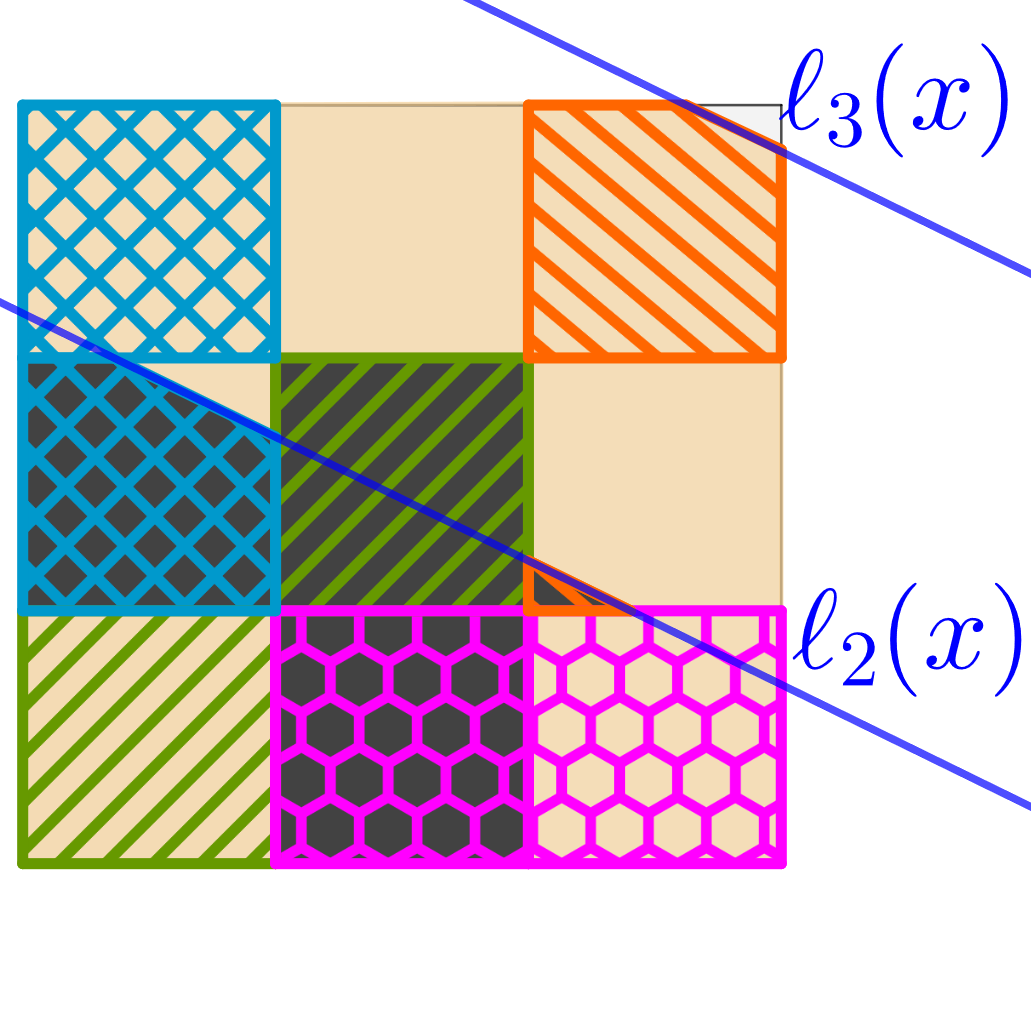}
   \subcaption{}\label{w83}
\endminipage\hfill
\caption{Figures for Fact \ref{w89} and \ref{w87}. In the last three figures the lighter color denotes the good and the darker the bad part of the unit square respectively.}
\end{figure}
\begin{fact}\label{w87}
If $\frac{1}{3}\leq c\leq 1$, then $\widetilde{H}(a,b,c)\geq0$.
\end{fact}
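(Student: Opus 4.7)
The plan is to organise the argument around the observation that when $c \geq \tfrac{1}{3}$, the plane $S(a,b,c)$ lies entirely in $\{z \geq 1/3\}$ inside the unit cube, so the bottom layer of deleted cubes contributes nothing. Since the slope of every $\ell_i$ equals $-a/b \leq 0$ and both $\ell_0(0) = -c/b \leq 0$ and $\ell_1(0) = (\tfrac{1}{3}-c)/b \leq 0$, the lines $\ell_0$ and $\ell_1$ are non-positive on $[0,1]$ and never enter the open unit square. Hence only $\ell_2$ and $\ell_3$ can be relevant, and the only candidates for producing bad area are the five middle-layer and the single top-layer deleted cubes.

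I split the range $c \in [\tfrac{1}{3},1]$ into $c \in [\tfrac{2}{3},1]$ and $c \in [\tfrac{1}{3}, \tfrac{2}{3}]$. For the easier sub-range $c \in [\tfrac{2}{3},1]$, also $\ell_2(0) = (\tfrac{2}{3}-c)/b \leq 0$, so the plane is confined to the top layer and only the central top cube $(\tfrac{1}{3},\tfrac{1}{3},\tfrac{2}{3})$ produces bad area, projecting to $Q_{1,1}$. Write $\Pi := \{(x,y) \in [0,1]^2 : ax+by \leq 1-c\}$ for the plane's $(x,y)$-projection. The key monotonicity observation is: if $\Pi \cap Q_{1,1} \neq \emptyset$, then picking any $(x_0,y_0) \in \Pi \cap Q_{1,1}$ and using $x_0, y_0 \geq \tfrac{1}{3}$ together with $a,b \geq 0$, every $(x,y) \in Q_{0,0}$ satisfies $ax+by \leq ax_0 + by_0 \leq 1-c$, hence $Q_{0,0} \subset \Pi$. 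This gives $\widetilde{F}(a,b,c) \geq \tfrac{1}{9} + \mathcal{L}eb_2(\Pi \cap Q_{1,1})$, and since the bad area is at most $\mathcal{L}eb_2(Q_{1,1}) = \tfrac{1}{9}$, the ratio bad/good is bounded by $\tfrac{1}{2} < \tfrac{5}{9}$, yielding $\widetilde{H}(a,b,c) \geq 0$.

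For the harder sub-range $c \in [\tfrac{1}{3}, \tfrac{2}{3}]$, I decompose $\Pi = \Pi_{\mathrm{mid}} \cup \Pi_{\mathrm{top}}$ according to the $z$-layer of the plane, where $\Pi_{\mathrm{mid}} := \{ax+by \leq \tfrac{2}{3}-c\} \cap [0,1]^2$ corresponds to $z \in [\tfrac{1}{3}, \tfrac{2}{3}]$ and $\Pi_{\mathrm{top}}$ to $z \in [\tfrac{2}{3}, 1]$. With $P := Q_{0,1}\cup Q_{1,0}\cup Q_{1,1}\cup Q_{2,1}\cup Q_{1,2}$ denoting the plus-shape of the five middle-layer bad squares, the bad area equals $\mathcal{L}eb_2(\Pi_{\mathrm{mid}} \cap P) + \mathcal{L}eb_2(\Pi_{\mathrm{top}} \cap Q_{1,1})$. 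I then carry out a case analysis based on which of the level-1 squares $Q_{i,j}$ the lines $\ell_2$ and $\ell_3$ meet, in each sub-case re-using the same monotonicity argument to trade bad contributions in $P$ against good contributions in the non-bad corner squares $Q_{0,0}, Q_{0,2}, Q_{2,0}, Q_{2,2}$. The extremal configuration is $a = b = 0$, where $\Pi_{\mathrm{mid}} = [0,1]^2$ and the bad area equals $\mathcal{L}eb_2(P) = \tfrac{5}{9} = \tfrac{5}{9}\widetilde{F}(a,b,c)$, producing equality in \eqref{w68}.

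The main obstacle is this second sub-range: one cannot separately bound the middle- and top-layer bad-to-good ratios by $\tfrac{5}{9}$, because $\mathcal{L}eb_2(\Pi_{\mathrm{top}} \cap Q_{1,1})/\mathcal{L}eb_2(\Pi_{\mathrm{top}})$ can approach $1$ when $\Pi_{\mathrm{top}}$ is a thin strip inside $Q_{1,1}$. The compensation is that whenever $\Pi_{\mathrm{top}}$ concentrates in $Q_{1,1}$, the middle slice $\Pi_{\mathrm{mid}}$ is forced to extend well into the non-bad square $Q_{0,0}$ by the same monotonicity; careful bookkeeping of this trade-off across the geometric sub-cases is the core of the proof.
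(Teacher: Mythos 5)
Your split into the sub-ranges $c\in[\tfrac{2}{3},1]$ and $c\in[\tfrac{1}{3},\tfrac{2}{3}]$ is a legitimate alternative to the paper's route, which instead argues that (for fixed $\ell_3$) the configuration only gets worse as $c$ decreases and thereby reduces the whole Fact to the single slice $c=\tfrac{1}{3}$, where it proves a precise pairing of bad and good level-1 squares (Claim~\ref{w57}). Your treatment of the easy sub-range $c\in[\tfrac{2}{3},1]$ is correct and self-contained: only the top central cube can contribute bad area, and the monotonicity observation (a point in $\Pi\cap Q_{1,1}$ forces $Q_{0,0}\subset\Pi$) gives
\[
\widetilde H=\tfrac{5}{9}\,(\mathrm{good}+\mathrm{bad})-\mathrm{bad}=\tfrac{5}{9}\,\mathrm{good}-\tfrac{4}{9}\,\mathrm{bad}\ \geq\ \tfrac{5}{9}\cdot\tfrac{1}{9}-\tfrac{4}{9}\cdot\tfrac{1}{9}>0.
\]

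The gap is that the sub-range $c\in[\tfrac{1}{3},\tfrac{2}{3}]$ — which carries essentially all of the difficulty, since this is where equality $\widetilde H=0$ is attained at $a=b=0$ — is left as a plan rather than a proof. You correctly identify the obstacle (the top-layer ratio $\mathcal{L}eb_2(\Pi_{\mathrm{top}}\cap Q_{1,1})/\mathcal{L}eb_2(\Pi_{\mathrm{top}})$ can approach $1$, so one cannot bound the two layers separately) and the compensating mechanism, but ``careful bookkeeping of this trade-off across the geometric sub-cases'' is exactly the step that needs to be carried out, and it is not a one-liner: the middle-layer bad contributions from all five squares of the plus-shape $P$ need to be individually compensated by distinct corner squares (not just $Q_{0,0}$), and this pairing must be verified to hold uniformly in $(a,b,c)$. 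This is precisely the content of the paper's Claim~\ref{w57} and its five bullet points, each relying on a specific inequality among the intercepts $\ell_i(0), \ell_i(\tfrac{1}{3}), \ell_i(\tfrac{2}{3}), \ell_i(1)$. Without that case-by-case verification (or an equivalent), the proof of the Fact on the range $c\in[\tfrac{1}{3},\tfrac{2}{3}]$ is not complete. You should also handle the degenerate case $b=0$ (hence $a=b=0$) separately, since the lines $\ell_i$ are undefined there; the paper disposes of it at the outset.
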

\begin{proof}
Whenever $a+b+c\leq 1$ we are done by Fact \ref{w89}. Hence, we can assume, that $a+b+c>1$. In this case $\ell_0([0,1]),\ell_1([0,1])\leq 0$. 
This follows from $c\geq\frac{1}{3}$ and 
\eqref{w58}.
For any such $S(a,b,c)$ plane if we fix the line $\ell_3(x)$ it is easy to see that the worst case scenario is when $c=\frac{1}{3}$, since for a fixed $\ell_3(x)$ the function $\widetilde{F}(a,b,c)$ remains the same but the area of the bad part grows as we decrease c. 
Hence, we may assume, that $c=\frac{1}{3}$ and we know that $\ell_3(1)<1$ (because  $a+b+c>1$) hence $a+b>\frac{2}{3}$.
So, it completes the proof of Fact \ref{w87}
if we verify that the following Claim holds: 
\begin{claim}\label{w57}
If  $a+b+c>1$ and $c=\frac{1}{3}$ then 
 for every level-$1$ square
$Q_{i,j}$ we can find another level-$1$ square 
$Q_{i',j'}$ such that the area of the good part of 
$Q_{i',j'}$ is greater than or equal to the 
area of the bad part of $Q_{i,j}$. Moreover, distinct $(i',j')$ correspond to distinct $(i,j)$.    
\end{claim}
Namely, this Claim implies that the area of the bad part of the unit square is smaller than the area of its good part and 
hence, 
 $\widetilde{H}(a,b,c)>0$.

The proof of Claim \ref{w57} will be given below, but first we remark that this proof if  illustrated 
 on Figure \textsc{\ref{w83}}. The light background regions on Figure \textsc{\ref{w83}} refer to the good parts and the dark background regions refer to the bad parts. Figure \textsc{\ref{w83}} indicates that the  bad region of every level-$1$ square  is compensated by an identically patterned good region of a corresponding level $1$ square having at least as large area as the corresponding bad region.
 
\begin{proof}[Proof of Claim \ref{w57}]\ 
    \begin{itemize}
    \item $Q_{1,2}$ does not have a bad part in this case, because $a+b>\frac{2}{3}$ and $b>\frac{1}{3}$ hence $a+2b>1$ thus $\ell_2(\frac{1}{3})=\frac{1-a}{3b}<\frac{2}{3}$. All of the lines $\ell_{i}(x)$ has negative slope, hence this shows that all of the points of $\ell_2(x)$ is under the square $Q_{1,2}$.
    \item The bad part of $Q_{1,1}$ is compensated by the good part of $Q_{0,0}$, since $\ell_3(0) \geq \ell_3\left(\frac{1}{3}\right)-\frac{1}{3}$ and hence the line segment $\ell_3([0,\frac{1}{3}])$ is higher relative to the line $y=0$ than the segment $\ell_3(\left[\frac{1}{3}, \frac{2}{3}\right])$ to the line $y=\frac{1}{3}$. This part is illustrated by green lines on Figure \textsc{\ref{w83}}.
    \item The area of the bad part of $Q_{1,0}$ is smaller than the area of the good part of $Q_{2,0}$ since $\ell_{3}(\frac{2}{3}) \geq \ell_{2}(\frac{1}{3})$ and $\ell_2(x)$ and $\ell_3(x)$ has the same slope. This part is illustrated by pink honeycombs on Figure \textsc{\ref{w83}}.
    \item The area of the bad part of $Q_{2,1}$ is at most as much as the area of the good part of $Q_{2,2}$. This follows from the fact that $\ell_3(\frac{2}{3})-\frac{2}{3} \geq \ell_2(\frac{1}{3})-\frac{1}{3}$. This part is illustrated by orange lines on Figure \textsc{\ref{w83}}.
    \item The area of the bad part of $Q_{0,1}$ is at most as much as the area of the good part of $Q_{0,2}$. This follows from the fact that $\ell_3(0)-\frac{2}{3} \geq \ell_2(0)-\frac{1}{3}$. This part is illustrated by blue lines (crossed) on Figure \textsc{\ref{w83}}.
\end{itemize}
\end{proof}
As we mentioned above, Claim \ref{w57} implies that the assertion of Fact \ref{w87} holds. 
\end{proof}

\begin{fact}\label{w65}
If $a+b+c\leq\frac{2}{3}$, then $\widetilde{H}(a,b,c)\geq 0$.
\end{fact}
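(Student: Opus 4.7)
The plan is to exploit the central symmetry of $Q\setminus M_1$ to reduce Fact \ref{w65} to the already established Fact \ref{w87}. The key observation is that the point reflection $\sigma(x,y,z):=(1-x,1-y,1-z)$ is an isometry of $Q$ which also preserves $Q\setminus M_1$: indeed, the induced map $(u,v,w)\mapsto(\tfrac{2}{3}-u,\tfrac{2}{3}-v,\tfrac{2}{3}-w)$ on the lower-left corners of the level-$1$ subcubes is an involution on the seven-element set $\mathcal{A}$ of \eqref{w94} (direct check, with $(\tfrac{1}{3},\tfrac{1}{3},\tfrac{1}{3})$ as its unique fixed point). Moreover, $\sigma$ sends $S(a,b,c)$ to $S(a,b,\,1-a-b-c)$, and the planar reflection $(x,y)\mapsto(1-x,1-y)$ that $\sigma$ induces under $\text{proj}_{x,y}$ is area-preserving on $[0,1]^2$; together these yield the symmetry
\[
\widetilde{F}(a,b,c)=\widetilde{F}(a,b,\,1-a-b-c).
\]

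Applying this summand by summand in \eqref{w49} and simultaneously reindexing the sum over $\mathcal{A}$ via the involution above gives the main identity
\[
\widetilde{H}(a,b,c)=\widetilde{H}(a,b,\,1-a-b-c).
\]
At the level of the algebra in $g$, this comes down to observing that, for each $(u,v,w)\in\mathcal{A}$, the third coordinates of $g(a,b,1-a-b-c,u,v,w)$ and $g(a,b,c,\,\tfrac{2}{3}-u,\tfrac{2}{3}-v,\tfrac{2}{3}-w)$ are related by the involution $c''\leftrightarrow 1-a-b-c''$, to which $\widetilde{F}$ is insensitive.

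To finish, suppose $a+b+c\le\tfrac{2}{3}$. If $a+b+c<0$, then $\max_{[0,1]^3}(ax+by+c)=a+b+c<0$, so $S(a,b,c)$ lies strictly below $Q$ and below every excluded subcube; every $\widetilde{F}$ appearing in \eqref{w49} vanishes and $\widetilde{H}(a,b,c)=0$. Otherwise $a+b+c\in[0,\tfrac{2}{3}]$, and hence $c':=1-a-b-c\in[\tfrac{1}{3},1]$. Fact \ref{w87} (whose ambient hypothesis $0\le a\le b\le 1$ is unchanged) applies to $(a,b,c')$ and gives $\widetilde{H}(a,b,c')\ge 0$; combining with the symmetry identity above, $\widetilde{H}(a,b,c)=\widetilde{H}(a,b,c')\ge 0$, as required. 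The only genuinely non-routine step is the symmetry identity, whose heart is the self-duality of $\mathcal{A}$ under central reflection of the unit cube; everything else is routine book-keeping.
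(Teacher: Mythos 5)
Your proof is correct and follows exactly the route the paper takes: the paper's proof of this fact reads in its entirety ``This is immediate from Facts \ref{w66} and \ref{w87},'' and you have unpacked precisely what ``immediate'' means, namely that the composite of the three axis reflections in Fact \ref{w66} is the central point reflection $\sigma(x,y,z)=(1-x,1-y,1-z)$, which sends $S(a,b,c)$ to $S(a,b,1-a-b-c)$ and acts as an involution on $\mathcal{A}$, yielding the identity $\widetilde{H}(a,b,c)=\widetilde{H}(a,b,1-a-b-c)$ and hence the reduction to Fact \ref{w87}. The computations you supply (the involution on $\mathcal{A}$ and the third-coordinate book-keeping for $g$) are exactly the details the paper leaves implicit, so this is the same argument made explicit rather than a different approach.
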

\begin{proof}
This is immediate from Facts \ref{w66} and \ref{w87}.
\end{proof}
\begin{corollary}\label{w79}
If $a+b\leq\frac{2}{3}$, then $\widetilde{H}(a,b,c)\geq 0$.
\end{corollary}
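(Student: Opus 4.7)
The plan is to cover the full admissible range of $c$ by stitching together the three Facts just established: Fact \ref{w65}, Fact \ref{w89}, and Fact \ref{w87}. Under the standing assumption $0\leq a\leq b\leq 1$, the plane $S(a,b,c)$ meets $Q$ only for $c\in[-(a+b),1]$ (by \eqref{w54}), so it suffices to verify $\widetilde{H}(a,b,c)\geq 0$ on this interval under the extra hypothesis $a+b\leq\tfrac{2}{3}$. The key observation is that this extra hypothesis forces $\tfrac{2}{3}-(a+b)\geq 0$ and $1-(a+b)\geq\tfrac{1}{3}$, which is exactly what is needed for the three Facts to overlap and jointly cover $[-(a+b),1]$.

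More precisely, I would split into three cases according to the value of $c$. First, if $c\leq\tfrac{2}{3}-(a+b)$, then $a+b+c\leq\tfrac{2}{3}$ and Fact \ref{w65} applies directly. Second, if $\tfrac{2}{3}-(a+b)<c<\tfrac{1}{3}$, then because $a+b\leq\tfrac{2}{3}$ we have $c>\tfrac{2}{3}-(a+b)\geq 0$ and simultaneously $a+b+c<(a+b)+\tfrac{1}{3}\leq 1$, so the hypotheses of Fact \ref{w89} are fulfilled. Third, if $c\geq\tfrac{1}{3}$, then we are in the range of Fact \ref{w87} (noting $c\leq 1$ automatically). In each case $\widetilde{H}(a,b,c)\geq 0$ follows, and together the three subintervals exhaust $[-(a+b),1]$.

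There is essentially no obstacle here: the Corollary is a bookkeeping consequence of the three Facts already proved, the only thing to check being that the endpoints $c=\tfrac{2}{3}-(a+b)$ and $c=\tfrac{1}{3}$ are included in at least one case (which they are, by the non-strict inequalities in Facts \ref{w65} and \ref{w87}). The only place where care is warranted is confirming that Fact \ref{w89}'s strict inequality $c>0$ holds on the middle subinterval; this is precisely where the hypothesis $a+b\leq\tfrac{2}{3}$ is used to guarantee $\tfrac{2}{3}-(a+b)\geq 0$, so that any $c>\tfrac{2}{3}-(a+b)$ is automatically positive.
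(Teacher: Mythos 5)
Your proposal is correct and follows essentially the same route as the paper: it is a case decomposition on $c$ that stitches together Facts \ref{w65}, \ref{w89} and \ref{w87} to cover $[-(a+b),1]$. The paper's own split is at $c=0$ and $c=\tfrac{1}{3}$ (with a redundant sub-split of $[\tfrac{1}{3},1]$ into $a+b<1-c$ and $a+b\geq 1-c$, even though Fact \ref{w87} already covers the whole interval $[\tfrac{1}{3},1]$ unconditionally), whereas you split at $c=\tfrac{2}{3}-(a+b)$ and $c=\tfrac{1}{3}$, which avoids that redundancy and makes the role of the hypothesis $a+b\leq\tfrac{2}{3}$ explicit in the middle range; either decomposition works.
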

\begin{proof}
Whenever 
\begin{description}
    \item [$0<c\leq\frac{1}{3}$] we are done by Fact \ref{w89};
    \item [$\frac{1}{3}\leq c\leq 1$], then 
    \begin{description}
        \item [$a+b<1-c$] we are done by Fact \ref{w89},
        \item[$a+b \geq 1-c$] we are done by Fact \ref{w87};
    \end{description}
    \item [$c\leq 0$] we are done by Fact \ref{w65}.
\end{description}

\end{proof}
\begin{fact}\label{w72}
If $a<\frac{1}{3}$, then $\widetilde{H}(a,b,c)\geq 0$.
\end{fact}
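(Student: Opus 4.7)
The plan is to reduce the parameter region by applying the earlier Facts and the central symmetry of $Q\setminus M_1$, and then to carry out a pairing argument in the spirit of Claim \ref{w57}.

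First, under the standing hypotheses $0\le a\le b\le 1$ and $-(a+b)\le c\le 1$, the hypothesis $a<\tfrac13$ combined with Facts \ref{w87} ($c\ge\tfrac13$), \ref{w89} ($c>0$ and $a+b+c\le 1$), and \ref{w65} ($a+b+c\le\tfrac23$) already covers most of the domain. What remains is the residual region
\[
\mathcal{R}\;=\;\bigl\{(a,b,c):\ a<\tfrac13,\ 0\le a\le b\le 1,\ -(a+b)\le c<\tfrac13,\ a+b+c>\tfrac23,\ \bigl(c\le 0\ \text{or}\ a+b+c>1\bigr)\bigr\}.
\]

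Second, I would invoke the central symmetry $(x,y,z)\mapsto(1-x,1-y,1-z)$, which fixes both $Q$ and $Q\setminus M_1$ (by Fact \ref{w66}) and therefore gives the identity
\[
\widetilde H(a,b,c)\;=\;\widetilde H\bigl(a,b,\,1-a-b-c\bigr).
\]
A direct check shows that the involution $c\mapsto 1-a-b-c$ carries the portion of $\mathcal{R}$ with $c\le 0$ either into the portion with $0<c'<\tfrac13$ (when $a+b\le 1$) or into the range $c'\ge\tfrac13$ already handled by Fact \ref{w87} (when $a+b>1$ and $c\in[-(a+b),\tfrac23-a-b]$), possibly leaving a sliver on which the involution preserves $\{c\le 0\}$ and which is handled symmetrically by the same pairing below. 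It therefore suffices to prove $\widetilde H\ge 0$ on
\[
\mathcal{R}'\;=\;\{(a,b,c)\in\mathcal{R}:\ 0<c<\tfrac13,\ a+b+c>1\}.
\]

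Third, on $\mathcal{R}'$ the hypotheses $a<\tfrac13$ and $c<\tfrac13$ force $b>1-a-c>\tfrac13$, and the common slope $-a/b$ of the level lines $\ell_i$ from \eqref{w58} satisfies $|{-a/b}|<\tfrac12$. Because $c>0$, the line $\ell_0$ lies below $[0,1]^2$, so the bottom row of level-$1$ squares has no bad part; because $a+b+c>1$, the line $\ell_3$ enters the top-right corner, trimming a small region out of $[0,1]^2$. I would then mimic the proof of Claim \ref{w57}: for each level-$1$ square $Q_{i,j}$ whose bad part (sandwiched between two consecutive $\ell_k$) is non-trivial, I exhibit an adjacent level-$1$ square whose good part — lying in the same horizontal band and obtained by a translation by $(\pm\tfrac13,0)$ or $(0,\pm\tfrac13)$ — has at least the same area, and I check that these pairings can be chosen injectively. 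Every such pairing reduces to a monotonicity inequality of the form $\ell_k(x)-\ell_k(x+\tfrac13)\ge 0$ or $\ell_{k+1}(x)-\ell_k(x)=\tfrac{1}{3b}>0$, both of which follow from $-1<-a/b\le 0$.

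The main obstacle will be the geometric bookkeeping. Unlike in Fact \ref{w87}, where reducing to $c=\tfrac13$ pinned the line $\ell_3$ in a single configuration, here $c$ sweeps the open interval $(0,\tfrac13)$ and each of $\ell_1$, $\ell_2$, $\ell_3$ may exit a given level-$1$ square through either its right edge or its top edge. This forces the pairing argument to branch into several geometric sub-configurations — one per combination of exit edges — with a separate diagram needed for each. In every branch, however, the underlying analytic inequality reduces to the same kind of one-dimensional monotonicity statement for a level line $\ell_k$, so the branching is a matter of enumeration rather than of genuinely new ideas.
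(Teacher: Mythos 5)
Your overall strategy — restrict the domain using Facts \ref{w87}, \ref{w89}, \ref{w65}, then use the involution $c\mapsto 1-a-b-c$ to shrink the region further, and finally run a pairing argument in the spirit of Claim \ref{w57} — differs from the paper's proof, which parametrizes by $h_0=-c/b$, runs a systematic case analysis on $h_0\in[0,\tfrac13],[\tfrac13,\tfrac23),[\tfrac23,1),[1,\infty)$ (i.e.\ $c\le 0$), and uses symmetry only for the residual $c>0$ case. Your route has two genuine gaps.

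First, the reduction to $\mathcal{R}'$ is not complete. You acknowledge a sliver on which the involution preserves $\{c\le 0\}$, but assert it is ``handled symmetrically by the same pairing below.'' That does not hold: the sliver is $\{a<\tfrac13,\ a+b>1,\ 1-a-b\le c\le 0\}$, a non-empty open set (e.g.\ $(a,b,c)=(0.2,1,-0.1)$, which is even a fixed point of the involution), and it lies entirely in $\{c\le 0\}$. Your pairing argument in the third step is set up under $c>0$ — you explicitly use ``Because $c>0$, the line $\ell_0$ lies below $[0,1]^2$'' — so it cannot apply, even ``symmetrically,'' on a region where $c\le 0$ and $\ell_0$ may cut through the unit square. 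In the paper's organization this sliver is automatically inside Cases I--III ($h_0\in[0,1)$), which is why the paper's symmetry reduction is one-sided (only $c>0$ is reflected to $c<0$) and avoids this issue; your reflection goes in the opposite direction and leaves the fixed locus uncovered.

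Second, the geometric claim that on $\mathcal{R}'$ ``the bottom row of level-$1$ squares has no bad part'' is false. The bad part of $Q_{1,0}$ lies between $\ell_1$ and $\ell_2$, not between $\ell_0$ and $\ell_1$, and $\ell_1(0)=(\tfrac13-c)/b>0$ whenever $c<\tfrac13$; for instance with $(a,b,c)=(0.1,0.9,0.05)\in\mathcal{R}'$ one has $\ell_1([\tfrac13,\tfrac23])\subset(0,\tfrac13)$, so $Q_{1,0}$ has a non-trivial bad part. (This is exactly why the paper's Claim \ref{w57} is stated only for $c=\tfrac13$, where $\ell_0$ \emph{and} $\ell_1$ both lie below the square.) Consequently the starting configuration of your pairing is misidentified, and the number of sub-configurations you would actually face on $\mathcal{R}'$ is larger than the Claim \ref{w57} template suggests. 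Similarly, the asserted slope bound $|{-a/b}|<\tfrac12$ is not implied by $a<\tfrac13$, $b>1-a-c$ (take $a=0.3,\,b=0.4,\,c=0.32$), though you do not seem to rely on it later. To repair the proposal you would need either to cover the sliver directly (which is essentially what the paper's Cases I--III do) or to choose a one-sided reflection, as the paper does, and carry out the pairing for $c\le 0$.
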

\begin{proof}
Let $h_i:=\ell_i(0)$ and $\widehat{h}_i:=\ell_i(1)$. Since $\text{dist}(h_i, h_{i+1})=\frac{1}{3b}$ and $b\leq 1$ it follows, that $\text{dist}(h_i, h_{i+1})\geq \frac{1}{3}$, hence the following cases are possible:
\begin{enumerate}[label*=\Roman*.]
    \item  $h_0\in \left[0,\frac{1}{3}\right]$ 
    \begin{enumerate}[label*=\alph*.]
        \item $h_1 \in \left[\frac{1}{3}, \frac{2}{3}\right)$
        \begin{enumerate}[label*=\arabic*.]
            \item  \label{w77} $h_2 \in \left[\frac{2}{3}, 1\right)$
            \item \label{u32} $h_2 \geq 1$
        \end{enumerate}
        \item \label{u31} $h_1 \in \left[\frac{2}{3},1\right)$ and $h_2 \geq 1$
        \item \label{u30} $h_1 \geq1$ and $h_2 > 1$
    \end{enumerate}
    \item \label{u29} $h_0 \in \left[\frac{1}{3}, \frac{2}{3}\right)$
    \begin{enumerate}[label*=\alph*.]
        \item \label{u28} $h_1\in \left[\frac{2}{3}, 1\right)$ and $h_2\geq 1$
        \item \label{w76} $h_1 \geq 1$ and $h_2>1$
    \end{enumerate}
    \item \label{u27} $h_0 \in \left[\frac{2}{3}, 1\right)$ and $h_1 \geq 1$ and $h_2>1$
    \item \label{u26} $h_0\geq 1$
    \item \label{w78} $h_0 < 0$.
\end{enumerate}
The proof of 
Case \ref{w78} follows from Fact \ref{w66}, 
Fact \ref{w89}  and from Cases I-IV. We mainly use two methods for the proof.

One is to estimate the area in the case when $a=0$ i.e. the slope of the lines $\ell_i$ equals zero, then we upper bound the growth of the area of the bad part and lower bound the shrinkage of the area of the good part to get a "worst case scenario" fraction of the bad and the total area. We use this method in case of  \ref{w77}-\ref{u31}.

The other way is to prove that the good part of the unit square is at least as big as the bad part. We do this one-by-one, for each $Q_{i,j}$ that has bad part. We apply this method in case of \ref{u30}-\ref{u26}.

\textbf{Case \ref{w77}}: In this case $h_0\in \left[0,\frac{1}{3}\right], h_1 \in \left[\frac{1}{3}, \frac{2}{3}\right), h_2 \in \left[\frac{2}{3}, 1\right)$ and $h_3\geq 1$. 
We distinguish two cases: when $a=0$ or $a>0$.
\begin{description}
    \item[$a=0$]    Then
  $\ell_i(x)=h_i$ and $\widetilde{F}(a,b,c)=1-h_0$. $Q_{1,1}$ has only bad part since $h_0\leq \frac{1}{3}$ and $h_1\geq 1$, $Q_{1,0}$ has no bad parts because $h_1\geq \frac{1}{3}$. The area of the bad part of $Q_{0,1}$ and $Q_{2,1}$ together is $\frac{2}{3}\left(\frac{2}{3}-h_1\right)$ and the area of the bad part of $Q_{1,2}$ is $\frac{1}{3}\left(h_2-\frac{2}{3}\right)$. So, the bad area in $[0,1]^2$ is $1/3$ of the total area which is $\widetilde{F}(a,b,c)=1-h_0$.
    \item[$a>0$] In this case we distinguish  two further sub-cases. Namely, the first is when $\ell_3(1)\geq1$ (see Figure \textsc{\ref{w75}}) and the second one is when $\ell_3(1)<1$ (see Figure \textsc{\ref{w74}}). In both sub-cases the area of the bad part increases at most with the area of the triangle $(0,h_1), (1,h_1), (1,\widehat{h_1})$ intersected with $Q_{0,1}$ and $Q_{1,0}$ and $Q_{2,1}$ (see the light hatched area  on Figure \textsc{\ref{w74}}). This area is smaller than $\frac{a}{3b}$. 
    \begin{description}
        \item[$\ell _3(1)\geq 1$] Then the total area 
        (the area of the good and the bad part together)
        does not decrease. Rewriting $h_2=h_0+\frac{2}{3b}$ and $h_1=h_0+\frac{1}{3b}$ gives that $\widetilde{H}(a,b,c)\geq \frac{2}{9}(1-h_0)-\frac{a}{3b}$. It follows from  $h_2=\frac{\frac{2}{3}-c}{b}<1$, that $b+c>\frac{2}{3}$ and from $\frac{2}{3b}=h_2-h_0 \leq 1$, that $b>\frac{2}{3}$. Multiplying $\frac{2}{9}(1-h_0)-\frac{a}{3b}$ by $9b>0$ gives $2b+c-3a>\frac{1}{3}>0$. Thus,  $\widetilde{H}(a,b,c)>0$.
        \item[$\ell _3(1)< 1$] Then the total area  decreases. An easy calculation shows that the total area is $1-\frac{c^2}{2ab}-\frac{(a+b+c-1)^2}{2ab}$. Observe that $\ell_3(1)=-\frac{a}{b}+\frac{1-c}{b}<1$ implies that $a+b+c>1$. Using this and that $b>\frac{2}{3}$ a substantial but elementary calculation shows that in this case $\widetilde{H}(a,b,c)>0$.
      \end{description}
  \end{description}

\textbf{Case \ref{u32}}: In this case $\widehat{h}_3>1$ since $\widehat{h}_{3}>h_2>1$. The proof is very similar to the one of \ref{w77}.
\begin{description}
    \item[$a=0$] In this case the total area $\widetilde{F}(a,b,c)$ is $1-h_0$. From $h_1\geq \frac{1}{3}$ it follows that $Q_{1,0}$ does not have a bad part. Hence the bad parts and their areas are as follows:
    \begin{itemize}
        \item $Q_{1,1}$ and $Q_{1,2}$ consists only of bad parts, hence their bad area together is $\frac{2}{9}$,
        \item $Q_{0,1}$ and $Q_{2,1}$ have  bad parts over $h_1$, the are of their bad parts together is $\left(\frac{2}{3}-h_1\right)\cdot \frac{2}{3}$
    \end{itemize}
    Altogether the bad part has area $$ \frac{2}{3}\left(1-h_0\right)-\frac{2}{9b}.$$
    $2 b(1-h_0)<2$, hence $6b(1-h_0)<2+4b(1-h_0)$ thus $\frac{2}{3}-\frac{2}{9b(1-h_0)}<\frac{4}{9}$, hence the fraction of the bad area and the total is smaller than $\frac{4}{9}<\frac{5}{9}$.
    \item[$a>0$] When $a>0$, it is clear that the area bad area grows with at most $\frac{a}{3b}$ similarly to the previous case. The total area does not decrease, hence we can use the previous calculation to show that in this case the assertion holds.
\end{description}

\textbf{Case \ref{u31}}:
\begin{description}
\item[$a=0$] The total area is again $(1-h_0)$. It follows from $h_1>\frac{2}{3}$, that only $Q_{1,2}$ and $Q_{1,1}$ can have bad parts. It is easy to see, that $Q_{1,1}$ has only bad parts, and that the bad part of $Q_{1,2}$ has area $(1-h_1)\frac{1}{3}=\frac{1}{3}\left(1-h_0-\frac{1}{3b}\right)$. An easy calculation shows that in this case the fraction of the bad and the total area is smaller than $\frac{4}{9}$, hence we are done.
\item[$a>0$]Similarly to the earlier cases we consider the growth of the area if we increase $a$. The additional bad part is the intersection of $Q_{0,1}\cup Q_{2,1}\cup Q_{1,2}$ with the triangle which we get by intersecting the unit square with the area between $\ell_{1}$ and the line with height $h_1$. The area of this triangle is smaller than $\frac{a}{3b}$. The assertion follows if we rewrite the inequality $\frac{1}{9(1-h_0)}+\frac{1}{3}+\frac{3a-1}{9b(1-h_0)}<\frac{5}{9}$, getting 
$1+\frac{3a-1}{b}<2(1-h_0)$, which trivially holds, since $3a-1<0$, hence the left hand side is smaller than 1, and the right hand side is greater than 1 by $1-h_0>\frac{2}{3}$
\end{description}

\textbf{Case \ref{u30}}:
This case is almost identical to \ref{w76}, only in that case the total area might be smaller because $h_0$ is greater. For this reason instead of proving this statement we present the proof for Case \ref{w76} later because that might seem more complicated.

\textbf{Case \ref{u28}}
Observe that from $\ell_{1}(1)\geq h_0\geq \frac{1}{3}$ it follows that $Q_{1,0}$ does not have bad part.
\begin{description}
    \item[$Q_{1,2}$] The bad parts of it are compensated by the good parts of $Q_{0,2}$. This is because $Q_{0,2}$ only has good parts, because $\ell_{3}(x)\cap [0,1]>1$ and $\ell_{0}(0)\leq \frac{2}{3}$.
    \item[$Q_{1,1}$] $Q_{2,2}$ only has good parts, this can be verified in the same way as in the case of $Q_{0,2}$. Thus the bad part of $Q_{1,1}$ is compensated by $Q_{2,2}$.
    \item[$Q_{0,1}$]Since $b\leq 1$, $\ell_1(x)-\ell_0(x)=\frac{1}{3b}\geq  \frac{1}{3}$. Consequently the area of $Q_{0,0}$ above the line $\ell_{0}(x)$ (i.e. the good part of $Q_{0,0}$) is greater than the part of $Q_{0,1}$ above $\ell_{1}(x)$ (i.e. the bad part of $Q_{0,1}$). It follows, that the area of the bad part of $Q_{0,1}$ is smaller than the area of the good part of $Q_{0,0}$.  
    \item[$Q_{2,1}$] By almost exactly the same reasoning as in the previous case the area of the bad part of $Q_{2,1}$ is smaller than the good part of $Q_{2,1}$.
\end{description}
\textbf{Case \ref{w76}}: In this case $h_0\in \left[\frac{1}{3}, \frac{2}{3}\right), h_1\geq 1, h_2, h_3 >1$. Note that $a<\frac{1}{3}$ yields $\ell_3([0,1])>h_1\geq 1$ and since $h_0<\frac{2}{3}$ the squares $Q_{0,1}$ and $Q_{2,2}$ has only good parts. It follows from $a<\frac{1}{3}$ that $\ell_1([0,1])>h_0\geq \frac{1}{3}$ and $h_1\geq 1$. Potentially only $Q_{1,1}, Q_{1,2}$ and $Q_{2,1}$ have bad parts. For the illustration of the following pairing see Figure \textsc{\ref{w73}}, where the bad parts are denoted by the darker, the good parts are denoted by the lighter color, the pairs has the same pattern and color.
\begin{description}
    \item[$Q_{1,1}$] Since the area of the bad part of $Q_{1,1}$ is at most $\frac{1}{9}$, and the area of $Q_{2,2}$ is $\frac{1}{9}$ the area of the bad part of $Q_{1,1}$ is compensated by the area of the good part of $Q_{2,2}$.
    \item[$Q_{1,2}$] The same holds for the bad area of $Q_{1,2}$ and the good area of $Q_{0,2}$.
    \item [$Q_{2,1}$] $Q_{2,1}$ has bad part only in the case, when $\widehat{h_1}<\frac{2}{3}$. The good part of $Q_{2,0}$ which is above $\ell_0(x)$ and the bad part of $Q_{2,1}$ which is above $\ell_1(x)$ and since $\ell_1(x)$ and $\ell_0(x)$ has the same slope, it is enough to show that $\frac{1}{3}-\widehat{h}_0>\frac{2}{3}-\widehat{h}_1$. Since $\widehat{h}_1= \widehat{h}_0+\frac{1}{3b}$ the inequality reduces to $1>b$, which is true by our assumptions, hence the area of the good part of $Q_{2,0}$ is greater than the area of the bad part of $Q_{2,1}$.
\end{description}

\textbf{Case \ref{u27}}
It is clear that only $Q_{1,1}$
 and $Q_{1,2}$ can have bad parts, since $\ell_{1}\cap[0,1]\geq h_0\geq \frac{2}{3}$.
 \begin{description}
     \item[$Q_{1,1}$] Whenever $Q_{1,1}$ has bad parts, $\ell_{0}(\frac{2}{3})\leq\frac{2}{3}$, then the whole $Q_{2,2}$ is good. Thus it compensates for the bad parts of $Q_{1,1}$.
     \item[$Q_{1,2}$] Since $\frac{1-a}{3b}>0$ it is easy to see that the area of the good part of $Q_{0,2}$ is greater than the area of the bad part of $Q_{1,2}$.
 \end{description}
\textbf{Case \ref{u26}}
In this case only $Q_{1,1}$ can have bad part, and if it happens then the whole $Q_{2,2}$ is good, hence the bad area of $Q_{1,1}$ is compensated.

\begin{figure}[h!]
\minipage{0.33\textwidth}
  \includegraphics[width=\linewidth]{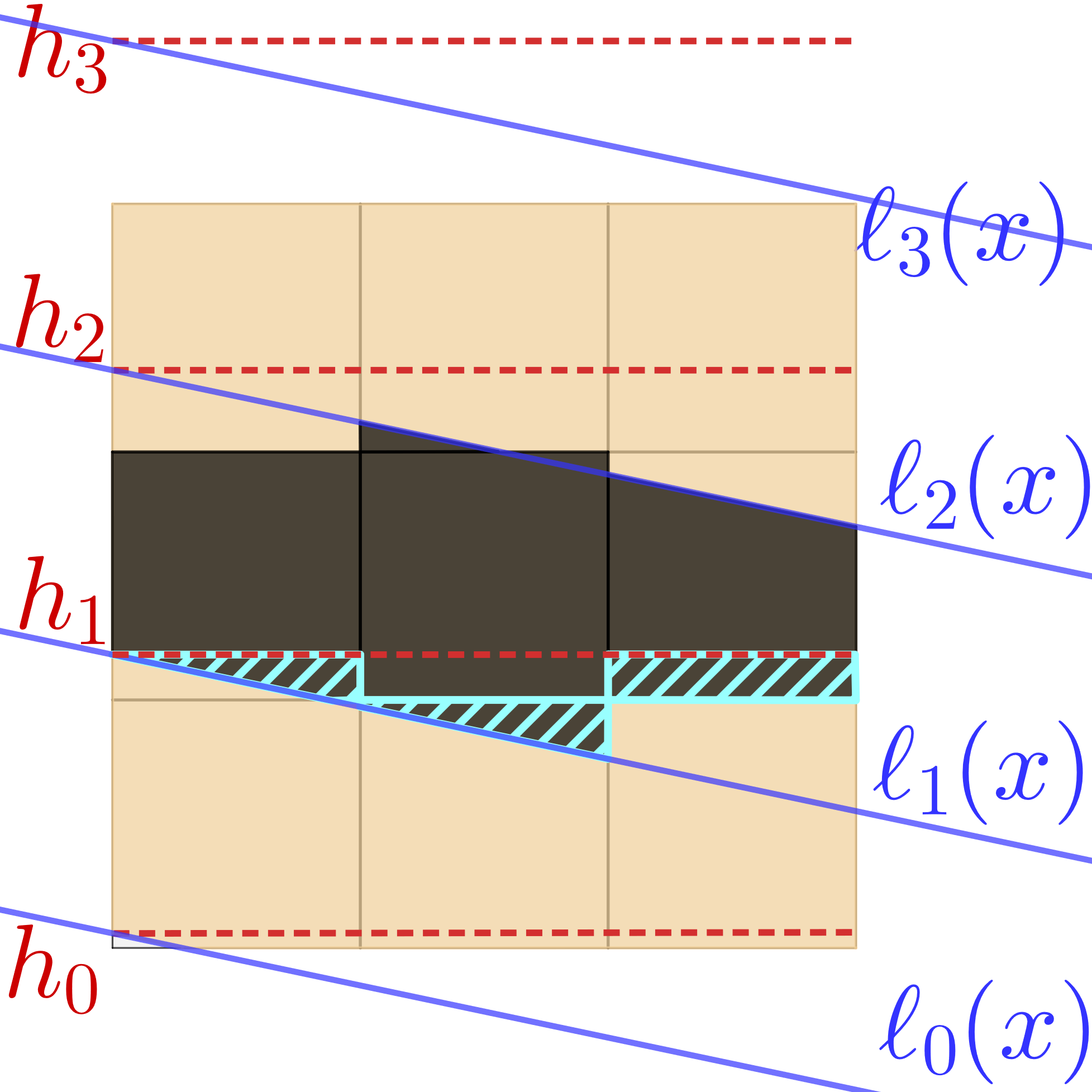}
  \subcaption{}\label{w75}
\endminipage\hfill
\minipage{0.33\textwidth}
  \includegraphics[width=\linewidth]{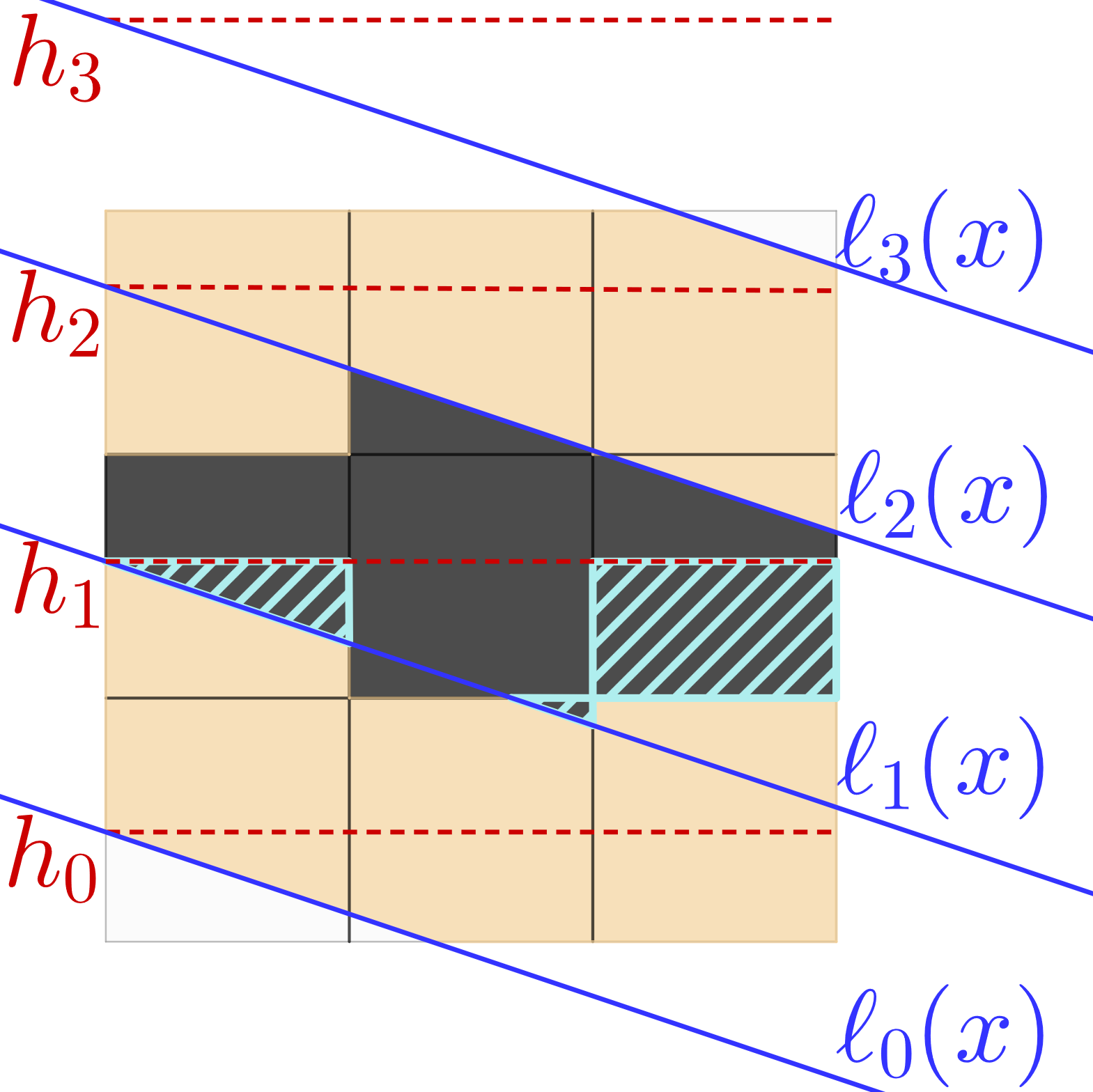}
  \subcaption{}\label{w74}
\endminipage\hfill
\minipage{0.33\textwidth}
  \includegraphics[width=\linewidth]{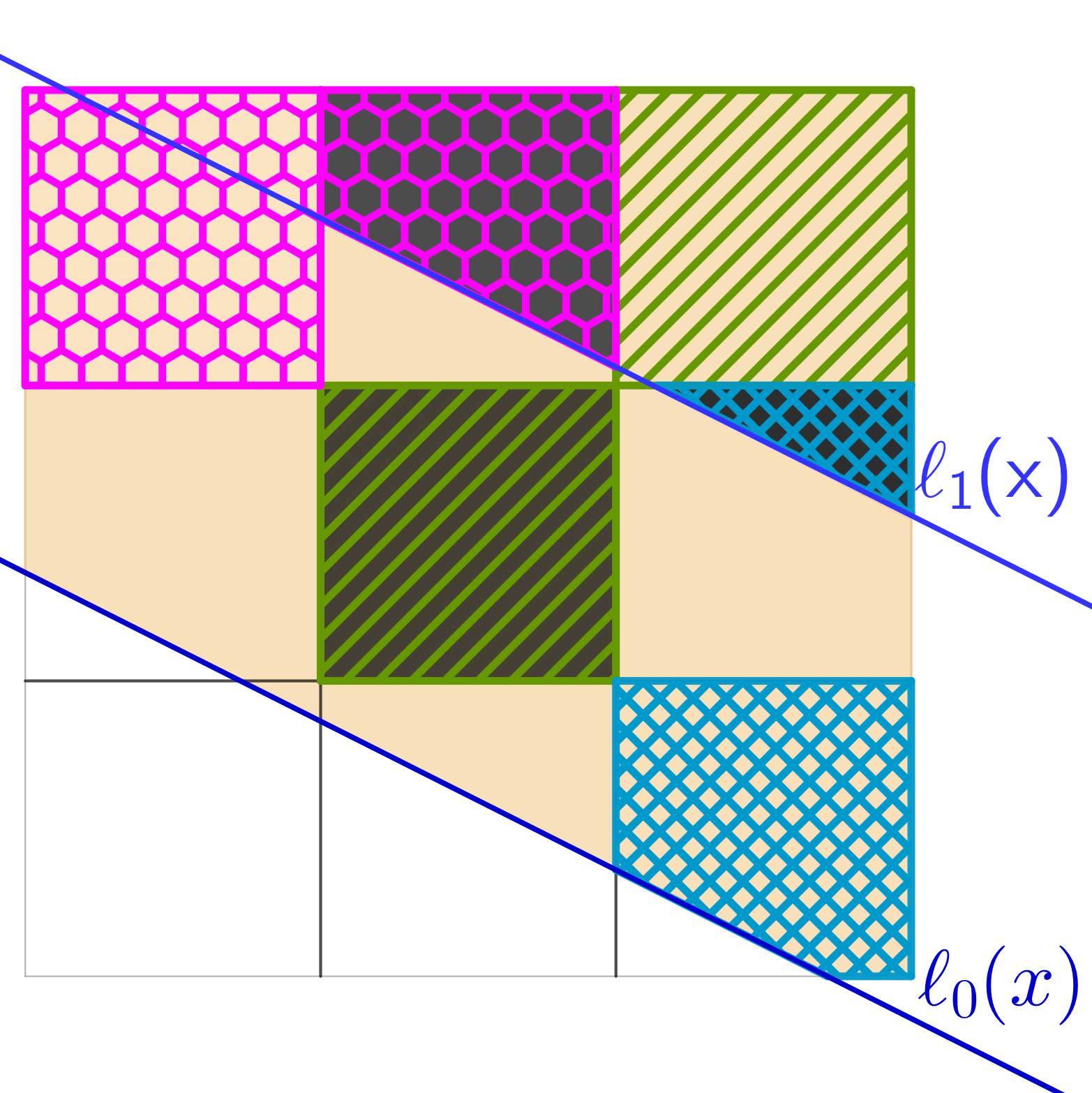}
  \subcaption{}\label{w73}
\endminipage\hfill
\caption{Figures for Fact \ref{w72}. In the first two figures the red lines denotes the height of $h_i$, the blue lines denotes $\ell_i(x)$.}
\end{figure}
\end{proof}
\subsubsection{Lipschitz-constants for the function $\widetilde{F}(a,b,c)$}\label{w60}
Putting together Facts \ref{w72} and \ref{w47}
from now we may always assume that 
\begin{equation}
\label{w46}
\frac{1}{3}\leq a\leq b\leq 1.
\end{equation}
 Since the gradient of the renormalized plane remains the same these assumptions holds when instead of $\widetilde{F}$ we consider $\widetilde{F}\circ g$ (g was defined in \eqref{w93}).
For a fix $(a,b)$
 satisfying \eqref{w46} consider the plane $S(a,b,c)$. This plane can intersect the unit cube $[0,1]^3$
 in eight different ways depending on $c$. The different ways of intersections yields different formulas for $\widetilde{F}(a,b,c)$.

 We estimate the Lipschitz constant from above  by the sup-norm of the gradient of $\widetilde{F}$, which can be calculated using elementary calculus. Table \ref{w71} contains the results of the calculation, and some figures to visualize  the different cases.
\begin{table}[]
\begin{tabular}{llllll}
\hline
  Name & Figure1 												  & Figure2 													 & Conditions                                                                             & $\widetilde{F}(a,b,c)$          & Lipsch.                    \\ \hline
{\footnotesize 0} &		    												  & \includegraphics[height=1.1cm]{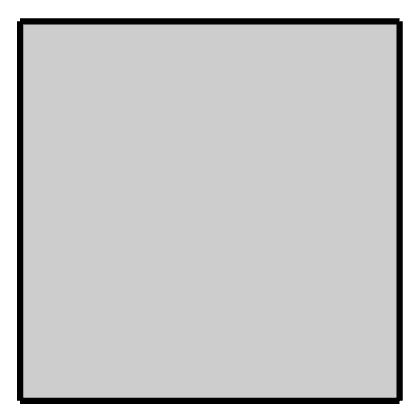} & \begin{tabular}[c]{@{}l@{}}$c\geq1$ \\ or\\ $a+b+c\leq 0$\end{tabular}                 & 0                               & 0                     \\ \hline
{\footnotesize 1} &	\includegraphics[height=1.2cm]{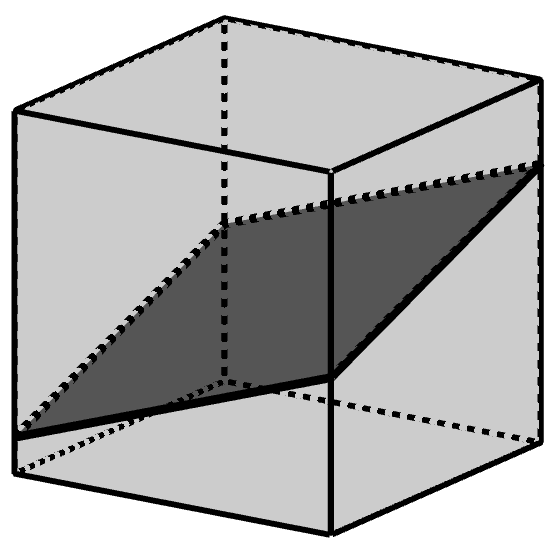} & \includegraphics[height=1.1cm]{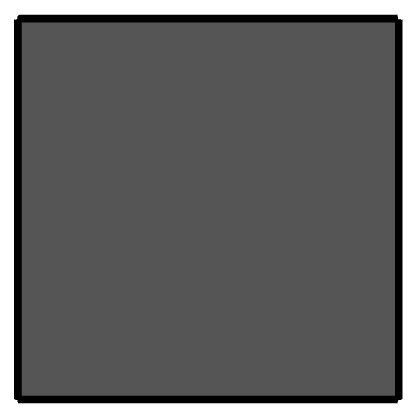} & \begin{tabular}[c]{@{}l@{}}$c \geq 0$ \\ and\\ $a+b+c \leq 1$\end{tabular}             & 1                               & 0                     \\ \hline
{\footnotesize 2} &	\includegraphics[height=1.2cm]{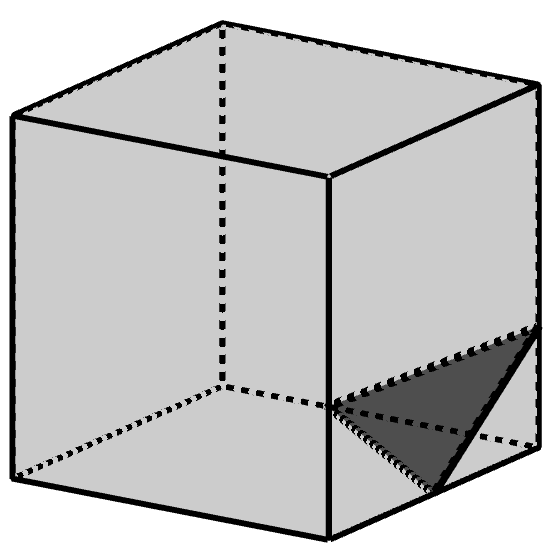} & \includegraphics[height=1.1cm]{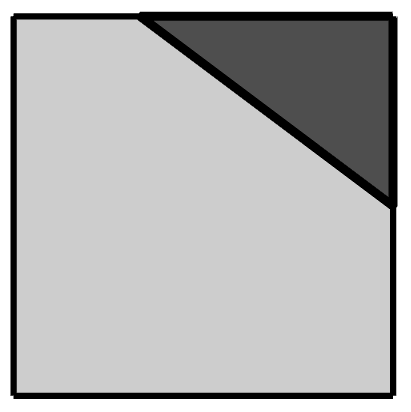} & $-(a+b) \leq c \leq b$                                                                 & $\frac{(a+b+c)^2}{2 a b}$       & $\frac{3\sqrt{3}}{2}$ \\ \hline
{\footnotesize 3} &	\includegraphics[height=1.2cm]{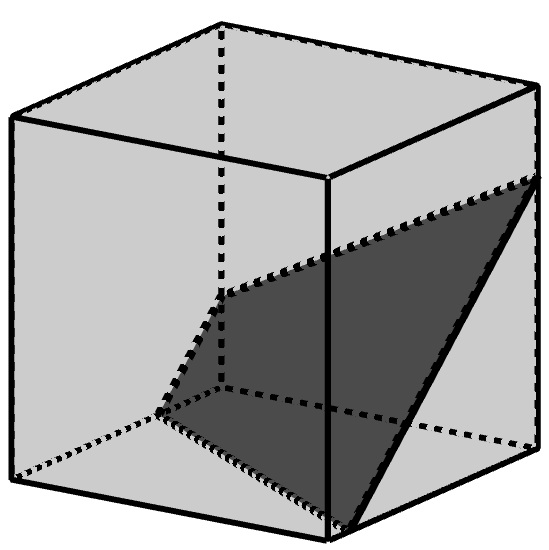} & \includegraphics[height=1.1cm]{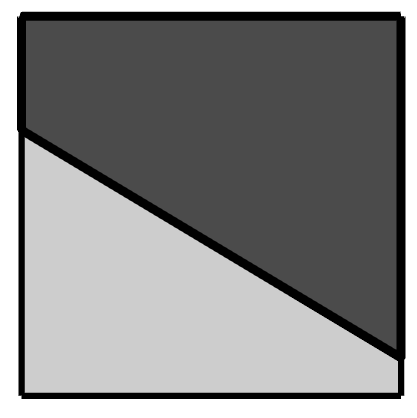} & $-b\leq c\leq -a$                                                                      & $\frac{a+2b+2c}{2b}$            & $\frac{3\sqrt{3}}{2}$ \\ \hline
{\footnotesize 4} &	\includegraphics[height=1.2cm]{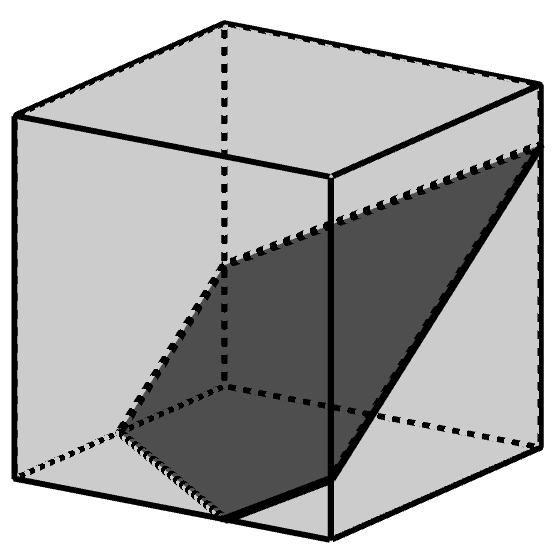} & \includegraphics[height=1.1cm]{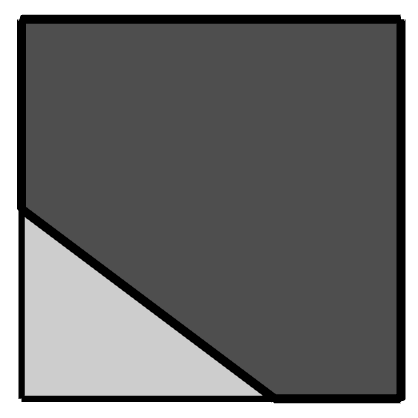} & \begin{tabular}[c]{@{}l@{}}$-a\leq c$ \\ and\\ $c\leq \min\left\{0,1-(a+b)\right\}$\end{tabular}  & $1-\frac{c^2}{2ab}$             & $\frac{3\sqrt{3}}{2}$ \\ \hline
{\footnotesize 5} &	\includegraphics[height=1.2cm]{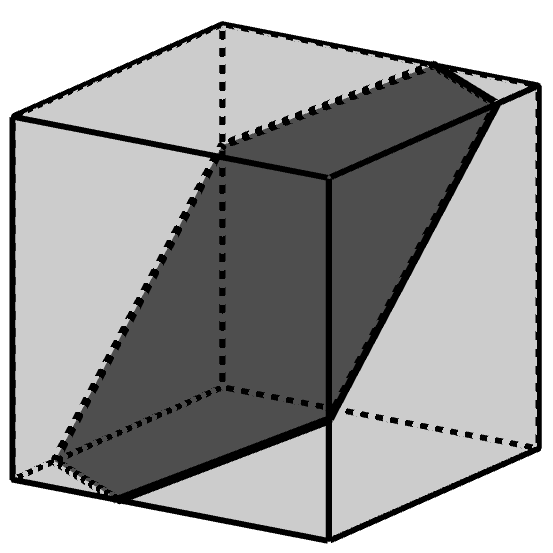} & \includegraphics[height=1.1cm]{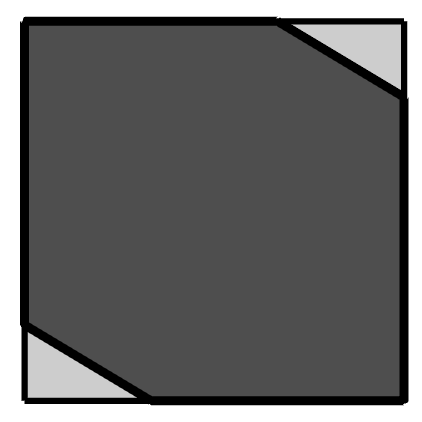} & \begin{tabular}[c]{@{}l@{}}$1\leq a+b$\\ and\\ $1-(a+b)\leq c\leq 0$\end{tabular}      & $1-\frac{c^2+(a+b+c-1)^2}{2ab}$ & $\frac{131}{54}$      \\ \hline
{\footnotesize 6} &	\includegraphics[height=1.2cm]{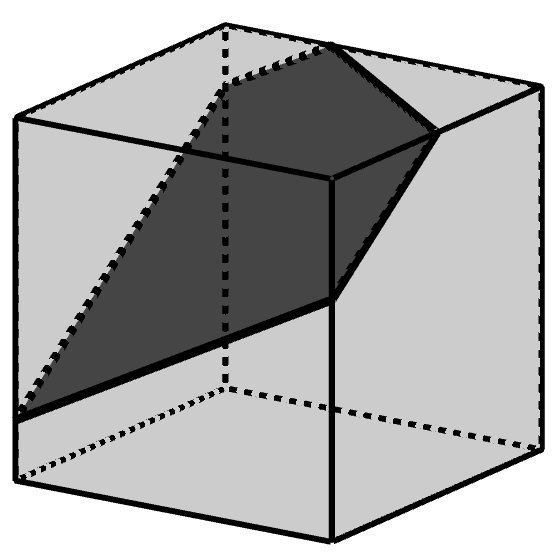} & \includegraphics[height=1.1cm]{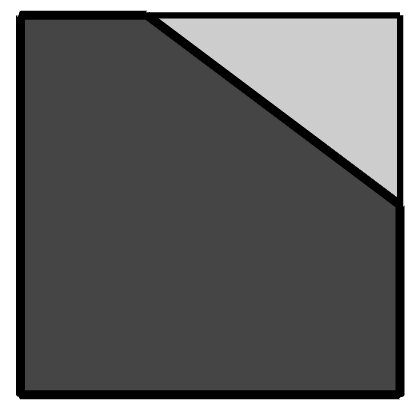} & \begin{tabular}[c]{@{}l@{}}$\max\left\{0,1-(a+b)\right\}\leq c$ \\ and\\ $c\leq 1-b$\end{tabular} & $1-\frac{(a+b+c-1)^2}{2ab}$     & $\frac{3\sqrt{3}}{2}$ \\ \hline
{\footnotesize 7} &	\includegraphics[height=1.2cm]{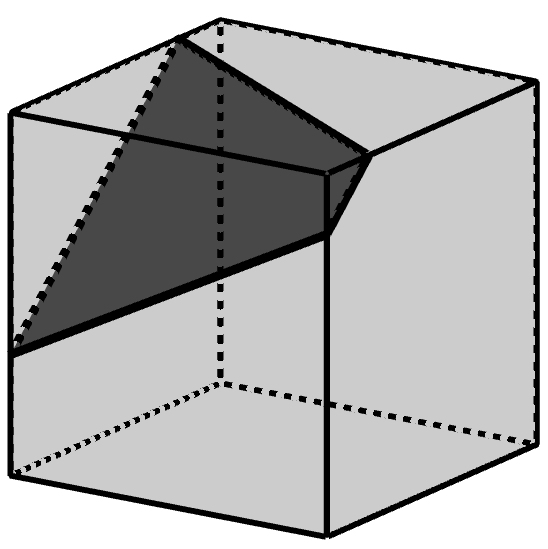} & \includegraphics[height=1.1cm]{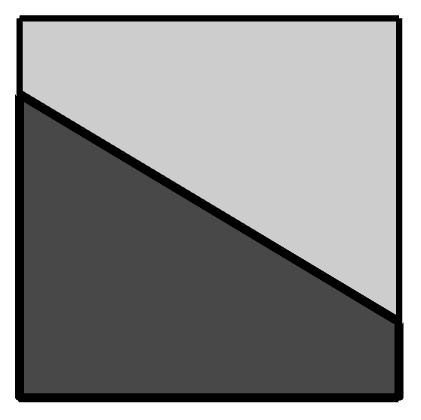} & $1-b\leq c\leq 1-a$                                                                    & $\frac{2-2c-a}{2b}$             & $\frac{3\sqrt{3}}{2}$ \\ \hline
{\footnotesize 8} &	\includegraphics[height=1.2cm]{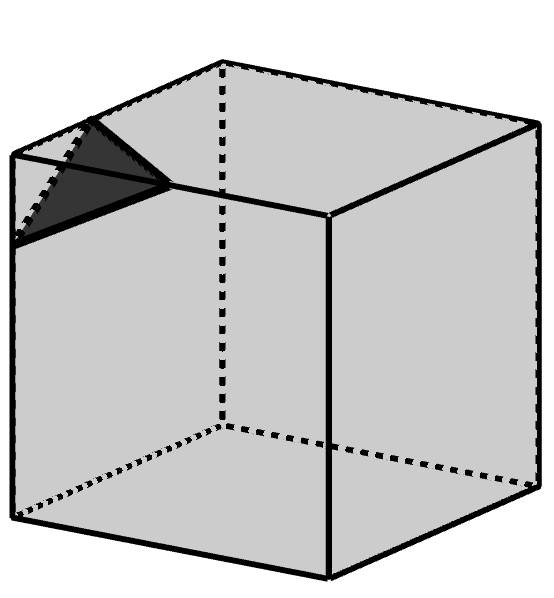} & \includegraphics[height=1.1cm]{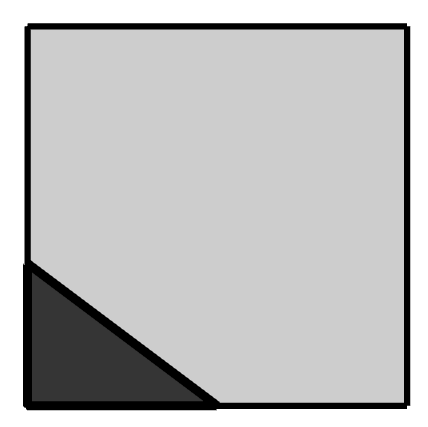} & $1-a\leq c\leq 1$                                                                      & $\frac{(1-c)^2}{2ab}$           & $\frac{3\sqrt{3}}{2}$ \\ \hline
\end{tabular}
\caption{The first column contains the name (used in Figure \ref{w70}) of the case. The second column shows the way of the intersection of the plane and the unit cube, the third column shows the projection of the above. In the fourth -- conditions --  column, we describe the region of $(a,b,c)$ under which we are in the given case. The fifth column contains the value of the function $\widetilde{F}$ in the given case, and the last, Lipsch., column contains the upper bound for the Lipschitz constant in the given case.}
\label{w71}
\end{table}

\begin{figure}[!htb]
    \centering
    \includegraphics[width=\textwidth]{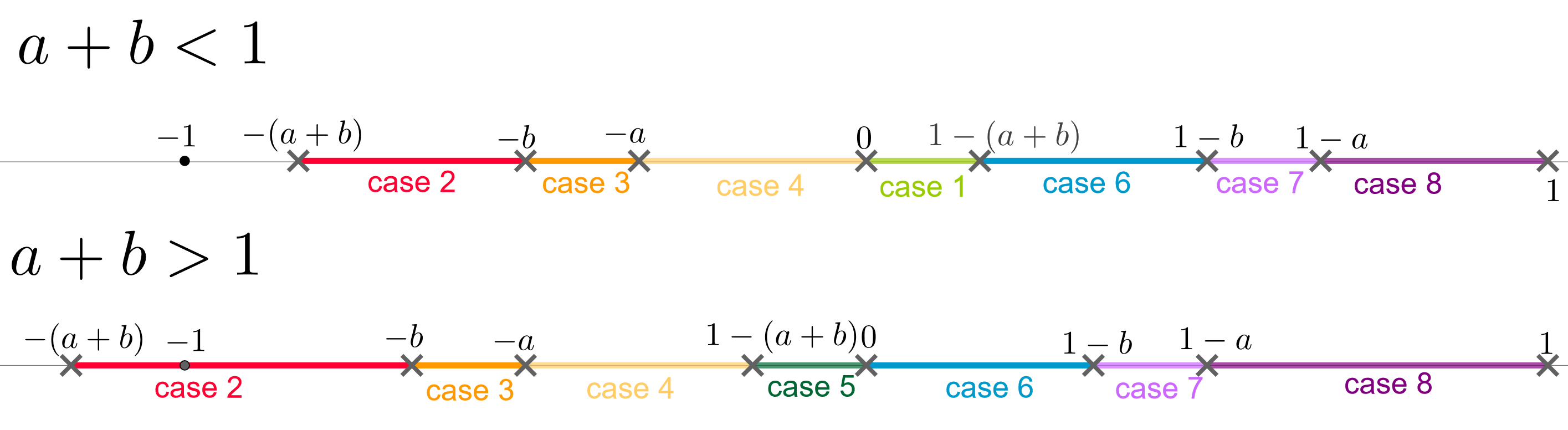}
    \caption{Figure for Table \ref{w71}. The two figures show that for a fixed $a$ and $b$ what case does the different values of $c$ give. The cases are named after the first column of Table \ref{w71}, e.g. when $-b\leq c \leq -a$ we are in case 3. In the upper figure we consider the case, when $a+b<1$, in the lower, when $a+b>1$.}
    \label{w70}
\end{figure}

Since according to the Table all the Lipschitz-constants are less then 4, 
\begin{equation}\label{w62}
    \left\|\widetilde{F}(a,b,c)-\widetilde{F}(\widehat{a}, \widehat{b}, \widehat{c})\right\|<4\left\|(a,b,c)-(\widehat{a}, \widehat{b}, \widehat{c})\right\|.
\end{equation}
Using that 
\begin{equation}
    A:= \nabla_{a,b,c} g(a,b,c,u,v,w)=\begin{bmatrix}
1 & 0 & 0\\
0 & 1 & 0 \\
3u & 3v & 3
\end{bmatrix}
\end{equation}
and that $\sup_{(u,v,w)\in \mathcal{A}}\|A\|<4$, it follows, that
\begin{multline}\label{w64}
    \left\|\sum_{(u,v,w)\in\mathcal{A}}\widetilde{F}(g(a,b,c,u,v,w))-\widetilde{F}(g(\widehat{a},\widehat{b},\widehat{c},u,v,w))\right\| \\
    < \sum_{(u,v,w)\in\mathcal{A}} 4 \cdot \left\|g(a,b,c,u,v,w)-g(\widehat{a},\widehat{b},\widehat{c},u,v,w)\right\| \\
    \leq |\mathcal{A}| \cdot 4 \cdot 4 \left\|(a,b,c)-(\widehat{a}, \widehat{b}, \widehat{c})\right\| 
    \leq 7 \cdot 4 \cdot 4\cdot \left\|(a,b,c)-(\widehat{a}, \widehat{b}, \widehat{c})\right\|.
\end{multline}
\begin{multline}
    \left\|\widetilde{H}(a,b,c)-\widetilde{H}(\widehat{a}, \widehat{b}, \widehat{c})\right\|\leq \frac{5}{9}\left\|\widetilde{F}(a,b,c)-\widetilde{F}(\widehat{a}, \widehat{b}, \widehat{c})\right\|\\
    +\frac{1}{9}\left\|\sum_{(u,v,w)\in\mathcal{A}}\widetilde{F}(g(a,b,c,u,v,w))-\widetilde{F}(g(\widehat{a},\widehat{b},\widehat{c},u,v,w))\right\| \\
    < \frac{44}{3} \left\|(a,b,c)-(\widehat{a}, \widehat{b}, \widehat{c})\right\|
    <15 \left\|(a,b,c)-(\widehat{a}, \widehat{b}, \widehat{c})\right\|,
\end{multline}
where the second inequality follows from the combination of 
\eqref{w62} and \eqref{w64}.
Hence, 
\begin{equation}\label{w45}
    \widetilde{H}(\widehat{a}, \widehat{b}, \widehat{c})>\widetilde{H}(a,b,c)-15 \left\|(a,b,c)-(\widehat{a}, \widehat{b}, \widehat{c})\right\|.
\end{equation}
\subsubsection{Numerical calculations using Wolfram Mathematica}\label{w50}
First we construct a grid $G$ 
depending on a given positive number $\widehat{d}$.
  Let $a_0=\frac{1}{3}$ and if $a_n$ is defined and $a_n+\widehat{d}\leq 1$, let $a_{n+1}=a_n+\widehat{d}$, otherwise $I=n$. For a given $i\in\left\{0,\dots,I\right\}$, let $b_{i,0}=a_i$ and if $b_{i,n}+\widehat{d}\leq 1$ then we define $b_{i, n+1}= b_{i,n}+\widehat{d}$, otherwise $J_i=n$. For a given $i\in\left\{0,\dots,I\right\}$, $j \in \left\{0,\dots, J_i\right\}$, let $c_{i,j,0}=\frac{2}{3}-(a+b) \leq 0$ and if $c_{i,j,n}$ is defined and $c_{i,j,n}+\widehat{d}\leq \frac{1}{3}$ then we put $c_{i,j,n+1}=c_{i,j,n}+\widehat{d}$ otherwise $K_{i,j}=n$. Now if we choose a point $(\widehat{a}, \widehat{b}, \widehat{c})$ such that $ \frac{1}{3}\leq \widehat{a}\leq 1$, $\widehat{a} \leq \widehat{b}\leq 1$ and $\frac{2}{3}-(\widehat{a}+\widehat{b})\leq \widehat{c}\leq \frac{1}{3}$, then there exists a grid point $(a_i, b_{i,j}, c_{i,j,k})\in G$, $i\in \left\{0\dots I\right\}, j \in \left\{0,\dots, J_i\right\}, k\in \left\{0, \dots, K_{i,j}\right\}$ such that $\|(\widehat{a}, \widehat{b}, \widehat{c})-(a_i, b_{i,j}, c_{i,j,k})\|<\sqrt{3}\widehat{d}$. Observe that 
  \eqref{w45}, yields the following implication: 
  If 
  $\widetilde{H}(a,b,c)\geq 15\cdot \sqrt{3}\cdot \widehat{d}$ holds for every $(a,b,c)\in G$
  then 
  we have
  $\widetilde{H}(a,b,c)\geq 0$ for every $(a,b,c)$ . 
  We choose $\widehat{d}=\frac{1}{500}$ and with Wolfram Mathematica we evaluate $\widetilde{H}$ at every grid point. 
  We obtain that
  the minimum of the results is $\frac{62509}{1125000}$. This is  greater than $15\cdot \sqrt{3}\cdot \frac{1}{500}$.
\section{Coin-tossing integer self-similar sets on the line}\label{y20}

We consider the IFS $\mathcal{F}=\{f_i\}_{i=0}^{M-1}$ defined in Example \ref{y28}. We would like to invoke the notation introduced in Definition \ref{y14}. To do so without loss of generality we may assume that the distinct elements of $\mathcal{F}$ are the first $m$ elements of $\mathcal{F}$. Using them we form a new IFS 
\begin{equation}
\label{y92}
\mathcal{S}:=
\left\{
    S_i(x):=\frac{1}{L}x+t_i
 \right\}_{i=0}^{m-1}.
\end{equation}
Moreover, also without loss of generality we may assume, that 
\begin{equation}
\label{y93}
L\geq 2,\ L\in\mathbb{N}, \text{ and }
t_0,\dots , t_m\in\mathbb{Z},
0= t_0< t_1 < \cdots < t_{m-1},\ L-1|t_{m-1}.
\end{equation}
Recall from \ref{u80} in Definition \ref{y14} that $n_j= \#\left\{f_{i}\in \mathcal{F}: f_{i}=S_{j}\right\}$.
From now on we denote the natural number (see \eqref{y93})  $\widetilde{n}:=\frac{t_{m-1}}{L-1}$. 
In this case the interval
\begin{equation}\label{y11}
    I:= \left[0,\widetilde{n}\cdot L\right]
\end{equation}
satisfies \eqref{y35}.
   \begin{lemma}[\cite{ruiz2009dimension}]\label{y90}
    \
  \begin{enumerate}
      \item For all $\ell \in[M], k\in [ N]$ there exist $i\in [N] $ and
$b\in [L] $ such that
$S_\ell (J^k)=J^i_{b}$.
\item $S_\ell (J^k)=J_{b}^{i }$ if and only if $S_\ell (J^k_{\underline{a}})=J_{b\underline{a}}^{i }$ for all $k\geq 0$, $\underline{a}\in [ L] ^k $.
\item If $S _{\ell }^{-1 }\left( J _{b\underline{a}}^{i } \right)\ne J _{\underline{ a}}^{k } $ holds for all
$k\in [ N] $ then $\nu \left( S _{\ell }^{ -1} (J _{b\underline{a}}^{i }) \right)=0$.
  \end{enumerate}
   \end{lemma}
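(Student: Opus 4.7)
The plan is to prove the three claims in order, exploiting the integer structure of $\mathcal{S}$ from \eqref{y93} together with the invariance $\nu=\sum_{j=0}^{m-1}q_{j}(S_{j})_{*}\nu$ of $\nu$.

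\textbf{Part (1).} Writing $J^k=[m_kL,(m_k+1)L]$ and $S_\ell(x)=(x+Lt_\ell)/L$, the image $S_\ell(J^k)=[m_k+t_\ell,m_k+t_\ell+1]$ is a unit interval with integer endpoints. Writing $m_k+t_\ell=qL+b$ with $b\in[L]$ shows that this image fits inside the unique $\mathcal{D}_{-1}$-cell $I:=[qL,(q+1)L]$ and coincides with its $b$-th level-$1$ subdivision. It remains to identify $I$ as a basic type. By invariance of $\nu$ we have $\nu(S_\ell(J^k))\geq q_\ell\cdot\nu(J^k)>0$, so $S_\ell(J^k)$ meets $\text{spt}(\nu)\subset\bigcup_{i\in[N]}J^i$. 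Since distinct $\mathcal{D}_{-1}$-cells are essentially disjoint and $S_\ell(J^k)\subset I$, this forces $\nu(I)>0$ and therefore $I=J^i$ for some $i\in[N]$, giving $S_\ell(J^k)=J^i_b$.

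\textbf{Part (2).} This is a formal consequence of $S_\ell$ being affine of ratio $1/L$. The map $S_\ell$ carries the $L$-adic subdivision tree on $J^k$ onto the $L$-adic subdivision tree on $S_\ell(J^k)$; if $S_\ell(J^k)=J^i_b$, then under this identification the descendant of $J^k$ with address $\underline{a}$ is sent to the descendant of $J^i_b$ with address $\underline{a}$, i.e.\ $S_\ell(J^k_{\underline{a}})=J^i_{b\underline{a}}$. A direct endpoint comparison confirms this. The ``only if'' direction follows by taking $\underline{a}$ to be the empty word.

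\textbf{Part (3).} By the same affine bookkeeping as in (2), $S_\ell^{-1}(J^i_{b\underline{a}})$ is the address-$\underline{a}$ subdivision of a uniquely determined $\mathcal{D}_{-1}$-cell $I^*$ (namely, the one whose left endpoint is $(m_iL+b-t_\ell)L$). If $I^*$ were a basic type $J^k$, then applying $S_\ell$ to the identity $S_\ell^{-1}(J^i_{b\underline{a}})=(I^*)_{\underline{a}}$ and comparing sub-addresses forces $S_\ell(J^k)=J^i_b$; Part (2) then gives $S_\ell^{-1}(J^i_{b\underline{a}})=J^k_{\underline{a}}$, contradicting the hypothesis. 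Hence $I^*\notin\{J^0,\dots,J^{N-1}\}$. Since the basic types are, by \eqref{u67}, the only $\mathcal{D}_{-1}$-cells intersecting $\text{spt}(\nu)$ in positive measure, we conclude $\nu(I^*)=0$, and therefore $\nu(S_\ell^{-1}(J^i_{b\underline{a}}))\leq\nu(I^*)=0$.

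The only conceptual step is Part~(1), where the invariance of $\nu$ promotes an a priori arbitrary $\mathcal{D}_{-1}$-cell $I$ to a basic type; the remaining parts are essentially bookkeeping about how the affine map $S_\ell$ interacts with $L$-adic subdivisions.
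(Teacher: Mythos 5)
The paper itself supplies no proof for this lemma --- it is imported verbatim from Ruiz's article \cite{ruiz2009dimension} --- so there is no in-paper argument to compare against; I will just assess correctness. Your proof is correct, and the structure (integer bookkeeping for Parts (2) and (3), plus the invariance $\nu=\sum_j q_j (S_j)_*\nu$ to upgrade the enclosing $\mathcal{D}_{-1}$-cell to a basic type in Part (1)) is the natural one. The one step you pass over silently in Part (1) is that $\nu(I)>0$ for a $\mathcal{D}_{-1}$-cell $I$ forces $I\in\{J^0,\dots,J^{N-1}\}$: from $\mathrm{spt}(\nu)\subset\bigcup_i J^i$ one only gets that $I\cap\mathrm{spt}(\nu)$ is contained in the finite set of shared endpoints unless $I$ is a basic type, so you need that $\nu$ is non-atomic (true here because $m\geq 2$ with positive weights, but worth saying). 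The same remark applies to your sentence in Part (3) asserting that the basic types are the only $\mathcal{D}_{-1}$-cells of positive $\nu$-measure. With that one-line addition the argument is complete.
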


\begin{corollary}
 For all $\underline{\ell}\in[m]^n$ and $k \in [N]$ there exist an $i \in [N]$ and $\underline{a} \in [L]^n$ such that 
$$
S_{\underline{\ell}}\left(J^k\right)=J^{i}_{\underline{a}}.
$$
\end{corollary}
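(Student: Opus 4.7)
The plan is a direct induction on the word length $n$, using the two parts of Lemma \ref{y90} as the base case and as the inductive mechanism, respectively. The base case $n = 1$ is exactly the content of Lemma \ref{y90}(1). For the inductive step, suppose the assertion holds for all words of length $n$, and let $\underline{\ell} = \ell_1, \ell_2, \dots, \ell_{n+1} \in [m]^{n+1}$. By the composition convention in \eqref{y33} we have $S_{\underline{\ell}} = S_{\ell_1} \circ S_{\ell_2, \dots, \ell_{n+1}}$, so the induction hypothesis applied to the length-$n$ suffix produces some $i' \in [N]$ and $\underline{a}' \in [L]^n$ with
\[
S_{\ell_2, \dots, \ell_{n+1}}(J^k) = J^{i'}_{\underline{a}'}.
\]
Next, apply Lemma \ref{y90}(1) to $S_{\ell_1}$ acting on the basic type $J^{i'}$ to obtain $i \in [N]$ and $b \in [L]$ with $S_{\ell_1}(J^{i'}) = J^{i}_{b}$. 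Lemma \ref{y90}(2), with $\underline{a}'$ inserted in the role of the $L$-adic sub-address, upgrades this to
\[
S_{\ell_1}\bigl(J^{i'}_{\underline{a}'}\bigr) = J^{i}_{b\underline{a}'}.
\]
Composing the two identities, $S_{\underline{\ell}}(J^k) = J^{i}_{b\underline{a}'}$, and choosing $\underline{a} := b\underline{a}' \in [L]^{n+1}$ closes the induction.

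There is no real obstacle here: the corollary is a pure bookkeeping consequence of Lemma \ref{y90}, with part (2) supplying exactly the compatibility needed to pass an $L$-adic sub-address through an additional application of some $S_{\ell}$. The only point to be careful about is the order of composition, so that the freshly produced digit $b$ is prepended to, rather than appended to, the old address $\underline{a}'$; this is consistent with the convention that the outermost map in $S_{\underline{\ell}}$ is $S_{\ell_1}$, which is applied last and hence refines the address on the left.
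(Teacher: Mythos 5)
Your proof is correct and follows exactly the route the paper indicates (which it leaves as ``follows from the first and second part of the previous Lemma and mathematical induction''): a base case from Lemma \ref{y90}(1) and an inductive step that peels off $S_{\ell_1}$, applies part (1) to the resulting basic type, and then uses part (2) to propagate the $L$-adic sub-address, prepending the new digit $b$. Your remark about the composition convention in \eqref{y33} is the right detail to be careful about, and you have handled it correctly.
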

\begin{proof}
 It follows from the first and second part of the previous Lemma (\ref{y90}) and mathematical induction.
\end{proof}
 It is easy to see, that the matrices are well-defined. Namely, for a given $\ell$ and $k$, for distinct $i_1$ and $i_2$ satisfying $S_{i_1}(J^k)=J^{\ell}_{a}= S_{i_2}(J^k)$, we have $t_{i_1}=t_{i_2}$, which is a contradiction, since by definition the translations differ.

Recall that the matrices $\left\{A_a\right\}_{a \in [L]}$ were defined in \eqref{y89}. Let 
\begin{equation}\label{y88}
    A:=\sum_{a \in [ L] }A_a .
\end{equation}
 The meaning of $A_{\underline{a}}(\ell, k)$ is simply the number of indices $(i_1, \dots, i_n)\in [M]^n$ such that $f_{i_1, \dots, i_n}(J^{k})=J^{\ell}_{\underline{a}}$, which is stated in the following lemma.
\begin{lemma}\label{u83}
For for any $n\in \N$, $n\geq 1$, $\underline{a} \in [L]^n$ and $\ell, k\in [N]$
\begin{equation}
     A_{\underline{a}}(\ell, k)= \#\left\{(i_1, \dots, i_n) \in [M]^n: f_{i_1, \dots, i_n}(J^k)=J_{\underline{a}}^{\ell}\right\}
\end{equation}
\end{lemma}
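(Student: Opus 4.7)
The plan is to argue by induction on $n$, using the matrix product structure in \eqref{u86} together with the decomposition properties of the $S_i$ on the basic-type intervals provided by Lemma \ref{y90}.

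For the base case $n=1$, fix $\ell,k\in[N]$ and $a\in[L]$. By definition \eqref{y89}, $A_a(\ell,k)=n_i$ if there is an index $i\in[m]$ with $S_i(J^k)=J^{\ell}_a$ (which is unique by injectivity of the translations, as remarked right before Lemma \ref{u83}), and otherwise $A_a(\ell,k)=0$. By the definition of $n_i$ in \eqref{u80}, this equals the number of $j\in[M]$ with $f_j=S_i$, which in turn equals $\#\{j\in[M]\colon f_j(J^k)=J^{\ell}_a\}$. This is exactly the right-hand side in the lemma for $n=1$.

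For the inductive step, assume the claim for some $n\ge 1$ and take $\underline{a}=(a_1,\dots,a_{n+1})\in[L]^{n+1}$ together with $\ell,k\in[N]$. Writing $\underline{a}'=(a_2,\dots,a_{n+1})$ and using \eqref{u86} together with the definition of matrix multiplication,
\begin{equation*}
A_{\underline{a}}(\ell,k)=\sum_{j=0}^{N-1}A_{a_1}(\ell,j)\,A_{\underline{a}'}(j,k).
\end{equation*}
By the inductive hypothesis, $A_{\underline{a}'}(j,k)$ counts tuples $(i_2,\dots,i_{n+1})\in[M]^n$ with $f_{i_2,\dots,i_{n+1}}(J^k)=J^{j}_{\underline{a}'}$, and by the base case $A_{a_1}(\ell,j)$ counts indices $i_1\in[M]$ with $f_{i_1}(J^j)=J^{\ell}_{a_1}$. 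For any such pair, part (2) of Lemma \ref{y90} gives $f_{i_1}(J^{j}_{\underline{a}'})=J^{\ell}_{a_1\underline{a}'}=J^{\ell}_{\underline{a}}$, and therefore $f_{i_1,\dots,i_{n+1}}(J^k)=J^{\ell}_{\underline{a}}$. So each term $A_{a_1}(\ell,j)\,A_{\underline{a}'}(j,k)$ contributes only tuples counted on the right-hand side, with different $j$ giving disjoint contributions (because the intermediate cylinder $f_{i_2,\dots,i_{n+1}}(J^k)$ determines $j$).

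Conversely, suppose $(i_1,\dots,i_{n+1})\in[M]^{n+1}$ satisfies $f_{i_1,\dots,i_{n+1}}(J^k)=J^{\ell}_{\underline{a}}$. Set $J':=f_{i_2,\dots,i_{n+1}}(J^k)$. By an iterated application of Lemma \ref{y90}(1)--(2), $J'$ is itself of the form $J'=J^{j}_{\underline{b}}$ for some unique $j\in[N]$ and some $\underline{b}\in[L]^n$, and the equation $f_{i_1}(J^{j}_{\underline{b}})=J^{\ell}_{\underline{a}}$ forces (again by Lemma \ref{y90}(2)) the identities $\underline{b}=\underline{a}'$ and $S_{i_1^\ast}(J^{j})=J^{\ell}_{a_1}$ for the unique $S_{i_1^\ast}$ equal to $f_{i_1}$. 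Hence the tuple is counted by exactly one term of the sum $\sum_j A_{a_1}(\ell,j)\,A_{\underline{a}'}(j,k)$. This establishes the bijection and completes the induction.

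I expect no serious obstacle here; the only care is in invoking Lemma \ref{y90}(2) to pass between an action of $f_{i_1}$ on a basic type $J^j$ and its action on all subdivisions $J^j_{\underline{b}}$, which is what ensures that the intermediate index $j$ is well-defined and that the partition of the right-hand count by $j$ matches the matrix product exactly.
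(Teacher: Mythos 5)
Your proposal is correct and takes the same inductive approach the paper indicates (the paper's proof is just the one-liner that the lemma follows by induction on $n$). You have simply supplied the details: the base case via the definitions \eqref{y89} and \eqref{u80}, and the inductive step by expanding the matrix product $A_{\underline{a}}=A_{a_1}A_{\underline{a}'}$ and invoking Lemma \ref{y90}(2) to pass from the action on basic types $J^j$ to the action on their subdivisions $J^j_{\underline{b}}$.
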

\begin{proof}
 Follows easily from mathematical induction on $n$.
\end{proof}
 
In what follows we denote 
\begin{equation}\label{y8}
    ||A||:= \underline{e}^T \cdot A \cdot \underline{e}, \quad \underline{e}^T=(1, \dots, 1)\in \R^N.
\end{equation}
Note that we have non-negative matrices, hence the norm defined above in our case coincide with $\| A \|_{1,1}$.

\subsection{The probability space}\label{u69}

First of all we define coin tossing self-similar sets precisely. Throughout this section we follow the method of Falconer and Xiong \cite{falconer2014exact}, only we simplify it a little, since the construction we use is way more simple. 
First let $\widehat{\Omega}:=\left\{0,1\right\}^{[M]}$, be the subsets of $[M]$, and let $\widehat{\mathcal{A}}$ be the discrete sigma-algebra on it. For an $\omega=(\omega_1, \dots, \omega_M)\in \widehat{\Omega}$ and $k=\#\left\{\ell: \omega_{\ell}=1\right\}$:
\begin{equation}\label{y17}
    \widehat{\mathbb{P}}(\left\{(\omega_1, \dots, \omega_M)\right\}):=p^k(1-p)^{M-k}.
\end{equation}
It is easy to see that $\widehat{\mathbb{P}}$ is a probability measure on the measurable-space $(\widehat{\Omega}, \widehat{\mathcal{A}})$. On this space we define the random variable 
\begin{equation}\label{y16}
    X(\omega):=(X_1(\omega), \dots, X_M(\omega)), \text{ where } X_k(\omega)=\omega_k.
\end{equation}
For the M-ary tree $\mathcal{T}$, we define
\begin{equation}\label{u75}
    (\Omega, \mathcal{A}, \mathbb{P}):= \bigotimes\limits_{{\tt i}\in\mathcal{T}}(\Omega_{\tt i}, \mathcal{F}_{\tt i}, \mathcal{P}_{\tt i}) \text{, where } (\Omega_{\tt i}, \mathcal{A}_{\tt i}, \mathcal{P}_{\tt i})=(\widehat{\Omega}, \widehat{\mathcal{A}}, \widehat{\mathbb{P}}).
\end{equation}
For each ${\tt i}\in \mathcal{T}$, we define the 

\begin{equation}
    \pi_{\tt i}: \Omega\to \Omega_{\tt i}, \quad \pi_{\tt i}(\pmb{\omega}):=\omega^{\tt i}.
\end{equation}
We also define
\begin{equation}
    X^{[{\tt i}]}:=X\circ \pi_{\tt i}, \text{ i.e. } X^{[{\tt i}]}_j=\left\{
\begin{array}{ll}
1 
,&
\hbox{if $\omega^{\tt i}_j=1$;}
\\
0
,&
\hbox{otherwise.}
\end{array} \right.
\end{equation}
Hence, $X^{[{\tt i}]}$ are i.i.d random variables with the same distribution as of $X$.
For an ${\tt i}=(i_1, \dots, i_n) \in \mathcal{T}$, we define 
\begin{equation}
    X^{{\tt i}}(\pmb{\omega}):=X^{[ i_1]}(\pmb{\omega})\cdots X^{[ i_n]}(\pmb{\omega}),
\end{equation}
and the level $n$-th and the eventual survival set:
\begin{equation}\label{u76}
    \mathcal{E}_n(\pmb{\omega}):= \left\{{\tt i} \in [M]^n: X^{{\tt i}}(\pmb{\omega})=1\right\}, \, \mathcal{E}_{\infty}(\pmb{\omega}):=\left\{{\tt i}\in \Sigma^{(M)}: {\tt i}|_n \in \mathcal{E}_{n}(\pmb{\omega}) \; \forall n \in \N\right\}.
\end{equation}
We are given the deterministic self-similar IFS $\mathcal{F}$ and $\mathcal{S}$ as in Definition \ref{y14} on the line. Put 
\begin{equation}
    E_{n}(\pmb{\omega}):=\bigcup\limits_{{\tt i}\in \mathcal{E}_{n}(\pmb{\omega})}I_{{\tt i}},
\end{equation}
where $I_{{\tt i}}:= f_{{\tt i}}(I)$ (recall, that $I$ was defined in \eqref{y11}).
The Coin tossing integer self-similar set on the line corresponding to probability $p$ and the IFS $\mathcal{F}$ is defined as
\begin{equation}
    \Lambda_{\mathcal{F}}(p)=\Lambda_{\mathcal{F}}(p,\pmb{\omega}):=\bigcap\limits_{n=1}^{\infty}E_{n}(\pmb{\omega}).
\end{equation}

\subsection{Proof of Theorem \ref{y26}}

The proof uses the method introduced by  Dekking and Simon \cite{SD2005}.

We say that the interval $J$,
\begin{itemize}
    \item  is of type $V$ if $J=f_{\tt i}\left(J^V\right)$ for some ${\tt i}\in \mathcal{E}_n$ for some $n$;
    \item  is of type $V$ with multiplicity $s\geq 0$ if  $$\#\left\{{\tt i}\in \mathcal{E}_n:J=f_{{\tt i}}\left(J^V\right)\right\}=s.$$
\end{itemize}

For $n\geq 0$; $U, V \in [N]$ and $\underline{a} \in [L]^n$, let
\begin{equation*}
    S^{U,V}\left(\underline{a}\right):=\left\{{\tt i} \in [M]^n:
    {\tt i} \in \mathcal{E}_n\text{ and } f_{{\tt i}}\left(J^V\right)=J^U_{\underline{a}}\right\}.
\end{equation*}
Thus, $J _{\underline{a}}^{U }$ is of type $V$ with multiplicity $\# S^{U,V}\left(\underline{a}\right)$.
For a $V\in [N]$ let  $\{S^{U,V}_U\left(\underline{a}\right)\}_{U\in [N]}$ be independent random variables such that the distribution of $S^{U,V}_{U}\left(\underline{a}\right)$ is equal to the distribution of $S^{U,V}\left(\underline{a}\right)$, and let 
\begin{equation*}
    S^V\left(\underline{a}\right):= \bigcup\limits_{U\in [N]}S^{U,V}_{U}\left(\underline{a}\right).
\end{equation*}
The following lemma illuminates the meaning of the previously introduced random variables. 
\begin{lemma}\label{y5}
For any $U, V \in [N]$ and $\underline{a}\in [L]^n$:
\begin{equation}
    \mathbb{E}\left(\#S^{U,V}\left(\underline{a}\right)\right)=p^nA_{ \underline{a}}\left(U,V\right)
\end{equation}
and
\begin{equation}
    \mathbb{E}\left(\#S^{V}\left(\underline{a}\right)\right)=p^n CS_{\underline{a},V}
\end{equation}
\end{lemma}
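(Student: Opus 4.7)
The plan is to reduce the statement to two ingredients: the deterministic counting provided by Lemma \ref{u83}, and the computation $\mathbb{P}(\mathbf{i}\in\mathcal{E}_n)=p^n$ coming from the independent coin-tossing structure described in Section \ref{u69}. Once these are in place, linearity of expectation finishes both assertions.

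For the first identity, I would write
\[
\#S^{U,V}(\underline{a})=\sum_{\mathbf{i}\in[M]^n} \mathbf{1}\{\mathbf{i}\in\mathcal{E}_n\}\cdot \mathbf{1}\{f_{\mathbf{i}}(J^V)=J^U_{\underline{a}}\},
\]
and observe that the second indicator is deterministic. By Lemma \ref{u83}, the number of $\mathbf{i}\in[M]^n$ for which $f_{\mathbf{i}}(J^V)=J^U_{\underline{a}}$ equals $A_{\underline{a}}(U,V)$. On the other hand, by the construction recalled in Section \ref{u69}, the event $\{\mathbf{i}\in\mathcal{E}_n\}$ is the intersection of $n$ independent Bernoulli$(p)$ events (one for each ancestor of $\mathbf{i}$ along the path from the root), so $\mathbb{P}(\mathbf{i}\in\mathcal{E}_n)=p^n$. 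Since this probability does not depend on $\mathbf{i}$, linearity of expectation yields $\mathbb{E}(\#S^{U,V}(\underline{a}))=p^n A_{\underline{a}}(U,V)$.

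For the second identity, I would first note a disjointness observation: in the (coupled) original construction, if $\mathbf{i}\in S^{U,V}(\underline{a})\cap S^{U',V}(\underline{a})$ then $J^U_{\underline{a}}=f_{\mathbf{i}}(J^V)=J^{U'}_{\underline{a}}$, which by \eqref{u67} forces $U=U'$. Hence the sets $\{S^{U,V}(\underline{a})\}_{U\in[N]}$ are pairwise disjoint, so the cardinality of the union equals the sum of the cardinalities. Passing to the independent copies $S^{U,V}_U(\underline{a})$ of equal distribution, we obtain
\[
\mathbb{E}(\#S^V(\underline{a}))=\sum_{U\in[N]}\mathbb{E}(\#S^{U,V}_U(\underline{a}))=\sum_{U\in[N]}p^n A_{\underline{a}}(U,V)=p^n\,CS_{\underline{a},V},
\]
where the last equality uses \eqref{y23} extended to products by matrix multiplication (cf.\ \eqref{u86}).

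There is no genuine obstacle here; the statement is essentially a bookkeeping exercise. The only subtle point worth spelling out carefully in the write-up is the product-of-independent-Bernoullis argument giving $\mathbb{P}(\mathbf{i}\in\mathcal{E}_n)=p^n$, since the product-space machinery from \eqref{u75}--\eqref{u76} is the reason we are free to multiply probabilities along the path.
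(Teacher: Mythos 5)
Your argument is correct and follows the same route as the paper: both reduce the first identity to Lemma~\ref{u83} together with the fact that a level-$n$ cylinder is retained with probability $p^n$, and both obtain the second identity by summing the first over $U$. You merely spell out two points the paper leaves implicit — the indicator decomposition giving linearity of expectation, and the disjointness of the sets $S^{U,V}(\underline{a})$ over $U$, which justifies passing from the union to the sum.
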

\begin{proof}
By Lemma \ref{u83} in the deterministic case the first part follows with $p=1$, and for every level $n$ cylinder the probability of retention is $p^n$, hence the assertion follows. For the second part:
\begin{equation}
    \mathbb{E}\left(\#S^{V}\left(\underline{a}\right)\right)=\sum_{U \in [N]}\mathbb{E}\left(\#S^{U,V}\left(\underline{a}\right)\right)=p^n\sum_{U\in[N]}A_{\underline{a}}\left(U,V\right)
\end{equation}
\end{proof}
\begin{lemma}\label{y4}
There exists a $U\in [N]$ and $\underline{a}\in [L]^n$ such that \begin{equation}
    \mathbb{P}\left(\left\{\forall \,{V}\in [N]:\#S^{U,V}\left(\underline{a}\right)>0\right\}  \right)>0
\end{equation}
\end{lemma}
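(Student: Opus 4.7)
The plan is to take $\underline{a}$ and $U$ directly from hypothesis (2) of Theorem \ref{y26}. That hypothesis supplies some $\underline{b}\in[L]^n$ and $U\in[N]$ with $A_{\underline{b}}(U,V)>0$ for every $V\in[N]$, which by Lemma \ref{u83} means that for each $V$ there exists at least one index $\mathbf{i}_V\in[M]^n$ with $f_{\mathbf{i}_V}(J^V)=J^U_{\underline{b}}$. I would set $\underline{a}:=\underline{b}$ and aim to show that with positive probability a suitable collection of such indices survives to be in $\mathcal{E}_n$.

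First I would observe that for distinct $V_1,V_2\in[N]$ the chosen indices $\mathbf{i}_{V_1}$ and $\mathbf{i}_{V_2}$ must be distinct elements of $[M]^n$. Indeed, if $\mathbf{i}_{V_1}=\mathbf{i}_{V_2}=:\mathbf{i}$ then $f_{\mathbf{i}}(J^{V_1})=J^U_{\underline{b}}=f_{\mathbf{i}}(J^{V_2})$, and injectivity of $f_{\mathbf{i}}$ forces $J^{V_1}=J^{V_2}$, hence $V_1=V_2$ since the basic types $J^0,\dots,J^{N-1}$ are distinct by construction (see \eqref{u67}). Thus the selected indices $\{\mathbf{i}_V\}_{V\in[N]}$ form an $N$-element subset of $[M]^n$.

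Next I would lower-bound the probability that all of these $\mathbf{i}_V$ lie in $\mathcal{E}_n$. The event
\[
\mathcal{G}:=\bigl\{\mathbf{i}_V\in\mathcal{E}_n\text{ for every }V\in[N]\bigr\}
\]
is implied by the (finite) event that every node of the $M$-ary tree of depth at most $n$ carries the label $1$, which by \eqref{u75}–\eqref{u76} has probability at least $p^{M+M^2+\cdots+M^n}>0$. (A sharper bound, retaining only the union of all ancestor nodes of the $\mathbf{i}_V$'s, would work equally well but is not needed.) On $\mathcal{G}$, for every $V\in[N]$ we have $\mathbf{i}_V\in\mathcal{E}_n$ and $f_{\mathbf{i}_V}(J^V)=J^U_{\underline{b}}$, so $\mathbf{i}_V\in S^{U,V}(\underline{b})$ and hence $\#S^{U,V}(\underline{b})\geq 1$.

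Putting these together, $\{\forall V\in[N]:\#S^{U,V}(\underline{a})>0\}\supseteq\mathcal{G}$ with $\underline{a}=\underline{b}$, which yields the claim. I do not foresee a real obstacle here: the entire content of the lemma is already encoded in hypothesis (2), and the probabilistic step is trivial once the $\mathbf{i}_V$ have been chosen disjointly — the only subtle point to state clearly is the injectivity argument that makes the $\mathbf{i}_V$ distinct across $V$, so that retaining one of them for each $V$ is a nonempty (indeed, positive-probability) finite-codimension event.
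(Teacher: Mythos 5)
Your proof is correct and follows essentially the same strategy as the paper's: read off $U$ and $\underline{a}$ from hypothesis (2) of Theorem \ref{y26} via Lemma \ref{u83}, then argue positivity. The paper's own proof is terser at the probabilistic step (it just asserts that since the events $\{\#S^{U,V}(\underline{a})>0\}_{V\in[N]}$ are ``not exclusive'' and each positive, their intersection has positive probability); your explicit construction of the sub-event $\mathcal{G}$ (all nodes up to depth $n$ labelled $1$) that implies all $N$ events simultaneously makes that step airtight. The distinctness observation for the $\mathbf{i}_V$'s is correct but not actually needed once you condition on $\mathcal{G}$, since $\mathcal{G}$ forces $\mathcal{E}_n=[M]^n$ regardless of whether the $\mathbf{i}_V$'s coincide.
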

\begin{proof}
The lemma follows from the second condition of the theorem, namely: under that condition there exist a $U\in [N]$ and $\underline{a}\in [L]^n$ for some $n$ such that $A_{\underline{a}}\left(U, V\right)>0$ for all $V\in [N]$.
The events $\left\{\#S^{U,V}\left(\underline{a}\right)>0\right\}_{V=0}^{N-1}$ are not exclusive and each has positive probability, hence the probability that all of them happens simultaneously is also positive.
\end{proof}
Fix $\widetilde{U}$ and $\widetilde{\underline{b}}_k\in[L]^k$   in a way that lemma \ref{y4} holds for $U=\widetilde{U}$ and $\underline{a}= \widetilde{\underline{b}}_k$. Let
\begin{equation}\label{u64}
\gamma_{V}:=p\cdot \min_{a \in [L]} CS_{a,V} \quad \text{and} \quad \gamma:= \min_{V\in [N]}\gamma_V.
\end{equation}
Note that $\gamma >1$ is equivalent to the the first condition of Theorem \ref{y26}. That is
\begin{equation}
\label{u63}
\gamma >1 \Longleftrightarrow
p\cdot CS_{a,j}>1 \text { for all } a\in [L]
\text{ and } j\in [N].
\end{equation}

Let $H^0$ be the event in Lemma \ref{y4}: that the interval $J^{\widetilde{U}}_{\widetilde{\underline{b}}_k}$ is of every type, i.e. 
\begin{equation}
    H^0:=\left\{\forall\, V\in [N]: S^{\widetilde{U}, V}\left(\widetilde{\underline{b}}_k\right)>0 \right\}.
\end{equation}
Then by Lemma \ref{y4}, 
\begin{equation}
    p_0:= \mathbb{P}\left(H^0\right)>0.
\end{equation}
This means that with positive probability we can find
\begin{equation}\label{y2}
    {\tt i}^{0,0}, \dots, {\tt i}^{0,N-1} \in [M]^k \text{ such that } f_{{\tt i}^{0,V}}\left(J^V\right)=J^{\widetilde{U}}_{\widetilde{\underline{b}}_k},
\end{equation}
meaning that we can find indices that make $J^{\widetilde{U}}_{\widetilde{\underline{b}}_k}$ of every type.

In what follows we define several processes counting the multiplicity of different types of the intervals of the form $J^V_{\underline{a}_n}$ for $\underline{a}_n=a_1, \dots, a_n \in [L]^n$ and $V\in [N]$ for the different values of $n\in \N$. We apply large deviation theory to prove that these processes simultaneously do not die out with positive probability, implying that the random attractor contains an interval with positive probability. Lastly a standard argument reveals that this is a 0-1 event conditioned on non-extinction.

We say that ${\tt i}\in [M]^*$ makes $J^U_{\underline{a}}$ ($U \in [N], a \in [L]^{|{\tt i}|}$) a type $V$ ($V\in [N]$) interval if the following two holds:
\begin{itemize}
    \item ${\tt i} \in \mathcal{E}_{|{\tt i}|}$ and
    \item $f_{\tt i}(J^V)=J^U_{a}$.
\end{itemize}
First  we  define a process, that collects the different set of indices $\{{\tt i}_V\in [M]^*\}_{V\in [N]}$, such that the elements of the set makes an interval of all type. Start the process with the zeroth level $N$-tuple: 
\begin{equation}
\mathfrak{T}^0\left(\emptyset\right):=\left\{\left({\tt i}^{0,0}, \dots, {\tt i}^{0,N-1}\right)\right\}.
\end{equation}
and for $\underline{c}_n \in [L]^n$ the level-$n$ $N$-tuple of  $\underline{c}_n$ is:
\begin{equation}
    \mathfrak{T}^n\left(\underline{c}_n\right)=\left\{\left({\tt i}^{n,0}_0, \dots, {\tt i}^{n,N-1}_0\right), \dots,\left({\tt i}^{n,0}_{j-1}, \dots, {\tt i}^{n,N-1}_{j-1}\right) \right\},
\end{equation}
in a way that the following three conditions hold:
\begin{itemize}
    \item ${\tt i}^{n,V}\in [M]^{n+k}$,
    \item $f_{{\tt i}^{n,V}}\left(J^V\right)=J^{\widetilde{U}}_{\underline{\widetilde{b}}_k\underline{c}_n}$, meaning that ${\tt i}^{n,V}$ makes  $J^{\widetilde{U}}_{\underline{\widetilde{b}}_k\underline{c}_n}$ be of  type $V$.
    \item For all $\ell_1, \ell_2 \in [j]$ and for all $V \in [N]$: ${\tt i}_{\ell_{1}}^{n,V}\neq {\tt i}_{\ell_{2}}^{n,V}$, meaning that all elements appear only once.
\end{itemize}
Observe that the distribution of $\# \mathfrak{T}^n\left(\underline{a}_n\right)|H^0$ is the same as the distribution of $\min\limits_{U\in [N]}\#S^U\left(\underline{a}_n\right)$. Now given $H^0$ consider $\#\mathfrak{T}^n\left(\underline{a}_n\right)$.
By condition (1) of the Theorem, we have $\gamma >1$
(see \eqref{u63}).
Choose $1<\rho<\gamma$, and define the events
\begin{equation}
    H_n:=\left\{\forall \underline{a}_n\in [L]^n: \#\mathfrak{T}^n\left(\underline{a}_n\right)>\rho^n \right\}.
\end{equation}
Recall that $1<\gamma= \min_{V \in [N]}\min_{a \in [L]}\mathbb{E}(\#S^V(a))$, by Lemma \ref{y5} and \eqref{u64}.
As usual let $\overline{A}$ denote the complement of the event $A$.


\begin{lemma}\label{y1}
There exists a $0<\delta<1$ such that for all $n\in\N$ 
\begin{equation}
\mathbb{P}\left(\overline{H}_{n+1}\left|\right.H_n, H_0\right)\leq L^{n+1}\cdot N \cdot \delta^{\rho^{n}}.
\end{equation}
\end{lemma}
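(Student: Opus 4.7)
The plan is to carry out a Chernoff-type large deviation estimate for each extension from level $n+k$ to level $n+k+1$, and then combine them by a union bound over the $L^{n+1}$ prefixes $\underline{a}_{n+1}\in[L]^{n+1}$ and the $N$ possible types $V'$.

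Fix $\underline{a}_{n+1}=\underline{a}_n a_{n+1}\in[L]^{n+1}$ with $\underline{a}_n\in[L]^n$ and $a_{n+1}\in[L]$, and fix $V'\in[N]$. For each $V\in[N]$, write $T_V(\underline{a}_n)$ for the number of indices ${\tt i}\in \mathcal{E}_{n+k}$ with $f_{{\tt i}}(J^V)=J^{\widetilde U}_{\underline{\widetilde b}_k \underline a_n}$; by construction $\#\mathfrak{T}^n(\underline{a}_n)=\min_V T_V(\underline{a}_n)$. For an extension ${\tt i} i$ of such an ${\tt i}$ to belong to the $V'$-column of $\mathfrak{T}^{n+1}(\underline{a}_n a_{n+1})$ we need, by Lemma \ref{y90}(2), that $f_i(J^{V'})=J^V_{a_{n+1}}$; the number of such $i\in[M]$ is exactly $A_{a_{n+1}}(V,V')$. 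The $V'$-column size of $\mathfrak{T}^{n+1}(\underline{a}_{n+1})$ is therefore
\[
N_{V'}(\underline{a}_{n+1})=\sum_{V\in[N]}\ \sum_{\substack{{\tt i}:\,f_{\tt i}(J^V)=J^{\widetilde U}_{\underline{\widetilde b}_k\underline a_n}\\ {\tt i}\in\mathcal{E}_{n+k}}}\ \sum_{\substack{i\in[M]:\\ f_i(J^{V'})=J^V_{a_{n+1}}}} \indk\bigl\{X^{[{\tt i} i]}=1\bigr\}.
\]
Because distinct pairs $({\tt i},i)$ give distinct words ${\tt i} i\in[M]^{n+k+1}$, conditional on the $\sigma$-algebra $\mathcal{G}_{n+k}$ generated by the first $n+k$ levels of the tree, these indicators are independent Bernoulli($p$). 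Hence conditional on $\mathcal{G}_{n+k}\cap H_n\cap H_0$,
\[
\mathbb{E}\bigl[N_{V'}(\underline{a}_{n+1})\bigm|\mathcal{G}_{n+k},H_n,H_0\bigr]\;=\;p\sum_{V}T_V(\underline{a}_n)\,A_{a_{n+1}}(V,V')\;\geq\;p\,\rho^n\,CS_{a_{n+1},V'}\;\geq\;\gamma\,\rho^n,
\]
where the last two inequalities use $H_n$ and the definition \eqref{u64} of $\gamma$.

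Since $\gamma>\rho$ by the choice $1<\rho<\gamma$, the threshold $\rho^{n+1}=\rho\cdot\rho^n$ lies below $(\rho/\gamma)$ times the conditional mean, so the event $\{N_{V'}(\underline{a}_{n+1})\leq\rho^{n+1}\}$ is a genuine lower deviation. A standard Chernoff bound for a sum $S$ of independent Bernoulli variables with $\mathbb{E} S\geq\gamma\rho^n$ gives, for some fixed $\delta_0\in(0,1)$ depending only on $p$, $\rho$ and $\gamma$,
\[
\mathbb{P}\bigl(N_{V'}(\underline{a}_{n+1})\leq\rho^{n+1}\bigm|\mathcal{G}_{n+k},H_n,H_0\bigr)\;\leq\;\delta_0^{\,\rho^n}.
\]
Taking expectation removes the conditioning on $\mathcal{G}_{n+k}$ and preserves the same bound given $H_n,H_0$.

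Finally, $\overline{H}_{n+1}$ is the union over $\underline{a}_{n+1}\in[L]^{n+1}$ of the events that \emph{some} column $V'\in[N]$ in $\mathfrak{T}^{n+1}(\underline{a}_{n+1})$ has fewer than $\rho^{n+1}$ members, i.e.\ $N_{V'}(\underline{a}_{n+1})\leq\rho^{n+1}$. A union bound over the $L^{n+1}\cdot N$ pairs $(\underline{a}_{n+1},V')$ yields
\[
\mathbb{P}\bigl(\overline{H}_{n+1}\bigm|H_n,H_0\bigr)\;\leq\;L^{n+1}\cdot N\cdot\delta_0^{\,\rho^n},
\]
which is the claimed estimate with $\delta:=\delta_0$.

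The only genuine technical step is the Chernoff estimate in the middle paragraph: one must choose $\delta_0$ uniformly in $n$, depending only on the gap $\gamma/\rho>1$ and on $p$. Everything else—the algebraic description of $N_{V'}(\underline{a}_{n+1})$ as a sum of independent Bernoullis conditional on $\mathcal{G}_{n+k}$, the lower bound on its conditional mean via $H_n$ and $CS_{a_{n+1},V'}\geq\gamma/p$, and the final union bound—is routine once the bookkeeping using Lemma \ref{y90} and the definition of $\mathfrak{T}^n$ is in place.
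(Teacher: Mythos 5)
Your proof is correct and follows essentially the same route as the paper's: a union bound over the $L^{n+1}$ words $\underline{a}_{n+1}$ and the $N$ types, followed by a large-deviation estimate for each fixed pair. The only cosmetic difference is that the paper invokes Azuma--Hoeffding (Fact \ref{y0}) for a sum of $\rho^n$ i.i.d.\ copies of $\#S^U(a)$ (grouping the Bernoulli indicators by parent), whereas you apply a Chernoff bound directly to the sum of individual $\text{Bernoulli}(p)$ indicators over children ${\tt i}i$; both yield the same uniform $\delta_0^{\rho^n}$ tail because the conditional mean exceeds $\gamma\rho^n>\rho^{n+1}$.
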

\begin{fact}[Azuma-Hoeffding inequality]\label{y0}
Let $Z^U_{0}\left(k\right), \dots, Z^{U}_{\ell}\left(k\right)$ be i.i.d random variables distributed according to $\#S^{U}(k)$, then
\begin{equation}
    \mathbb{P}\left(Z^U_{0}\left(k\right)+\dots+Z^{U}_{\ell}\left(k\right)\leq \ell \cdot \rho\right)\leq \delta^{\ell}, \text{ for some }0<\delta<1.
\end{equation}
\end{fact}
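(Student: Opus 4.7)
The plan is to apply Hoeffding's inequality for sums of i.i.d.\ bounded random variables, which is arguably the most direct route given the name of the Fact. Two preliminary observations are needed. First, the random variable $Z:=\#S^{U}(k)$ is \emph{bounded}: it counts a subset of those ${\tt i}\in[M]$ for which $f_{\tt i}(J^V)=J^U_k$ for some $V$, so by Lemma \ref{u83} it takes values in $\{0,1,\dots,C\}$ with $C:=\max_{a\in[L],\,j\in[N]}CS_{a,j}<\infty$. Second, by Lemma \ref{y5} its common expectation is $\mu:=\mathbb{E}(Z)=p\cdot CS_{k,U}\geq\gamma_U\geq\gamma$, and by the choice $1<\rho<\gamma$ we have $\mu-\rho\geq\gamma-\rho>0$.

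With these two inputs in hand, Hoeffding's inequality applied to the sum of $\ell+1$ i.i.d.\ copies of $Z$ yields
\[
\mathbb{P}\!\left(Z^U_0(k)+\dots+Z^U_\ell(k)\leq(\ell+1)\rho\right)\leq \exp\!\left(-\tfrac{2(\ell+1)(\mu-\rho)^2}{C^2}\right).
\]
The event $\{Z^U_0(k)+\dots+Z^U_\ell(k)\leq\ell\rho\}$ is contained in this event (since $\ell\rho<(\ell+1)\rho$), so the same bound applies; dropping the ``$+1$'' in the exponent and bounding $\mu-\rho$ from below by $\gamma-\rho$ gives
\[
\mathbb{P}\!\left(Z^U_0(k)+\dots+Z^U_\ell(k)\leq\ell\rho\right)\leq \delta^{\ell},\qquad \delta:=\exp\!\left(-\tfrac{2(\gamma-\rho)^2}{C^2}\right)\in(0,1).
\]
Since the Fact is invoked later only for the finitely many pairs $(U,k)\in[N]\times[L]$, a single $\delta<1$ valid uniformly in $(U,k)$ can be obtained by maximising over them.

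The degenerate situation in which $Z$ is almost surely constant (necessarily equal to $\mu\geq\gamma>\rho$) makes the target probability identically $0$, so any $\delta\in(0,1)$ suffices and no separate treatment is needed. I do not foresee any real obstacle: the statement is essentially a textbook exponential concentration bound for bounded i.i.d.\ sums with favourable mean, and the substantive content is simply certifying that $Z$ is bounded and that its mean exceeds $\rho$, both of which are immediate from the definitions together with Lemma \ref{y5}.
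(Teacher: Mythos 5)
The paper states Fact \ref{y0} without proof, treating it as a standard concentration inequality (as the title ``Azuma-Hoeffding'' signals); the proof of Lemma \ref{y1} simply invokes it. Your proposal therefore supplies the argument the paper leaves implicit, and it is correct and follows the intended route: $\#S^{U}(k)$ is a random subset of a deterministic index pool of size $CS_{k,U}$, hence almost surely bounded by $C:=\max_{a,j}CS_{a,j}$; its mean is $p\cdot CS_{k,U}\geq\gamma>\rho$ by Lemma \ref{y5} and condition (1) of Theorem \ref{y26}; and Hoeffding's inequality for bounded i.i.d.\ sums plus the containment $\{\sum Z_i\leq\ell\rho\}\subset\{\sum Z_i\leq(\ell+1)\rho\}$ yields the uniform $\delta=\exp\!\left(-2(\gamma-\rho)^2/C^2\right)<1$. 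One small notational slip: in describing what $\#S^{U}(k)$ counts you wrote ``$f_{\tt i}(J^V)=J^U_k$ for some $V$'', but the paper's convention is $S^V(\underline{a})=\bigcup_U S^{U,V}_U(\underline{a})$ with $S^{U,V}(\underline{a})=\{{\tt i}\in\mathcal{E}_n: f_{\tt i}(J^V)=J^U_{\underline{a}}\}$, so $\#S^{U}(k)$ counts (independent copies of) retained indices ${\tt i}$ with $f_{\tt i}(J^U)=J^W_k$ for the various $W\in[N]$; the deterministic cap $\#S^{U}(k)\leq\sum_W A_k(W,U)=CS_{k,U}\leq C$ is exactly what you use, so the conclusion is unaffected.
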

\begin{proof}[Proof of Lemma \ref{y1}]
\begin{multline}
\mathbb{P}\left(\overline{H}_{n+1}|H_{n}, H_0\right) \\
\leq \sum_{\underline{a}_n \in [L^n]}\sum_{a \in [L]}\mathbb{P}\left(\#\mathfrak{T}^n\left(\underline{a}_n a\right) <\rho^{n+1}\left|\right.\forall \, \underline{c}_n\in [L]^n: \#\mathfrak{T}^n\left(\underline{c}_n\right)\geq \rho^n\right)
\end{multline}
For a fixed $\underline{a}_n\in [L]^n$, $a \in [L]$
\begin{multline}
   \mathbb{P}\left(\#\mathfrak{T}^n\left(\underline{a}_n a\right)<\rho^{n+1}\left|\right.\forall \underline{c}_n\in [L]^n: \#\mathfrak{T}^n\left(\underline{c}_n\right)\geq \rho^n\right) \\
   \leq \sum_{U \in [N]}\mathbb{P}\left(\#S^{U}\left(\underline{a}_n a\right)<\rho^{n+1}\left|\right.\min_{V\in [N]}\#S^{V}\left(\underline{a}_n\right)\geq \rho^n\right).
\end{multline}
For any fixed $U$ we can use Fact \ref{y0} to upper-bound the last probability. This is because conditioned on we have at least $\rho^n$ level $n$ $N$-lets, $\#S^U\left(\underline{a}_n a\right)$ is at least the sum of at least $\rho^n$ independent random variables, distributed according to $\#S^U\left(a\right)$. 
The reason that the random variables in this sum are independent is as follows: 
By the construction, given $\mathcal{E}_n$, the events that we retain or discard different cylinders on level $n+1$ are independent, hence the random variables in the sum are also indeed independent. 
\begin{multline}\label{u99}
\mathbb{P}\left(\#S^{U}\left(\underline{a}_n a\right)<\rho^{n+1}\left|\right.\min_{V\in [N]}\#S^{V}\left(\underline{a}_n\right)\geq \rho^n\right)\leq \delta\left(U, a\right)^{\rho^{n}} \\ 
\text{ for some }0<\delta\left(U,a\right)<1,
\end{multline}
for any particular $U\in [N]$,  $\underline{a}_n\in [L]^n$ and $a \in [L]$. Choose $\delta:=\max\limits_{U\in[N]}\max\limits_{a \in [L]} \delta\left(U,a\right)$, in this way we get that:
\begin{equation}\label{u73}
    \mathbb{P}\left(\overline{H_{n+1}}\left|\right.H_n\right)\leq \sum_{\underline{a}_n\in [L^n]}\sum_{a\in [L]}\sum_{U\in [N]} \delta^{\rho^{n}}=L^{n+1}\cdot N \cdot \delta^{\rho^{n}}
\end{equation}
\end{proof}
\begin{lemma}
For every $n \in \N$:
\begin{equation}
    \mathbb{P}\left(\forall\, V\in [N],\, \forall\, \underline{a}_n\in [L]^n:\:\#S^V\left(\underline{a}_n\right)\geq \rho^n, \right)>0
\end{equation}
\end{lemma}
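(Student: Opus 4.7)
I will establish the stronger assertion $\mathbb{P}\bigl(H^0 \cap \bigcap_{k \geq 1} H_k\bigr) > 0$, which implies the statement for every fixed $n$. Indeed, the distributional identity $\#\mathfrak{T}^n(\underline{a}_n) \mid H^0 \stackrel{d}{=} \min_V \#S^V(\underline{a}_n)$ noted earlier shows that on $H^0 \cap H_n$ one automatically has $\#S^V(\underline{a}_n) > \rho^n$ for every $V \in [N]$ and every $\underline{a}_n \in [L]^n$, so positivity of the intersection is more than enough.

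The branching construction is Markov: conditional on the set of retained cylinders at any fixed level, the remainder of the process is independent of the past. Consequently the per-step estimate from Lemma \ref{y1}, $\mathbb{P}(\overline{H}_{n+1} \mid H_n, H^0) \leq L^{n+1} N \delta^{\rho^n}$, remains valid when we enlarge the conditioning to the full history $H^0 \cap H_1 \cap \cdots \cap H_n$. Because $\rho > 1$ forces the terms $\delta^{\rho^n}$ to decay doubly exponentially, one has $\sum_{n \geq 0} L^{n+1} N \delta^{\rho^n} < \infty$. I would therefore pick $n_0$ so large that $\sum_{n \geq n_0} L^{n+1} N \delta^{\rho^n} < \tfrac{1}{2}$; chaining then gives
$$\mathbb{P}\Bigl(\bigcap_{n > n_0} H_n \,\Big|\, H^0 \cap H_{n_0}\Bigr) \geq \prod_{n \geq n_0} \bigl(1 - L^{n+1} N \delta^{\rho^n}\bigr) > \tfrac{1}{2}.$$
Hence it suffices to prove $\mathbb{P}(H^0 \cap H_{n_0}) > 0$ for this single finite value of $n_0$.

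The main obstacle is precisely this finite initial segment, since for small $n$ the bound in Lemma \ref{y1} is vacuous. The plan is to condition on the event $\mathcal{R}$ that every node of the labelled $M$-ary tree up to depth $n_0 + k$ carries the label $1$. This event has probability $p^{|\mathcal{R}|} > 0$, where $|\mathcal{R}|$ is the finite number of such nodes. On $\mathcal{R}$, the choice of $\widetilde{U}$ and $\widetilde{\underline{b}}_k$ coming from Lemma \ref{y4} (which uses condition (2) of Theorem \ref{y26}) forces $H^0$ to hold, and all multiplicities coincide with their deterministic counterparts, giving $\#S^V(\underline{a}_{n_0}) = CS_{\underline{a}_{n_0}, V}$. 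Condition (1) of Theorem \ref{y26} states $\alpha := \min_{a, V} CS_{a, V} > 1/p$, and since $\gamma = p\alpha > \rho$ one has $\alpha > \rho$. The matrix-product identity
$$CS_{a \underline{a}', V} = \sum_W CS_{a, W}\, A_{\underline{a}'}(W, V) \geq \alpha \cdot CS_{\underline{a}', V}$$
then iterates to $CS_{\underline{a}_{n_0}, V} \geq \alpha^{n_0} > \rho^{n_0}$, so $\mathcal{R} \subseteq H^0 \cap H_{n_0}$ and in particular $\mathbb{P}(H^0 \cap H_{n_0}) \geq \mathbb{P}(\mathcal{R}) > 0$. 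Combining this with the tail estimate above yields $\mathbb{P}\bigl(H^0 \cap \bigcap_{k \geq 1} H_k\bigr) > 0$, which completes the argument.
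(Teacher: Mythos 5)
Your approach is considerably more roundabout than the paper's, and one of the deduction steps does not hold as stated.

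The paper's proof of this lemma is direct and does not need $\mathfrak{T}^n$, $H_n$, or any chaining at all: on the event that every label in the relevant levels equals $1$ (which has positive probability), the random counts $\#S^V(\underline{a}_n)$ coincide with their deterministic values, and the column-sum iteration you use near the end (essentially $CS_{a\underline{a}',V}\geq \alpha\cdot CS_{\underline{a}',V}$ with $\alpha := \min_{a,V}CS_{a,V}>1/p\geq\rho/p$) gives $CS_{\underline{a}_n,V}\geq(\rho/p)^n\geq\rho^n$. You independently rediscovered exactly this inclusion when you argued $\mathcal{R}\subseteq H^0\cap H_{n_0}$, and that inclusion \emph{already is} the proof. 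Everything else in your argument --- proving $\mathbb{P}\bigl(H^0\cap\bigcap_k H_k\bigr)>0$ via Lemma \ref{y1}, choosing $n_0$ so the tail product exceeds $\tfrac12$ --- is superstructure that the paper postpones to the subsequent step (the derivation of \eqref{u98}) and that this lemma does not require.

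More seriously, the final deduction contains a genuine gap. You write that the identity $\#\mathfrak{T}^n(\underline{a}_n)\mid H^0\stackrel{d}{=}\min_V\#S^V(\underline{a}_n)$ ``shows that on $H^0\cap H_n$ one automatically has $\#S^V(\underline{a}_n)>\rho^n$.'' This is not so. Equality in distribution is not a pathwise identity: from $X\stackrel{d}{=}Y$ you cannot conclude that an event of the form $\{X(\omega)>c\}$ entails $\{Y(\omega)>c\}$. The variables $\#\mathfrak{T}^n$ and the $\#S^V$ (which are built from artificially independent copies $S^{U,V}_U$) live on different parts of the probability space and are not pointwise coupled. What \emph{can} be extracted, assuming the distributional identity is upgraded from a single $\underline{a}_n$ to a joint identity over all $\underline{a}_n$ simultaneously (the paper records only the former), is the equality of probabilities $\mathbb{P}\bigl(\forall\,\underline{a}_n:\min_V\#S^V(\underline{a}_n)>\rho^n\bigr)=\mathbb{P}\bigl(H_n\mid H^0\bigr)$; then positivity of $\mathbb{P}(H^0\cap H_n)$ suffices. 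But even that repair still leaves the argument much longer than necessary: the direct route via $\mathcal{R}$ proves the lemma in a few lines without ever invoking the $\mathfrak{T}^n$-process.
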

\begin{proof}
This is an adaptation of Dekking and Simon (\cite{SD2005}) Lemma 1. For $\underline{a}_n \in [L]^n$:
\begin{multline}
    \underline{e}^TA_{\underline{a}_n}\geq p^n \underline{e}^T A_{\underline{a}_n}=[CS_{\underline{a}_n,0}, \dots, CS_{\underline{a}_n,N-1}] 
    \geq [\rho^n, \dots, \rho^n].
\end{multline}
This means that in the deterministic setup we have enough indices for the event to happen for any $\underline{a}_n$. Hence it follows that $\#S^V\left(\underline{a}_n\right)$ happens with positive probability for each $\underline{a}_n$ and $V$, and as the events are not exclusive the intersection of them also has positive probability. 
\end{proof}
It is easy to see, that if we can prove that 
\begin{equation} \label{u98}
    \mathbb{P}\left(\forall\, n, \,\forall\, \underline{a}_n: \: \#\mathfrak{T}^n\left(\underline{a}_n\right)>0\right)>0
\end{equation}
then we prove that $\Lambda_{\mathcal{F}}\left(p\right)$ contains an interval with positive probability. This is because \eqref{u98} means that with positive probability for any $n$ we retain something in every sub-interval $J^{\widetilde{U}}_{\underline{\widetilde{b}}_k\underline{a}_n}$ (for all $\underline{a}_n\in [L]^n$) of $J^{\widetilde{U}}_{\underline{\widetilde{b}}_k}$. Then \eqref{u98} holds, since 
\begin{multline}
    \mathbb{P}\left(H_n, n>r\right)=\mathbb{P}\left(H_0\right)\cdot \mathbb{P}\left(H_r\right)\prod \limits_{n=r}^{\infty}\mathbb{P}\left(H_{n+1}\left|\right.H_n\right) \\
    \geq p_0\cdot \mathbb{P}\left(H_{r}\right)\prod \limits_{n=r}^{\infty}\left(1-L^{n+1}\cdot N\cdot \delta^{\rho^{n}}\right).
\end{multline}
$p_0$ is positive by Lemma \ref{y4}, $\mathbb{P}\left(H_r\right)$ is positive by Lemma \ref{u98}, and we can choose $r$ such that the last expression is positive. 
Hence 
\begin{equation} \label{u96}
    \mathbb{P}\left(\Lambda_{\mathcal{F}}\left(p\right) \text{ contains an interval}\right)>0.
\end{equation}
\begin{lemma}\label{u95}
Let $\theta$ be a possible property of $\Lambda_{\mathcal{F}}(p)$. Let $\mathfrak{A}$ denote the event that $\theta$ holds for $\Lambda_{\mathcal{F}}(p)$. Assume that $\mathfrak{A}$ \begin{enumerate}
    \item  happens almost surely if the process dies out,
    \item  happens if and only if 
$\theta$ holds for every intersection of the set with level $n$ cylinders, namely for every $n$ and $i_1, \dots, i_n$: 
$\theta$ holds for  $\Lambda_{\mathcal{F}}\left(p\right)\cap I_{i_1, \dots, i_n}$ and
\item is not a sure event.
\end{enumerate}
Then conditioned on non-extinction $\mathfrak{A}$ almost surely does not happen.
\end{lemma}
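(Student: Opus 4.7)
The plan is to derive a fixed-point equation for $q := \mathbb{P}(\mathfrak{A})$ that coincides with the equation satisfied by the extinction probability of the underlying Galton–Watson branching process, and then to identify the two.

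First, I would apply condition 2 at level $1$ to obtain the decomposition
\begin{equation*}
\mathfrak{A} \;=\; \bigcap_{i=0}^{M-1} B_i,
\qquad
B_i := \bigl\{\theta\text{ holds for } \Lambda_{\mathcal{F}}(p)\cap I_i\bigr\}.
\end{equation*}
The tree-indexed product construction of $(\Omega,\mathcal{A},\mathbb{P})$ from Section \ref{u69} supplies two crucial facts: the events $B_0,\dots,B_{M-1}$ are mutually independent (they depend on disjoint subtrees of the coin-tossing tree), and for $i\in X_1$ the set $\Lambda_{\mathcal{F}}(p)\cap I_i$ equals $f_i$ applied to an independent copy of $\Lambda_{\mathcal{F}}(p)$. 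Together with the (implicit) invariance of the property $\theta$ under the similarities $f_i$, this gives $\mathbb{P}(B_i\mid i\in X_1)=q$; and for $i\notin X_1$ the set is empty, so condition 1 yields $\mathbb{P}(B_i\mid i\notin X_1)=1$. Hence $\mathbb{P}(B_i)=pq+(1-p)$ for every $i$, and by independence
\begin{equation*}
q \;=\; \bigl(pq+(1-p)\bigr)^{M}.
\end{equation*}

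This is precisely the fixed-point equation satisfied by the extinction probability $\pi$ of the Galton–Watson process with Binomial$(M,p)$ offspring distribution. Its solutions in $[0,1]$ are $\pi$ and $1$, and condition 3 rules out $q=1$, forcing $q=\pi$. Writing
\begin{equation*}
\mathbb{P}(\mathfrak{A}) \;=\; \mathbb{P}(\mathfrak{A}\cap\mathrm{ext}) + \mathbb{P}(\mathfrak{A}\cap\overline{\mathrm{ext}}),
\end{equation*}
the first term equals $\pi$ by condition 1, while the sum equals $\pi$ by the above, so the second term vanishes. Dividing by $\mathbb{P}(\overline{\mathrm{ext}})=1-\pi$ (positive exactly in the regime in which condition 3 is non-vacuous) gives the conclusion.

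The main obstacle I anticipate is the careful bookkeeping that establishes the product structure: namely, that $\{B_i\}_{i\in[M]}$ are genuinely independent and that each $B_i$ conditional on $i\in X_1$ has the same law as $\mathfrak{A}$. This requires unpacking the subtree factorization of $(\Omega,\mathcal{A},\mathbb{P})$ constructed in Section \ref{u69} together with the similarity invariance of $\theta$ (which holds in all the intended applications: "contains no interval", "has zero Lebesgue measure", etc.). Once this is in hand, the fixed-point computation and its comparison with the extinction equation are routine.
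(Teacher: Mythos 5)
Your proof is correct, but it follows a genuinely different route from the paper's. The paper decomposes at a deep level $n$: it uses only the forward implication of condition~2 to write $\mathfrak{A}\subseteq\bigcap_{(i_1,\dots,i_n)\in\mathcal{E}_n}\{\theta\text{ holds for }\Lambda_{\mathcal F}(p)\cap I_{i_1,\dots,i_n}\}$, invokes conditional independence and self-similarity to bound $\mathbb P(\mathfrak A\mid \#\mathcal E_n\geq\widetilde M)\leq\varepsilon^{\widetilde M}$, and then exploits the branching-process fact that $\#\mathcal E_n\to\infty$ on non-extinction, sending $n\to\infty$ and then $\widetilde M\to\infty$. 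You instead decompose at level $1$, obtain the offspring generating-function fixed-point relation $q=(pq+1-p)^M$, and identify $q$ with the extinction probability $\pi$. Both proofs lean equally on the implicit similarity-invariance of $\theta$, which you correctly flag, so neither is more rigorous on that front. Your argument is arguably cleaner: it is algebraic rather than a double limiting argument, and it makes the connection to the Galton--Watson extinction equation explicit. One small remark: you do not actually need the backward direction of condition~2 (which is the less immediate one, since it requires applying condition~2 recursively to the subtrees). The inclusion $\mathfrak A\subseteq\bigcap_i B_i$ already yields $q\leq(pq+1-p)^M=:f(q)$; since $f$ is convex with $f(1)=1$ and $f'(1)=Mp>1$ in the (only nonvacuous) supercritical regime, $q\leq f(q)$ together with $q<1$ forces $q\leq\pi$, and condition~1 gives $q\geq\pi$, so $q=\pi$ follows from the one-sided estimate alone.
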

\begin{proof}
The proof is based on a standard argument (similar to the one in \cite[page 471]{Falconer_Grimmet}) using statistical self-similarity. Let
$\mathfrak{C}:=\left\{\#\mathcal{E}_{\infty}>0\right\}$ denote the event that the process does not die out.
By the third assumption of the lemma
\begin{equation}
    \mathbb{P}\left(\mathfrak{A}\right)=\varepsilon<1.
\end{equation}
From the theory of branching processes we know that conditioned on non-extinction the number of retained level n cylinders tends to infinity almost surely, i.e. $\mathbb{P}\left(\#\mathcal{E}_{n}<\widetilde{M}^n\left|\right.\mathfrak{C}\right)\to 0$ as $n\to \infty$ for all $\widetilde{M}$.
Now, by the assumption of the Lemma: 
\begin{equation}
\begin{split}
& \mathbb{P}\left(\mathfrak{A}\left|\right.\mathfrak{C}\right) \\
& =\mathbb{P}\left(\mathfrak{A}\left|\right.\#\mathcal{E}_{n}\geq \widetilde{M}\right)\cdot \mathbb{P}\left(\#\mathcal{E}_{n}\geq \widetilde{M}\left|\right.\mathfrak{C}\right) \\
   & +\mathbb{P}\left(\mathfrak{A}\left|\right.\#\mathcal{E}_{n}< \widetilde{M}\right)\cdot \mathbb{P}\left(\#\mathcal{E}_{n}< \widetilde{M}\left|\right.\mathfrak{C}\right) \\
 & \leq \varepsilon^{\widetilde{M}}+\mathbb{P}\left(\#\mathcal{E}_{n}< \widetilde{M}\left|\right.\mathfrak{C}\right).
\end{split}
\end{equation}
The second inequality follows from the second condition of the lemma $$\mathfrak{A}
=\{\forall n,\,\forall i_1, \dots, i_n
\in \mathcal{E}_n: \theta
\text{ holds for }
\Lambda_{\mathcal{F}}(p)
\cap I_{i_1, \dots, i_n}\}$$ and statistical self-similarity. 
Namely from the fact that conditioned on $\mathcal{E}_n$ in all the retained level $n$-cylinders we have independent 
rescaled copies of $\Lambda_{\mathcal{F}}(p)$.
First let $n\to \infty$, then $\widetilde{M}\to \infty$ gives that 
\begin{equation}
    \mathbb{P}\left(\mathfrak{A}\left|\right.\mathfrak{C}\right)=0.
\end{equation} 
\end{proof}
\begin{proof}[Proof of Theorem \ref{y26}]
Follows from \eqref{u96} and Lemma \ref{u95}, using that does not containing an interval is property that satisfies the assumptions of Lemma \ref{u96}. 

\end{proof}
\color{black}

\subsection{Proof of Theorem \ref{y21}}
\begin{proof}[Proof of Theorem \ref{y21}]
Parts of the following proof resembles \cite[Proof of Thorem 1 (b)]{SD2005}.
Let $a\in [L]$ such that the spectral radius of $p \cdot A_a$ is smaller than 1. 
Solely for this section let $\underline{a}_n$ denote the $n$-long vector consisting only of $a$ i.e.
\begin{equation}
    \overline{a}^n:=(a, \dots, a).
\end{equation}
It is a well known fact that if the spectral radius of an $N\times N$ matrix $B$ is smaller than 1, then 
\begin{equation}
    \lim_{n\to \infty}\left\|B^n\right\|=0.
\end{equation}
This implies by the assumptions of the theorem, that $\lim\limits_{n \to \infty}\left\|(p\cdot A_a)^n)\right\|=\lim\limits_{n \to \infty}\left\|p^n\cdot A_{\overline{a}^n}\right\|=0$.
By the sub-multiplicativity  of the $\|.\|$ matrix norm defined in \eqref{y8}, it follows that for any $\underline{c}_k=(c_1, \dots, c_k)\in [L]^j$:
\begin{equation}\label{y7}
    \lim_{n \to \infty} \left\|p^{n+k}A_{\underline{c}_k\overline{a}^n}\right\|\leq \lim_{n \to \infty} \left\|p^{k}A_{\underline{c}_k}\right\|\cdot\left\|p^n A_{\overline{a}^n}\right\|= \left\|p^k A_{\underline{c}_k}\right\|\lim_{n \to \infty}\left\|p^n A_{\overline{a}^n}\right\|=0.
\end{equation}
Let $\underline{c}_{k}\in [L]^k$ be given and $Z_n$ denote the number of level $k+n$ cylinders intersecting
$\cup_{j \in [N]}J^j_{\underline{c}_k\underline{a}_n}$. 
From Lemma \ref{y5} we know that 
\begin{equation*}
    \mathbb{E}(Z_n)= \|p^{k+n}A_{\underline{c}_k\overline{a}^n}\|,
\end{equation*}
and hence from \eqref{y7}, it follows that 
\begin{equation}\label{u82}
\lim_{n\to \infty}\mathbb{E}(Z_n)=0,
\end{equation}
thus by Markov's inequality:
\begin{equation*}
    \mathbb{P}(Z_{n}\geq 1)\leq \mathbb{E}(Z_n)\to 0 \text{ as } n\to \infty.
\end{equation*}
in this way the points
\begin{equation*}
    \bigcup_{j\in [N]}\bigcap\limits_{n\to \infty}J^j_{\underline{c}_k\overline{a}^n}
\end{equation*}
are not contained in $\Lambda_{\mathcal{F}}(p)$ with probability one.
By varying $\underline{c}_k$ we get a countable dense set which is not contained in $\Lambda_{\mathcal{F}}(p)$ with probability one, hence it can not contain an interval.
\end{proof}
\subsection{Proof of Theorem \ref{y25}}
\begin{prop}\label{u94}
Under the conditions of Theorem \ref{y25} there exists a set $K$ of positive $\mathcal{L}eb_1$ measure such that 
\begin{equation}
    \mathbb{P}\left(x \in
    \Lambda_{\mathcal{F}}\left(p\right)\right)>0 
    \quad  
    \text{for } \mathcal{L}eb_1\text{-a.e. } x \in K. 
\end{equation}
\end{prop}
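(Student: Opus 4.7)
The plan is to reduce Proposition \ref{u94} to the positivity of $\mathbb{E}(\mathcal{L}eb_1(\Lambda_{\mathcal{F}}(p)))$ and then to establish this via a Paley--Zygmund / second-moment argument. Since $E_n\downarrow \Lambda_{\mathcal{F}}(p)$ and $\mathcal{L}eb_1(E_n)\leq \mathcal{L}eb_1(I)$, bounded convergence together with Fubini gives
$$\mathbb{E}(\mathcal{L}eb_1(\Lambda_{\mathcal{F}}(p))) = \lim_{n\to\infty} \mathbb{E}(\mathcal{L}eb_1(E_n)) = \lim_{n\to\infty}\int \mathbb{P}(x\in E_n)\,d\mathcal{L}eb_1(x).$$
If this limit is positive, then the set $K := \{x : \mathbb{P}(x\in \Lambda_{\mathcal{F}}(p)) > 0\}$ must have positive Lebesgue measure, which is exactly the conclusion of the proposition.

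For the lower bound on $\mathbb{P}(x\in E_n)$, I fix $x$ in the interior of an $L$-adic interval $J^U_{\underline{a}}$ with $\underline{a}\in[L]^n$. By Lemma \ref{y90}, $x\in E_n$ holds exactly when the counting random variable
$$W_n := W_n^{U,\underline{a}} := \#\{\mathbf{i}\in\mathcal{E}_n : \exists V\in[N],\ f_{\mathbf{i}}(J^V)=J^U_{\underline{a}}\}$$
is positive. Lemma \ref{u83} and the independence of the Bernoulli variables at distinct nodes yield $\mathbb{E}(W_n) = p^n(A_{\underline{a}}\mathbf{1})_U$, where $\mathbf{1}\in\R^N$ is the all-ones column vector. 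The Paley--Zygmund inequality then gives $\mathbb{P}(x\in E_n) \geq \mathbb{E}(W_n)^2/\mathbb{E}(W_n^2)$, so I must control the second moment. Decomposing pairs $(\mathbf{i},\mathbf{j})\in[M]^n\times[M]^n$ by the length $k$ of their common prefix and using $\mathbb{P}(\mathbf{i},\mathbf{j}\in\mathcal{E}_n) = p^{2n-k}$ together with iterated applications of Lemma \ref{y90}, one obtains a bound of the form
$$\mathbb{E}(W_n^2) \leq p^{2n}\sum_{k=0}^{n} p^{-k}\sum_{U'\in[N]} A_{\underline{a}_{[1,k]}}(U,U')\,\bigl(A_{\underline{a}_{(k,n]}}\mathbf{1}\bigr)_{U'}^{\!2},$$
where $\underline{a}_{[1,k]}$ and $\underline{a}_{(k,n]}$ denote the first $k$ and last $n-k$ digits of $\underline{a}$.

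The main obstacle is then to combine this with conditions (1) and (2) to obtain a uniform positive lower bound on $\int \mathbb{P}(x\in E_n)\,d\mathcal{L}eb_1(x)$. Condition (1), $p^L\prod_a CS_{a,U} > 1$, is equivalent to $p\cdot g_U > 1$ where $g_U := (\prod_a CS_{a,U})^{1/L}$ is the geometric mean of the column sums; this geometric-mean expansion is exactly what is needed to make the ratio $\mathbb{E}(W_n)^2/\mathbb{E}(W_n^2)$ — after summation over $(U,\underline{a})$ via the telescoping provided by $CS_{a,\cdot} = \underline{e}^T A_a$ — stay uniformly bounded below in $n$. Condition (2) — the existence, for every digit $b\in[L]$, of a row of $A_b$ that is positive in every column — supplies the primitivity needed to prevent the matrix products $A_{\underline{a}}$ and the vectors $A_{\underline{a}}\mathbf{1}$ from degenerating, so that the Paley--Zygmund lower bound is nondegenerate on a set of $(U,\underline{a})$ of positive normalised counting measure. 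The technical heart of the argument, analogous to \cite{SMS2009}, is precisely this combinatorial second-moment estimate and its matching with the geometric-mean expression in condition (1); once it is in place, $\liminf_n \int \mathbb{P}(x\in E_n)\,d\mathcal{L}eb_1(x) > 0$, yielding the proposition via the reduction in the first paragraph.
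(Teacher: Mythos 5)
Your approach is genuinely different from the paper's.  The paper fixes a specific $L$-adic interval $K=J^{\widetilde U}_{\widetilde{\underline b}_k}$ chosen via Lemma~\ref{y4}, randomises the continuation of the digit string, and encodes the survival question as a multi-type branching process in i.i.d.\ random environment; Proposition~\ref{u94} then follows from the Smith--Wilkinson-type survival theorem \cite[Theorem 3]{bp_renv}, with condition C1 supplied by condition (2) of Theorem~\ref{y25} and condition C2 by the geometric-mean inequality of condition (1).  You instead fix $x$, let $W_n$ count retained $n$-cylinders through $x$, and attempt a direct Paley--Zygmund / second-moment argument after integrating over $x$.  Both reductions to positivity of $\mathbb{E}(\mathcal{L}eb_1(\Lambda_{\mathcal F}(p)))$ are correct and the paper performs the same Fubini step in the proof of Proposition~\ref{u92}, so the framing is fine.  (Minor imprecision: ``$x\in E_n$ holds exactly when $W_n>0$'' is only the implication $W_n>0 \Rightarrow x\in E_n$, since $I_{\mathbf i}$ has length $\widetilde n L^{1-n}$ and strictly contains $f_{\mathbf i}(\bigcup_V J^V)$ when $\widetilde n>1$; this is harmless for the lower bound.)

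However, there is a real gap at what you yourself call ``the technical heart of the argument.''  Paley--Zygmund is applied pointwise in $x$, and the ratio $\mathbb{E}(W_n^2)/\mathbb{E}(W_n)^2$ depends on the $L$-adic digit string $\underline a$ of $x$: for $x$ whose digits are dominated by letters $a$ with small column sums $CS_{a,\cdot}$ one has $\mathbb{E}(W_n)\to 0$ and the ratio blows up, so the bound cannot be uniform in $x$.  What you must show is that for Lebesgue-a.e.\ $x$ --- equivalently, for i.i.d.-uniform digit strings --- the random series
\begin{equation*}
\sum_{k\geq 0} p^{-k}\,\frac{\sum_{U'}A_{\underline a_{[1,k]}}(U,U')\bigl(A_{\underline a_{(k,n]}}\mathbf 1\bigr)_{U'}^{2}}{\bigl((A_{\underline a}\mathbf 1)_U\bigr)^{2}}
\end{equation*}
stays bounded; in the one-type case this is the familiar condition $\sum_k (p\,m)^{-k}<\infty$, while in the multi-type case it requires both a strong-law statement matching $p^L\prod_a CS_{a,U}>1$ and a comparability (Perron--Frobenius / primitivity) input so that the Cauchy--Schwarz loss across the $N$ types is controlled.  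That is precisely the content that the paper packages into the BPRE conditions C1/C2 and offloads to the cited survival theorem, and it is also the step that absorbs both hypotheses (1) and (2) of Theorem~\ref{y25}.  Your proposal names these ingredients but does not prove the estimate, and since that estimate is the entire difficulty of the proposition, the argument as written is incomplete.  If you want to pursue the second-moment route, the cleanest way to close the gap is to make the digit string random (uniform on $[L]^{\N}$) and run the subadditive/ergodic argument on $\log(A_{\underline a|_n}\mathbf 1)_U$, which is essentially re-deriving the BPRE supercriticality --- at which point directly invoking \cite[Theorem 3]{bp_renv}, as the paper does, is shorter.
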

First we prove Theorem \ref{y25} using Proposition \ref{u94} and Lemma \ref{u95}, then we prove Proposition \ref{u94}.
\begin{prop}\label{u92}
$\mathbb{P}\left(\mathcal{L}eb_1\left(\Lambda_{\mathcal{F}}\left(p\right)\right)>0\right)>0$.
\end{prop}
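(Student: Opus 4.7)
The plan is to derive Proposition \ref{u92} from Proposition \ref{u94} by a short Fubini/Tonelli argument. Let $K$ denote the set provided by Proposition \ref{u94}, and set $Y := \mathcal{L}eb_1(K \cap \Lambda_{\mathcal{F}}(p))$. Applying Tonelli's theorem to the (jointly measurable) indicator $\mathbf{1}_{\{x \in \Lambda_{\mathcal{F}}(p,\pmb{\omega})\}}$ on $K\times \Omega$, I would write
\begin{equation}
\mathbb{E}(Y) = \int_K \mathbb{P}\left(x \in \Lambda_{\mathcal{F}}(p)\right)\, d\mathcal{L}eb_1(x).
\end{equation}
By Proposition \ref{u94}, the integrand on the right is strictly positive for $\mathcal{L}eb_1$-a.e.\ $x \in K$, and $\mathcal{L}eb_1(K)>0$, so the integral is strictly positive. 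Hence $\mathbb{E}(Y)>0$, which forces $\mathbb{P}(Y>0)>0$, and a fortiori
\begin{equation}
\mathbb{P}\left(\mathcal{L}eb_1(\Lambda_{\mathcal{F}}(p))>0\right) \geq \mathbb{P}(Y>0) > 0,
\end{equation}
which is the assertion of Proposition \ref{u92}.

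The only preparatory step is to justify the joint measurability of $(x,\pmb{\omega})\mapsto \mathbf{1}_{\{x\in \Lambda_{\mathcal{F}}(p,\pmb{\omega})\}}$, which is what is needed to invoke Tonelli. I would verify this directly from the construction in Section \ref{u69}: the random set $\mathcal{E}_n(\pmb{\omega})$ is determined by the finitely many Bernoulli variables $(X^{[{\tt i}]})_{|{\tt i}|\leq n}$, so $E_n(\pmb{\omega}) = \bigcup_{{\tt i}\in \mathcal{E}_n(\pmb{\omega})} I_{\tt i}$ is a closed set whose graph in $\R \times \Omega$ is a finite union of product-measurable rectangles; the graph of $\Lambda_{\mathcal{F}}(p,\pmb{\omega}) = \bigcap_n E_n(\pmb{\omega})$ is then the countable intersection of these graphs, and hence product-measurable.

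I do not anticipate any real obstacle in this step. The substance of the argument lives entirely in Proposition \ref{u94}, which supplies the pointwise positivity of the hitting probability on a set of positive Lebesgue measure; once this is in hand, the passage from a positive hitting probability to a positive probability of positive Lebesgue measure is a textbook Fubini application, and the measurability check is purely bookkeeping.
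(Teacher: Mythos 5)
Your argument is correct and is essentially the same Fubini/Tonelli swap that the paper uses: the paper also passes to $\mathbb{E}\bigl(\mathcal{L}eb_1(\Lambda_{\mathcal{F}}(p))\bigr)$, lower-bounds it by $\int_K\mathbb{P}(x\in\Lambda_{\mathcal{F}}(p))\,d\mathcal{L}eb_1(x)$, and invokes Proposition \ref{u94}. The only difference is that you spell out the joint-measurability check that the paper leaves implicit, which is a harmless refinement.
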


\begin{proof}[Proof of Proposition \ref{u92} assuming Proposition \ref{u94}]
Since \newline
$\mathbb{P}\left(\mathcal{L}eb_1\left(\Lambda_{\mathcal{F}}\left(p\right)\right)>0\right)>0$ if and only if $\mathbb{E}\left(\mathcal{L}eb_1\left(\Lambda_{\mathcal{F}}\left(p\right)\right)\right)>0$ we will prove the second.
\begin{equation}
\begin{split}
    \mathbb{E}\left(\mathcal{L}eb_1\left(\Lambda_{\mathcal{F}}\left(p\right)\right)\right)
    & =\int\limits_{\Omega}\mathcal{L}eb_1\left(\Lambda_{\mathcal{F}}\left(p, \pmb{\omega}\right)\right)d\mathbb{P}\left(\pmb{\omega}\right) \\
    & \geq 
    \int\limits_{\Omega} \int\limits_{K} \ind{\left\{x\in \Lambda_{\mathcal{F}}\left(p, \pmb{\omega}\right)\right\}}d\mathcal{L}eb_1\left(x\right)d\mathbb{P}\left(\pmb{\omega}\right) \\
    & =
    \int\limits_{K}\int\limits_{\Omega}\ind{\left\{x\in \Lambda_{\mathcal{F}}\left(p, \pmb{\omega}\right)\right\}} d\mathbb{P}\left(\pmb{\omega}\right)d\mathcal{L}eb_1\left(x\right) \\
    & =
    \int\limits_{K}\mathbb{P}\left(x\in \Lambda_{\mathcal{F}}\left(p\right)\right)d\mathcal{L}eb_1\left(x\right)>0,
    \end{split}
\end{equation}
where the last inequality follows form Proposition \ref{u94}.
\end{proof}
\begin{proof}[Proof of Theorem \ref{y25} assuming Proposition \ref{u94}]
By a similar argument as in the proof of Theorem \ref{y26} using Proposition \ref{u92} and Lemma \ref{u95}.
\end{proof}
\color{black}

The proof of Theorem \ref{y25} assuming Proposition \ref{u94} is very similar to the one presented in \cite[Proof of Theorem 2., page 140]{SMS2009} hence we will not repeat this proof. In what follows we prove Proposition \ref{u94}, using the same  --branching process  method -- as in \cite{SMS2009}, but with a modified process.
\color{black}
\begin{proof}[Proof of Proposition \ref{u94}]
We will use the first part of the proof of Theorem \ref{y26}, namely the part until \eqref{y2}.
 We choose
 $\widetilde{U}$ and $\widetilde{\underline{b}}_k\in[L]^k$   in a way that lemma \ref{y4} holds for $U=\widetilde{U}$ and $\underline{a}= \widetilde{\underline{b}}_k$.
Let 
\begin{equation}\label{u91}
    K:=J^{\widetilde{U}}_{\underline{\widetilde{b}}_k},
\end{equation}
 Let $U\sim \text{Uniform}\left(K\right)$ and  $\mathfrak{P}$ and $\mathfrak{E}$ denote the corresponding distribution and expectation respectively. In what follows we will prove that 
\begin{equation}
    \mathfrak{P}\left(\mathbb{P}\left(U \in K\right)>0\right)=1,
\end{equation}
using the theory of branching processes in random environment. We begin with defining the process. Namely,
for a given $\mathbf{a}=\left(a_1, \dots, a_n, \dots\right)\in \Sigma^{\left(L\right)}$
let 
\begin{equation}
\widetilde{\mathfrak{T}}^0\left(\mathbf{a}\right):={\left({\tt i}^{0,0}, \dots, {\tt i}^{0,N-1}\right)}, 
\end{equation}
where ${\tt i}^{0,0}, \dots, {\tt i}^{0,N-1}$ was defined in \eqref{y2}. Lemma \ref{y4} guarantees that such ${\tt i}^{0,0}, \dots, {\tt i}^{0,N-1}$ exists since the second condition of Theorem \ref{y25} is stronger than the second assumption of Theorem \ref{y26}. Recursively if we have 
\begin{equation}
    \widetilde{\mathfrak{T}}^{t-1}\left(\mathbf{a}\right)=\left\{\left({\tt i}^{t-1,0}_1, \dots, {\tt i}^{t-1,N-1}_1\right), \dots, \left({\tt i}^{t-1,0}_s, \dots, {\tt i}^{t-1,N-1}_s\right)\right\},
\end{equation}
then $\left({\tt i}^{t,0}, \dots, {\tt i}^{t,N-1}\right)$ with elements ${\tt i}^{j, V}\in [M]^t$ is in  $\widetilde{\mathfrak{T}}^{t}\left(\mathbf{a}\right)$ if and only if, for some $\Delta$ defined later
the following holds:
\begin{itemize}
    \item $\left|{\tt i}^{t,V}\right|=\Delta\cdot t+k$ 
    \item ${\tt i}^{t,V}|_{\Delta\cdot (t-1)+k}\in \widetilde{\mathfrak{T}}^{t-1}\left(\mathbf{a}\right)$
    \item $f_{{\tt i}^{t,V}}\left(J^V\right)=J^{\widetilde{U}}_{\widetilde{b}_k\mathbf{a}|_{t\cdot \Delta}}$
    \item If ${\tt i}^{t,V}$ is in an element of $\widetilde{\mathfrak{T}}^{t}$, then it is not contained in another element of $\widetilde{\mathfrak{T}}^{t}$.
\end{itemize}
The elements of $\widetilde{\mathfrak{T}}^n\left(\overline{\theta}\right)$ are
 denoted by $\left({\tt i}^{n,0}, \dots, {\tt i}^{n,N-1}\right)$,
 are called level-$n$ $N$-tuples.
Now we define the environment:
\begin{equation}\label{u59}
    \overline{\theta}=\left(\theta_0, \dots, \theta_n, \dots\right)\text{, where } \theta_k=\left(a_{k\Delta+1}, \dots, a_{\left(k+1\right)\Delta}\right),
\end{equation}
and the branching process in random environment is the following: 
\begin{equation*}
    \begin{split}
    & \mathcal{Z}_0\left(\overline{\theta}\right):=\#\widetilde{\mathfrak{T}}^0\left(\mathbf{a}\right)=1, \\
    & \mathcal{Z}_n\left(\overline{\theta}\right):=\#\widetilde{\mathfrak{T}}^n\left(\mathbf{a}\right).
    \end{split}
\end{equation*}
The above defined $\mathcal{Z}_n\left(\overline{\theta}\right)$ process is indeed a branching process in random (i.i.d. hence stationary and ergodic) environment. This is because 
\begin{equation}
    \mathcal{Z}_{n+1}\left(\overline{\theta}\right)= \sum\limits_{i=1}^{\mathcal{Z}_n\left(\overline{\theta}\right)}X_{n,i}\left(\overline{\theta}\right),
\end{equation}
where $X_{n,i}\left(\overline{\theta}\right)$ is the number of level-$n+1$ $N$-tuples coming from the $i$-th level-$n$ $N$-tuple in $\widetilde{\mathfrak{T}}^n\left(\mathbf{a}\right)$. The random variables $\left\{X_{n,i}\left(\overline{\theta}\right)\right\}_{i=1}^{\mathcal{Z}^n\left(\overline{\theta}\right)}$ are independent, because what happens in different retained cylinders are independent of each other.

If $\mathcal{Z}_n\left(\overline{\theta}\right)$ does not die out for a given $\overline{\theta}=\left(a_1, \dots, a_{k\Delta+1}, \dots\right)$, then conditioned on $H^0$ the point $x$ which has $L$-adic expansion \newline $\widetilde{\underline{b}}_k,a_1, \dots, a_{k\Delta+1}, \dots$ shifted with the left endpoint of the interval $J^{\widehat{U}}=\left[\widehat{u}_1, \widehat{u}_2\right]$ i.e.   $x=\widehat{u}_1+\sum_{j=1}^{k}\widehat{b}_{k_j}L^{j-1}+\sum_{j=1}^{\infty}a_jL^{j-1+k}$ is contained in $\Lambda_{\mathcal{F}}\left(p\right)$. This is because if the process does not die out then on every level the cylinder containing $x$ is retained, since it is of a retained type.

Denote the probability that the interval $J^{V}_a$ is of every type with $q\left(a, V\right)$:
\begin{multline}\label{u77}
    q\left(k,V\right):=\mathbb{P}\left(\forall\, U\in [N]\;\exists\, i \in \mathcal{E}_1:f_i\left(J^U\right)=J^V_a\right), \text{ and } \\ q:=\min_{a\in [L]}\max_{V \in [N]}q\left(a,V\right).
\end{multline}
To continue the proof of Proposition \ref{u94} we need the following fact
\begin{fact}
q>0.
\end{fact}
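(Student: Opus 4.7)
The plan is to read off $q>0$ directly from condition (2) of Theorem \ref{y25}, together with the independent Bernoulli retention of the first-level cylinders. Since $[L]$ is finite, it suffices to show that $\max_{V\in[N]} q(a,V)>0$ for every fixed $a\in[L]$.

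Fix $a\in[L]$. Applying condition (2) of Theorem \ref{y25} with $b=a$ produces a destination type $V_{*}=V_{*}(a)\in[N]$ such that $A_a(V_{*},U)>0$ for every $U\in[N]$. By the definition \eqref{y89} of the matrices, this in turn means that for every source type $U\in[N]$ there is a (unique) index $j_U\in[m]$ with $S_{j_U}(J^U)=J^{V_{*}}_{a}$, and the number of copies of $S_{j_U}$ in the original family $\mathcal{F}$ is exactly $n_{j_U}=A_a(V_{*},U)\geq 1$. Consequently, the event that some retained $i\in\mathcal{E}_1$ satisfies $f_i(J^U)=J^{V_{*}}_{a}$ coincides with the event that at least one of the $n_{j_U}$ Bernoulli$(p)$ coin tosses attached to the copies of $S_{j_U}$ comes up heads, which has probability $1-(1-p)^{n_{j_U}}>0$.

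The only mildly delicate step is to argue that, as $U$ varies over $[N]$, these events are independent. This is true because each individual similarity $f_i\in\mathcal{F}$ is injective, so $f_i$ cannot send two distinct basic types $J^{U_1}\neq J^{U_2}$ onto the same destination interval $J^{V_{*}}_{a}$; hence the finite sets of Bernoulli$(p)$ trials on which these events depend are pairwise disjoint. This yields
\begin{equation*}
q(a,V_{*})\;\geq\;\prod_{U\in[N]}\bigl(1-(1-p)^{n_{j_U}}\bigr)\;>\;0.
\end{equation*}

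Therefore $\max_{V\in[N]} q(a,V)\geq q(a,V_{*})>0$ for every $a\in[L]$, and taking the minimum over the finite index set $[L]$ gives $q>0$. There is no serious obstacle beyond the independence bookkeeping just described, which itself is immediate from the injectivity of the level-one similarities: the conclusion essentially falls out of condition (2) of Theorem \ref{y25} once it is unpacked through the definition of $A_a$.
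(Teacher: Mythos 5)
Your proof is correct and follows essentially the same route as the paper's: extract the positive row of $A_a$ guaranteed by condition (2) of Theorem \ref{y25}, then exploit independence of the level-one retention events to conclude that the intersection over $U\in[N]$ has positive probability. The paper's version phrases the final step through the equivalence $\mathbb{P}(X>0)>0\Leftrightarrow\mathbb{E}(X)>0$ for the non-negative integer product $\prod_U\#S^{V,U}(a)$ and then factors the expectation by independence, whereas you compute the lower bound $\prod_U\bigl(1-(1-p)^{n_{j_U}}\bigr)$ directly; you also spell out the independence bookkeeping (distinctness of the $j_U$ via injectivity of the similarities) which the paper merely asserts, so your argument is in fact slightly more complete on that point.
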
 
\begin{proof}[The proof of the Fact]
Every matrix $A_a$ has a positive row $A_{a, V}$ by the second condition in the Theorem \ref{y25}. 
\begin{multline}
\mathbb{P}\left(\{\forall U: \: \#S^{V,U}\left(a\right)>0\}\right)
=\mathbb{P}\left(\prod\limits_{U\in [N]}\#S^{V,U}\left(a\right)>0\right)>0
\\
\Longleftrightarrow
\mathbb{E}\left(\prod\limits_{U\in [N]}\#S^{V,U}\left(a\right)\right)>0,
\end{multline}
which is by independence and Lemma \ref{y5} equivalent to 
$$\prod\limits_{U\in [N]}\mathbb{E}\left(\#S^{V,U}\left(a\right)\right)=\prod\limits_{U\in [N]}p\cdot A_a\left(V,U\right)>0.$$ 
The last inequality holds by the second condition Theorem \ref{y25}.
\end{proof}
Let $V\in[N]$ be arbitrary such that 
$V=\max\limits_{W\in [N]}q\left(a,W\right)$. Then we define
\begin{equation}
    U\left(a\right):=V.
\end{equation}
Using the Theory of branching processes in random environment (more precisely \cite[Theorem 3]{bp_renv}), under the following two conditions $\mathfrak{P}$-almost surely the process ${\mathcal{Z}_n\left(\overline{\theta}\right)}_{n\geq 1}$ does not die out with positive probability.
\begin{description}
    \item[C1] There exists a $c>0$ such that for all $\overline{\theta}$: $\mathbb{P}\left(\mathcal{Z}_1\left(\overline{\theta}\right)>0\right)>c$;
    \item[C2] \label{u72} $\frac{1}{L^{\Delta}}\sum\limits_{\substack{\left(a_1, \dots, a_{\Delta}\right)\\ \in [L]^{\Delta}}}\log\left[\mathbb{E}\left(\mathcal{Z}_1\left(\overline{\theta} \right)|\theta_{0}=\left(a_0, \dots, a_{\Delta}\right)\right)\right]>0$,
  \end{description}
where $\Delta $ is natural number which is to be defined later, but was introduced  as the 
length of the elements of the environmental 
random variable (see \eqref{u59}).
To see that
Condition {\bf{C1}} holds, note that   by the definition of $q$, for some $\varepsilon\in\left(0,1\right)$ we have
\begin{equation}
\mathbb{P}\left(\mathcal{Z}_1\left(\overline{\theta}\right)>0\right)>p_0\cdot q^{\Delta}\left(1-\varepsilon\right).
\end{equation}
The proof of the second condition {\bf {C2}} is a little bit trickier. 
Recall, that
\begin{equation}
     g_j=  \left(\prod_{a\in [L]}CS_{a,j}\right)^{^1/_L}
\end{equation}
and denote
\begin{equation}
    \widehat{\Gamma}=\min_{j \in [N]} g_j.
\end{equation}
By the first assumption of Theorem \ref{y25} $p\cdot \widehat{\Gamma}>1$.


\begin{lemma}
For any $n\in \N$ and $V\in [N]$:
\begin{equation}
    \mathfrak{E}\left[\log\left(\mathbb{E}\left(\#S^V\left(a_1,\dots, a_n\right)\right)\right)\right]\geq n\cdot \log \left[\widehat{\Gamma}\cdot p\right].
\end{equation}
\end{lemma}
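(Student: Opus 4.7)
The plan is to unpack $\mathbb{E}(\#S^V(a_1,\dots,a_n))$ using Lemma \ref{y5}, which gives $\mathbb{E}(\#S^V(\underline{a}))=p^n\cdot CS_{\underline{a},V}$. So writing $T_n(V):=CS_{(a_1,\dots,a_n),V}=\underline{e}^T A_{a_1}A_{a_2}\cdots A_{a_n}\underline{\delta}_V$, the claim reduces to showing
\begin{equation*}
\mathfrak{E}[\log T_n(V)]\;\geq\;n\log\widehat{\Gamma}\qquad\text{for every }V\in[N],
\end{equation*}
since adding $n\log p$ to both sides then gives the desired inequality.

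I would argue by induction on $n$, using the one-step decomposition
\begin{equation*}
T_n(V)=\sum_{i\in[N]}CS_{a_1,i}\cdot A_{a_2\cdots a_n}(i,V).
\end{equation*}
The base case $n=1$ is immediate from the definitions: by i.i.d.\ uniformity of the digits,
\begin{equation*}
\mathfrak{E}[\log T_1(V)]=\frac{1}{L}\sum_{a\in[L]}\log CS_{a,V}=\log g_V\geq\log\widehat{\Gamma}.
\end{equation*}
For the step, set $q_i:=A_{a_2\cdots a_n}(i,V)/T_{n-1}(V)$, so that $\sum_i q_i=1$, and write $T_n(V)=T_{n-1}(V)\sum_i q_i\, CS_{a_1,i}$. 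Jensen's inequality applied to $\log$ (concave) with weights $(q_i)$ yields
\begin{equation*}
\log T_n(V)\geq \log T_{n-1}(V)+\sum_i q_i\log CS_{a_1,i}.
\end{equation*}
Taking $\mathfrak{E}$ and using that $a_1$ is independent of $(a_2,\dots,a_n)$ (on which $q_i$ alone depends), while $\mathfrak{E}[\log CS_{a_1,i}]=\log g_i\geq\log\widehat{\Gamma}$, and $\sum_i\mathfrak{E}[q_i]=1$, advances the induction by $\log\widehat{\Gamma}$.

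The main obstacle is the usual well-definedness issue: for the Jensen step to be meaningful one needs $T_{n-1}(V)>0$ and $CS_{a,i}>0$ so that the logarithms are finite. This is precisely where condition (2) of Theorem \ref{y25} is used: since each $A_a$ has an entirely positive row, indexed by some $U_a\in[N]$ with $A_a(U_a,j)\geq 1$ for all $j$, one gets $CS_{a,j}\geq A_a(U_a,j)\geq 1$ deterministically, and an immediate induction on $n$ (using the same decomposition) shows $T_n(V)\geq 1$ for every realization of $(a_1,\dots,a_n)$ and every $V$. Thus every quantity on which $\log$ is evaluated is at least $1$, all logarithms are nonnegative, and the Jensen inequality applies without subtlety. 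This will complete the induction and establish the lemma.
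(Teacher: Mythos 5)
Your proof is correct and uses essentially the same argument as the paper — induction on $n$, Jensen's inequality for the concave logarithm applied to a convex decomposition of the column sum, and independence of the i.i.d.\ uniform digits under $\mathfrak{E}$ — the only cosmetic difference being that you peel off the first digit $a_1$ (so the convex weights come from the tail product $A_{a_2\cdots a_n}$) while the paper peels off the last digit $a_{n+1}$ (weights $A_{a_{n+1}}(U,V)/CS_{a_{n+1},V}$), a mirror-image decomposition that yields the identical inductive step. Your explicit verification that every argument of $\log$ is at least $1$, via condition (2) of Theorem \ref{y25} giving $CS_{a,j}\geq 1$ and then inductively $T_n(V)\geq 1$, tidies up a well-definedness point the paper leaves tacit.
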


\begin{proof}
The proof is by mathematical induction, namely, for $n=1$:
\begin{multline}
    \mathfrak{E}\left[\log\left(\mathbb{E}\left(\#S^V\left(a\right)\right)\right)\right]
    = \frac{1}{L}\sum_{a \in [L]}\log\left(\mathbb{E}\left(\#S^V\left(a\right)\right) \right)\\
    =\frac{1}{L}\log\left(\prod_{i\in [L]}p \cdot CS_{i, V}\right)
    =\log\left(p\cdot g_{V}\right)
    \geq \log\left(p \cdot \widehat{\Gamma}\right).
\end{multline}
Assume that $\mathfrak{E}\left[\log\left(\mathbb{E}\left(\#S^V\left(a_1,\dots, a_n\right)\right)\right)\right]\geq n\cdot \log \left[\widehat{\Gamma}\cdot p\right]$. From $$\mathbb{E}\left(\#S^{U,V}\left(a\right)\right)=p\cdot A_{a}\left(U,V\right),$$it follows, that
\begin{multline}
    \mathbb{E}\left(\#S^V\left(a_1,\dots, a_n, a_{n+1}\right)\right)
    = \sum_{U \in [N]} \mathbb{E}\left(\#S^{U,V}\left(a_{n+1}\right)\right)\cdot  \mathbb{E}\left(\#S^U\left(a_1,\dots, a_n\right)\right) \\
    = \sum_{U \in [N]} \left[\frac{A_{a_{n+1}}\left(U,V\right)}{CS_{a_{n+1}, V}}\right]\cdot  \left[p\cdot {CS_{a_{n+1}, V}}\mathbb{E}\left(\#S^U\left(a_1, \dots, a_n\right)\right)\right]
\end{multline}
by the concavity of the $\log$ function
\begin{multline}
    \mathbb{E}\left(\#S^V\left(a_1,\dots,a_n, a_{n+1}\right)\right)\geq 
    \log\left(p\cdot CS_{a_{n+1}, V}\right) \\+\sum_{U\in [N]}\frac{A_{a_{n+1}}\left(U,V\right)}{CS_{a_{n+1}, V}} \log\left[\mathbb{E}\left(\#S^U\left(a_1, \dots, a_n\right)\right)\right]
\end{multline}
Hence by independence and the linearity of the expectation, we get that
\begin{multline}
    \mathfrak{E}\left[\log\left(\mathbb{E}\left(\#S^V\left(a_1,\dots, a_n ,a_{n+1}\right)\right)\right)\right]
    =\mathfrak{E}\left[\log\left(p\cdot CS_{a_{n+1}, V}\right)\right] \\ + \sum_{U\in [N]}\mathfrak{E}\left[\frac{A_{a_{n+1}}\left(U,V\right)}{CS_{a_{n+1}, V}}\right]\mathfrak{E}\left[ \log\left(\mathbb{E}\left(\#S^U\left(a_1,\dots, a_{n}\right)\right)\right)\right].
\end{multline}
The first part of the sum is
\begin{equation}
\mathfrak{E}\left(\log\left(p\cdot CS_{a_{n+1}, V}\right)\right)= \log\left(g_{V}\cdot p\right)\geq \log\left(\widehat{\Gamma}\cdot p\right),
\end{equation}
and for the second, by the induction hypothesis $$\mathfrak{E}\left[ \log\left(\mathbb{E}\left(\#S^U\left(a_1,\dots, a_{n}\right)\right)\right)\right]\geq n\cdot \log\left(\widehat{\Gamma}\cdot p\right).$$ Hence
\begin{multline}
\sum_{U\in [N]}\mathfrak{E}\left[\frac{A_{a_{n+1}}\left(U,V\right)}{CS_{a_{n+1}, V}}\right]\mathfrak{E}\left[ \log\left(\mathbb{E}\left(\#S^U\left(a_1,\dots, a_{n}\right)\right)\right)\right] \\
= n\cdot \log\left(\widehat{\Gamma}\cdot p\right) \sum_{U\in [N]}\mathfrak{E}\left[\frac{A_{a_{n+1}}\left(U,V\right)}{CS_{a_{n+1}, V}}\right]
=n\cdot \log\left(\widehat{\Gamma}\cdot p\right)
\end{multline}
combining the two above gives the required inequality.
\end{proof}
It follows from
\begin{multline}
    \mathbb{E}\left(\mathcal{Z}_{1}\left(\overline{\theta}\right)|\theta_0=\left(a_1, \dots, a_{\Delta}\right)\right) \geq \mathbb{E}\left(\#S^{U\left(a_{\Delta}\right)}\left(a_1,\dots, a_{\Delta}\right)\right)q\left(a_{\Delta}, U\left(a_{\Delta}\right)\right) \\
    \geq q \cdot \mathbb{E}\left(\#S^{U\left(a_{\Delta}\right)}\left(a_1,\dots, a_{\Delta}\right)\right),
\end{multline}
that
\begin{multline}
\mathfrak{E}\left[\log\left(\mathbb{E}\left(\mathcal{Z}_1\left(\overline{\theta}
\right)|\theta_{0}=\left(a_1, \dots,
a_{\Delta}\right)\right)\right)\right]\geq
\mathfrak{E}\left(\log\left(\mathbb{E}\left(\#S^V\left(a_1, \dots,a_{\Delta} \right)\right)\right)\right) \\ +\log\left(q\right)
\geq \Delta \cdot \log\left(\widehat{\Gamma}\cdot p\right)+\log\left(q\right).
\end{multline}
Since $\log\left(\widehat{\Gamma}\cdot p\right)>0$ we can choose $\Delta$, such that 
\begin{equation}\label{u78}
    \Delta \cdot \log\left(\widehat{\Gamma}\cdot p\right)+\log\left(q\right)>0.
\end{equation}

\end{proof}
\subsection{Proof of Theorem \ref{y24}}
Throughout this section we always assume that the deterministic attractor, denoted by $\Lambda_{\mathcal{F}}$, has positive Lebesgue measure, $\mathcal{L}eb_1(\Lambda_{\mathcal{F}})>0$. Whenever it has zero $\mathcal{L}eb_1$-measure the random attractor also has $0$ measure hence the theorem trivially holds.
In this section we consider the deterministic system and first state a theorem (Theorem \ref{y50}) about it using that we prove Theorem \ref{y25} and then using similar methods as in \cite{barany2014dimension} and \cite{ruiz2009dimension} we prove Theorem \ref{y50}.
On the space $\Sigma^{(L)}$ we define the left shift operator, $\sigma:\Sigma^{(L)}\to \Sigma^{(L)}$:
\begin{equation}
    \sigma(\mathbf{a}):=a_2\dots a_n\dots,
\end{equation}
for $\mathbf{a}=a_1,\dots, a_n,\dots \in \Sigma^{(L)}$. Also we define the uniform measure
\begin{equation}
    \widetilde{\mathcal{L}}:=\left (\frac{1}{L}, \dots, \frac{1}{L}\right)^{\N},
\end{equation}
on $\Sigma^{(L)}$.

\begin{theorem}\label{y50}
There exist a $w< \log\frac{M}{L}$ such that 
\begin{equation}
  \lim_{n \to \infty}\frac{1}{n}\log\|A_{\mathbf{a}|n}\|=w, \quad \text{ for } \widetilde{\mathcal{L}} \text{ almost every }  \mathbf{a}\in \Sigma^{(L)}. 
\end{equation}
\end{theorem}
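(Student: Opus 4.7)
The plan has three stages: existence of the almost-sure limit, a non-strict upper bound, and then strictness.

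\emph{Step 1: Existence of the limit.} I would invoke Kingman's subadditive ergodic theorem. The norm $\|B\|=\underline{e}^{T}B\underline{e}$ is submultiplicative on the cone of non-negative matrices, since for $B,C\geq 0$,
\[
\|BC\|=\sum_{k}\bigl(\underline{e}^{T}B\bigr)_{k}\bigl(C\underline{e}\bigr)_{k}\leq \|B\|\max_{k}\bigl(C\underline{e}\bigr)_{k}\leq \|B\|\,\|C\|.
\]
Hence $\phi_{n}(\mathbf{a}):=\log\|A_{\mathbf{a}|n}\|$ is a subadditive cocycle over the Bernoulli shift $(\Sigma^{(L)},\sigma,\widetilde{\mathcal{L}})$, and $\phi_{1}$ is bounded. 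Kingman together with the ergodicity of $\widetilde{\mathcal{L}}$ yields $\phi_{n}/n\to w$ $\widetilde{\mathcal{L}}$-a.s.\ for the deterministic constant $w=\inf_{n}\mathbb{E}[\phi_{n}]/n$.

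\emph{Step 2: Non-strict bound $w\leq \log(M/L)$.} The identity $\sum_{\underline{a}\in[L]^{n}}A_{\underline{a}}=A^{n}$, with $A:=\sum_{a}A_{a}$, gives $\mathbb{E}[\|A_{\mathbf{a}|n}\|]=\|A^{n}\|/L^{n}$. By Lemma \ref{y90}, each $\mathbf{i}\in[M]$ sends every basic type $J^{k}$ to exactly one $J^{\ell}_{a}$, so every column of $A$ sums to $M$; Perron--Frobenius then gives $\rho(A)=M$ and $\|A^{n}\|^{1/n}\to M$. Jensen's inequality applied to $\log$ yields $\mathbb{E}[\phi_{n}]/n\leq \tfrac{1}{n}\log\|A^{n}\|-\log L\to \log(M/L)$, so $w\leq \log(M/L)$.

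\emph{Step 3: Strict inequality.} This is the main obstacle, and I would follow the strategy of \cite{barany2014dimension,ruiz2009dimension}. The relevant regime is $M>L$ (equivalently $\dim_{\rm Sim}\Lambda_{\mathcal{F}}>1$), which forces genuine overlaps in $\mathcal{F}$. Let $\underline{v}>0$ be the right Perron eigenvector of $A$ with $A\underline{v}=M\underline{v}$. Since $\mathbb{E}[A_{a}]=A/L$, the process $\mathbf{Y}_{n}:=(L/M)^{n}A_{\mathbf{a}|n}\underline{v}$ is a non-negative vector-valued martingale of constant mean $\underline{v}$, and converges componentwise $\widetilde{\mathcal{L}}$-a.s.\ to some $\mathbf{Y}_{\infty}\geq 0$. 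Coupled with the identity
\[
\nu\bigl(J^{\ell}_{\underline{a}}\bigr)=M^{-n}\bigl(A_{\underline{a}}\underline{\nu}\bigr)_{\ell}
\]
linking the natural measure $\nu$ to the matrix products, $\mathbf{Y}_{n}$ is identified (up to positive constants coming from $\underline v$) with the renormalised $L$-adic density of $\nu$ along the random address $\mathbf{a}$; one then shows that the equality $w=\log(M/L)$ is equivalent to uniform integrability of $\mathbf{Y}_{n}$ and hence to $\nu$ being absolutely continuous with $L^{\infty}$ density with respect to $\mathcal{L}eb_{1}$. The overlap condition $M>L$ rules this out through a spectral-gap analysis of the averaged transfer operator $v\mapsto \tfrac{1}{L}\sum_{a}A_{a}v$ restricted to the hyperplane complementary to $\underline{v}$, in the spirit of \cite{ruiz2009dimension}, or via the self-similar density-equation argument of \cite{barany2014dimension}. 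The principal difficulty is to upgrade this qualitative contradiction into an \emph{exponential} decay of $\|\mathbf{Y}_{n}\|$, i.e.\ a quantitative gap between $M/L$ and the a.s.\ growth rate of $A_{\mathbf{a}|n}\underline v$; once this gap is established, $w<\log(M/L)$ is immediate.
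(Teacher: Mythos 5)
Your Steps~1 and~2 are correct: Kingman's subadditive ergodic theorem yields the a.s.\ constant limit $w$, and the observation that every column of $A$ sums to $M$, combined with Jensen's inequality, gives the non-strict bound $w\le\log(M/L)$. The whole content of the theorem, however, is the strict inequality, which lives entirely in your Step~3, and there your argument is a sketch with two genuine gaps rather than a proof. First, the asserted equivalence ``$w=\log(M/L)$ iff $\mathbf{Y}_n$ is uniformly integrable'' holds only in one direction: UI forces $\mathbb{E}[\mathbf{Y}_\infty]=\underline v>0$, hence $\mathbf{Y}_\infty>0$ with positive probability, hence $w=\log(M/L)$ by ergodicity; but $w=\log(M/L)$ is a statement only about the exponential rate and is perfectly compatible with $\mathbf{Y}_n\to 0$ a.s.\ through subexponential fluctuations, so the converse fails. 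Likewise UI would give $\nu\ll\mathcal{L}eb_1$ with an $L^1$ density, not a bounded one. Consequently, even if you could rule out absolute continuity of $\nu$, you would only obtain $\mathbf{Y}_\infty=0$ a.s., which does not yield $w<\log(M/L)$. You acknowledge this (``upgrade this qualitative contradiction into an \emph{exponential} decay'') and then leave it open; but that open step \emph{is} the theorem. Second, the claim that $M>L$ rules out $\nu$ having a bounded density via a spectral gap of $v\mapsto\tfrac1L\sum_a A_a v$ on the complement of $\underline v$ is never made precise and is not generally true without further input.

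The paper closes the gap by a multifractal argument rather than a martingale one. It bounds the covering count $\#\mathcal G_n(\tau(\mathbf a))$ by $\|A_{\mathbf a|n}\|$ (Lemma~\ref{y76}), introduces the pressure $P(t)$ of the subadditive potential $\log\|A_{\mathbf a|n}\|$ with $P(0)=1$ and $P(1)=s=\log M/\log L$, and uses primitivity of $A$ (Lemma~\ref{y81}) together with the mixing of $\widetilde\eta$ (Lemma~\ref{y79}) to identify $\widetilde\eta$ with the unique Gibbs state $\mu_1$ at $t=1$ (Lemma~\ref{y69}). The decisive external input, which your sketch has nothing to replace, is $\dim_H\widetilde\eta<1$ (Lemma~\ref{y41}, citing \cite{Torma}, \cite{SBS_book}); combined with $\dim_H\mu_1=-P'(1)+s$ this gives $P'(1)>s-1$, hence by convexity $\inf_{t>0}\bigl(P(t)-(s-1)t\bigr)<1$. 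The large-deviation estimate of Lemma~\ref{y67} then bounds $\dim_H\bigl\{\mathbf a:\ \lim\tfrac{1}{n\log L}\log\|A_{\mathbf a|n}\|\ge s-1\bigr\}$ by this infimum, so the set is strictly less than one-dimensional and therefore $\widetilde{\mathcal L}$-null, which is exactly $w/\log L<s-1$, i.e.\ $w<\log(M/L)$. To salvage your martingale route you would need a quantitative input of comparable strength (in substance equivalent to $\dim_H\nu<1$); without it the reduction cannot close.
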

The proof of this theorem is presented at end of this section.

We denote by $\mathfrak{t}_{j}$ the translation that translates $[0,L]$ into $J^j$, $j \in [N]$. Namely, if $J^j=[k\cdot L, (k+1) \cdot L]$, then $\mathfrak{t}_{j}(x)=x+k\cdot L$. This way for $\mathbf{a}\in [L]^n, n\geq 0$ $\mathfrak{t}_{j}^{-1}(J^j_{\mathbf{a}})=J^0_{\mathbf{a}}$. Also let $\Xi_{j}: \Sigma^{(L)} \to J^j$,
\begin{equation}\label{y38}
    \Xi_{j}(\mathbf{a}):=\mathfrak{t}_{j}\left(\sum_{n=1}^{\infty}a_l L^{1-n}\right) \quad \text{and} \quad  \Xi(\mathbf{a}):=\Xi_0(\mathbf{a}).
\end{equation}
The composition of the natural projection from the symbolic space to the interval $[0,L]$ and of the translation of $[0,L]$ into the $j$-th interval, $J^j$. 
\begin{lemma}\label{y9}
If $x=\Xi_{\ell}(\mathbf{a})$ for some $\ell\in [N]$ and $\mathbf{a}\in \Sigma^{(L)}$, then
\begin{equation}
    \#\left\{(j_1, \dots, j_n)\in [M]^n: x\in I_{j_1, \dots, j_n}\right\}\leq \|A_{\mathbf{a}|n}\|.
\end{equation}
\end{lemma}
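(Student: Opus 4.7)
The plan is to construct an injection from the set
\[
E := \{(j_1,\dots,j_n)\in [M]^n : x \in I_{j_1,\dots,j_n}\}
\]
into the target set
\[
\mathcal{B} := \{((j_1,\dots,j_n),k)\in [M]^n\times[N] : \exists\, i\in [N]\text{ with } f_{j_1,\dots,j_n}(J^k)=J^i_{\mathbf{a}|n}\}.
\]
By Lemma \ref{u83} together with the definition in \eqref{y8}, one has $\#\mathcal{B}=\sum_{i,k} A_{\mathbf{a}|n}(i,k)=\|A_{\mathbf{a}|n}\|$, so such an injection immediately yields the desired bound.

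First I would select, for each $(j_1,\dots,j_n)\in E$, an index $k=k(j_1,\dots,j_n)\in [N]$ with $x\in f_{j_1,\dots,j_n}(J^k)$. Since $x=\Xi_\ell(\mathbf{a})$ lies in $J^\ell\subseteq \bigcup_k J^k$, the preimage $f_{j_1,\dots,j_n}^{-1}(x)$ falls in some basic type $J^k$. By the Corollary following Lemma \ref{y90}, $f_{j_1,\dots,j_n}(J^k)=J^i_{\underline{a}'}$ for some $i\in [N]$ and $\underline{a}'\in [L]^n$. The crux is to verify $\underline{a}'=\mathbf{a}|n$: both $J^i_{\underline{a}'}$ and $J^\ell_{\mathbf{a}|n}$ are length-$L^{1-n}$ cells of the partition $\mathcal{D}_{n-1}$ and both contain $x$, so whenever $x$ lies in the interior of $J^\ell_{\mathbf{a}|n}$ the two cells must coincide. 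Consequently $\Phi(j_1,\dots,j_n):=((j_1,\dots,j_n),k(j_1,\dots,j_n))$ is a well-defined map from $E$ to $\mathcal{B}$, and it is injective because its first coordinate is a left inverse.

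The main obstacle is the boundary case: when $x$ is a common endpoint of two adjacent cells of $\mathcal{D}_{n-1}$, the naive choice of $k$ may yield an image $J^i_{\underline{a}'}$ with $\underline{a}'\ne \mathbf{a}|n$. I would resolve this by using that the specific infinite address $\mathbf{a}\in \Sigma^{(L)}$ singles out one of the two adjacent cells (namely the one whose $L$-adic expansion agrees with $\mathbf{a}$), so that $k$ can always be chosen compatibly with $\underline{a}'=\mathbf{a}|n$.
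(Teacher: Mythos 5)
Your overall strategy --- constructing an injection into the set of pairs $\bigl((j_1,\dots,j_n),k\bigr)$ with $f_{j_1,\dots,j_n}(J^k)=J^i_{\mathbf a|n}$ for some $i$, and then counting that set via Lemma~\ref{u83} and \eqref{y8} --- is exactly the union bound the paper uses to prove the twin Lemma~\ref{y76}, to which the proof of Lemma~\ref{y9} is delegated. So the route is the same; the difference is that you argue directly from the hypothesis $x\in I_{j_1,\dots,j_n}$, and that is where a gap appears.

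The problematic step is the very first one: ``Since $x=\Xi_\ell(\mathbf a)$ lies in $J^\ell\subseteq\bigcup_k J^k$, the preimage $f_{j_1,\dots,j_n}^{-1}(x)$ falls in some basic type $J^k$.'' This is a non-sequitur. The hypothesis $x\in I_{j_1,\dots,j_n}=f_{j_1,\dots,j_n}(I)$ gives only $f_{j_1,\dots,j_n}^{-1}(x)\in I$; the fact that $x$ itself lies in some $J^\ell$ says nothing about where its preimage lands. Moreover $\bigcup_k J^k$ can be a \emph{proper} subset of $I$ (whenever a $\mathcal D_{-1}$-interval inside $I$ carries zero $\nu$-mass --- e.g.\ for $\mathcal S=\{x/3,\,x/3+6\}$ one has $I=[0,9]$ but $\bigcup_k J^k=[0,3]\cup[6,9]$), so there may be no $k$ at all with $x\in f_{j_1,\dots,j_n}(J^k)$, leaving $\Phi(j_1,\dots,j_n)$ undefined. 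In the paper's proof of Lemma~\ref{y76} this step is covered by a stronger hypothesis: there $(j_1,\dots,j_n)$ is assumed to be the initial block of a symbolic code $\mathbf i\in\Sigma^{(M)}$ of the point, so that $\Xi_\ell(\mathbf a)\in f_{j_1,\dots,j_n}(\Lambda)\subset f_{j_1,\dots,j_n}\bigl(\bigcup_k J^k\bigr)$ since $\Lambda\subset\bigcup_k J^k$. The hypothesis of Lemma~\ref{y9} is weaker --- $x\in I_{j_1,\dots,j_n}$ does \emph{not} imply $x\in f_{j_1,\dots,j_n}(\Lambda)$ when cylinders overlap --- so this transition is precisely what has to be supplied and is missing from the proposal.

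A secondary issue is that the boundary-case fix is asserted rather than proved: when $x$ is a common endpoint of two adjacent $\mathcal D_{n-1}$-cells, the cell compatible with $\mathbf a|n$ need not be of the form $f_{j_1,\dots,j_n}(J^k)$ for any $k$, so ``choosing $k$ compatibly'' can be impossible. (In the application within Theorem~\ref{y24} this is harmless, because the $\widetilde{\mathcal L}$-null set of $\mathbf a$ with a boundary $L$-adic expansion may be removed from $\widetilde F_\varepsilon$, but the Lemma as stated does not say this.)
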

The proof of Lemma \ref{y9} is very similar to the proof of Lemma \ref{y76}, hence we will only present the proof of the later one.
\begin{proof}[Proof of Theorem \ref{y24} using Theorem \ref{y50} and Lemma \ref{y9}]
\end{proof}

It follows from Theorem \ref{y50} and Egorov's theorem, that for any fixed $\varepsilon$ there exists a measurable subset $\widetilde{F}_{\varepsilon}$ such that 
\begin{itemize}
    \item $\widetilde{\mathcal{L}}(\widetilde{F}_{\varepsilon}^c)<\varepsilon$
    \item for $\mathbf{a} \in \widetilde{F}_{\varepsilon}$ the convergence $\lim_{n\to \infty}\frac{1}{n}\log\|A_{\mathbf{a}|n}\|=w$ is uniform.
\end{itemize}
Let 
$$F_{\varepsilon}:=\left ( \bigcup\limits_{\ell \in [ N]
}\mathfrak{t}_{\ell}(\widetilde{F}_{\varepsilon})\right
)\cap \Lambda_{\mathcal{F}}.$$
It is easy to see, that $\mathcal{L}eb_1(\Lambda_{\mathcal{F}} \setminus F_{\varepsilon})<N\cdot \varepsilon$.
Choose $v \in \left(w,\log\frac{M}{L}\right)$, and $\delta>0$ such that $v=\log \left(\frac{M-\delta}{L}\right)$. Because of the uniform convergence and of the fact that $v$ is strictly greater than $w$, we can choose $N'$ such that for any $\mathbf{a}\in \widetilde{F}_{\varepsilon}$ and $n > N'$:
\begin{equation}\label{y49}
    \frac{1}{n}\log \|A_{\mathbf{a}|n}\|<v.
\end{equation}
If $x \in F_{\varepsilon}$ and $x=\Xi_{\ell}(\mathbf{a})$ for some $\ell\in [N]$ and $\mathbf{a}\in \Sigma^{(L)}$ as in Lemma \ref{y9}, then
\begin{multline}
    \frac{1}{n}\log \#\left\{(j_1, \dots, j_n) \in [M]^n: x \in I_{j_1, \dots, j_n}\right\} \leq \frac{1}{n}\log \|A_{\mathbf{a}|n}\|<v.
\end{multline}
Hence
\begin{equation}
    \#\left\{(j_1, \dots, j_n) \in [M] ^n: x \in I_{j_1, \dots, j_n}\right\}<\left(\frac{M-\delta}{L}\right)^n.
\end{equation}
As in \ref{u76}, let 
\begin{equation}\label{u79}
\begin{split}
    & \mathcal{E}_{n}=\mathcal{E}_{n}(\pmb{\omega})=\left\{(j_1, \dots, j_n)\in [M]^n: I_{j_1, \dots, j_n}\text{ is selected}\right\},\\
    & \mathcal{E}_{n}(\pmb{\omega},x)=\left\{(j_1, \dots, j_n)\in \mathcal{E}_{n}: x \in I_{j_1, \dots, j_n}\right\} \text { and} \\
    & \mathcal{G}_{n}(U)=\left\{(j_1, \dots, j_n) \in [M]^n: U \cap [j_1, \dots, j_n]\neq \emptyset\right\}.
    \end{split}
\end{equation}
Fix now a $p<\frac{L}{M-\delta}$. With this choice of $p$
$$
\frac{M-\delta}{L}\cdot p<1.
$$
Recall that $\Pi^{(M)}$ denotes the natural projection from the symbolic space $\Sigma^{(M)}=[M]^{\N}$ to $\R$.
\begin{multline}\label{y48}
    \mathbb{E}(\#\mathcal{E}_{n}(x))= \sum_{\substack{(j_1, \dots, j_n)\in  \\ \mathcal{G}_{n}((\Pi^{(M)})^{-1}(x))}}\mathbb{P}(I_{j_1, \dots, j_n}\text{ is selected}) \\
    = \#\mathcal{G}_{n}((\Pi^{(M)})^{-1}(x))\cdot p^n
    < \left(\frac{M-\delta}{L}\cdot p\right)^n=:b^n.    
\end{multline}
Let 
\begin{equation}
    u:=u(\varepsilon):= \mathcal{L}eb_1(F_{\varepsilon}\cap \Lambda_{\mathcal{F}}).
\end{equation}
Recall, that our assumption was that $\mathcal{L}eb_1(\Lambda_{\mathcal{F}})>0$.
Let 
$$\widehat{\mu}:=\frac{\mathcal{L}eb_{1}|_{F_{\varepsilon}\cap \Lambda_{\mathcal{F}}} \times \mathbb{P}}{u},$$
and consider the finite measure-space: $(F_{\varepsilon}\times \Omega, \mathcal{B}\times \mathcal{A}, \widehat{\mu})$ ($\mathcal{B}$ denotes the Borel sigma-algebra and recall: ($\Omega, \mathcal{A}$) was defined in \ref{u75}) and let $\Lambda_{\mathcal{F}}(p,\pmb{\omega})$ denote the random Cantor set corresponding to the realization $\pmb{\omega}$. Then by the previous calculation,
\begin{equation}
    \int\limits_{x \in F_{\varepsilon}} \int\limits_{\pmb{\omega} \in \Omega} \#\mathcal{E}_{n}(\pmb{\omega}, x)d\widehat{\mu}(\pmb{\omega}, x)\leq b^n.
\end{equation}

\begin{lemma}
For $\mathbb{P}$ almost every $\pmb{\omega}\in \Omega$ we have
$$
\mathcal{L}eb_1(\Lambda_{\mathcal{F}}(p, \pmb{\omega}) \cap F_{\varepsilon})=0.
$$
\end{lemma}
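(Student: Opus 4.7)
The plan is to use the Markov-type bound \eqref{y48} together with Fubini and the first Borel--Cantelli flavored observation that membership in $\Lambda_{\mathcal{F}}(p,\pmb{\omega})$ forces the counting random variable $\#\mathcal{E}_n(\pmb{\omega},x)$ to be at least $1$ for every $n$. Since $b:=\tfrac{M-\delta}{L}\cdot p<1$ by our choice of $p$, this geometric decay will kill the expected intersection measure.

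More concretely, I would first note that for any realisation $\pmb{\omega}$ and any $n\geq 1$, since $\Lambda_{\mathcal{F}}(p,\pmb{\omega})=\bigcap_n E_n(\pmb{\omega})$ and $E_n(\pmb{\omega})=\bigcup_{(j_1,\dots,j_n)\in\mathcal{E}_n(\pmb{\omega})}I_{j_1,\dots,j_n}$, the pointwise inequality
\begin{equation*}
\mathbbm{1}_{\{x\in\Lambda_{\mathcal{F}}(p,\pmb{\omega})\}}\;\leq\;\mathbbm{1}_{\{x\in E_n(\pmb{\omega})\}}\;\leq\;\#\mathcal{E}_n(\pmb{\omega},x)
\end{equation*}
holds for every $x\in\mathbb{R}$. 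Integrating this over $F_\varepsilon$ against $\mathcal{L}eb_1$ and taking $\mathbb{P}$-expectation, Tonelli's theorem yields
\begin{equation*}
\mathbb{E}\bigl[\mathcal{L}eb_1(\Lambda_{\mathcal{F}}(p,\pmb{\omega})\cap F_\varepsilon)\bigr]
\;\leq\;\int_{F_\varepsilon}\mathbb{E}\bigl(\#\mathcal{E}_n(\pmb{\omega},x)\bigr)\,d\mathcal{L}eb_1(x).
\end{equation*}

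Next I would invoke the key estimate \eqref{y48}, which is available precisely because $x\in F_\varepsilon$ is covered by the uniform convergence regime arising from Theorem \ref{y50} and Egorov's theorem (this is the only place where we use that $F_\varepsilon$ was constructed via $\widetilde{F}_\varepsilon$ and the bound \eqref{y49}). This gives $\mathbb{E}(\#\mathcal{E}_n(\pmb{\omega},x))<b^n$ uniformly for $x\in F_\varepsilon$ and all $n>N'$, hence
\begin{equation*}
\mathbb{E}\bigl[\mathcal{L}eb_1(\Lambda_{\mathcal{F}}(p,\pmb{\omega})\cap F_\varepsilon)\bigr]\;\leq\;u\cdot b^n \xrightarrow[n\to\infty]{} 0,
\end{equation*}
because $b<1$. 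The left-hand side does not depend on $n$, so it must equal $0$. Since $\mathcal{L}eb_1(\Lambda_{\mathcal{F}}(p,\pmb{\omega})\cap F_\varepsilon)$ is a nonnegative random variable with vanishing expectation, it is $\mathbb{P}$-a.s.\ zero, which is the conclusion of the lemma.

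The only delicate point is making sure the inequality \eqref{y48} applies \emph{uniformly} in $x\in F_\varepsilon$: this is exactly what the Egorov-based construction of $F_\varepsilon$ and the choice of $N'$ buy us, and the other ingredients (Fubini, $\mathbb{P}$-nonnegativity) are routine. Once the uniform upper bound is in hand, the argument is a one-line geometric-series collapse.
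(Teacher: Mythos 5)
Your argument is correct and uses exactly the same key ingredients as the paper's proof: the pointwise bound relating membership in the attractor to the counting variable $\#\mathcal{E}_n(\pmb{\omega},x)$, Tonelli, and the uniform estimate \eqref{y48} coming from the Egorov-based construction of $F_\varepsilon$. The paper phrases this as a proof by contradiction (assuming the intersection has positive measure with positive probability, defining events $H_n$, and deriving a contradiction with the decaying bound $b^n$), whereas you run the same estimate directly through the expectation and conclude from $\mathbb{E}[\mathcal{L}eb_1(\Lambda_{\mathcal{F}}(p,\pmb{\omega})\cap F_\varepsilon)]=0$; your direct version is marginally cleaner but in substance identical.
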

\begin{proof}
Assume that there exists an $\widehat{\varepsilon}>0$ such that
\begin{equation}
\label{u29}
\mathbb{P}(\mathcal{L}eb_1(\Lambda_{\mathcal{F}}(p, \pmb{\omega})\cap F_{\varepsilon})>\widehat{\varepsilon})>\widehat{\varepsilon}.
\end{equation} 
The $n$-th approximation of the random attractor is denoted by
$$
\Lambda_{n,\mathcal{F}}(p, \pmb{\omega}):=\bigcup_{(j_1, \dots, j_n)\in \mathcal{E}_{n}(\pmb{\omega})}I_{j_1, \dots, j_n}.
$$
Choose $n>N'$ (such that \eqref{y49} holds).
By the assumption \eqref{u29} $\mathbb{P}(\pmb{\omega} \in \Omega: \mathcal{L}eb_1(\Lambda_{n,\mathcal{F}}(p, \pmb{\omega})\cap F_{\varepsilon})>\widehat{\varepsilon})>\widehat{\varepsilon}$ also holds. Let
\begin{equation}
    H_{n}:= \left\{\pmb{\omega} \in \Omega: \mathcal{L}eb_1(\Lambda_{n,\mathcal{F}}(p, \pmb{\omega})\cap F_{\varepsilon})>\widehat{\varepsilon}\right\}.
\end{equation}
Then 
$\mathbb{P}(H_n)>\widehat{\varepsilon}$. For an $\pmb{\omega}\in H_{n}$, by the definition of $\Lambda_{n,\mathcal{F}}(p, \pmb{\omega})$
$$
F_{\varepsilon}\cap \Lambda_{n,\mathcal{F}}(p, \pmb{\omega})= \bigcup_{\substack{(j_1, \dots, j_n) \\ \in \mathcal{E}_{n}(\pmb{\omega})}}I_{j_1, \dots, j_n} \cap F_{\varepsilon}.
$$
It follows from the assumption \eqref{u29} that
\begin{equation}
    \widehat{\varepsilon}<\mathcal{L}eb_1(\Lambda_{\mathcal{F}}(p, \pmb{\omega})\cap F_{\varepsilon}) \leq \int_{F_{\varepsilon}}\sum_{\substack{(j_1, \dots, j_n) \\ \in \mathcal{E}_{n}(\pmb{\omega})}}\ind_{I_{j_1, \dots, j_n}}(x)dx =\int_{F_{\varepsilon}}\#\mathcal{E}_{n}(\pmb{\omega}, x) dx
\end{equation} 
Since $n>N'$ by \eqref{y48}, $\mathbb{E}(\#\mathcal{E}_{n}(x))<b^n$, and since $b<1$, $b^n$ can be arbitrarily small, which contradicts the above calculation, which shows that  $\mathbb{E}(\#\mathcal{E}_{n}(x))>\widehat{\varepsilon}$.
\end{proof}
\begin{proof}[Proof of Theorem \ref{y24}]
Let $\widetilde{\varepsilon}=\frac{1}{k}$ for $k \in \N$. 
Then there exists a set $F_{\frac{1}{k}}$ 
such that 
$\mathcal{L}eb_1(\Lambda_{\mathcal{F}}\setminus F_{\frac{1}{k}})<N \cdot \frac{1}{k}$ and $\Omega_{k}\subset \Omega$ with $\mathbb{P}(\Omega_{k})=1$ such that for any $\pmb{\omega} \in \Omega_k$: $\mathcal{L}eb_1(\Lambda_{\mathcal{F}}(p, \pmb{\omega})\cap F_{\frac{1}{k}})=0$. 
It follows that $\mathcal{L}eb_1(\Lambda_{\mathcal{F}}(p, \pmb{\omega}))<\frac{N}{k}$ holds 
for any $\pmb{\omega} \in \Omega_{k}$.
Let $\widetilde{\Omega}=\bigcap\limits_{k=1}^{\infty}\Omega_{k}$ then $\mathbb{P}(\widetilde{\Omega})=1$ and for any $\pmb{\omega} \in \widetilde{\Omega}$
we get
$\mathcal{L}eb_1(\Lambda_{\mathcal{F}}(p, \pmb{\omega}))=0$.  
\end{proof}

The rest of this section is devoted to the proof of  Theorem \ref{y50}. It is a simple consequence of a combination of a number of theorems due to Ruiz \cite{ruiz2009dimension} and jointly to Bárány and Rams \cite{barany2014dimension}. First we state these assertions and then we provide the proof of Theorem \ref{y50}.

\begin{fact}\label{y85}
For any $\ell\in [N] $, $b\in [L] $ and $\underline{a}\in [L] ^n$, $n\geq 0$:
$$\nu(J^{\ell}_{b\underline{a}})=\sum_{k\in [N] }\frac{1}{M}A_{b}(\ell, k)\nu(J_{\underline{a}}^k).$$
\end{fact}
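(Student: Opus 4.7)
The plan is to derive the identity by exploiting the self-similarity of the natural measure $\nu$ together with the structural lemma about preimages of basic intervals (Lemma \ref{y90}).

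First I would record the self-similarity relation satisfied by $\nu$. Since $\mu=\underline{q}^{\N}$ on $\Sigma^{(m)}$ and $\Pi^{(m)}$ intertwines the shift with the maps $S_i$, one obtains
\begin{equation*}
\nu(E)=\sum_{i=0}^{m-1}q_i\,\nu\!\left(S_i^{-1}(E)\right)=\sum_{i=0}^{m-1}\frac{n_i}{M}\,\nu\!\left(S_i^{-1}(E)\right)
\end{equation*}
for every Borel set $E\subset\R$. Specializing to $E=J^{\ell}_{b\underline{a}}$ gives
\begin{equation*}
\nu\!\left(J^{\ell}_{b\underline{a}}\right)=\frac{1}{M}\sum_{i=0}^{m-1}n_i\,\nu\!\left(S_i^{-1}(J^{\ell}_{b\underline{a}})\right).
\end{equation*}

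Next I would analyze each preimage $S_i^{-1}(J^{\ell}_{b\underline{a}})$ using Lemma \ref{y90}. By part (3) of that lemma, if there is no $k\in[N]$ with $S_i^{-1}(J^{\ell}_{b\underline{a}})=J^{k}_{\underline{a}}$, then the corresponding summand vanishes since $\nu(S_i^{-1}(J^{\ell}_{b\underline{a}}))=0$. By part (2), such a $k$ exists precisely when $S_i(J^{k})=J^{\ell}_{b}$, and in that case $S_i^{-1}(J^{\ell}_{b\underline{a}})=J^{k}_{\underline{a}}$. Because the $S_i$ are pairwise distinct (Definition \ref{y14}) and each is injective, for every pair $(\ell,k)$ there is at most one index $i$ satisfying $S_i(J^{k})=J^{\ell}_{b}$; when such an $i$ exists, the defining formula \eqref{y89} gives $A_{b}(\ell,k)=n_i$, and otherwise $A_{b}(\ell,k)=0$.

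Combining these observations, I can re-index the sum from indices $i\in[m]$ to indices $k\in[N]$:
\begin{equation*}
\nu\!\left(J^{\ell}_{b\underline{a}}\right)=\frac{1}{M}\sum_{k\in[N]}A_{b}(\ell,k)\,\nu\!\left(J^{k}_{\underline{a}}\right),
\end{equation*}
which is the claimed identity. The only genuinely non-routine step is the bookkeeping that matches the indicator ``$\exists k:\ S_i(J^{k})=J^{\ell}_{b}$'' with the entries of $A_{b}$; the rest is a direct application of self-similarity and of the already established Lemma \ref{y90}, so no serious obstacle is expected.
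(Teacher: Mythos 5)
Your proof is correct and follows what is essentially the standard argument; the paper itself does not reproduce a proof but instead cites the corresponding argument at the bottom of page~354 of Ruiz's paper, which proceeds exactly along these lines: apply the self-similarity relation $\nu=\sum_i q_i\,(S_i)_*\nu$, discard the preimages that have $\nu$-measure zero by part~(3) of Lemma~\ref{y90}, identify the surviving preimages via part~(2), and re-index by $k$ using the definition~\eqref{y89} of $A_b$.

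One small imprecision: you justify the fact that for each pair $(\ell,k)$ there is at most one $i$ with $S_i(J^k)=J^\ell_b$ by saying that the $S_i$ are pairwise distinct and injective. Injectivity alone does not give this (two distinct injective maps can send the same interval to the same interval, e.g.\ by a reflection). What one actually uses is that all the $S_i$ share the same orientation-preserving contraction $x\mapsto \tfrac1L x$ and differ only by their translations $t_i$, so $S_{i_1}(J^k)=S_{i_2}(J^k)$ forces $t_{i_1}=t_{i_2}$, hence $i_1=i_2$. This is precisely the well-definedness remark the paper makes in the paragraph following the Corollary to Lemma~\ref{y90}, so you could simply cite that discussion rather than supplying your own justification.
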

This fact can be proved by the same argument used in 
 \cite[bottom of page 354]{ruiz2009dimension}.

For an $\underline{a}\in [L] ^n$ we form the vectors:
\begin{equation}\label{y84}
    \pmb{\nu}(., \underline{a})=\left(\nu(J^0_{\underline{a}}), \dots, \nu(J^{N-1}_{\underline{a}})\right), \quad \pmb{\nu}(., \emptyset)=\left(\nu(J^0), \dots, \nu(J^{N-1})\right).
\end{equation}
Fact \ref{y85} implies that $\nu(J^{\ell}_{\underline{a}})=\frac{1}{M^n} \underline{e}_{\ell}^T\cdot A_{\underline{a}}\pmb{\nu}(., \emptyset)$, where $\underline{e}_{\ell}\in\mathbb{R}^N$ is the
$\ell $-th coordinate unit vector.
Let 
\begin{equation}\label{y83}
    \eta_j:= \nu_j \circ \mathfrak{t}_{j}, \quad \eta:= \sum_{j \in [ N] }\eta_j
\end{equation}
where $\nu_j= \nu_{|J^j}$.

It is easy to see, that $\eta$ is probability measure on $[0,L]$. We also define 
\begin{equation}\label{y82}
\widetilde{\eta}([a_1,\dots, a_n]):= \eta(J^0_{a_1,\dots, a_n}).
\end{equation}
Observe that 
\begin{equation}
    \eta(J^0_{\underline{a}})= \sum_{j\in [N] }\nu(J^k_{\underline{a}})= \frac{1}{M^n}\underline{e}^TA_{\underline{a}}\pmb{\nu}(., \emptyset).
\end{equation}

Now we briefly explain the intuition behind the method presented below.
Instead of studying the original attractor we consider the one which we get by intersecting the original attractor with the different intervals $J^0, \dots, J^N$, and translate all of the resulting intersection sets to $[0,1]$. The measure $\eta$ can be thought of as the natural measure of the modified system corresponding to the modified attractor. Also to be able to connect the matrices $A_{a}$ and their products to the behaviour of the system it is more convenient for us to work in the symbolic space $\Sigma^{(L)}$ containing the $L$-adic codes of the points of $[0,L]$ (the support of our new attractor).
\begin{lemma}\label{y81}
The matrix $A=\sum_{k\in [N] }A_k$ is primitive, meaning that there exists a $K> 0$ such that $A^K(i,j)> 0$ for every $i,j\in \left\{1,\dots, N\right\}$.
\end{lemma}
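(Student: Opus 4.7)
The plan is to establish primitivity of $A$ by separately verifying two conditions on the directed graph $G$ with vertex set $[N]$ in which an edge $k\to\ell$ is drawn whenever $A(\ell,k)>0$: (i) $G$ is strongly connected, i.e.\ $A$ is irreducible; and (ii) $G$ carries a self-loop at some vertex. A classical concatenation argument then yields a single exponent $K$ for which every entry of $A^K$ is strictly positive.

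The self-loop is the easy half. Since $t_0=0$, the map $S_0$ equals $x\mapsto x/L$, and because $0\in\Lambda$ lies in the leftmost basic type we have $J^0=[0,L]$. Consequently $S_0(J^0)=[0,1]=J^0_0$, whence $A_0(0,0)=n_0\ge 1$ by the definition \eqref{y89}, giving $A(0,0)>0$.

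For irreducibility, I would fix $k,\ell\in[N]$ and produce an $n$ with $A^n(\ell,k)>0$. Because $\nu(J^\ell)>0$ and $\nu$ has no atom at the two boundary points of $J^\ell$ (since $\mu=\underline q^{\N}$ is non-atomic and each projection fibre $(\Pi^{(m)})^{-1}(\{x\})$ is at most countable for our linear IFS with distinct integer translations), I can choose an address $\mathbf{i}=i_1i_2\cdots\in\Sigma^{(m)}$ with $\Pi^{(m)}(\mathbf{i})\in\mathrm{int}(J^\ell)$. By the Corollary following Lemma \ref{y90}, for each $n$ there exist $\ell_n\in[N]$ and $\underline a_n\in[L]^n$ such that $S_{i_1\cdots i_n}(J^k)=J^{\ell_n}_{\underline a_n}$. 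This interval has length $L^{1-n}$ and contracts to $\Pi^{(m)}(\mathbf{i})$, so for all $n$ large it sits inside $\mathrm{int}(J^\ell)$; since two distinct elements of $\mathcal D_{-1}$ share at most one point, this forces $\ell_n=\ell$. Lemma \ref{u83} then gives $A_{\underline a_n}(\ell,k)\ge 1$, and hence $A^n(\ell,k)\ge A_{\underline a_n}(\ell,k)>0$.

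Writing $n(k,\ell)$ for the exponent produced above, I set $K:=\max_{k,\ell\in[N]}\bigl(n(k,0)+n(0,\ell)\bigr)$. For every pair $(k,\ell)$ the slack $m:=K-n(k,0)-n(0,\ell)\ge 0$ allows me to concatenate a path from $k$ to $0$, $m$ turns around the self-loop at $0$, and a path from $0$ to $\ell$, producing
\begin{equation*}
A^K(\ell,k)\ \ge\ A^{n(0,\ell)}(\ell,0)\cdot A(0,0)^m\cdot A^{n(k,0)}(0,k)\ >\ 0.
\end{equation*}
Thus $A^K$ is entrywise positive, which is the desired primitivity. The only genuine technical point is the interior-versus-boundary subtlety in the irreducibility step; everything else is a direct unfolding of the definitions together with the standard self-loop padding trick.
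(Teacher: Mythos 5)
Your argument is correct in outline but takes a genuinely different route from the paper's. The paper's proof is a one-shot positivity argument: for each basic type $J^\ell$ it fixes an address $\mathbf{i}^\ell$ of an interior point $x_\ell$ of $J^\ell$, then chooses a single $K$ so large that $|f_{\mathbf{i}^\ell|_K}(I)|$ is smaller than $\min_\ell\mathrm{dist}(x_\ell,\partial J^\ell)$; this forces $f_{\mathbf{i}^\ell|_K}(J^u)\subset J^\ell$ for every $u\in[N]$, and hence $A^K(\ell,u)>0$ for all pairs simultaneously, with the same $K$. You instead factor primitivity through irreducibility plus aperiodicity, obtaining the latter from a self-loop at vertex $0$ (using $t_0=0$ and $J^0=[0,L]$ to get $S_0(J^0)=J^0_0$, so $A(0,0)\ge n_0\geq 1$), and then concatenate and pad with the self-loop to reach a common exponent. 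Both arguments hinge on the same geometric input, namely that every $J^\ell$ meets $\Lambda$ in its interior. Yours is more modular and isolates where aperiodicity comes from; the paper's is more compact and produces the common exponent $K$ directly from a single covering estimate.

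One justification you give should be replaced. To conclude that $\nu$ gives no mass to $\partial J^\ell$, you assert that the fibres $(\Pi^{(m)})^{-1}(\{x\})$ are at most countable. That is false in general for overlapping integer IFSs: once two distinct finite words produce the same cylinder interval, the fibre over a point can be uncountable, indeed of positive Hausdorff dimension in $\Sigma^{(m)}$, so countable-to-one push-forward reasoning does not apply. Non-atomicity of $\nu$ is still true, but for the standard self-similar-measure reason: if $c:=\max_x\nu(\{x\})>0$ were attained at $x^*$, then $c=\sum_j q_j\,\nu(\{S_j^{-1}x^*\})\le c$ forces $\nu(\{S_j^{-1}x^*\})=c$ for every $j$, and iterating puts $x^*$ in every cylinder of every level, contradicting $\mathrm{diam}(\Lambda)>0$ when $m\geq2$. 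With that replacement (or a citation to non-atomicity of self-similar measures) your proof goes through; note the paper's own proof silently uses the same fact when it takes $d_\ell>0$, i.e.\ an interior point $x_\ell$ with an address $\mathbf{i}^\ell$.
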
                                                 \begin{proof}
It is proven in \cite[Section 4.4]{SBS_book}, but for the convenience of the reader we present the proof of this lemma.

Recall that $|I|=\widetilde{n}L$.
Let $d_{\ell}$ be the distance between $x_{\ell}$ and the nearest end-point of $J_{\ell}$, let $d'>0$ be the minimum of the $d_{\ell}$s. We choose $K$ such that $|f_{\mathbf{i}^{\ell}|_{K}}(I)|=\frac{\widetilde{n}}{L^{K-1}} < d'$. Using this and the fact that $x_{\ell} \in f_{\mathbf{i}^{\ell}|_{K}}(I)$ we obtain that $f_{\mathbf{i}^{\ell}|_{K}}(I) \subset J^{\ell}$. It follows that for any $u \in [N] $
$$
f_{\mathbf{i}^{\ell}|_{K}}(J^u)\subset J^{\ell}.
$$
Hence there exists an $\underline{a}^u\in [L] ^K$ such that $f_{\mathbf{i}^{\ell}|_{K}}(J^u)=J_{\underline{a}^u}^{\ell}$. It follows that $A_{\mathbf{i}^{\ell}|_{K}}(\ell, u)>0$ and thus for every $\ell$ and $u$ 
$$
A^K(\ell, u)= \sum_{i_1, \dots, i_K} A_{i_1, \dots, i_K}(\ell, u) >0.
$$
\end{proof} 
\begin{lemma}\label{y79}
$\widetilde{\eta}$ is $\sigma$-invariant and mixing.
\end{lemma}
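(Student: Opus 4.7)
The plan is to check both assertions by reducing them to algebraic identities involving the matrices $A_a$ and the vector $\pmb{\nu}(\cdot,\emptyset)$. The key tool, given directly by Fact \ref{y85} iterated along $\underline{a}$, is the identity
\begin{equation*}
\widetilde{\eta}([\underline{a}])=\eta(J^{0}_{\underline{a}})=\sum_{j\in[N]}\nu(J^{j}_{\underline{a}})=\frac{1}{M^{n}}\,\underline{e}^{T}A_{\underline{a}}\,\pmb{\nu}(\cdot,\emptyset)\qquad (\underline{a}\in[L]^{n}),
\end{equation*}
which translates measure-theoretic statements into linear-algebra statements. All computations below rest on this formula.

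First I would establish two identities. \emph{(i) Left invariance:} $\underline{e}^{T}A=M\,\underline{e}^{T}$. To see this, the $k$-th column sum of $A=\sum_{a}A_{a}$ counts, with multiplicities $n_{i}$, all pairs $(a,\ell)$ for which some $S_{i}$ sends $J^{k}$ onto $J^{\ell}_{a}$; by part (1) of Lemma \ref{y90} every $S_{i}$ contributes exactly once, so the total is $\sum_{i\in[M]}n_{i}=M$. \emph{(ii) Right invariance:} $A\,\pmb{\nu}(\cdot,\emptyset)=M\,\pmb{\nu}(\cdot,\emptyset)$. Summing the equality in Fact \ref{y85} over $b\in[L]$ and taking $\underline{a}=\emptyset$ gives $\nu(J^{\ell})=\frac{1}{M}\sum_{k}A(\ell,k)\nu(J^{k})$, which is exactly this eigenrelation. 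Since $\underline{e}>0$ and $\pmb{\nu}(\cdot,\emptyset)>0$ (both positive on all coordinates), and since $A$ is primitive by Lemma \ref{y81}, Perron--Frobenius tells us that $M$ is the Perron eigenvalue and that $\underline{e}$, respectively $\pmb{\nu}(\cdot,\emptyset)$, are the left and right Perron eigenvectors (unique up to scaling). The normalisation $\underline{e}^{T}\pmb{\nu}(\cdot,\emptyset)=\sum_{\ell}\nu(J^{\ell})=1$ is already built in.

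Now the $\sigma$-invariance is immediate: for $\underline{a}\in[L]^{n}$,
\begin{equation*}
\widetilde{\eta}(\sigma^{-1}[\underline{a}])=\sum_{b\in[L]}\widetilde{\eta}([b\underline{a}])=\frac{1}{M^{n+1}}\,\underline{e}^{T}A\,A_{\underline{a}}\,\pmb{\nu}(\cdot,\emptyset)=\frac{1}{M^{n}}\,\underline{e}^{T}A_{\underline{a}}\,\pmb{\nu}(\cdot,\emptyset)=\widetilde{\eta}([\underline{a}]).
\end{equation*}
For mixing, since cylinders generate the $\sigma$-algebra, it suffices to check that for $\underline{a}\in[L]^{p}$, $\underline{b}\in[L]^{q}$ and all sufficiently large $n$,
\begin{equation*}
\widetilde{\eta}\bigl([\underline{a}]\cap\sigma^{-n}[\underline{b}]\bigr)=\sum_{\underline{c}\in[L]^{n-p}}\widetilde{\eta}([\underline{a}\,\underline{c}\,\underline{b}])=\frac{1}{M^{n+q}}\,\underline{e}^{T}A_{\underline{a}}\,A^{n-p}\,A_{\underline{b}}\,\pmb{\nu}(\cdot,\emptyset)
\end{equation*}
converges to $\widetilde{\eta}([\underline{a}])\widetilde{\eta}([\underline{b}])$. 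By primitivity of $A$ together with the two eigenrelations above and the normalisation $\underline{e}^{T}\pmb{\nu}(\cdot,\emptyset)=1$, the Perron--Frobenius theorem yields
\begin{equation*}
\frac{1}{M^{k}}A^{k}\;\longrightarrow\;\pmb{\nu}(\cdot,\emptyset)\,\underline{e}^{T}\qquad (k\to\infty),
\end{equation*}
so the expression above tends to $\bigl(M^{-p}\underline{e}^{T}A_{\underline{a}}\pmb{\nu}(\cdot,\emptyset)\bigr)\bigl(M^{-q}\underline{e}^{T}A_{\underline{b}}\pmb{\nu}(\cdot,\emptyset)\bigr)=\widetilde{\eta}([\underline{a}])\widetilde{\eta}([\underline{b}])$, as required. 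The only subtle step is identifying $\pmb{\nu}(\cdot,\emptyset)$ as the right Perron eigenvector with the correct normalisation; once that is in place, the rest is book-keeping with the matrix formula for $\widetilde{\eta}$.
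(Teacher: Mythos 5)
Your proof is correct and self-contained, whereas the paper's own ``proof'' is a one-line citation to \cite[Lemma 3.4]{barany2014dimension}, with Lemma \ref{y81} supplying the primitivity hypothesis. You have effectively unpacked the argument that the paper references rather than taking a different route: the matrix formula $\widetilde{\eta}([\underline{a}])=M^{-n}\underline{e}^{T}A_{\underline{a}}\pmb{\nu}(\cdot,\emptyset)$, the left and right Perron eigenrelations $\underline{e}^{T}A=M\underline{e}^{T}$ and $A\pmb{\nu}(\cdot,\emptyset)=M\pmb{\nu}(\cdot,\emptyset)$, the normalisation $\underline{e}^{T}\pmb{\nu}(\cdot,\emptyset)=1$, and primitivity forcing $M^{-k}A^{k}\to\pmb{\nu}(\cdot,\emptyset)\underline{e}^{T}$ — followed by the translation into invariance and mixing on cylinders — is exactly the standard Markov-chain/Gibbs argument that the cited lemma carries out. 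Writing it out in full is a genuine service to the reader, and the logic is sound: checking mixing on the cylinder $\pi$-system suffices, and the $\sigma$-invariance computation correctly uses $A_{b\underline{a}}=A_{b}A_{\underline{a}}$ together with the left eigenrelation. One typographical slip: in establishing the column sums you should write $\sum_{i\in[m]}n_{i}=M$ (summing over the distinct maps $S_{i}$, $i\in[m]$), not $\sum_{i\in[M]}$; the count works because, by Lemma \ref{y90}(1), each distinct $S_{i}$ contributes its multiplicity $n_{i}$ exactly once to each column.
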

\begin{proof}
The proof is a slightly modified version of the proof  in \cite[Lemma 3.4]{barany2014dimension}. In our case instead of using \cite[Lemma 3.1]{barany2014dimension} we use the previous lemma -- Lemma \ref{y81}.
\end{proof}
\begin{lemma}\label{y41}
$\text{dim}_H\widetilde{\eta}<1$.
\end{lemma}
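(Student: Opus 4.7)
My plan is to reduce Lemma \ref{y41} to showing that $\widetilde\eta$ is not the measure of maximal entropy on the full $L$-shift. Since Lemma \ref{y79} tells us that $\widetilde\eta$ is shift-invariant and mixing, hence ergodic, Shannon--McMillan--Breiman applied to the partition into level-$1$ cylinders yields
\[
\lim_{n\to\infty}\frac{-\log\widetilde\eta([a_1,\ldots,a_n])}{n}=h_{\widetilde\eta}(\sigma)\qquad \widetilde\eta\text{-a.e.}
\]
Coupled with the fact that the $L$-adic coding is an essentially bi-Lipschitz bijection between $\Sigma^{(L)}$ (endowed with the ultrametric giving each level-$n$ cylinder diameter $L^{-n}$) and its image in $[0,L]$, this gives the standard identification $\dim_H \widetilde\eta = h_{\widetilde\eta}(\sigma)/\log L$.

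Next, the variational principle applied to the full $L$-shift gives $h_{\widetilde\eta}(\sigma)\leq h_{\mathrm{top}}(\sigma)=\log L$, with equality if and only if $\widetilde\eta$ coincides with the unique measure of maximal entropy, the uniform Bernoulli measure $\widetilde{\mathcal L}=(1/L,\ldots,1/L)^{\mathbb N}$. Consequently, $\dim_H\widetilde\eta<1$ is equivalent to $\widetilde\eta\neq\widetilde{\mathcal L}$, and it suffices to produce a single finite cylinder on which the $\widetilde\eta$-mass differs from $L^{-n}$.

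The main obstacle is establishing this last non-degeneracy. My plan is to exploit the matrix representation that follows by iterating Fact \ref{y85},
\[
\widetilde\eta([a_1,\ldots,a_n])=\frac{1}{M^n}\,\underline e^{T}A_{a_1,\ldots,a_n}\,\pmb\nu(\cdot,\emptyset),
\]
together with the observation that $\pmb\nu(\cdot,\emptyset)$ is a strictly positive right Perron--Frobenius eigenvector of $A=\sum_{a\in[L]}A_a$ with eigenvalue $M$: primitivity of $A$ is Lemma \ref{y81}, and summing the identity of Fact \ref{y85} with $\underline a=\emptyset$ over $b\in[L]$ yields $A\pmb\nu(\cdot,\emptyset)=M\,\pmb\nu(\cdot,\emptyset)$. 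The equality $\widetilde\eta=\widetilde{\mathcal L}$ would force $\underline e^{T}A_a\,\pmb\nu(\cdot,\emptyset)=M/L$ simultaneously for every $a\in[L]$, a rigid system of algebraic identities on the non-negative matrices $A_a$ and their common Perron eigenvector.

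The hard part is ruling this out: in the regime of Theorem \ref{y24} ($M>L$, $\mathcal{L}eb_1(\Lambda_\mathcal F)>0$, so genuine integer overlap in $\mathcal F$), I would show that the structure imposed by the integer translations $\{t_i\}$ and the constraints \eqref{u66}, together with the definition of the $A_a$ via \eqref{y89}, is incompatible with such uniform $\pmb\nu$-weighted column sums; a direct case analysis at level $n=1$ (or, if needed, at small finite $n$) would exhibit an $a\in[L]$ for which $\underline e^{T}A_a\pmb\nu(\cdot,\emptyset)\neq M/L$. This yields $\widetilde\eta\neq\widetilde{\mathcal L}$ and hence $\dim_H\widetilde\eta<1$.
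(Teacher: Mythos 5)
Your strategy is a genuinely different route from the paper's, which proves the statement entirely by citation: the paper refers to \cite[Theorem~44]{Torma} (also \cite[Section~4.4.4]{SBS_book}) for $\dim_H\nu<1$ and to \cite[page~357]{ruiz2009dimension} for $\dim_H\widetilde\eta=\dim_H\nu$, whereas you set up a direct entropy-theoretic argument. The reduction via Shannon--McMillan--Breiman and the variational principle to the statement $\widetilde\eta\neq\widetilde{\mathcal L}$ is clean and uses only facts already in the paper (ergodicity of $\widetilde\eta$ from Lemma~\ref{y79}, the matrix identity $\widetilde\eta([\underline a])=M^{-n}\underline e^{T}A_{\underline a}\pmb\nu(\cdot,\emptyset)$, and the Perron eigenvector relation $A\pmb\nu(\cdot,\emptyset)=M\pmb\nu(\cdot,\emptyset)$, which you correctly derive from Fact~\ref{y85} by summing over $b\in[L]$). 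Up to this point the argument is sound.

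However, there is a genuine gap in the final step, and you have flagged it yourself as ``the hard part'' without actually closing it. The claim you would need is that the system of identities $\underline e^{T}A_a\,\pmb\nu(\cdot,\emptyset)=M/L$ for all $a\in[L]$ (or, more generally, $\underline e^{T}A_{\underline a}\,\pmb\nu(\cdot,\emptyset)=M/L^{n}$ for all finite words) must fail. This is not a consequence of the hypotheses you invoke ($M>L$ together with $\mathcal{L}eb_1(\Lambda_{\mathcal F})>0$). Consider $L=2$, $\mathcal S=\{x/2,\,x/2+1\}$ with $n_0=n_1=2$, hence $M=4$: then $N=1$, $A_0=A_1=(2)$, $\pmb\nu(\cdot,\emptyset)=(1)$, and $\widetilde\eta([a_1,\dots,a_n])=2^n/4^n=L^{-n}$, so $\widetilde\eta=\widetilde{\mathcal L}$ and $\dim_H\widetilde\eta=1$. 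Here $\mathcal F$ has ``integer overlap'' in your sense (each map repeated), yet the distinct-map IFS $\mathcal S$ has no overlaps at all, and your proposed obstruction simply is not there. So a ``level-$1$ (or small-$n$) case analysis'' cannot succeed without an extra structural hypothesis isolating genuine overlaps in $\mathcal S$ itself (not merely multiplicities in $\mathcal F$), and even then the discrepancy need not surface at any prescribed finite level --- you would have to argue by contradiction using the combinatorics of the $A_a$'s, which is precisely what the paper outsources to \cite{Torma}. As it stands, the proposal has reformulated the lemma as the non-degeneracy $\widetilde\eta\neq\widetilde{\mathcal L}$ but has not proved it.
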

\begin{proof}
The fact that $\text{dim}_H\nu<1$ holds was made explicit first in \cite[Theorem 44]{Torma} and 
the proof of this fact is also available in
\cite[Section 4.4.4]{SBS_book}.
On the other hand,
$\text{dim}_H\widetilde{\eta}=\text{dim}_H\nu$
 follows from the arguments in \cite[page 357]{ruiz2009dimension}.
\end{proof}
Recall that we denoted $[M]^{\N}$ by $\Sigma^{(M)}$. Moreover, $\Sigma^{(M)}$ is equipped with the metrics 
$$
\rho(\mathbf{i}, \mathbf{j}):=L^{-|\mathbf{i}\land \mathbf{j}|}.
$$
For any $U\subset \Sigma^{(M)}$ we have 
\begin{equation}\label{y40}
    \overline{\text{dim}}_{B}(U)= \lim_{n\to \infty}\frac{\log \# \mathcal{G}_n(U)}{n \log L},
\end{equation}
where $\mathcal{G}_n(U)$ was defined in \eqref{u79}.
Let $\tau: \Sigma^{(L)} \to \Sigma^{(M)}$,
\begin{equation}\label{y37}
    \tau(\mathbf{a}):=\left(\Pi^{(M)}\right)^{-1}\left(\bigcup_{\ell \in [N]  } \Xi_{\ell}(\mathbf{a})\right).
\end{equation}

Now we briefly explain the meaning of this function. Recall that $\Xi_{\ell}$ is the composition of a projection from the symbolic space $\Sigma^{(L)}$ to $[0,L]$ and a translation to the $\ell$-th interval, $J^\ell$. Hence what we do with $\tau$ is that we take an $L$-adic representation $\mathbf{a}\in \Sigma^{L}$, then consider the point $x$ with $L$-adic representation $\mathbf{a}$ and translate it with the left-endpoints of the intervals $J^1, \dots, J^N$ to get the points $x_1, \dots, x_{N}\in  \R$. Then consider the  symbolic space corresponding the iterated function system,  $\Sigma^{M}$. The resulting set consists of those points of $\Sigma^{M}$ which natural projection ($\Pi^{(M)}$) is one of $x_1, \dots, x_{N}$.

The relations between the different projections are indicated by the following commutative diagram.

 \begin{equation}\label{y64}
 \large{
    \xymatrix{ 
    \Sigma^{(M)}    \ar[d]_{\Pi^{(M)}(.)}  & 
    \Sigma^{(L)} \ar[l]_{\tau(.)} \ar[d]^{\Pi^{(L)}(.)}   \ar[ld]_{\Xi_{k}(.)}
                       \\ %
    J^k &
    [0,L] \ar[l]^{\mathfrak{t}_{k}(.)} }}%
\end{equation}


In the following section our goal is to  upper bound the upper box dimension of the set $\tau(\mathbf{a})$ for a large set of $\mathbf{a}\in \Sigma^{(L)}$. 
Recall that for any $\mathbf{a}\in \Sigma^{(L)}$: 
$$
\overline{\text{dim}}_B(\tau(\mathbf{a}))=\limsup_{n\to \infty}\frac{\log\#\mathcal{G}_{n}(\tau(\mathbf{a}))}{n \log L}.
$$
In what follows we  give an upper bound on  $\#\mathcal{G}_{n}(\tau(\mathbf{a}))$.

\begin{lemma} \label{y76}
For any $\mathbf{a}=(a_1,\dots, a_n,\dots)$ we have
$\#\mathcal{G}_n(\tau(\mathbf{a}))\leq ||A_{a_1,\dots, a_n}||$.
\end{lemma}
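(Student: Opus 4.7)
The plan is to convert the symbolic intersection defining $\mathcal{G}_n(\tau(\mathbf{a}))$ into a geometric condition on how $f_{j_1,\dots,j_n}$ maps the basic types, and then invoke Lemma \ref{u83}. First, by the definition of $\tau$ in \eqref{y37} together with the definition of $\mathcal{G}_n$, a tuple $(j_1,\dots,j_n) \in [M]^n$ belongs to $\mathcal{G}_n(\tau(\mathbf{a}))$ if and only if some extension $\mathbf{j} \in [j_1,\dots,j_n]$ satisfies $\Pi^{(M)}(\mathbf{j}) = \Xi_\ell(\mathbf{a})$ for some $\ell \in [N]$. Since $\Pi^{(M)}([j_1,\dots,j_n]) \subseteq I_{j_1,\dots,j_n} = \bigcup_{k \in [N]} f_{j_1,\dots,j_n}(J^k)$, this forces
$$\Xi_\ell(\mathbf{a}) \in f_{j_1,\dots,j_n}(J^k) \quad \text{for some } \ell, k \in [N].$$

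Next I would apply the corollary to Lemma \ref{y90} to write $f_{j_1,\dots,j_n}(J^k) = J^{\ell'}_{\underline{a}'}$ for a unique pair $(\ell', \underline{a}') \in [N]\times [L]^n$. By the definition of $\Xi_\ell$ in \eqref{y38}, the point $\Xi_\ell(\mathbf{a})$ lies in $J^\ell_{\mathbf{a}|n}$. Since two distinct level-$n$ intervals from the family $\{J^{\ell'}_{\underline{a}'}\}_{\ell',\underline{a}'}$ can meet only at endpoints, the inclusion $\Xi_\ell(\mathbf{a}) \in J^{\ell'}_{\underline{a}'}$ forces $(\ell', \underline{a}') = (\ell, \mathbf{a}|n)$ whenever $\Xi_\ell(\mathbf{a})$ is an interior point. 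If $\Xi_\ell(\mathbf{a})$ happens to lie on a shared endpoint, several pairs $(\ell', \underline{a}')$ may witness the inclusion, but this only \emph{increases} the number of witnessing $(\ell,k)$ pairs and thus strengthens the upper bound that follows. In either case, every $(j_1,\dots,j_n) \in \mathcal{G}_n(\tau(\mathbf{a}))$ admits at least one pair $(\ell, k) \in [N]^2$ with $f_{j_1,\dots,j_n}(J^k) = J^\ell_{\mathbf{a}|n}$.

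Finally, Lemma \ref{u83} states that for each fixed $(\ell, k)$, the number of tuples $(j_1,\dots,j_n) \in [M]^n$ satisfying $f_{j_1,\dots,j_n}(J^k) = J^\ell_{\mathbf{a}|n}$ equals $A_{\mathbf{a}|n}(\ell, k)$. Summing over $(\ell, k) \in [N]\times[N]$ and using that each element of $\mathcal{G}_n(\tau(\mathbf{a}))$ is counted at least once in this sum gives
$$\#\mathcal{G}_n(\tau(\mathbf{a})) \leq \sum_{\ell, k \in [N]} A_{\mathbf{a}|n}(\ell, k) = \underline{e}^T A_{\mathbf{a}|n} \underline{e} = \|A_{\mathbf{a}|n}\|,$$
which is the claim. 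There is no real obstacle here; the only delicate point is the observation that a single tuple $(j_1,\dots,j_n)$ may be witnessed by more than one pair $(\ell, k)$, but this only causes the right-hand side to overcount and therefore cannot harm the inequality.
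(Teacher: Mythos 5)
Your proof follows essentially the same route as the paper: translate the symbolic condition $[j_1,\dots,j_n]\cap\tau(\mathbf{a})\neq\emptyset$ into the geometric condition $\Xi_\ell(\mathbf{a})\in f_{j_1,\dots,j_n}(J^k)$, invoke the Corollary to Lemma~\ref{y90} to write $f_{j_1,\dots,j_n}(J^k)=J^{\ell'}_{\underline{a}'}$, match that interval with $J^{\ell}_{\mathbf{a}|n}$, and count via Lemma~\ref{u83} by summing the entries of $A_{\mathbf{a}|n}$.

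One caveat about your endpoint remark, since it is stated as if it closes a gap but does not. The key claim you need is that every $(j_1,\dots,j_n)\in\mathcal{G}_n(\tau(\mathbf{a}))$ admits some $(\ell,k)$ with $f_{j_1,\dots,j_n}(J^k)=J^{\ell}_{\mathbf{a}|n}$. When $\Xi_\ell(\mathbf{a})$ is the shared endpoint of $J^{\ell}_{\mathbf{a}|n}$ and a distinct level-$n$ interval $J^{\ell'}_{\underline{b}}$, a tuple with $f_{j_1,\dots,j_n}(J^k)=J^{\ell'}_{\underline{b}}$ still lies in $\mathcal{G}_n(\tau(\mathbf{a}))$ (the endpoint is in both closed intervals) but is \emph{not} counted by any entry $A_{\mathbf{a}|n}(\ell,k)$, since those entries count only tuples mapping onto $J^{\ell}_{\mathbf{a}|n}$ itself; so ``more witnessing pairs'' does not rescue the bound. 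The paper's own proof silently treats nonempty intersection of two same-level $L$-adic intervals as implying equality, which likewise fails exactly on this boundary set. Because the offending $\mathbf{a}$ (tail eventually constant $0$ or $L-1$) form a $\widetilde{\mathcal{L}}$-null set, and the lemma is only ever applied through a.e.\ statements (Theorem~\ref{y50} and the Egorov step in the proof of Theorem~\ref{y24}), this is harmless in context; but your stated justification for the endpoint case is not a valid argument and should either be dropped or replaced by the observation that the exceptional $\mathbf{a}$ are $\widetilde{\mathcal{L}}$-negligible.
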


\begin{proof}
Fix $\mathbf{a}\in \sigma^{(M)}$.
Recall that the definition of $\mathcal{G}_{n}(\tau(\mathbf{a}))$ is the following:
$$\mathcal{G}_{n}(\tau(\mathbf{a}))=\left\{(j_1,\dots,j_n)\in [M]^n: [j_1\dots j_n]\cap \tau(\mathbf{a})\neq \emptyset\right\}.$$
Hence, for a $(j_1,\dots, j_n)\in [M]^n$ we have
$(j_1, \dots, j_n) \in \mathcal{G}_{n}(\tau(\mathbf{a}))$
if there exists an $\mathbf{i} \in \Sigma^{(M)}$ with $\mathbf{i}|n=j_1, \dots, j_n$ and an $\ell\in [ N]  $ such that $\lim_{k\to \infty}f_{\mathbf{i}_{|k}}(0)=\Xi_{\ell}(\mathbf{a})\in J^{\ell}_{a_1,\dots, a_n}$. It follows that $\Xi_{\ell}(\mathbf{a})\in f_{j_1\dots j_n}(\Lambda) \subset f_{j_1\dots j_n}(\cup_{k\in [ N]  }J^k)$. Therefore there exists a $k$ such that $\Xi_{\ell}(\mathbf{a})\in f_{j_1\dots j_n}(J^k)$.
Since for any interval $J^{k}$ there exists $\ell$ and $\underline{b}=(b_1, \dots, b_n)$ such that $f_{j_1, \dots, j_n}(J^k)=J^{\ell}_{\underline{b}}$,  it follows from 
$J^{\ell}_{\mathbf{a}_{|n}}\cap f_{j_1\dots j_n}(J^k)\neq \emptyset$, that $J^{\ell}_{\mathbf{a}_{|n}}=f_{j_1\dots j_n}(J^k)$.
Let
$$
U_{n}^{k, \ell}(\mathbf{a}):= \#\left\{(j_1, \dots, j_n)\in [M]^n: f_{j_1, \dots, j_n}(J^k)=J_{{\mathbf{a}}_{|n}}^{\ell}\right\}.
$$
It is easy to see from the definition of $A_{a_1, \dots, a_n}$ and 
the argument presented above
$U_{n}^{k, \ell}(\mathbf{a})$ and the above that
\begin{equation*}
\begin{split}
    & \#\left\{(j_1, \dots, j_n) \in [M]^n: [j_1\dots j_n] \cap \tau(\mathbf{a})\neq \emptyset\right\} 
     \\
    & \leq \#\left\{(j_1, \dots, j_n)\in [M]^n: \exists k, \exists \ell \, f_{j_1, \dots, j_n}(J^k)=J^{\ell}_{a_1, \dots, a_n}\right\} 
     \\
    & \leq \#\bigcup_{k\in [ N]  }\bigcup_{\ell \in [ N]  }\left\{ (j_1, \dots, j_n)\in [M]^n: f_{j_1, \dots, j_n}(J^k)=J^{\ell}_{a_1, \dots, a_n}\right\}
     \\
    & \leq \sum_{k\in [ N]  }\sum_{\ell \in [ N]  } U_{n}^{k, \ell}(\mathbf{a})
    =
    ||A_{a_1, \dots, a_n}||
    \end{split}
\end{equation*}
\end{proof}
It follows from Lemma \ref{y76}, that 
\begin{equation*}
    \overline{\text{dim}}_{B}(\tau(\mathbf{a})) \leq \limsup_{n\to \infty}\frac{\log(||A_{a_1, \dots, a_n}||) }{n \log(L)}
\end{equation*}
%
Let us define the pressure function $P(t)$ as 
\begin{equation*}
    P(t):= \lim_{n \to \infty} \frac{1}{n \log (L)}\log \sum_{\substack{(a_1,\dots, a_n) \\ \in [ L]  ^n}}(||A_{a_1,\dots, a_n}||)^t.
\end{equation*}
Since $\#[L]  ^n=L^n$, it is easy to see, that $P(0)= 1$. Also, $\sum\limits_{(a_1,\dots, a_n)\in [ L]  ^n}(||A_{a_1,\dots, a_n}||)=||A^n||$ and from the meaning of the matrices $A$ and $A^n$, it follows that $||A^n||=M^n$, hence $P(1)=\frac{n\log M}{n \log L}=s.$
\begin{lemma} \label{y70}
The function $t \to P(t)$
\begin{itemize}
    \item exists for $t \in \R$,
    \item is monotone increasing, convex, continuous,
    \item is continuously differentiable for $t>0$.
\end{itemize}
\end{lemma}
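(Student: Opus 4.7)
The plan is to establish the four properties sequentially: existence of the limit first, then monotonicity, convexity and continuity, and finally $C^1$ regularity. Set $Z_n(t) := \sum_{\mathbf{a} \in [L]^n,\ A_{\mathbf{a}} \neq 0} \|A_{\mathbf{a}}\|^t$, so that $P(t) = \lim_n \frac{1}{n\log L}\log Z_n(t)$. For $t \geq 0$, sub-multiplicativity of the norm \eqref{y8} gives $Z_{n+m}(t) \leq Z_n(t) Z_m(t)$, so $\log Z_n(t)$ is sub-additive in $n$ and Fekete's lemma yields the limit. For $t < 0$ the sub-multiplicativity inequality reverses on individual norms, and I would combine it with primitivity of $A$ (Lemma \ref{y81}) to get a quasi-super-multiplicative bound of the form $Z_{n+K+m}(t) \geq c_0\, Z_n(t) Z_m(t)$: since $A^K$ is strictly positive, for any two non-zero products one can insert a connecting word $\mathbf{c} \in [L]^K$ keeping $A_{\mathbf{a}\mathbf{c}\mathbf{b}}\neq 0$, and the contribution $\|A_\mathbf{c}\|^t$ is uniformly bounded since there are only finitely many $\mathbf{c}$. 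A standard Fekete-type argument for quasi-additive sequences then delivers the limit for every $t \in \R$.

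Monotonicity is essentially automatic: the entries of each $A_{\mathbf{a}}$ are non-negative integers, so $\|A_{\mathbf{a}}\| \geq 1$ whenever $A_{\mathbf{a}} \neq 0$, whence $t \mapsto \|A_{\mathbf{a}}\|^t$ is non-decreasing; this property survives summation, logarithm, and limit. Convexity of the pre-limit $\log Z_n(t) = \log \sum_{\mathbf{a}} e^{t \log \|A_{\mathbf{a}}\|}$ is the standard log-sum-exp convexity (its second derivative is the variance of $\log \|A_{\mathbf{a}}\|$ under the Gibbs-type weights $\|A_{\mathbf{a}}\|^t/Z_n(t)$), and a pointwise limit of convex functions is convex. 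Since $P$ is convex and finite on all of $\R$, it is automatically continuous.

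Continuous differentiability on $(0,\infty)$ is the main obstacle, since general convex functions are only differentiable outside a countable set and their derivatives are only semi-continuous. My plan is to appeal to the sub-additive thermodynamic formalism for matrix cocycles: the potentials $\varphi_n(\mathbf{a}) = t\log\|A_{\mathbf{a}|_n}\|$ on the full shift $\Sigma^{(L)}$ form a sub-additive sequence whose pressure, in the sense of K\"aenm\"aki and Feng, coincides with $P(t)\log L$. Primitivity of $A$ (Lemma \ref{y81}) supplies the irreducibility required for \emph{uniqueness} of the equilibrium state $\mu_t$ at each $t>0$; by convex-analytic duality this uniqueness is equivalent to differentiability of $P$ at $t$, and continuity of the derivative then follows from upper semi-continuity of the equilibrium-state map combined with this uniqueness. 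The restriction to $t > 0$ is essential: on $(0,\infty)$ we are in the exponential-growth regime (since $Z_n(t) \geq Z_n(0) = L^n$ gives $P(t) \geq 1$) and zero-norm products cause no singularity of the potential, whereas both features can fail for $t \leq 0$. Carefully verifying these thermodynamic-formalism hypotheses for our explicit non-negative integer matrices and extracting $C^1$ regularity is where I expect the technical weight of the proof to lie.
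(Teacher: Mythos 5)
Your proposal is correct and follows essentially the same route as the paper, which simply invokes primitivity of $A$ (Lemma \ref{y81}) and refers to the proof of \cite[Lemma 4.3]{barany2014dimension}. You are in effect unpacking that citation: the Fekete/quasi-super-multiplicativity step for existence, the log-sum-exp argument for convexity and continuity, and the sub-additive (Feng--K\"aenm\"aki) thermodynamic formalism for $C^1$ regularity are precisely the ingredients behind the B\'ar\'any--Rams lemma, with primitivity of $\sum_a A_a$ furnishing the combinatorial irreducibility that machinery requires (and one can shortcut the last step slightly, since a finite convex function that is differentiable on an open interval is automatically $C^1$ there).
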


\begin{proof}
Since the matrix $A$ is primitive (see Lemma \ref{y81}), the proof is similar to the proof of Lemma 4.3 in \cite{barany2014dimension}.
\end{proof}

The following lemma is also a slightly modified version of lemma 4.5. in\cite{barany2014dimension}.

\begin{lemma} \label{y69}
There exists a unique ergodic, left-shift invariant Gibbs measure $\mu_{1}$ on $\Sigma^{(L)}$ such that, there exists a $C>0$ that for any $(a_1, \dots, a_n) \in [ L]  ^n$ we have
\begin{enumerate}
    \item  $C^{-1} \leq \frac{\mu_{1}([a_1, \dots, a_n])}{||A_{a_1, \dots, a_n}||M^{-n}}\leq C$;
    \item $\text{dim}_{H}\mu_{1}=-P'(1)+s$;
    \item $\lim_{n \to \infty}\frac{\log ||A_{a_1, \dots, a_n}||}{n \log (L)}$.
\end{enumerate}
\end{lemma}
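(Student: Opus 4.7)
The plan is to adapt the thermodynamic-formalism argument for matrix-valued potentials given in the proof of \cite[Lemma 4.5]{barany2014dimension}, using as key inputs the primitivity of the matrix $A=\sum_a A_a$ established in Lemma \ref{y81} and the $C^1$-regularity of $P$ at $t=1$ obtained in Lemma \ref{y70}. The main algebraic ingredient I would pin down first is \emph{quasi-multiplicativity} of the sub-multiplicative cocycle $\{\|A_{\underline{a}}\|\}$: there exist a constant $C_0>0$ and a finite collection of bridge words $\mathcal{W}\subset\bigcup_{k\geq 0}[L]^k$ such that for any $\underline{a},\underline{b}\in\bigcup_{k\geq 0}[L]^k$ one can find $\underline{w}\in\mathcal{W}$ with
\[
\|A_{\underline{a}\,\underline{w}\,\underline{b}}\|\;\geq\;C_0\,\|A_{\underline{a}}\|\cdot\|A_{\underline{b}}\|.
\]
This follows from Lemma \ref{y81}: the strict positivity of $A^K(i,j)$ for every $(i,j)\in[N]^2$ yields a length-$K$ word $\underline{w}_{ij}\in[L]^K$ with $A_{\underline{w}_{ij}}(i,j)\geq 1$; taking $\mathcal{W}=\{\underline{w}_{ij}\}_{i,j\in [N]}$ and choosing $\underline{w}=\underline{w}_{ij}$ with $(i,j)$ realising the row- and column-maxima of $A_{\underline{b}}$ and $A_{\underline{a}}$ respectively gives the inequality with a uniform $C_0$.

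With quasi-multiplicativity in place, I would introduce the approximants
\[
\widehat{\mu}_n\big([a_1,\dots,a_n]\big):=\frac{\|A_{a_1,\dots,a_n}\|}{\|A^n\|},
\]
which are probability measures on the length-$n$ cylinders (with $\|A^n\|=N M^n$, since by Lemma \ref{y90}(1) every column of $A$ sums to $M$ and column sums multiply under matrix multiplication). I would take $\mu_1$ to be a weak-$*$ accumulation point of the Cesàro averages of the shift-translates of the $\widehat{\mu}_n$; this produces a shift-invariant probability measure on $\Sigma^{(L)}$ for which the upper Gibbs bound in (1) follows from sub-multiplicativity of $\|A_{\underline{a}}\|$ and the lower bound from quasi-multiplicativity. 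Uniqueness of a shift-invariant measure satisfying such a two-sided Gibbs estimate is classical under primitivity, and immediately yields ergodicity; mixing can either be imported from Lemma \ref{y79} applied to the associated push-forward, or proved analogously.

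For (3), Kingman's sub-additive ergodic theorem applied to the sub-additive cocycle $\varphi_n(\mathbf{a}):=\log\|A_{\mathbf{a}|n}\|$ over $(\Sigma^{(L)},\sigma,\mu_1)$ gives the $\mu_1$-a.s.\ existence of $\chi:=\lim_n n^{-1}\log\|A_{\mathbf{a}|n}\|$. Differentiating the definition of $P$ at $t=1$ (justified by Lemma \ref{y70}) and inserting the Gibbs estimate identifies $\chi=P'(1)\log L$, while the Gibbs estimate rewrites as $-\log\mu_1([\mathbf{a}|n])=n\log M-\log\|A_{\mathbf{a}|n}\|+O(1)$. Dividing by $n\log L$ and passing to the $\mu_1$-a.s.\ limit yields pointwise local dimension
\[
\lim_{n\to\infty}\frac{-\log\mu_1([\mathbf{a}|n])}{n\log L}\;=\;\frac{\log M-\chi}{\log L}\;=\;s-P'(1),
\]
and the standard characterisation of Hausdorff dimension of a measure in terms of its local dimension gives (2).

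The main obstacle I anticipate is the verification of quasi-multiplicativity with a uniform constant $C_0$, which is essentially combinatorial and relies on carefully threading the primitive index $K$ from Lemma \ref{y81} through the row/column structure of arbitrary products $A_{\underline{a}}$. Once that step is completed, every subsequent step is a routine application of thermodynamic formalism that mirrors \cite[Lemma 4.5]{barany2014dimension} almost verbatim.
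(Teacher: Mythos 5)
Your proposal is correct and takes essentially the same route as the paper, which simply cites \cite[Lemma~4.5]{barany2014dimension}: you reconstruct that argument (quasi-multiplicativity of the cocycle $\|A_{\underline{a}}\|$ via the primitivity from Lemma~\ref{y81}, Gibbs measure as a weak-$*$ limit of normalised/shifted approximants using $\|A^n\|=NM^n$, uniqueness from the two-sided Gibbs bound, Kingman for item (3), and the standard local-dimension identity with $P'(1)$ for item (2)). The only slip is cosmetic: in the quasi-multiplicativity step you want the bridge word $\underline{w}_{ij}$ with $i$ indexing a maximal column sum of $A_{\underline{a}}$ and $j$ a maximal row sum of $A_{\underline{b}}$ (you transposed these), but since $\mathcal{W}$ contains all pairs this is harmless.
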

From lemma \ref{y79} we know that $\widetilde{\eta}$ is also an ergodic measure on $\Sigma^{(L)}$, hence it follows, that $\widetilde{\eta}=\mu_1$. 

Hence by Lemma \ref{y41} $\text{dim}_{H}\mu_1=\text{dim}_{H}\widetilde{\eta}<1$ thus by the second part of Lemma \ref{y69}
$$
P'(1)>s-1.
$$
From the convexity and continuity of the pressure function it follows that
\begin{equation} \label{y68}
    \inf_{t>0}(P(t)-(s-1)t)<1,
\end{equation}
see Figure \ref{y45} for a visual explanation.
\begin{figure}[h!]
    \centering
    \includegraphics[width=0.3\textwidth]{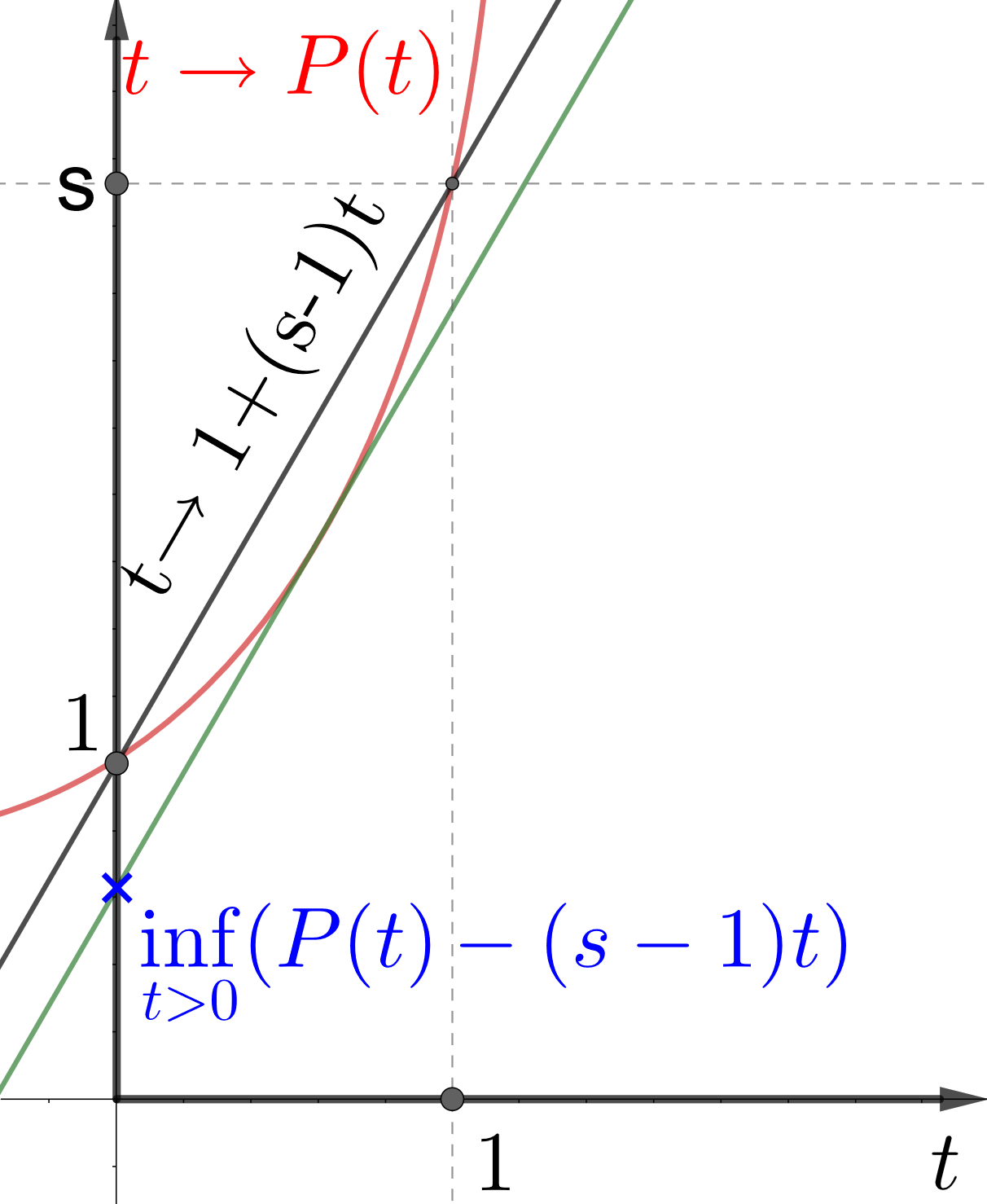}
    \caption{Explanation of \eqref{y68}.}
    \label{y45}
\end{figure}
\begin{lemma} \label{y67}
\begin{multline}
\text{dim}_{H}\left\{\mathbf{a}\in \Sigma^{(L)}: \lim_{n \to \infty} \frac{1}{n \log L} \log ||A_{\mathbf{a}_|n}||\geq s-1\right\} \\
\leq \inf_{t>0}(P(t)-(s-1)t).
\end{multline}
\end{lemma}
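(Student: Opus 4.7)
The plan is to use a standard covering argument from multifractal analysis: estimate the Hausdorff measure of the level set by covers made of cylinders $[a_1,\dots,a_n]\subset\Sigma^{(L)}$, each of diameter $L^{-n}$, and use the Chebyshev-Markov bound together with the definition of the pressure $P(t)$ to convert the constraint $\|A_{\mathbf{a}|n}\|\geq L^{n(s-1)}$ into a bound in terms of $P(t)$.

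First, fix $t>0$ and $\delta>0$; set $d_t:=P(t)-(s-1)t$. For $\varepsilon>0$ and $N\in\mathbb{N}$ introduce the auxiliary set
\begin{equation*}
E_{N,\varepsilon}:=\left\{\mathbf{a}\in\Sigma^{(L)}:\ \|A_{\mathbf{a}|n}\|\geq L^{n(s-1-\varepsilon)}\ \text{for all } n\geq N\right\}.
\end{equation*}
If $\mathbf{a}$ belongs to the set in the lemma, then $\liminf_{n\to\infty}\tfrac{1}{n\log L}\log\|A_{\mathbf{a}|n}\|\geq s-1$, so for every $\varepsilon>0$ there exists $N$ with $\mathbf{a}\in E_{N,\varepsilon}$. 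Hence the set in question is contained in $\bigcup_{N\geq 1}E_{N,\varepsilon}$ and, by countable stability of Hausdorff dimension, it suffices to bound $\dim_H E_{N,\varepsilon}$ for each $N$.

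Next I would cover $E_{N,\varepsilon}$ by cylinders of a single large length $n\geq N$, namely those $(a_1,\dots,a_n)\in[L]^n$ satisfying $\|A_{a_1,\dots,a_n}\|\geq L^{n(s-1-\varepsilon)}$. The $(d_t+\delta)$-pre-Hausdorff sum of this cover is
\begin{equation*}
\sum_{\|A_{a_1,\dots,a_n}\|\geq L^{n(s-1-\varepsilon)}}L^{-n(d_t+\delta)}
\leq \sum_{(a_1,\dots,a_n)\in[L]^n}\frac{\|A_{a_1,\dots,a_n}\|^{t}}{L^{tn(s-1-\varepsilon)}}\,L^{-n(d_t+\delta)},
\end{equation*}
where I used the Chebyshev-Markov inequality $\mathbf{1}_{\{x\geq y\}}\leq (x/y)^t$ for $x,y>0$, $t>0$. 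By the definition of the pressure (Lemma \ref{y70}), for every $\eta>0$ and all large $n$ we have $\sum_{(a_1,\dots,a_n)}\|A_{a_1,\dots,a_n}\|^t\leq L^{n(P(t)+\eta)}$. Substituting and recalling $P(t)-t(s-1)=d_t$, the sum is at most $L^{n(t\varepsilon-\delta+\eta)}$, which tends to $0$ provided $\eta$ and $\varepsilon$ are chosen small enough that $t\varepsilon+\eta<\delta$. This shows $\mathcal{H}^{d_t+\delta}(E_{N,\varepsilon})=0$, hence $\dim_H E_{N,\varepsilon}\leq d_t+\delta$.

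Finally, letting $\delta\downarrow 0$ gives $\dim_H E_{N,\varepsilon}\leq d_t=P(t)-(s-1)t$ for every $t>0$, and taking the countable union over $N$ and then the infimum over $t>0$ yields the claimed bound. The only genuinely delicate point is the order-of-limits choice of $\varepsilon,\eta,\delta$, which is routine; all other ingredients (existence and convexity of $P(t)$, the Gibbs-type asymptotic for $\sum\|A_{\mathbf{a}|n}\|^t$) are already supplied by Lemmas \ref{y70} and \ref{y69}.
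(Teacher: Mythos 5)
Your proof is correct and follows the standard Legendre-transform covering argument, which is exactly what the paper's cited source (Lemma 4.7 in B\'ar\'any--Rams \cite{barany2014dimension}) uses; the paper itself gives no details beyond that citation. The computation of the pre-Hausdorff sum is right: the exponent collapses to $n(t\varepsilon - \delta + \eta)$ after substituting $d_t = P(t)-(s-1)t$, and the order-of-choice $t,\delta$ first, then $\varepsilon,\eta$ small, works as you claim. The only ingredient you invoke from the earlier lemmas is the existence of the pressure $P(t)$ (Lemma \ref{y70}); the Gibbs measure from Lemma \ref{y69} is not actually needed for this upper bound, so that parenthetical could be trimmed.
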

\begin{proof}
The proof is the same as the proof of Lemma 4.7 in \cite{barany2014dimension}.
\end{proof}

From Lemma \ref{y67} and equation \eqref{y68} it follows that
\begin{equation*}
    \text{dim}_{H}\left\{\mathbf{a}\in \Sigma^{(L)}: \lim_{n \to \infty} \frac{1}{n \log L} \log ||A_{\mathbf{a}\left|\right.n}||\geq s-1\right\}<1 
\end{equation*}

\begin{proof}[Proof of Theorem \ref{y50}]

Let $\varphi_{n}:\Sigma^{(L)}\to \R$,
\begin{equation*}
    \varphi_{n}(\mathbf{a})=\log ||A_{\mathbf{a}|n}||.
\end{equation*}
From the properties of the 1-norm, it is easy to see, that 
\begin{equation*}
    ||A_{a_1, \dots, a_n, a_{n+1},\dots, a_{n+m}}|| \leq ||A_{a_1, \dots, a_n}||\cdot ||A_{n+1}\dots A_{n+m}||,
\end{equation*}
hence, $\varphi_{n+m}(\mathbf{a})\leq \varphi_{n}(\mathbf{a})+\varphi_{m}(\sigma^{n}\mathbf{a})$, i.e. the function $\varphi$ is subadditive.
Recall, that  $\mathcal{\widetilde{L}}=(\frac{1}{L}, \dots, \frac{1}{L})^{\N}$ denotes the uniform distribution measure on $\Sigma^{(L)}$. $\mathcal{\widetilde{L}}$ is $\sigma$-invariant and ergodic. Also clearly $\varphi_{1}$ is an $L^1(\mathcal{\widetilde{L}})$ function. Hence we can use the Subadditive ergodic theorem (\cite{walters2000introduction}, Theorem 10.1), which states that there exists a $\varphi$ such that 
\begin{enumerate}
    \item $\varphi \in L^1(\mathcal{\widetilde{L}})$,
    \item $\varphi \circ \sigma = \varphi$ almost everywhere,
    \item $\lim_{n\to \infty}\frac{1}{n}\varphi_{n}=\varphi$ almost everywhere,
    \item $\lim_{n \to \infty}\frac{1}{n}\int \varphi_{n}d\mathcal{\widetilde{L}}=\int \varphi d\mathcal{\widetilde{L}}$
\end{enumerate}
Since $\mathcal{\widetilde{L}}$ is ergodic measure and $\varphi(\mathbf{a})=\varphi(\sigma \mathbf{a})$ for $\widetilde{\mathcal{L}}$ almost every $\mathbf{a} \in \Sigma^{(L)}$ there exists a $w$ such that $\varphi(\mathbf{a})=w$ for $\mathcal{\widetilde{L}}$ almost every $\mathbf{a}\in\Sigma^{(L)}$.  
\end{proof}

 \section{Appendix}
 \subsection{Random Sierpi\'nski carpet}
\begin{figure}[!htb]
\centering
\minipage{0.6\textwidth}
\minipage{0.3\textwidth}
\raggedright
  \includegraphics[width=\linewidth]{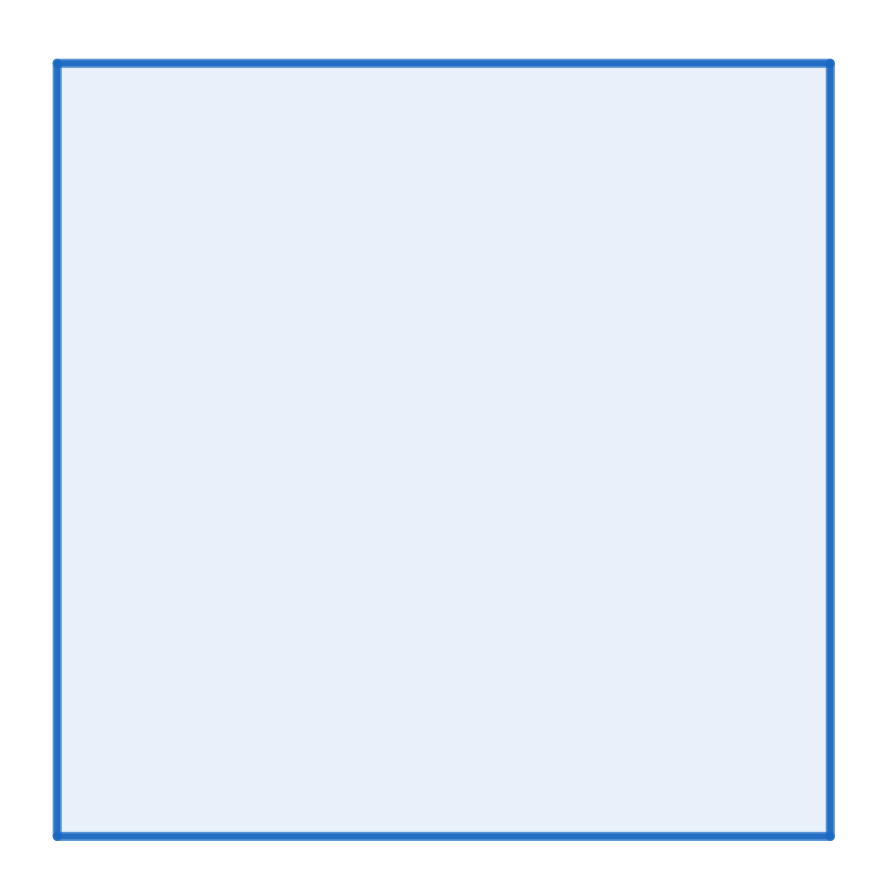}

\endminipage\hfill
\minipage{0.3\textwidth}
\centering
  \includegraphics[width=\linewidth]{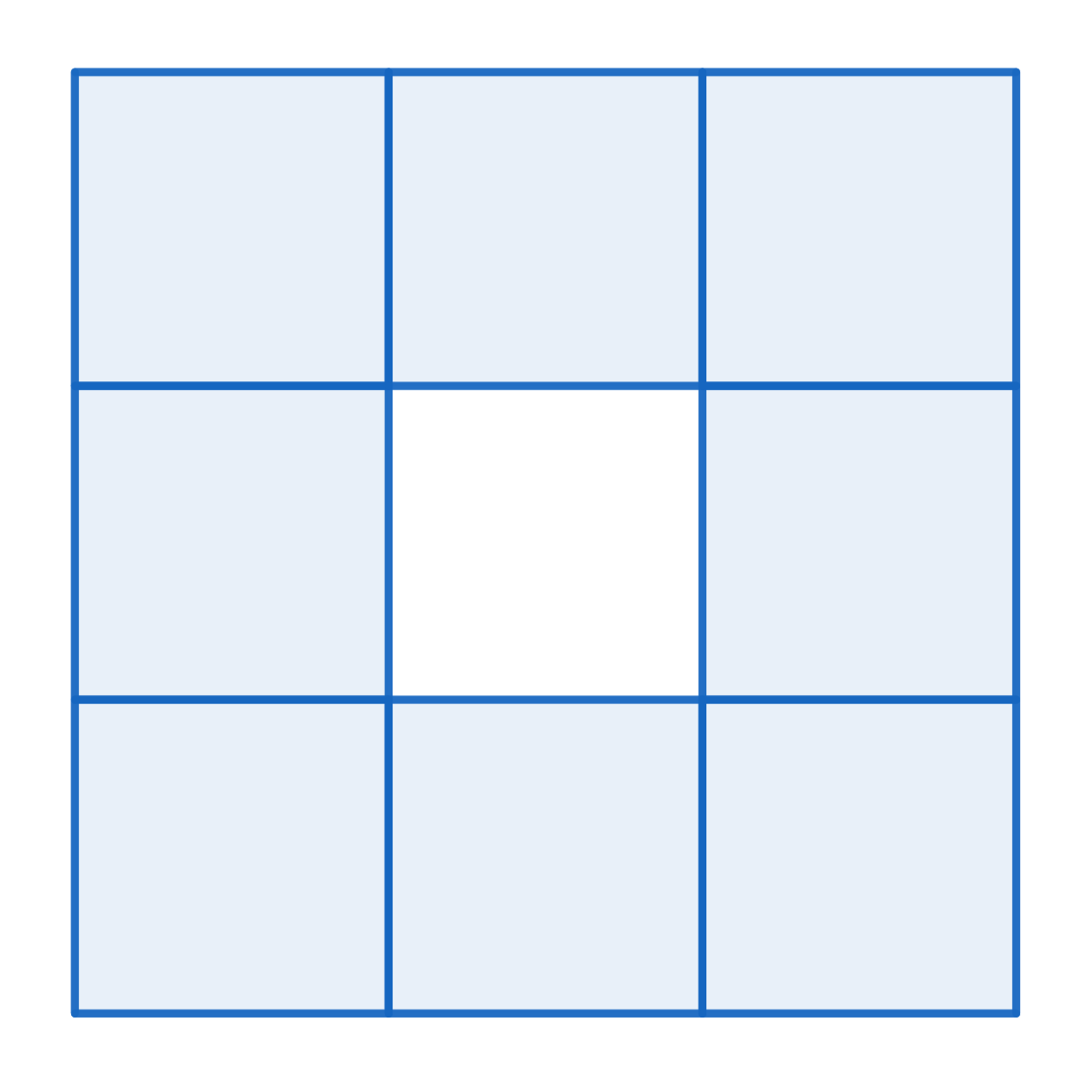}

\endminipage\hfill
\minipage{0.3\textwidth}
\raggedleft
  \includegraphics[width=\linewidth]{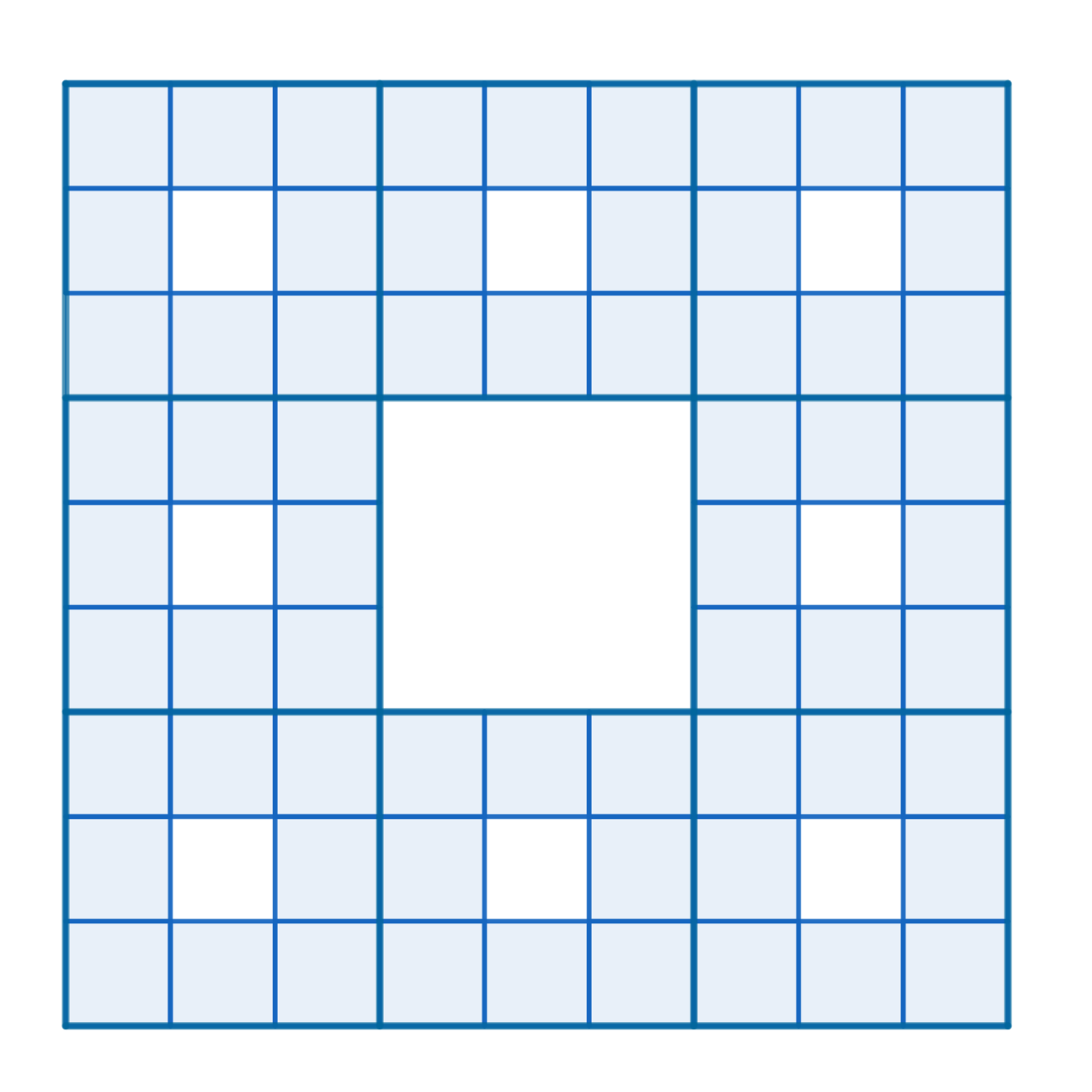}

\endminipage\hfill
\endminipage
\caption{The first three level approximation of the  Sierpi\'nski carpet.}
\label{u19}
\end{figure}

\begin{figure}[ht]
    \centering
    \includegraphics[width=0.5\textwidth]{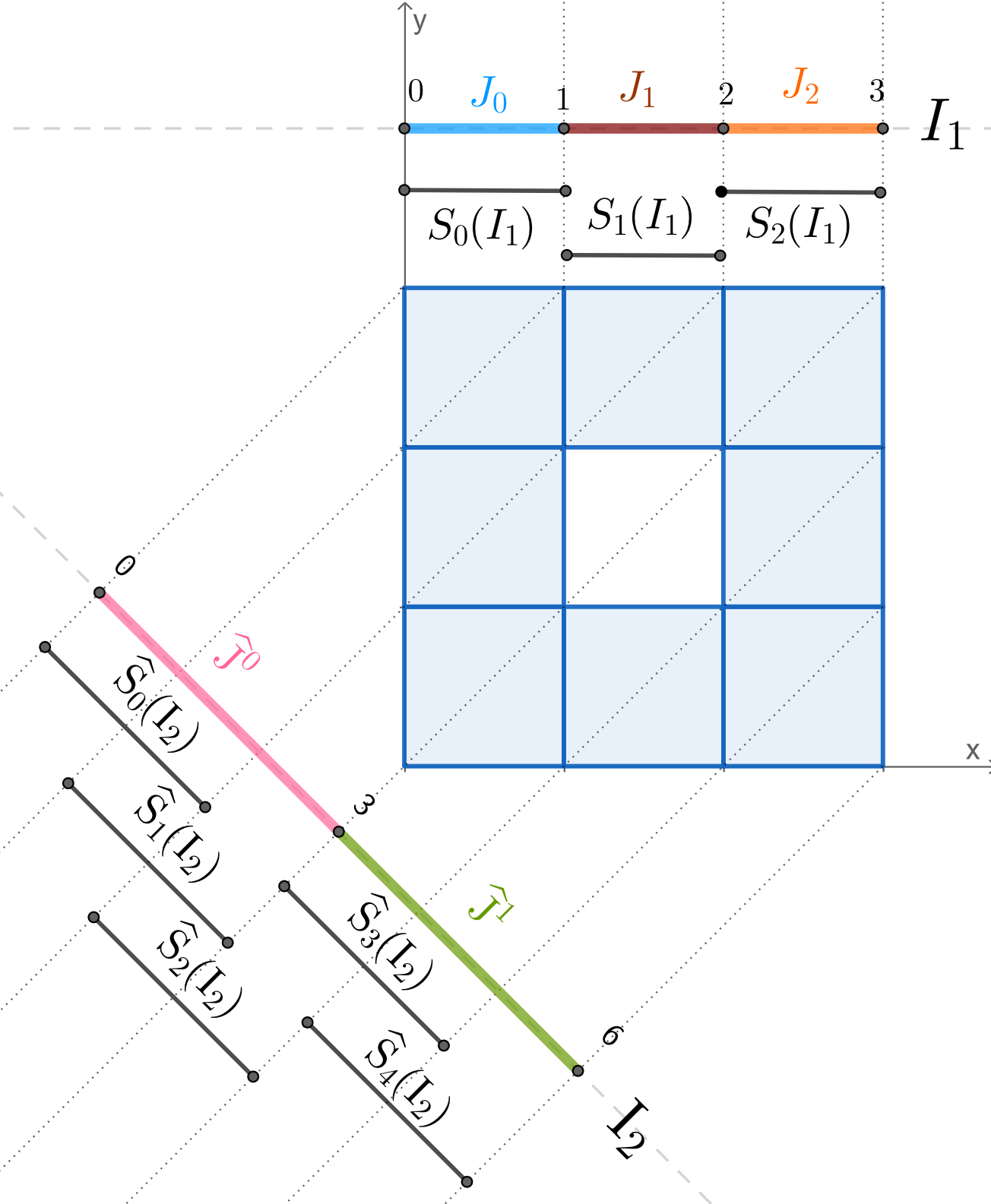}
    \caption{Illustration of the $\widehat{\text{proj}}_{(1,-1)}$ and the $\widehat{\text{proj}}_{(1,0)}$ projection of the first level of the Sierpi\'nski carpet.}
    \label{u39}
\end{figure}
The (deterministic) Sierpi\'nski carpet is the attractor (see Figure \ref{u50}) of the following self-similar IFS in $\R^2$:
\begin{equation}\label{u48}
\mathcal{F}:
=\left\{f_{i}(\underline{x})=\frac{1}{3}(\underline{x})+t_i\right\}_{i=0}^{8},
\end{equation}
where $\left\{t_i\right\}_{i=0}^{8}$ is an enumeration of the following set
\begin{equation}\label{u35}
\left\{0,\frac{1}{3},\frac{2}{3}\right\}^2 \setminus \left\{\left(\frac{1}{3}, \frac{1}{3}\right)\right\}
\end{equation}
We obtain the random Sierpi\'nski carpet by applying the random construction introduced in Definition \ref{y36} for the deterministic IFS above. We denote the random Sierpi\'nski carpet with parameter $p$ by $\mathcal{S}_p$.

Let $\widehat{\text{proj}}_{(\alpha,\beta)}$ ($\alpha,  \beta\in \R$) denote the following projection to $\R$:
\begin{equation}\label{u42}
\widehat{\text{proj}}_{(\alpha,\beta)}(x,y)=\alpha\cdot x+\beta \cdot y,
\end{equation}
which is the orthogonal projection to the line of tangent $\beta/\alpha$ rescaled. In what follows we consider the $(1,-1)$ and the $(1,0)$ projections of the random Sierpi\'nski carpet as it is schematically illustrated in Figure \ref{u39}. The behaviour of the projections of $\mathcal{S}_p$ is quite well known (see for example \cite{SR2015}, \cite{SV2018}). In particular, about the $\widehat{\text{proj}}_{(1,0)}$ projection we know almost everything from  \cite{Dekking1990}. However, by Theorems \ref{y26}-\ref{y24} we can further clarify the picture, as we explain below. 
\begin{example}[$\widehat{\text{proj}}_{(1,0)}(\mathcal{S}_p)$]\label{u41}
The projected and rescaled IFS is the following (see Definition \ref{y14}):
$S_i(x)= \frac{1}{3}x+t_i$, $t_i \in \left\{0,1,2\right\}$, with $\left(n_0,n_1,n_2\right)=\left(3,2,3\right)$, and basic types (see \eqref{u67}) $J^0=\left[0,1\right]$, $J^1=\left[1,2\right]$, $J^2=\left[2,3\right]$. Hence, the corresponding matrices (see \eqref{y89}) are,
\begin{equation}
D_0=
\begin{bmatrix}
3 & 0 & 0\\
2 & 0 & 0 \\
3 & 0 & 0
\end{bmatrix}, \quad
D_1=
\begin{bmatrix}
0 & 3 & 0\\
0 & 2 & 0 \\
0 & 3 & 0
\end{bmatrix}, \quad
D_2=
\begin{bmatrix}
0 & 0 & 3\\
0 & 0 & 2 \\
0 & 0 & 3
\end{bmatrix}.
\end{equation}
In this case the 
spectral radiuses of $p\cdot D_{i}$, $i\in \{0,1,2\}$ are $3p,2p,3p$ respectively. 
\end{example}

\begin{example}[$\widehat{\text{proj}}_{(1,-1)}
(\mathcal{S}_p)$]\label{u34}
The projected and rescaled IFS is the following (see Definition \ref{y14}):
$\widehat{S}_i(x)= \frac{1}{3}x+t_i$, $t_i \in \left\{0,1,2,3,4\right\}$, with $\left(n_0,n_1,n_2, n_3, n_4\right)=\left(1,2,2,2,1\right)$, and basic types (see \eqref{u67}) $\widehat{J}^0=\left[0,3\right]$, $\widehat{J}^1=\left[3,6\right]$. Hence, the corresponding matrices (see \eqref{y89}) are,
\begin{equation}
C_0=
\begin{bmatrix}
1 & 0 \\
2 & 2  \\

\end{bmatrix}, \quad
C_1=
\begin{bmatrix}
2 & 1 \\
1 & 2  \\

\end{bmatrix}, \quad
C_2=
\begin{bmatrix}
2 & 2 \\
0 & 1  \\
\end{bmatrix}.
\end{equation}
The spectral radiuses of $p\cdot C_i$ ($i \in \{0,1,2\}$) are $2 p, 3 p ,2p $ respectively.
\end{example}

As we mentioned already the $\widehat{\text{proj}_{(1,0)}}$ projection of $\mathcal{S}_p$ is heavily studied by \cite{Dekking1990}. Their results are illustrated in Figure \ref{u18}. What we can add to this is as follows: \begin{enumerate}
    \item by Theorem \ref{y21} we know that for $p<\frac{1}{2}$ the projection $\widehat{\text{proj}_{(1,0)}}$ does not contain any intervals almost surely.
\end{enumerate}
It is clear from the results in \cite{Dekking1990} that for $p>\frac{1}{2}$ the projection $\widehat{\text{proj}_{(1,0)}}(\mathcal{S}_p)$ contains an interval almost surely conditioned on non-extinction. 

\begin{figure}[ht]
    \centering
    \includegraphics[width=\textwidth]{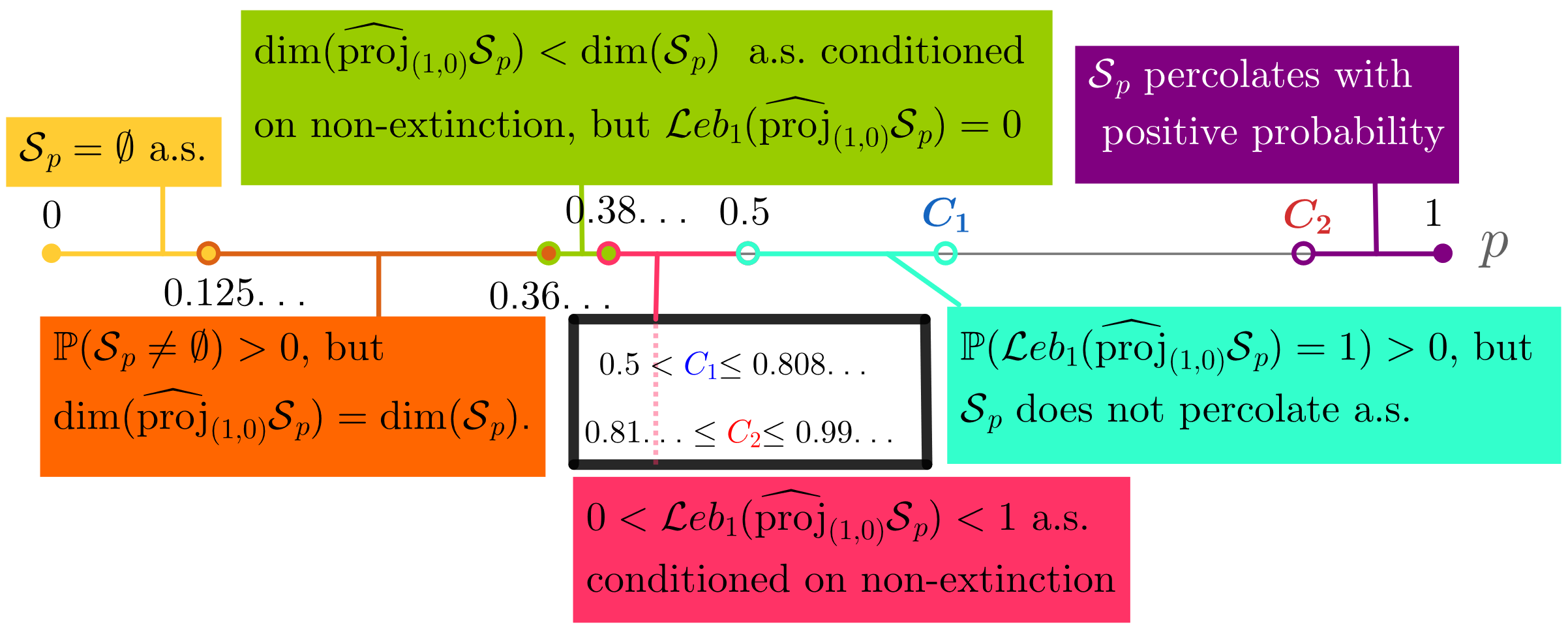}
    \caption{Illustration of the results of Dekking and Meester \cite{Dekking1990} on the $\widehat{\text{proj}}_{(1,0)}$ phases of the projection of the random Sierpi\'nski carpet.
    It follows from our results that the content of the red rectangle can be replaced with "
    $\widehat{\text{proj}}_{(1,0)}(\mathcal{S}_p)$ has positive Lebesgue measure a.s. conditioned on non-extinction but empty interior a.s.hat".
    } 
    \label{u18}
\end{figure}
Moreover,  it is  stated in \cite{SR2015},
 that this, last result holds for every direction, especially for the $(-1, 1)$ direction too. About the $\widehat{\text{proj}}_{(-1,1)}$ projection we further know from \cite{SV2018} that there exists a $p'>\frac{3}{8}$, such that for $p<p'$, $\text{dim}_{H}(\mathcal{S}_p)>1$ almost surely conditioned on non-extinction but the $\widehat{\text{proj}}_{(-1,1)}$ projection does not contain any intervals almost surely. Using Theorems \ref{y26}-\ref{y24} we can extend these results as follows:
\begin{enumerate}
    \item there exists a $p''$ such that for $p<p''$,
    $\text{dim}_{H}(\mathcal{S}_p)>1$ almost surely conditioned
    on non-extinction but the 
    projection 
    $\widehat{\text{proj}}_{(-1,1)}(\mathcal{S}_p)$
    has not only empty
    interior but also zero Lebesgue
    measure.
    \item For $p>0.38\dots=\frac{1}{18^{^1/_3}}$ (note that this is the same value that occurred in the other projection $\widehat{\text{proj}}_{(1,0)}$ in Figure \ref{u18}) the projection
    $\widehat{\text{proj}}_{(-1,1)}(\mathcal{S}_p)$
    has positive Lebesgue measure almost surely conditioned on non extinction.
    \item For $p<\frac{1}{2}$ the projection
    $\widehat{\text{proj}}_{(-1,1)}(\mathcal{S}_p)$
    does not contain an interval almost surely. 
\end{enumerate}

The Menger sponge is the $3$-dimensional analogue of the Sierpi\'nski carpet and the projections we studied in this chapter are analogous to those three dimensional ones which we denoted by $\text{proj}_{(1,0,0)}$ and $\text{proj}_{(1,1,1)}$. Nonetheless unlike in the case of the Menger sponge (where the $p$ parameter intervals were very different in case of the two projections) here the two projections seemingly shows a very similar behavior for a given $p$.

\bibliographystyle{plain}
\bibliography{references}
\end{document}